\newcommand{\F}{\mathbb}
\newcommand{\lb}{\textup{(}}
\newcommand{\rb}{\textup{)}}
\newenvironment{customthm}[1]
  {\innercustomthm}
  {\endinnercustomthm}
\newtheorem{theorem}{Theorem}
\newtheorem{lemma}[theorem]{Lemma}
\newtheorem{corollary}[theorem]{Corollary}
\theoremstyle{definition}
\newtheorem{definition and lemma}[theorem]{Definition and Lemma}
\newtheorem{remark}[theorem]{Remark}
\newtheorem{definition}[theorem]{Definition}
\newtheorem{example}[theorem]{Example}
\newtheorem{reference_paper}[theorem]{Reference}
\newtheorem*{setting}{Setting}
\numberwithin{equation}{subsection}
\numberwithin{theorem}{subsection}
\begin{document}

\begin{titlepage}
\begin{center}

\textsc{\LARGE Concentration Inequalities for Markov Jump Processes}

\vspace{2cm}

\textsc{\LARGE Georg-August-Universität Göttingen}

\vspace{1.5cm}

\textsc{\LARGE Bachelor Thesis} \\
\vspace{1.5cm}
\large
A thesis submitted in partial fulfillment of the requirements for the degree of  \\
\Large 
Bachelor of Science in Mathematics

\vspace{1cm}

\small{Supervisor:}\\

\large{Prof. Dr. Anja Sturm}

\vspace{1cm}

\small{Second Assessor:}\\

\large{Dr. Aljaz Godec}

\vspace{1cm}

\small{Date of submission:}\\
16.02.2022
\vfill 
\large{Santiago Carrero Ibanez} \\
\end{center}
\end{titlepage}
\pagestyle{fancy}
\fancyfoot{}
\fancyhead{}

\newpage
\tableofcontents

\newpage

\setcounter{page}{1}
\fancyfoot{}
\fancyhead{}
\fancyhead[LO,RE]{\thepage}

\section{Introduction}
What are  'concentration inequalities'? In general, concentration inequalities refer to inequalities that provide bounds on the probability that a random variable deviates from some characteristic value, e.g. the mean value. For example, one may consider a random variable $Z$ and may look at probabilities such as (see e.g. \cite{concentration2013})
\begin{equation}
    \F{P}(Z \geq u), \; \F{P}(Z \leq u),  \F{P}(\abs{Z} \geq u).
\end{equation}
Concentration inequalities  are of the form
\begin{equation}
    P(u) \leq B(u),
\end{equation}
where $B(u) $ is some bound and $P(u)$ is one of the aforementioned probabilities. In other words, concentration inequalities provide bounds on how much the (probability) distribution of a random variable is 'concentrated' around some characteristic value. The Markov  and Chebychev inequalities are simple, well known examples of concentration inequalities.
There are many areas of applications of concentration inequalities including statistical mechanics, statistics, information theory, and high-dimensional geometry \cite[P.1]{concentration2013}.
Several methods have been developed to prove such inequalities, e.g. martingale methods, information theoretic methods, 'the entropy method', the transportation method, etc. For an extensive, general presentation of different methods and concentration inequalities see \cite{concentration2013}. \\
What are 'concentration inequalities for Markov jump processes'? In this work we consider concentration inequalities for functionals of a special class of Markov processes, namely Markov jump processes (MJP) on finite sets. More precisely, our main goal is to derive bounds for the probability
\begin{equation}
\label{eq: first_probability}
    \mathbb{P}_\nu \left (\frac{1}{t}\int_{0}^{t}f(X_s)ds - \pi(f)  \geq u \right ),
\end{equation}
where $u \geq 0$ and $\pi(f) = \int f d\pi $, $\F{X} = (X_t)_{t \geq 0}$ is an irreducible Markov jump process on a finite state space $E$, with invariant distribution $\pi$, $\nu $ denotes the initial distribution, and $f : E \rightarrow \F{R} $ is some function  of interest (see Section 2 for the definition of these notions). The inquiry of the above probability is very natural, as by the well known ergodic theorem 
\begin{equation}
    \frac{1}{t}\int_{0}^{t}f(X_s)ds \xrightarrow[]{t \to \infty} \pi(f) \quad \F{P}_\nu- \text{a.s.}
\end{equation}
 Of course, the deviation probability $\mathbb{P}_\nu \left (t^{-1}\int_{0}^{t}f(X_s)ds - \pi(f)  \geq u \right )$ is of equal interest, however, it is sufficient to consider \eqref{eq: first_probability} as we can replace $f \to -f$. It is important to note that the ergodic theorem is asymptotic in nature and it does not provide any information about about the rate of convergence or deviations at finite time. However, in many areas of application, e.g. Monte-Carlo simulation or non-equilibrium statistical physics, it is of interest to examine the fluctuations of (finite time) averages $t^{-1}\int_0^t f(X_s) ds$, and to approximate $\int f d \pi$ by the (finite time) average  $t^{-1}\int_0^t f(X_s) ds$ (see also Section \ref{subsec: conc_inequalities_physics}). Consequently, the study of concentration inequalities is of fundamental and practical interest, as they provide some insight to the fluctuations of time averages and provide quantitative bounds for times, where time averages $t^{-1}\int_0^t f(X_s) ds$ are 'close' to $\pi(f) $ .\\
To derive concentration inequalities  we will use the so called \textit{Cramér-Chernoff method}, a general technique which is used to provide bounds for probabilities of the form $\F{P}(Z \geq u)$. The application of the Cramér-Chernoff method to derive concentration inequalities for \eqref{eq: first_probability} in the context of Markov processes is well established, and  this work is based on \cite{wu}, \cite{lezaud}, \cite{guillin}, \cite{bernstein}; works that derive concentration inequalities for \eqref{eq: first_probability} based on the explicit or implicit use of the Cramér-Chernoff method . These works all consider more general Markov processes, and in this work we present, summarize and extend some selected concentration inequalities (\cite[Thrm.\,1]{wu}, \cite[Thrm.\,1.1]{lezaud}, \cite[Prop.1.4, Thrm.\,2.3]{guillin} and \cite[Thrm.\,1.2, Thrm.\,2.2]{bernstein}) in the context of Markov jump processes. The present work can be read independently of the works just mentioned, in particular we present full proofs of the main results (contained in Section \ref{sec: concentration_inequalities}). Nonetheless, it is advisable for the reader to inspect also the mentioned works as these provide additional context and we sometimes explain the connections in the notation, the results and proofs of the present work with respect to the corresponding notation, results and proofs of \cite{wu}, \cite{lezaud}, \cite{guillin}, \cite{bernstein}. The main results of this work are summarized in Section \ref{subsec: Summary}.  \\
This thesis is divided in three main parts. First, Section \ref{sec: preliminaries} presents the general framework and the most important background information on which this thesis is based on. Hereby, Section \ref{subsec: Markov_jump_processes} treats MJPs (Markov jump processes) \textemdash the general framework of this thesis. Here we provide basic definitions and results about MJPs, including invariant distributions, infinitesimal generators and long term behavior. In Section \ref{subsec: additional_tools}  we present additional tools and background information concerning  linear algebra, perturbation theory and convex analysis. Overall, Section \ref{sec: preliminaries} contains frameworks, definitions and results, which are used in Section \ref{sec: concentration_inequalities} in the derivation of concentration inequalities. Then, in Section \ref{sec: concentration_inequalities} \textemdash  the core of this thesis \textemdash we present the Cramér-Chernoff method and  apply it to functionals of MJPs to derive concentration inequalities for the probability \eqref{eq: first_probability}. Hereby, in Section \ref{subsec: cramer_chernoff_method} we explain how the Cramér Chernoff method may be used  to derive general concentration inequalities for tail probabilities $\F{P}(Z \geq u)$. Then,  in Section \ref{subsec: application_cramer_chernoff}, based on the Cramér-Chernoff method we derive concentration inequalities for functionals of (irreducible) MJPs;  we first  derive a general concentration inequality  and then based on this inequality we derive further, more concrete concentration inequalities by using three different approaches: perturbation theory, functional inequalities  and information inequalities. A detailed outline of Section \ref{sec: concentration_inequalities} is given in Section \ref{subsec: outline_and_goal}.
Finally, in Section \ref{sec: schluss} we summarize our results, give an outlook to further theory and discuss the application in physics.  \\
To comprehend this thesis the reader should have a solid understanding of basic probability theory; including stochastic processes and conditional expectations, basic linear algebra and some functional analysis; in particular, basic knowledge about Hilbert spaces. Knowledge about Markov processes is useful but not necessary to understand the core of this thesis.

\newpage

\section{Preliminaries}
\label{sec: preliminaries}
   
\subsection{Markov Jump Processes}
\label{subsec: Markov_jump_processes}
Markov jump processes are a special type of Markov processes \textemdash stochastic processes  defined by the Markov property, which states that given the present the future is independent of the past. Markov processes find applications in many areas including physics, population dynamics, financial markets, etc. If the state space $E$ of the process is countable, the term 'continuous time Markov chain' is also frequently used (see  \cite{Anderson}, \cite{Bremaud},\cite{Norris1998},\cite{Liggett2010}). We shall use the term Markov jump process (MJP) and consider only finite state spaces. \\
This section is structured as follows. First, in Section \ref{subsubsec: basic_theory}  we present basic definitions and results, providing a general framework and introducing notation. Then, in Section \ref{subsubsec: invariant_distributions} we consider invariant distributions and present a central result about the existence and uniqueness. Afterwards, in Section \ref{subsubsec: infinitesimal_generators} we present the Markov semigroup of an MJP and compute the infinitesimal generator and its properties. Finally, in Section \ref{subsubsec: limit_behaviour} we present two results about the long term behavior, including  the ergodic theorem. \\
It should be mentioned that not everything covered in Section \ref{subsec: Markov_jump_processes} will be necessary for
our purposes. Nevertheless, it is still worthwhile covering all these topics as they provide  solid background information. The most important notions and results of this section, which should be kept in mind when reading the main part of this work, are: The basic framework and notions \textemdash irreducible MJPs (Definitions \ref{def: Markov_jump_process}, \ref{def: irreducibility}) and invariant distributions (Definition \ref{def: invariant_measure}), the existence and uniqueness of invariant distributions (Theorem \ref{th: uniqueness_existence_invariant_measure}), infinitesimal generators of (irreducible) MJPs  and its properties (Section \ref{subsubsec: infinitesimal_generators}) and the ergodic Theorem (Theorem \ref{th: ergodic_theorem}). A reader who is mainly interested in the core of this work  (Section \ref{sec: concentration_inequalities}) and is  familiar with basic concepts and notation of MJPs may skim through the definitions and results of Sections \ref{subsubsec: basic_theory}, \ref{subsubsec: invariant_distributions} and \ref{subsubsec: limit_behaviour}, focus on Section \ref{subsubsec: infinitesimal_generators} and on the results and definitions just mentioned. \\

\subsubsection{Basic Theory}
\label{subsubsec: basic_theory}
In the following let $E$ be a finite set endowed with the $\sigma$-algebra $ \mathcal{E} = \mathcal{P}(E)$. Furthermore, endow $E^{[0,\infty)}$ with the product $\sigma$-algebra $\mathcal{E}^{[0,\infty)}$.  Following \cite[Ch.\,2.1,Def.\,2.1]{Liggett2010} we define:

\begin{definition}[MJP and transition function]
\label{def: Markov_jump_process}
Let $\F{X} = (X_t)_{ t \geq 0}$ be an $E$-valued stochastic process, defined on an underlying measure space $(\Omega, \mathcal{F})$  and let $(\F{P}_x)_{x \in E}$ be a family of probability measures on $(\Omega,\mathcal{F})$. We call $\F{X}$ (or more precisely $(\F{X}, (\F{P}_x)_{ x \in E})$) an MJP if
\begin{enumerate}[\lb $a$\rb]
    \item $\F{X}$ is a jump process, i.e. for all $\omega \in \Omega$ and all $t \geq 0$ there is a $\varepsilon > 0$ such that
    \begin{equation*}
        X_{t+h}(\omega) = X_{t}(\omega) \: \text{for all} \: h \in [t,t+
        \varepsilon].
    \end{equation*} 
    \item $\F{P}_x(X_0 = x) = 1$ for all $x \in E$
    \item The Markov property holds, i.e.  for all $x \in E$, all $t \geq 0$ and all bounded and measurable $A : E^{[0,\infty)} \rightarrow \F{R}$
    \begin{equation}
    \label{eq: Markov_property}
    \F{E}_x[A((X_{t+s})_{ s \geq 0}) | (X_r)_{ 0 \leq r \leq t}] = \F{E}_x[
    A((X_{t+s})_{ s \geq 0}) |X_t] =  \F{E}_{X_t}[A((X_s)_{s \geq 0})],
    \end{equation}
\end{enumerate}
where $\F{E}_x$ denotes the (conditional) expectation under the measure $\F{P}_x$. Define for $t \geq 0$
\begin{equation}
    P(t) := (p_{xy}(t))_{x,y \in E},
\end{equation}
with
\begin{equation}
    p_{xy}(t) := \F{P}_x(X_t = y).
\end{equation}
 The family $(P(t))_{t \geq 0}$ of matrices is called transition function of the MJP.
 
\end{definition}
\begin{remark}(Right continuity)
As $E$ is countable, note that condition $(a)$ above, which states that the paths of the process are piecewise constant, is equivalent to the right continuity of the paths of $\F{X}$. In this work we are only interested in processes with right continuous (i.e. piecewise constant) paths. Furthermore, as the term 'jump process' is usually defined as a right continuous process with piecewise constant paths (see e.g.  \cite[Ch.\,2.2,Def.\,2.5]{Bremaud}, \cite[Ch.\,2.3]{Yin2011}, \cite[Ch.\,12]{Kallenberg2002}), we decided to use the term Markov jump process (MJP) for the stochastic process of our interest. That we assume right continuity of the paths of the stochastic process has the following reasons:
\begin{enumerate}
    \item Right continuity ensures directly that for any $f : E \rightarrow \F{R}$ our time average of interest $t^{-1}\int_0^t f(X_s) ds$ is well defined, measurable  and can be approximated by Riemann sums.
    \item In applications, in order  to characterize the distribution of the process, one specifies the so called   $Q$-matrix $Q = (q_{xy})_{x,y \in E}$ which characterizes the transition probabilities $p_{xy}(t) = \F{P}(X_t = y | X_0 = x) $ for 'small' times:
    \begin{enumerate}[$(a)$]
        \item  $p_{xy}(h) = q_{xy}h + o(h)$ as $ h \to 0$ for $x \neq y$
        \item $p_{xx}(h) = 1 + q_{xx} h + o(h)$ as $h \to 0$
    \end{enumerate}
    Given such a matrix $Q$, one can construct explicitly a continuous time Markov chain with right continuous paths whose transition probabilities satisfy $(a)$ and $(b)$  (see Theorem \ref{th: characterization_MJP}$(b)$).
    \item Right continuity ensures that the so called infinitesimal generator $L$ \textemdash a linear operator acting on functions $f : E \rightarrow \F{R}$ \textemdash of the Markov process exists, and $Lf$ is defined for all $f : E \rightarrow \F{R} $ (for details see Lemma \ref{lem: contraction_semigroup_MJP}). This property allows us to derive concentration inequalities that hold for all $f : E \rightarrow \F{R}$ ($f$ is the function used in the time average $t^{-1} \int_0^tf(X_s)ds$).
    
\end{enumerate}
\end{remark}
\begin{remark}(Equivalent definitions)
\label{rem: equivalent_definitions}
It should be remarked that there  are many different (equivalent) ways to define an MJP or, respectively  a 'continuous time Markov chain'. One may use a definition that describes the process via an underlying Markov chain and exponentially distributed  random variables (see  \cite[Ch.\,2.6]{Norris1998}, Theorem \ref{th: characterization_MJP}$(b)$). Alternatively, one may use a definition that describes an MJP as a special case of a Markov process (see \cite[Ch.\,12]{Kallenberg2002}). One may include the Markov property \eqref{eq: Markov_property}
 directly in the definition (see e.g.\cite[Def.\,2.1]{Liggett2010}), or one may use the fact that when considering a stochastic process $\F{X}$ on a countable space $E$ the Markov property  
follows also from the statement 
\begin{equation}
\label{eq: alternative_to_markov}
    \F{P}( X_{t+s} = y | X_s = x, X_{s_1} = x_1,..., X_{s_k} = x_k ) = \F{P}( X_{t+s} = y| X_s = x) = \F{P}( X_t = y | X_0 = x)
\end{equation}
for all $x,y, x_1,..x_k \in E$, all $t,s \geq 0$, and all $0 \leq s_1 \leq ....\leq s_k \leq s $, and then just require in the definition that \eqref{eq: alternative_to_markov} holds (see e.g. \cite[Ch.\,1.1]{Anderson}, \cite[Ch.\,8,Def.\,2.1]{Bremaud}). Furthermore, one may include a family $(\F{P}_x)_{x \in E}$ of ''starting measures'' , where $(\F{X},\F{P}_x)$ describes the process started at $x$, directly in the definition (see \cite[Def.\,2.1]{Liggett2010}) or define them via $\F{P}_x = \F{P}(\: \cdot \: | X_0 = x) $ (see \cite[Ch.\,8.2.1]{Bremaud}). As we shall later use the Markov property (see e.g. Lemma \ref{lem: feynman_kac}) in the form \eqref{eq: Markov_property}, we directly included it in the definition. Furthermore, we want to also work with the measures $(\F{P}_x)_{x \in E}$, so we also included them directly in the definition.
\end{remark}
To define the concept of an MJP with starting distribution $\nu$, define for any probability measure $\nu = (\nu_x)_{x \in E}$ on $E$ the probability measure
\begin{equation}
\label{eq: def_P_nu}
    \F{P}_\nu := \sum_{x \in E} \nu_x \F{P}_x.
\end{equation}
 Denote by $\F{E}_\nu$ the expectation under  $\F{P}_\nu$ and by $\mathscr{L}_\nu(Z)$ the distribution of a random element $Z$ under $\F{P}_\nu$, i.e. $\mathscr{L}_\nu(Z) = \F{P}_\nu Z^{-1}$. Notice that because of property $(b)$ in Definition \ref{def: Markov_jump_process} of an MJP we have that $\mathscr{L}_\nu(X_0) = \nu$, thus we refer to $(\F{X}, \F{P}_\nu)$ as an MJP with initial distribution $\nu$.
 Notice that the initial distribution $\nu $ and the transition function $(P(t))_{t \geq 0}$ uniquely determine the distribution of $\F{X}$ under $\F{P}_\nu$ (on the path space $(E^{[0, \infty)}, \mathcal{E}^{[0,\infty)}$)). Indeed, using the Markov property \eqref{eq: Markov_property} with $A((x_s)_{s \geq 0}) = 1_{\{ x_h = y\}}$ for $h \geq 0$, $y \in E$ and $(x_s)_{s\geq 0} \in E^{[0,\infty)}$ yields for any $x_1,...,x_n \in E$ , $0 \leq t_1 \leq ... \leq t_n$ and any $h \geq 0$ that
 \begin{align*}
     &\F{P}_x(X_{t_1} = x_1,...,X_{t_n} = x_n , X_{t_n + h} = y) = \F{E}_x[\F{E}_x(1_{ \{X_{t_n+h} = y \}}| (X_r)_{ 0 \leq r \leq t_n})1_{ \{X_{t_1} = x_1 ,...,X_{t_n} = x_n \}} ] \\
     &=\F{E}_x[\F{E}_{X_{t_n}}(1_{ \{ X_h = y \}}) 1_{ \{X_{t_1} = x_1 ,...,X_{t_n} = x_n \}}] = p_{x_ny}(h)\F{P}_x(X_{t_1} = x_1,...,X_{t_n} = x_n ).
 \end{align*}
 Consequently,
 \begin{equation*}
     \F{P}(X_{t_n+h} = y | X_{t_1}= x ,..., X_{t_{n}} = x_{n}) = p_{x_ny}(h),
 \end{equation*}
 and thus for any $0 \leq t_1 ... \leq t_n$ and any $x_1,...,x_n \in E$
\begin{align}
\label{eq: fidi_distributions}
\begin{split}
    &\F{P}_\nu(X_{t_n} = x_n,..., X_{t_1} = x_1) \\
    &= \sum_{ x \in E} \nu_x  \F{P}_x(X_{t_1} = x_1 | X_0 = x) ...\, \F{P}_x(X_{t_n} = x_n | X_0 = x , X_{t_1} = x_1,..., X_{t_{n-1}} = x_{n-1}) \\
    &=\sum_{x \in E} \nu_x  p_{xx_1}(t_1)...\,p_{x_{n-1}x_n}(t_n - t_{n-1}).
\end{split}
\end{align}
Thus, the finite dimensional distributions $\mathscr{L}_\nu(X_{t_1},...,X_{t_n})$ are determined by $\nu $ and $(P(t))_{ t \geq 0}$,  so by the uniqueness theorem for stochastic processes  $\mathscr{L}_\nu(\F{X}) $ is uniquely determined. It is straightforward to check that the transition function has the following properties:
\begin{enumerate}
    \item $P(t)$ is a stochastic matrix, i.e $p_{xy}(t) \geq 0$ and $\sum_{y \in E}p_{xy}(t) = 1$ for all $x,y \in E$ and all $t \geq 0$. 
    \item $\lim_{t \downarrow 0} p_{xy}(t) = p_{xy}(0) = \delta_{xy}$ for all $x,y \in E$
    \item $P(s+t) = P(t)P(t)$, i.e. for all $t,s \geq 0$ and all $x,y \in E$ 
    \begin{equation}
    \label{eq: chapman_kolmogorov_equation}
        p_{xy}(t+s) = \sum_{z \in E}p_{xz}(t)p_{zy}(s)
    \end{equation}
\end{enumerate}
Here 2. follows from the dominated convergence theorem and the right continuity of $\F{X}$, and \eqref{eq: chapman_kolmogorov_equation} follows from the Markov property  \cite[Ch.2,Th.\,2.12]{Liggett2010}. 
The equations \eqref{eq: chapman_kolmogorov_equation} are called \textit{Chapman-Kolmogorov equations}. Generally, any family $(P(t))_{t \geq 0}$ of matrices indexed by $E$ satisfying the above three properties is called a \textit{transition function} \cite[Ch.\,2,Def.\,2.2]{Liggett2010}. Not every transition function (in the case of infinite $E$) is given by an MJP (for a counterexample see \cite[Ch.\,2.4, Remark 2.20]{Liggett2010}). Because of the Chapman-Kolmogorov equations for the transition function  one could hope that it is possible to find a matrix $Q = (q_{xy})_{x,y \in E}$ such that $P(t) = \exp(tQ)$. This would make it possible to characterize the infinite family $(P(t))_{t\geq 0}$ (and thus the distribution of $\F{X})$  via just one matrix. If $P(t) = \exp(tQ) $, then in particular $\frac{d}{dt}P(t)|_{t = 0} = Q$. Because $\sum_{y \in E} p_{xy}(t) = 1$ and $p_{xy}(t) \geq 0 = p_{xy}(0)$ (for $x \neq y$), this implies (using finiteness of $E$ and interchanging sum and derivative)
\begin{enumerate}[$(a)$]
    \item $q_{xy} \geq 0$ for all $ x,y \in E$, $x \neq y$
    \item $\sum_{y \in E} q_{xy} = 0$ for all $x \in E$.
\end{enumerate}
This motivates the following definition (we follow the definition of \cite[Ch.\,2.1, Def.\,2.3]{Liggett2010})
\begin{definition}(Q-Matrix)
\label{def: Q_matrix}
Let $Q = (q_{xy})_{x,y \in E}$ be a matrix of real numbers. $Q$ is called a $Q$-matrix if
\begin{enumerate}[$(a)$]
    \item $q_{xy} \geq 0$ for all $ x,y \in E$, $x \neq y$
    \item $\sum_{y \in E} q_{xy} = 0$ for all $x \in E$
\end{enumerate}
For any $Q$-matrix $Q$ define $q_x := - q_{xx} = \sum_{ y \neq x} q_{xy}$.
\end{definition}
 As we will see in Theorem \ref{th: characterization_MJP}, transition functions of MJPs are exactly those that are given by $P(t) = \exp(tQ)$ for some $Q$-matrix $Q$. Before characterizing MJPs and the corresponding transition functions let us state the following result, which generalizes the Markov property \eqref{eq: Markov_property} and will be used to prove the characterization result (Theorem \ref{th: characterization_MJP}).
\begin{lemma}[Strong Markov property]
An MJP satisfies the strong Markov property, i.e. for any stopping time $\tau$, any $B \in \mathcal{E}^{[0,\infty)}$ and any $x \in E$
\begin{equation}
    \F{P}_x( (X_{t+\tau})_{t \geq 0} \in B, \tau < \infty | \mathcal{F}_\tau ) = 1_{\{\tau < \infty \} } \F{E}_{X_\tau}( (X_t)_{t \geq 0} \in B),
\end{equation}
where $\mathcal{F_\tau} = \{ A \in \mathcal{F} \, | \, A \cap \{ \tau \leq t \} \in \sigma( (X_s)_{0 \leq s \leq t}) \} $ denotes the $\sigma$-algebra of $\tau$-past.
\label{lem: strong_markov}
\end{lemma}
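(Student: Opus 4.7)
The plan is to establish the strong Markov property in two stages, paralleling the standard approach for right-continuous Markov processes: first for stopping times taking only countably many values \textemdash\ where the claim reduces to the ordinary Markov property \eqref{eq: Markov_property} by partitioning on each possible value of $\tau$ \textemdash\ and then for general $\tau$ by approximating from above with dyadic discrete stopping times, exploiting the piecewise-constancy of paths guaranteed by Definition \ref{def: Markov_jump_process}$(a)$.

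By a monotone class / $\pi$-$\lambda$ argument it suffices to verify the identity against bounded measurable test functionals $A$ depending on finitely many coordinates of $(X_s)_{s\geq 0}$; equivalently, I must show
\begin{equation*}
\F{E}_x\!\left[A((X_{t+\tau})_{t\geq 0})\, 1_{\{\tau<\infty\}}\, 1_C\right] = \F{E}_x\!\left[\F{E}_{X_\tau}\!\left[A((X_t)_{t\geq 0})\right] 1_{\{\tau<\infty\}}\, 1_C\right]
\end{equation*}
for every $C\in\mathcal{F}_\tau$. When $\tau$ takes values in a countable set $S\cup\{\infty\}$, the event $\{\tau=s\}$ lies in $\mathcal{F}_s$, so $C\cap\{\tau=s\}\in\mathcal{F}_s$ whenever $C\in\mathcal{F}_\tau$. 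Decomposing the left-hand side as
\begin{equation*}
\sum_{s\in S} \F{E}_x\!\left[A((X_{t+s})_{t\geq 0})\, 1_{C\cap\{\tau=s\}}\right]
\end{equation*}
and applying \eqref{eq: Markov_property} at the deterministic time $s$ to each summand, then using $X_s=X_\tau$ on $\{\tau=s\}$, yields the identity for discrete $\tau$.

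For a general stopping time $\tau$, I would set $\tau_n:=2^{-n}\lceil 2^n\tau\rceil$ on $\{\tau<\infty\}$ and $\tau_n:=\infty$ on $\{\tau=\infty\}$. Each $\tau_n$ is a stopping time taking countably many values, since $\{\tau_n\leq k/2^n\}=\{\tau\leq k/2^n\}\in\mathcal{F}_{k/2^n}$, and $\tau_n\downarrow\tau$ pointwise. Because $\tau\leq\tau_n$ entails $\mathcal{F}_\tau\subseteq\mathcal{F}_{\tau_n}$, the discrete case just established applies to $\tau_n$ for every $C\in\mathcal{F}_\tau$. The main obstacle is then the limit $n\to\infty$: here I would invoke the piecewise-constancy of paths to argue that for each fixed $t\geq 0$ and each $\omega\in\{\tau<\infty\}$ there exists $N(\omega,t)$ with $X_{t+\tau_n}(\omega)=X_{t+\tau}(\omega)$ for all $n\geq N(\omega,t)$; in particular $X_{\tau_n}\to X_\tau$ pointwise on $\{\tau<\infty\}$. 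Finiteness of $E$ and boundedness of $A$ then permit dominated convergence to transfer the identity from $\tau_n$ to $\tau$, completing the proof. Without the right-continuity (in fact piecewise constancy) assumption, the pointwise convergence of $X_{t+\tau_n}$ to $X_{t+\tau}$ would fail, so this last step is precisely where the jump-process structure is indispensable.
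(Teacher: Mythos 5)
The paper offers no argument here; it simply cites \cite[Ch.\,12, Thrm.\,12.14]{Kallenberg2002}. What you have written is a correct, self-contained execution of the standard discretization proof, which is also the substance of the cited result, so in that sense the approaches agree. Two small points worth polishing. First, for the identity to be a statement about conditional expectation you also need to know that the candidate $1_{\{\tau<\infty\}}\,\F{E}_{X_\tau}[A((X_t)_{t\geq 0})]$ is $\mathcal{F}_\tau$-measurable; this follows because a right-continuous adapted process is progressively measurable, hence $X_\tau 1_{\{\tau<\infty\}}$ is $\mathcal{F}_\tau$-measurable, and you should say so since it uses the same right-continuity hypothesis you invoke for the limit. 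Second, the $\pi$-$\lambda$ reduction should be stated a little more carefully: the pointwise-convergence argument in the limiting step genuinely needs $A$ of the cylinder form $A((x_s)_{s\geq 0})=\phi(x_{t_1},\dots,x_{t_k})$ (where continuity of $\phi$ is automatic because $E$ is finite and discrete), after which the monotone class/$\pi$-$\lambda$ step extends the identity to all $B\in\mathcal{E}^{[0,\infty)}$ for fixed $C\in\mathcal{F}_\tau$. With those two clarifications the proof is complete and correct.
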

\begin{proof}
 \cite[Ch.12,Thrm.\,12.14]{Kallenberg2002}
\end{proof}
Let us explain intuitively how an MJP may be characterized. For that consider the path $t \mapsto X_t$ of an MJP. If the process is in state $x $ at time $t$, i.e. $X_t = x$, then because of right continuity,  $X_t$ stays at $x$ for a positive time and then $X_t$ 'jumps'  to another state $y \in E$ (or $X_t$ stays forever at $x$). The total time of staying in $x$ before making a jump is called the holding time. Because of the Markov property, if the process is in state $x$ at time $t$ the future process $(X_s)_{ s \geq t }$ should behave (in distribution) like an MJP (with same transition function) started at $x$ at time $t = 0$. Thus one expects the following properties of the holding time:
\begin{enumerate}
    \item The distribution of the holding time just depends on the state $x$
    \item The distribution of the holding time should be memoryless, i.e. if $T$ denotes the holding time then $\F{P}(T > t +s | T > t) = \F{P}( T > s)$
\end{enumerate}
This would imply that $T \sim \mathrm{Exp}(q_x)$ for some $q_x \geq 0$. This is indeed the case (see proof of Theorem \ref{th: characterization_MJP}). Furthermore, if $\tau_n$ denotes the time of the $n$-th jump by the strong Markov property one expects $(X_{\tau_n})_{ n \in \F{Z}_+}$ to be a Markov chain. Thus, an MJP should be characterized by a Markov chain \textemdash describing the jumps, and exponentially distributed holding times \textemdash describing the time the MJP stays at some state. More precisely, we have:
\begin{theorem}[Characterization of MJPs]
\label{th: characterization_MJP}
\begin{enumerate}[\lb a\rb]
    \item Let $(\F{X},(\F{P}_x)_{x \in E})$ be an MJP with transition function $(P(t))_{t \geq 0}$. Define recursively the stopping times $\tau_0 := 0$ and
\begin{equation}
    \tau_n := \inf \{ t \geq \tau_{n-1} | X_t \neq X_{\tau_{n-1}} \}
\end{equation}
for $n \in \F{N}$, where $\inf \emptyset = \infty$. Furthermore, define a stochastic process $\F{Y} = (Y_n)_{n \in \F{Z}_+}$ on $E$ (recursively) via  $Y_0 := X_{\tau_0} = X_0$ and
\begin{equation}
    Y_n := 
    \begin{cases}
    X_{\tau_n} \; ; \; \tau_n < \infty \\
    Y_{n-1} \; ; \; \tau_n = \infty 
    \end{cases}.
\end{equation}
Then:
\begin{enumerate}[1.]
    \item There is a unique $Q$-matrix $Q$ such that $P(t) = \exp(tQ)$, and it is given by 
    \begin{equation}
    \label{eq: def_Q_matrix_of_MJP}
        q_{xy} = (\F{E}_x(\tau_1))^{-1} (\F{P}_x(X_{\tau_1} = y) - \delta_{xy}),
    \end{equation}
    where $q_{xy} = 0 $ if $\F{E}_x(\tau_1) = \infty $.
    \item Let $q_{xy}$ be defined as above and $q_x = -q_{xx}$. Then,  $(\F{Y}, (\F{P}_x)_{x \in E})$ is a Markov chain with transition probabilities given by 
    \begin{equation}
    \label{eq: def_underlying_MC_1}
       p_{xx} = 1 \quad and \quad p_{xy} = 0 \quad  for \,\, y \neq x 
    \end{equation} 
    if $q_x = 0$, and by  
     \begin{equation}
     \label{eq: def_underlying_MC_2}
        p_{xx} = 0 \quad and \quad p_{xy} =  \frac{q_{xy}}{q_x} \quad  for \,\, y \neq x 
     \end{equation}
     if $q_x > 0$.
   \item There is a sequence of random variables $(R_k)_{k \in \F{N}}$ such that for all $x \in E$ and all $n \in \F{Z}_+$
    \begin{equation}
    \label{eq: tau_n}
        \tau_n = \sum_{k = 1}^n \frac{R_k}{q_{Y_{k-1}}} \quad \F{P}_x\text{- }a.s.
    \end{equation}
    Hereby, for all $x \in E$, with respect to $\F{P}_x$, the sequence $(R_k)_{k \in \F{N}}$ is independent of $\F{Y}$ and i.i.d. with 
    \begin{equation}
        \mathscr{L}_x(R_k) = Exp(1),
    \end{equation}
    where $Exp(1)$ denotes the exponential distribution with expectation $1$. In other words, for all $x \in E$ it holds that given $\F{Y}$ the  holding times $(\tau_n - \tau_{n-1})_{n \in \F{N}}$ are independent and exponentially distributed with respect to $\F{P}_x$, i.e. for all $n \in \F{N}$ and $r \geq 0$
   \begin{equation}
       \F{P}_x(\tau_{n}-\tau_{n-1} > r \,| \, \F{Y}) = e^{-q_{Y_{n-1}}r}.
   \end{equation}
  \end{enumerate}
  \item Conversely, let $Q$ be a $Q$-matrix, $(\F{Y}, (\F{P}_x)_{x \in E})$ a Markov chain with transition probabilities as above $($see \eqref{eq: def_underlying_MC_1} and \eqref{eq: def_underlying_MC_2}$)$, and  $(R_n)_{n \in \F{N}}$  a sequence of  random variables such that for all $x$, with respect to $\F{P}_x$, the sequence is independent of the Markov chain $\F{Y}$ and i.i.d. with $\mathscr{L}_x(R_1) = Exp(1)$. Furthermore, let  $\tau_n$ be defined as in \eqref{eq: tau_n} and define the stochastic process $\F{X} = (X_t)_{t \geq 0}$ by
  \begin{equation}
  \label{eq: def_MJP_holding_times}
      X_t = Y_n \quad for \,\, t \in [\tau_n, \tau_{n+1}).
  \end{equation}
  Then \lb after removing a common nullset of all $\F{P}_x$'s\rb,  the process  $(\F{X}, (\F{P}_x)_{x \in E})$ is a well defined MJP on $E$ with transition function $P(t) = \exp(tQ)$ and $P(t)$ is the unique transition function satisfying $Q = \frac{d}{dt}P(t)|_{t = 0}$.
\end{enumerate}

\end{theorem}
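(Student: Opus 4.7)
The plan is to treat part (a) by first analyzing the first holding time $\tau_1$ under $\F{P}_x$, then bootstrapping to the full jump structure via the strong Markov property (Lemma \ref{lem: strong_markov}), and finally extracting the $Q$-matrix from $P'(0)$. Part (b) runs the construction in reverse and verifies the Markov property directly.

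For part (a), I would fix $x \in E$ and study $\tau_1$ under $\F{P}_x$. By definition of $\tau_1$, on $\{\tau_1 > t\}$ one has $X_t = x$, so the Markov property at time $t$ gives
\begin{equation*}
\F{P}_x(\tau_1 > t+s) = \F{P}_x(\tau_1 > t)\,\F{P}_x(\tau_1 > s),
\end{equation*}
which, combined with right continuity of $t\mapsto\F{P}_x(\tau_1>t)$, forces $\F{P}_x(\tau_1>t)=e^{-q_xt}$ for some $q_x\in[0,\infty)$ (the case $q_x=0$ being $\tau_1=\infty$ a.s.). The same argument shows $X_{\tau_1}$ is independent of $\tau_1$ on $\{\tau_1<\infty\}$. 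Setting $\pi_{xy}:=\F{P}_x(X_{\tau_1}=y)$ and $q_{xy}:=q_x\pi_{xy}$ for $y\neq x$ matches the formula \eqref{eq: def_Q_matrix_of_MJP} since $\F{E}_x[\tau_1]=1/q_x$, and $Q$ is then a $Q$-matrix. Splitting $p_{xy}(h)$ on $\{\tau_1>h\}$ versus $\{\tau_1\leq h\}$ and using the exponential law yields $p_{xy}(h)=q_{xy}h+o(h)$ for $y\neq x$ and $p_{xx}(h)=1-q_xh+o(h)$, so $P'(0)=Q$. Chapman--Kolmogorov then gives the forward equation $P'(t)=P(t)Q$ with $P(0)=I$, whose unique solution is $\exp(tQ)$; uniqueness of $Q$ follows from $Q=P'(0)$.

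For statements 2 and 3, I would apply the strong Markov property at $\tau_{n-1}$ on $\{\tau_{n-1}<\infty\}$: the post-$\tau_{n-1}$ process has law $\F{P}_{Y_{n-1}}$, so the analysis of $\tau_1$ transfers to give $Y_n$ conditionally independent of $\tau_n-\tau_{n-1}$ given $Y_{n-1}$, with the kernel \eqref{eq: def_underlying_MC_1}--\eqref{eq: def_underlying_MC_2} and $\tau_n-\tau_{n-1}\mid Y_{n-1}\sim\mathrm{Exp}(q_{Y_{n-1}})$. Induction yields the Markov chain property of $\F{Y}$ and the full joint law of the holding times. Defining $R_k:=q_{Y_{k-1}}(\tau_k-\tau_{k-1})$ when $q_{Y_{k-1}}>0$ and an auxiliary independent $\mathrm{Exp}(1)$ otherwise produces the representation \eqref{eq: tau_n} with the required independence and distributional properties.

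For part (b), I would define $\F{X}$ by \eqref{eq: def_MJP_holding_times} and first check non-explosion: since $E$ is finite, $q^{*}:=\max_xq_x<\infty$, so $\tau_n\geq(q^{*})^{-1}(R_1+\cdots+R_n)\to\infty$ $\F{P}_x$-a.s.\ by the strong law, making $\F{X}$ a well-defined right continuous jump process off a common null set. The Markov property then follows from the memoryless property of the exponential holding times together with the Markov property of $\F{Y}$: conditional on $X_t=y$, the residual holding time is $\mathrm{Exp}(q_y)$ and independent of the past, so $(X_{t+s})_{s\geq0}$ has law $\F{P}_y$. Identifying the transition function with $\exp(tQ)$ reduces, by conditioning on the first jump as in part (a), to verifying $P'(0)=Q$ and invoking uniqueness of solutions of the forward equation. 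The main obstacle is the clean derivation of the exponential law of $\tau_1$ under $\F{P}_x$: the Markov property must be applied on $\{\tau_1>t\}$, which rests on the identity $X_t=x$ there (a non-trivial use of right continuity), and care is needed with the degenerate case $q_x=0$ and with excluding instantaneous jumps; once this and the no-explosion step are in hand, both directions reduce to bookkeeping via Lemma \ref{lem: strong_markov}.
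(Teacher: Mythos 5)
Your plan is correct in spirit and reaches the same conclusions, but it develops from scratch several pieces that the paper outsources to references, and in doing so it incurs some technical debt that your sketch only gestures at. Concretely: for statement $1$ of part $(a)$ the paper cites \cite[Ch.\,12, Lemma 12.16]{Kallenberg2002} for $\mathscr{L}_x(\tau_1)=\mathrm{Exp}(q_x)$ and the independence of $(X_{t+\tau_1})_{t\geq 0}$ from $\tau_1$, and then obtains $P(t)=\exp(tQ)$ via the \emph{backward} integral equation (conditioning on the first jump, following \cite{Anderson}); you instead rederive the exponential law from memorylessness (fine, but you should note the implicit measurability of $\{\tau_1>t\}$ requires the right-continuity argument through rationals) and then go through $P'(0)=Q$ plus the \emph{forward} equation via Chapman--Kolmogorov, which works but needs the small-time estimate $p_{xy}(h)=q_{xy}h+o(h)$ for $y\neq x$ --- and that estimate requires controlling the probability of two or more jumps in $[0,h]$, a step your sketch omits. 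For statements $2$--$3$ the paper simply translates \cite[Ch.\,12, Thrm.\,12.17]{Kallenberg2002}; your inductive strong-Markov argument is a legitimate alternative but is the bulk of a nontrivial theorem. For part $(b)$ the non-explosion argument you give (via Borel--Cantelli / SLLN) matches the paper's; however the paper delegates the verification of the Markov property to \cite[Ch.\,2.6, Thrm.\,2.8.2]{Norris1998} and a $\pi$--$\lambda$ argument, whereas your sentence ``conditional on $X_t=y$, the residual holding time is $\mathrm{Exp}(q_y)$ and independent of the past'' is precisely the delicate content of that cited theorem, not a consequence of it; if you want a self-contained proof you would need to make that conditioning at a deterministic time rigorous (it is not the same as conditioning at a jump time). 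So your route is more elementary and self-contained in intent, but carries three nontrivial lemmas that the paper cites away; what the paper's approach buys is brevity and a clean separation of the ``hard analysis'' into citable results.
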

\begin{proof}[Proof of Theorem \ref{th: characterization_MJP}]
\text{\space} 
\begin{enumerate}[\lb a\rb]
     \item[Part ($a$)]:\begin{enumerate}[1.]
         \item Note that if $P(t) = \exp(tQ)$ then $Q = \frac{d}{dt}|_{t = 0}P(t)$, so there is at most one $Q$ matrix such that $P(t) = \exp(tQ)$. Thus, it suffices to show that $Q$ defined in \eqref{eq: def_Q_matrix_of_MJP} satisfies $P(t) = \exp(tQ)$. By the existence and uniqueness theorem for differential equations ($E$ is finite) it is sufficient to show that the equation
 \begin{equation*}
     \frac{d}{dt}P(t) = QP(t)
 \end{equation*}
is satisfied. It can be shown (\cite[Ch.\,2.1, Prop.\,1.1]{Anderson}) that this equation is equivalent to the integral equations
  \begin{equation}
  \label{eq: integral_equation}
      p_{xy}(t)= \delta_{xy}e^{-q_x t} + \int_0^t e^{-q_x s}\sum_{z \neq x}q_{xz}p_{zy}(t-s) ds.
  \end{equation}
  Moreover, using the Markov property it can be shown that for any $x \in E$ (\cite[Ch.\,12,Lemma 12.16]{Kallenberg2002}) 
 \begin{enumerate}
     \item $\mathscr{L}_x(\tau_1) = Exp(q_x)$, where $Exp(0) = \delta_\infty$. Here $Exp$ denotes the exponential distribution and $\delta$ the Dirac-distribution. 
     \item Under $\F{P}_x$ the process $(X_{t+\tau_1})_{t \geq 0}$ is independent of $\tau_1$ (for $q_x > 0)$. 
 \end{enumerate}
 To show  \eqref{eq: integral_equation} we follow the idea of \cite[Ch.\,2.1, P.\,65]{Anderson}. Assume $q_x > 0$, otherwise \eqref{eq: integral_equation} is trivially satisfied. We get (using the disintegration theorem)
\begin{align*}
&\F{P}_x(X_t = y) =  \F{P}_x(X_t = y, \tau_1 > t) + \F{P}_x(X_t = y, \tau_1 \leq  t)\\
&= \delta_{xy}e^{-q_x t} + \int_0^t\F{P}_x( X_t = y | \tau_1 = s) \F{P}_x(\tau_1 \in ds) \\
&= \delta_{xy}e^{-q_x t}+ \int_0^t\F{P}_x( X_{t-s + \tau_1} =   y | \tau_1 = s) \F{P}_x(\tau_1 \in ds) \\
&= \delta_{xy}e^{-q_x t}+ \int_0^t\F{P}_x( X_{t-s + \tau_1} =   y) \F{P}_x(\tau_1 \in ds) \\
&= \delta_{xy}e^{-q_x t}+ \int_0^t \sum_{z \neq x}\F{P}_x(X_{\tau_1} = z)p_{zy}(t-s)q_x e^{-q_xs}ds.
\end{align*}
Whereby in the fourth line we used the independence of $\tau_1$ and $(X_{t+\tau_1})_{t \geq 0}$ and in the fifth line we used the strong Markov property (Lemma \ref{lem: strong_markov}). 
Consequently \eqref{eq: integral_equation} is satisfied and the transition function of the MJP is given by $P(t) = \exp(tQ)$
\item[2. and 3.]: This is exactly the statement of \cite[Ch.\,12, Thrm.\,12.17]{Kallenberg2002}. Let us explain the connection between the notation of \cite{Kallenberg2002} and our notation. Let $\mu$ be the kernel defined via $\mu(x,B) = \F{P}_x( X_{\tau_1} \in B)$ and $c$ be the function defined via $c(x) = \F{E}_x(\tau_1)^{-1}$, this is the notation of \cite[Ch.\,12, P.\,238]{Kallenberg2002}. Using the definition \eqref{eq: def_Q_matrix_of_MJP} of the $Q$-matrix  and the definitions of the transition probabilities $p_{xy}$ it is easy to check that 
\begin{equation}
    q_x = c(x)  \quad \text{and} \quad \mu(x,{y}) = p_{xy}
\end{equation}
for all $x,y \in E$. Thus, statements 2. and 3. are a reformulation of \cite[Ch.\,12,Thrm.\,12.17]{Kallenberg2002}.

     \end{enumerate}
     \item[Part ($b$)]:\\Finally, for part $(b)$ note that $\F{X}$ defined via \eqref{eq: def_MJP_holding_times} is a well defined, right continuous process (after removing a common nullset of all $\F{P}_x$'s). Indeed, as  the $R_k$'s are exponentially distributed we may assume $R_k > 0$ and thus  $\tau_{n+1} > \tau_{n}$ whenever $\tau_n < \infty $. Consequently, $\F{X}$ has right continuous paths (as they are piecewise constant) and is well defined on $\cup_{n \in \F{Z}_+}[\tau_n,\tau_{n+1})$. Moreover, the finiteness of $E$ implies
 $C := \sup_{x \in E} q_x < \infty$, so
    \begin{equation}
        \tau_n \geq \frac{1}{C} \sum_{k = 1}^n R_k .
    \end{equation}
    But the right hand side goes to infinity $\F{P}_x$- a.s. by the Borel-Cantelli lemma. Thus, $\F{P}_x$ - a.s. we have $ \F{R} =\dot{\cup}_{n \in \F{Z}_+}[\tau_n, \tau_{n+1})$ (disjoint union) and $\F{X}$ is a well defined, right continuous process (after removing a common nullset of all $\F{P}_x$) with $\F{P}_x(X_0 = x) = \F{P}_x( Y_0 = x) = 1$ for all $x \in E$. It remains to show that $\F{X}$ satisfies the Markov property of Definition \ref{def: Markov_jump_process}. We invoke \cite[Ch.\,2.6, Thrm.\,2.8.2]{Norris1998}. By construction of $\F{X}$ the conditions of \cite[Ch.\,2.6, Thrm.\,2.8.2(a)]{Norris1998} are satisfied and thus
    \begin{equation}
    \label{eq: pf_alternative_markov_property}
    \F{P}( X_{t+s} = y | X_s = x, X_{s_1} = x_1,..., X_{s_k} = x_k ) = \F{P}( X_{t+s} = y| X_s = x)   = p_{xy}(t)
\end{equation}
for all $x,y, x_1,..x_k \in E$, all $t,s \geq 0$, and all $0 \leq s_1 \leq ....\leq s_k \leq s $, where $(p(t))_{x,y \in E} = P(t) = \exp(tQ)$. But \eqref{eq: pf_alternative_markov_property} is equivalent to the Markov property of Definition \ref{def: Markov_jump_process}, which follows by a standard $\pi-\lambda$ argument (see also Remark \ref{rem: equivalent_definitions}).
 \end{enumerate}
\end{proof}
\begin{remark}
\begin{enumerate}[\lb $a$\rb]
    \item Theorem \ref{th: characterization_MJP}$(b)$ gives an algorithm for the simulation of an  MJP with $Q$-matrix $Q$.
    \item In the above proof  we implicitely proved, that any MJP on a finite state space is non-explosive, i.e. 
    \begin{equation}
        \F{P}_x \left ( \lim_{n \to \infty} \tau_n = \infty \right ) = 1
    \end{equation}
    for all $x$. In other words, $\F{P}_x$ - almost surely there are just finitely many jumps in finite time. 
\end{enumerate}
\end{remark}

\begin{definition}
The $Q$ matrix, defined by \eqref{eq: def_Q_matrix_of_MJP} or the relation $P(t) = \exp(tQ)$ is called the $Q$-matrix of the MJP.
\end{definition}
\begin{definition}
The Markov chain $\F{Y} = (Y_n)_{n \in \F{Z}_+}$ is called the underlying chain of the MJP
\end{definition}
Finally, as a direct consequence we obtain the well known \textit{Kolmogorov equations}.
\begin{corollary}[Kolmogorov equations]
\label{cor: kolmogorov_equations}
Let $(P(t))_{t \geq 0}$ be the transition function of an MJP with $Q$-matrix $Q$. Then 
$(P(t))_{t \geq 0}$ satisfies the Kolmogorov equations
\begin{align}
    &\text{Kolmogorov Backward Equation: } \frac{d}{dt}P(t) = QP(t)  \label{eq: KBE}\\
    & \text{Kolmogorov Forward Equation: } \frac{d}{dt}P(t) = P(t)Q
\end{align}

\end{corollary}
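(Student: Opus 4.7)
The plan is to deduce both Kolmogorov equations directly from the representation $P(t) = \exp(tQ)$ provided by Theorem \ref{th: characterization_MJP}$(a)$, since the hard work of establishing this identity has already been done. Once the matrix exponential form is in hand, the corollary reduces to a standard fact about differentiating the exponential of a constant matrix.

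First I would recall the series definition of the matrix exponential,
\begin{equation*}
P(t) = \exp(tQ) = \sum_{n=0}^{\infty} \frac{t^n Q^n}{n!},
\end{equation*}
and note that since $E$ is finite, this series converges absolutely in any matrix norm, uniformly on compact subsets of $t \in \mathbb{R}$, and so does the formally differentiated series $\sum_{n \geq 1} t^{n-1} Q^n / (n-1)!$. This justifies term-by-term differentiation and yields
\begin{equation*}
\frac{d}{dt} P(t) = \sum_{n=1}^{\infty} \frac{t^{n-1} Q^n}{(n-1)!}.
\end{equation*}

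Next I would factor out $Q$ on either the left or the right of the sum. Since the factor $Q$ commutes with every power $Q^{n-1}$, one obtains
\begin{equation*}
\frac{d}{dt} P(t) = Q \sum_{n=1}^{\infty} \frac{t^{n-1} Q^{n-1}}{(n-1)!} = Q \exp(tQ) = Q P(t),
\end{equation*}
which is the Kolmogorov backward equation, and symmetrically
\begin{equation*}
\frac{d}{dt} P(t) = \left( \sum_{n=1}^{\infty} \frac{t^{n-1} Q^{n-1}}{(n-1)!} \right) Q = \exp(tQ) Q = P(t) Q,
\end{equation*}
the forward equation.

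There is essentially no main obstacle here: the only subtlety is the interchange of differentiation and summation, which is immediate from the finiteness of $E$ (so that all matrix norms are equivalent and the exponential series behaves like a convergent series of entries). Alternatively, one could quote the general result that $t \mapsto \exp(tA)$ is differentiable with derivative $A \exp(tA) = \exp(tA) A$ for any square matrix $A$, and apply it with $A = Q$.
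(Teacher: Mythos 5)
Your proof is correct and follows exactly the same route as the paper: the paper's proof simply says ``Follows directly from $P(t) = \exp(tQ)$,'' appealing to the representation established in Theorem \ref{th: characterization_MJP}$(a)$. You have merely filled in the standard details of differentiating the matrix exponential, which the paper leaves implicit.
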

\begin{proof}
 Follows directly from $P(t) = \exp(tQ)$.
\end{proof}

\subsubsection{Invariant Distributions}
\label{subsubsec: invariant_distributions}
We now discuss the concept of invariant distributions, which play a central role in describing the limit behavior (e.g. the ergodic theorem) of an MJP. We define (c.f. \cite[Ch.\,2.6, Def.\,2.6.1]{Liggett2010}).
\begin{definition}[Invariant distribution]
\label{def: invariant_measure}
A nontrivial probability measure $\mu = (\mu_x)_{x \in E}$ is called invariant (or stationary) distribution if for all $t \geq 0$
\begin{equation}
    \label{eq: def_invariant_measure}
    \mu^T P(t) = \mu^T.
\end{equation}

\end{definition}
Notice that $\mu$ is a stationary distribution if and only if  $\mu = \mathscr{L}_\mu(X_t)$ for all $t \geq 0$. By the  Markov property this is equivalent to 
\begin{equation*}
    \mathscr{L}_\mu((X_{t+s})_{t \geq 0}) = \mathscr{L}_\mu((X_t)_{t \geq 0})
\end{equation*}
for all $ s \geq 0$, i.e. the process is strongly stationary.
As $E$ is finite, we can interchange summation over $E$ and the differentiation, so using the Kolmogorov backward equation (Corollary \ref{cor: kolmogorov_equations}) yields
\begin{equation*}
    \frac{d}{dt}\mu^TP(t) = \mu^T\frac{d}{dt}P(t) = \mu^T Q P(t).
\end{equation*} 

It follows that $\mu$ is an invariant distribution if and only if 
\begin{equation}
    \label{eq: def_invariant_measure_infinitesimal}
    \mu^TQ = 0 ,
\end{equation}
which reduces the infinite system of equations \eqref{eq: def_invariant_measure} to just one equation in terms of the $Q$-matrix. Using that  $\sum_{ y \in E, y \neq x} q_{xy} = -q_{xx}$ for all $x \in E$, we can write \eqref{eq: def_invariant_measure_infinitesimal} equivalently as \begin{equation}
    \label{eq: prepariation_for_detailed_balance}
    0 = \sum_{ y \in E, y \neq x}\mu_y q_{yx} - \sum_{ y \in E, y \neq x}\mu_x q_{xy}  = \sum_{y \in E, y \neq x}\mu_y q_{yx} - \mu_x q_{xy}.
\end{equation}
Intuitively, if we interpret $\mu_x q_{xy}$ as the 'probability flow' from $ x$ to $y$, then $\sum_{ y \in E, y \neq x}\mu_x q_{xy}$ is the total 'probability outflow' from $ x $ and $\sum_{ y \in E, y \neq x} \mu_y q_{yx}$ is the total 'probability inflow' to $x$. Using  \eqref{eq: prepariation_for_detailed_balance} we can interpret \eqref{eq: def_invariant_measure_infinitesimal} as  'the net probability flow vanishes'. Intuitively, the 'net probability flow' would vanish if for all $ x \neq y$ the flow from $x $ to $y$ is equal to the flow from $y$ to $x$ i.e. $\mu_x q_{xy} = \mu_y q_{yx}$, then also the equivalent condition \eqref{eq: prepariation_for_detailed_balance} for $\mu$ to be an invariant distribution would be satisfied , as each summand on the right hand side  vanishes. This observation motivates the following definition.

\begin{definition}[Detailed balance]
We say a probability measure $\mu = (\mu_x)_{x \in E}$ on $E$ satisfies the detailed balance condition if for all $x,y \in E$
\begin{equation}
    \label{def: detailed_balance}
    \mu_x q_{xy} = \mu_y q_{yx}.
\end{equation}
\end{definition}
Thus, the detailed balance condition is an 'easier' set of equations that are sufficient for $\mu$ to be an invariant distribution. However, these equations are not always solvable, the following remark gives a brief insight on when a measure satisfying the detailed balance condition exists.
\begin{remark}(Kolmogorov cycle criterion)
A natural question to ask is when a probability measure satisfying the detailed balance condition exists. The so called \textit{Kolmogorov cycle criterion} provides information about the existence of a measure satisfying the detailed balance condition. The Kolmogov cycle criterion states that for all closed paths $x, x_1,...,x_n, x $ in $E$
\begin{equation}
    q_{xx_1}q_{x_1x_2}.....q_{x_nx} = q_{xx_n}q_{x_nx_{n-1}}.....q_{x_1x}.
\end{equation}
This criterion is essentially equivalent to the existence of an measure that satisfies detailed balance (see \cite[Ch.\,7.1, Thrm.\,1.3]{Anderson} for details of this equivalence). However, for our purposes this criterion is not relevant and we shall not discuss it further. For a more detailed presentation of the Kolmogorov cycle criterion see \cite[Ch.\,7.1]{Anderson}
\end{remark}
 Not every MJP admits an invariant distribution. The following condition guarantees the  existence and uniqueness an invariant distribution for an MJP (c.f. Theorem \ref{th: uniqueness_existence_invariant_measure}).
 \begin{definition and lemma}[Irreducibility]
 \label{def: irreducibility}
 For an MJP the following statements are equivalent :
 \begin{enumerate}[\lb $a$\rb]
    \item The underlying Markov chain $\F{Y}$ is irreducible, i.e. for all $x,y \in E$
    \begin{equation*}
    \F{P}_x(\exists \; n \in \F{Z}_+ : Y_n = y) > 0.
\end{equation*}
    \item For all $x,y \in E$
    \begin{equation*}
    \F{P}_x(\exists \; t \geq 0 : X_t = y) > 0.
\end{equation*}
    \item For all $x,y \in E$ and all $t  > 0$
    \begin{equation*}
        p_{xy}(t) > 0.
    \end{equation*}
    \item For all $x \neq y$ there is a $n \in \F{N}$ and a sequence $ x_0 := x,x_1,...,x_n := y$ such that 
    \begin{equation}
        q_{x_0x_1}...q_{x_{n-1}x_n} > 0.
    \end{equation}
\end{enumerate}
If any of the above conditions are satisfied we call an MJP irreducible.
 \end{definition and lemma}

\begin{proof}The equivalence of statements $(a), (c)$ and $(d)$ is shown in \cite[Ch.\,5.3,Prop.\,3.1]{Anderson}. It should be remarked that although \cite{Anderson} shows the equivalence of $(a), (c), (d)$ for  $P(t) = F(t)$, where $F(t)$ is the minimal solution of the backward equation \eqref{eq: KBE} (see \cite[Ch.\,2.2,Thrm.\,2.2]{Anderson} for a definition of the minimal solution $F(t)$), the theorem can still be applied to our setting because by the uniqueness theorem for differential equations and Theorem \ref{th: characterization_MJP} we automatically have $F(t) = \exp(tQ) = P(t)$. Finally, statements $(a)$ and $(b)$ are equivalent because by definition of the underlying jump chain $\F{Y}$ (see Theorem \ref{th: characterization_MJP}) we have
\begin{equation*}
    \{\,Y_n \,|\, n \in \F{Z}_+\,\} = \{ \,X_t\, |\, t \geq 0\,\}
\end{equation*}
(for all $\omega \in \Omega$) and consequently
\begin{equation*}
    \F{P}_x(\exists \; t \geq 0 : X_t = y) = \F{P}_x( \exists \; n \in \F{Z}_+ : Y_n = y).
\end{equation*}
\end{proof}
Thus, irreducibility means that starting from any state $x \in E$, the chain  can always reach any other state $y \in E$ with positive probability. For irreducible MJPs on a finite state space we have the following theorem on the existence and uniqueness of invariant distributions.
\begin{theorem}[Existence and Uniqueness of invariant Distributions]
Let $\F{X}$ be an irreducible MJP on $E$. Then there is a unique strictly positive  invariant distribution $\pi = (\pi_x)_{x \in E}$, i.e. $\pi_x > 0 $ for all $x \in E$.

\label{th: uniqueness_existence_invariant_measure}
\end{theorem}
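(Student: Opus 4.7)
The key reformulation, already established in the preceding discussion, is that $\mu$ is an invariant distribution iff $\mu^T Q = 0$, and equivalently iff $\mu^T P(t) = \mu^T$ for every $t \geq 0$. My plan is to reduce the problem further: it will suffice to find a probability vector $\mu$ with $\mu^T P(t_0) = \mu^T$ for a single $t_0 > 0$. Indeed, if such a $\mu$ exists and the $1$-eigenspace of $P(t_0)$ is one-dimensional, then for any $s \geq 0$ the Chapman--Kolmogorov equation \eqref{eq: chapman_kolmogorov_equation} gives
\[
(\mu^T P(s))\, P(t_0) \;=\; \mu^T P(s+t_0) \;=\; (\mu^T P(t_0))\, P(s) \;=\; \mu^T P(s),
\]
so $\mu^T P(s)$ is a left $1$-eigenvector of $P(t_0)$ and hence $\mu^T P(s) = c\,\mu^T$ for some scalar $c$; summing coordinates and using that $P(s)$ is stochastic forces $c=1$, so $\mu^T P(s) = \mu^T$ for all $s \geq 0$.

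Existence and uniqueness of such a $\mu$ then follows from the Perron--Frobenius theorem. I would fix any $t_0 > 0$; by the equivalence $(a) \Leftrightarrow (c)$ of Definition and Lemma \ref{def: irreducibility}, irreducibility of the MJP yields $p_{xy}(t_0) > 0$ for all $x,y \in E$, so $P(t_0)$ is a strictly positive (in particular, primitive) stochastic matrix on the finite set $E$. The Perron--Frobenius theorem for strictly positive matrices then supplies a simple dominant eigenvalue with strictly positive left and right eigenvectors. Stochasticity gives $P(t_0)\mathbf{1} = \mathbf{1}$, so the dominant eigenvalue is $1$, and there is a unique (up to scaling) strictly positive left eigenvector $\mu$ with $\mu^T P(t_0) = \mu^T$. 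Normalizing to $\sum_x \mu_x = 1$ yields a strictly positive probability vector which, by the previous paragraph, is an invariant distribution; simplicity of the eigenvalue gives uniqueness of $\pi := \mu$ among probability vectors.

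The main potential obstacle is not conceptual but one of bookkeeping: one needs the version of Perron--Frobenius that simultaneously gives strict positivity of the eigenvector and one-dimensionality of the $1$-eigenspace, both of which I use. A purely infinitesimal alternative would be to prove directly that the left null space of $Q$ is one-dimensional and admits a strictly positive generator, exploiting $Q\mathbf{1}=0$, the off-diagonal sign pattern of $Q$, and connectivity via condition $(d)$ of Definition and Lemma \ref{def: irreducibility}; however, routing the argument through the strictly positive matrix $P(t_0)$ is cleaner and reduces the theorem to a standard finite-dimensional linear-algebraic fact.
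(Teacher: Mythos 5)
Your proof is correct, but it takes a genuinely different route from the paper. The paper simply invokes a textbook theorem of Br\'emaud on irreducible, recurrent continuous-time Markov chains (after checking that irreducibility on a finite state space forces recurrence), normalizes the resulting invariant measure, and is done; the heavy lifting is outsourced to a probabilistic reference. You instead give a self-contained linear-algebra argument: fix $t_0>0$, use the equivalence $(a)\Leftrightarrow(c)$ in Definition and Lemma \ref{def: irreducibility} to see that $P(t_0)$ is strictly positive, apply Perron--Frobenius to get a simple dominant eigenvalue $1$ (since $P(t_0)\mathbf{1}=\mathbf{1}$ and $\rho(P(t_0))=1$) with a strictly positive one-dimensional left eigenspace, normalize, and then upgrade invariance at the single time $t_0$ to invariance at all times via the commutation $\mu^T P(s)P(t_0) = \mu^T P(t_0)P(s)$ and stochasticity. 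This is the cleaner argument in the present finite-state setting and is arguably more in the spirit of the rest of the paper, which already invokes Perron's theorem in the proof of Lemma \ref{lem: properties_infinitesimal_generator}; the paper's reference-based proof has the advantage of scaling to settings beyond finite $E$. One bookkeeping remark: for uniqueness you should note explicitly that any candidate invariant distribution $\nu$ in particular satisfies $\nu^T P(t_0)=\nu^T$, hence lies in the one-dimensional left $1$-eigenspace, so $\nu=\pi$ after normalization; you gesture at this with ``simplicity of the eigenvalue gives uniqueness'' and it is indeed immediate.
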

\begin{proof}
Follows applying \cite[Ch.\,8.5,Thrm.\,5.1]{Bremaud} to an irreducible MJP on a finite state space $E$. The conditions of \cite[Ch.8.5,Thrm.\,5.1]{Bremaud} are satisfied, because  $\F{X}$ is irreducible in the sense of definition \cite[Ch.\,8.5,Def.\,5.1]{Bremaud} by assumption. Furthermore,  any irreducible Markov chain on a finite state space is recurrent (see \cite[Ch.\,3.3,Thrm.\,3.3]{Bremaud}) and consequently $\F{X}$ is recurrent in the sense of definition \cite[Ch.\,8.5,Def.\,5.2]{Bremaud}. Moreover, any invariant measure $\mu$ on $E$ defines a invariant distribution $\pi$ by setting $\pi = \frac{\mu}{\mu(E)}$ and thus \cite[Ch.\,8.5,Thrm.\,5.1]{Bremaud} yields the uniqueness and existence of a unique strictly positive invariant distribution.
\end{proof}

\subsubsection{Infinitesimal Generators}
\label{subsubsec: infinitesimal_generators}
In this section we present a brief overview of the concept of infinitesimal generators and explain how the situation simplifies for a finite state space $E$. Later, to derive concentration inequalities, the  infinitesimal generator of the MJP will be of central importance. We start with a more general concept of infinitesimal generators of semigroups on Banach spaces and then analyze the situation for an (irreducible) MJP. We start by defining (c.f. \cite[Ch.\,1.4]{Anderson})

\begin{definition}[Semigroup of operators] Let $(V, \norm{\cdot})$ be a (real) Banach space. A family $(P_t)_{t \geq 0}$ of operators on $V$ is called a  semigroup of operators if
\begin{enumerate}[\lb $a$\rb]
    \item $P_{s+t} =P_sP_t$ for all $s,t \geq 0$
    \item $P_0 = 1$.
\end{enumerate}
If furthermore  $\norm{P_t} \leq 1$ for all $ t \geq 0$, then $(P_t)_{t \geq 0}$ is called a contraction semigroup.
A semigroup is called continuous if for any  $v \in V$ $\norm{P_hv-v} \to 0$ as $h \to 0$.
\end{definition}
\begin{definition}[Infinitesimal Generator]
\label{def: infinitesimal_generator}
Let $(P_t)_{t \geq 0}$ be a   semigroup on $V$. Define 
\begin{equation*}
    D = \left \{ v \in V \middle  | \; \lim \limits_{ h \to 0} \frac{P_hv-v}{h} \; \text{exists in V} \right \}
\end{equation*}
and for $v \in D$ define 
\begin{equation}
    \label{eq: def_infinitesimal_generator}
    Lv =  \lim \limits_{ h \to 0}  \frac{P_hv-v}{h}.
\end{equation}
Then $L : D \rightarrow V$ is called infinitesimal generator of $(P_t)_{t\geq 0}$. Denote by $D(L) = D$ the domain of $L$.
\end{definition}
\begin{remark}(Finite dimensional $V$)
\label{rem: semigroup_generator_finite_dim}
  If $V$ is finite dimensional then given any operator $L : V \rightarrow V$ there is a unique semigroup $(P_t)_{t \geq 0}$ having infinitesimal generator $L$, that is given by $P_t = \exp(tL)$. We call $(P_t)_{ t \geq 0}$ the semigroup generated by $L$.
\end{remark}

A general Markov process (see \cite[Ch.\,17]{Klenke2013} or \cite[Ch.\,8]{Kallenberg2002} for a definition of general Markov processes) $(\F{X}, (\F{P}_x)_{x \in E})$ on some general (polish) space $E$  defines in a natural way a semigroup of operators $(P_t)_{t \geq 0}$ on the Banach space of bounded, measurable real-valued functions $(\mathcal{B}(E), \norm{\cdot}_\infty)$ by
\cite[Ch.\,1.2]{Levy_matters}
\begin{equation}
    \label{eq: contraction_semigroup_Markov_process}
    (P_tf)(x) :=  \F{E}_x(f(X_t)),
\end{equation}
where the semigroup property $P_{s+t} = P_sP_t$ follows from the Markov property. If $\pi $ is an invariant distribution of the Markov process (see  \cite[Ch.\,8]{Kallenberg2002} for a general definition of invariant distributions), then $(P_t)_{t \geq 0}$ can be seen as a (well defined) contraction semigroup on $L^p(\pi)$ for all $p \geq 1$ (see Remark \ref{rem: extension_semigroup_to_L^p}). The works \cite{wu}, \cite{guillin}, \cite{lezaud}, \cite{bernstein} considered here that derive concentration inequalities for more general Markov processes (than MJPs) consider $(P_t)_{t \geq 0}$ on the function space $L^2(\pi)$ and not directly on $\mathcal{B}(E)$. This has the reason that in order to derive concentration inequalities these works use the inner product on $L^2(\pi)$. In our setting we consider an irreducible MJP with invariant distribution $\pi$ on a finite state space $E$ and we can directly identify (see Remark \ref{rem: simplification_finite_state_space}) $L^2(\pi) = \mathcal{B}(E)$, thus it does not depend whether we formally treat $(P_t)_{ t \geq 0}$ as a semigroup on $L^2(\pi)$ or $\mathcal{B}(E)$. Remark \ref{rem: simplification_finite_state_space} explains in more detail how the situation simplifies in our setting.
\begin{remark}
\label{rem: extension_semigroup_to_L^p}
Using Jensen's inequality (Theorem \ref{th: jensen_inequality}) it can be shown (see \cite[Ch.\,4, Lemma 4.2]{eberle}) that if $\pi$ is an invariant distribution of a general Markov Process $(\F{X}, (\F{P}_x)_{x \in E})$, then  $(P_t)_{ t \geq 0}$ is  a well defined contraction semigroup on $L^p(\pi)$, where well-definedness means that for an equivalence class $[f] \in L^p(\pi)$ definition \eqref{eq: contraction_semigroup_Markov_process} does not depend on the member of $[f]$ (up to a $\pi$ nullset) and $\abs{P_tf}^p$ is  integrable with respect to $\pi$.
\end{remark}

\begin{remark}(Simplification in our setting)
\label{rem: simplification_finite_state_space}
More generally, if one considers a general Markov process on a Polish space $E$ (see \cite[Ch.17.1]{Klenke2013} for a definition of a Markov process), the Banach space $\mathcal{B}(E)$ may be 'too big' and $(P_t)_{ t \geq 0}$ may not have desirable properties like strong continuity  ($\norm{P_hf -f}_\infty \xrightarrow[]{h \to 0} 0 )$. Furthermore, one may be interested in using the scalar product of $L^2(\pi)$, where $\pi$ is an invariant measure of the Markov process. Thus, in general one can consider $(P_t)_{ t \geq 0}$ on the following (real) Banach spaces (for a precise definition of these Banach spaces see \cite{Engel2006}) 

\begin{enumerate}[$\bullet$]
    \item $\mathcal{B}(E)$ 
    \item $\mathcal{C}_b(E)$ := \{ $f : E \rightarrow \F{R}$ | $f$ is bounded and continuous \}  \\
    \item $\mathcal{C}_0(E)$ := \{ $f : E \rightarrow \F{R}$ | $f$ is continuous and vanishes at infinity \} \\
    \item $L^2(\pi)$,
\end{enumerate}
where the spaces $\mathcal{B}(E), \mathcal{C}_b(E) $ and $\mathcal{C}_0(E)$ are endowed with the sup-norm $\norm{f}_\infty = \sup_{x \in E}\abs{f(x)} $, and $L^2(\pi)$ is endowed with the usual $L^2$-norm. In general, these spaces are not equal and not isomorphic (as Banach spaces). Consequently, in a general setting one has to be precise when referring to 'the  infinitesimal generator of the Markov process' as  Definition \ref{def: infinitesimal_generator} depends on the Banach space, on which one analyses the semigroup $(P_t)_{ t \geq 0}$. However, in our case $E$ is just a finite discrete space (endowed with the discrete topology), $\pi$ is the invariant measure of an irreducible MJP, and all above function spaces coincide in the following way.\\
Denote by $\F{R}^E$ the vector space of all functions $f : E \rightarrow \F{R}$. Notice that as $E$ is endowed with the discrete topology and $E$ is finite, all $f \in \F{R}^E$ are bounded, continuous and vanish at infinity. So $\F{R}^E = \mathcal{B}(E) = \mathcal{C}_b(E) = \mathcal{C}_0(E)$. Furthermore $\pi_x > 0$ for any $x \in E$ (see Theorem \ref{th: uniqueness_existence_invariant_measure}) so $\mathscr{L}^2(\pi) = L^2(\pi)$, where $\mathscr{L}^2(\pi)$ denotes the space of measurable $\pi$ - square integrable functions (recall that elements of $L^2(\pi)$ are equivalence classes).  But by finiteness of $E$,  for any $f \in \F{R}^E$ we have  $\sum_{x \in E} f(x)^2 \pi_x < \infty   $, so $L^2(\pi) = \mathscr{L}^2(\pi) = \F{R}^E $. Furthermore as $\F{R}^E$ is finite dimensional all norms on $\F{R}^E$ are equivalent and pointwise convergence coincides with convergence in norm: for any sequence $(f_n)_{n \in \F{N}}$ and any $f$ in $\F{R}^E$  we have $ \norm{f_n - f} \xrightarrow[]{n \to \infty } 0 $ if and only if $f_n(x) \xrightarrow[]{n \to \infty}f(x)$ for all $x \in E$ (where $\norm{\cdot }$ is some arbitrary norm). In particular, for the definition of the infinitesimal generator of $(P_t)_{ t \geq 0}$ it does not matter on which function space one considers $(P_t)_{ t \geq 0}$, and in Definition \ref{def: infinitesimal_generator} the convergence in norm may be replaced by pointwise convergence.
\end{remark}
The above remark shows that  in our setting, where we consider an irreducible MJP on a finite set, the semigroup $(P_t)_{ t \geq 0}$ defined in  \eqref{eq: contraction_semigroup_Markov_process} is  also a (well defined) semigroup  on $L^2(\pi)$ (without having to refer to Remark \ref{rem: extension_semigroup_to_L^p}) as we can identify $L^2(\pi) = \mathcal{B}(E)$. Moreover, the definition of the infinitesimal generator is independent of the function space considered (on which $(P_t)_{t \geq 0}$ is defined), thus we will use the notion \textit{the infinitesimal generator of an \lb irreducible\rb \space  MJP \lb with invariant distribution $\pi$\rb}. From now on (unless stated otherwise), $L$ always denotes the infinitesimal generator of an (irreducible) MJP. Mostly we will consider $L$, to be treated as an operator on $L^2(\pi)$, but for the rest of this work we make the identification $L^2(\pi) = \mathcal{B}(E) = \mathcal{C}_b(E) = \mathcal{C}_0(E) = \F{R}^E$, whenever needed. Using Theorem \ref{th: characterization_MJP}$(a)$ we can directly compute the infinitesimal generator $L$ in terms of the $Q$-matrix of the MJP. We have

\begin{lemma}[Semigroup and infinitesimal generator of an irreducible MJP]
\label{lem: contraction_semigroup_MJP}
Let $L$ be the infinitesimal generator and $(P_t)_{ t \geq 0} $ the semigroup of an irreducible MJP. Then, $(P_t)_{ t \geq 0}$ is a continuous contraction semigroup \lb on $L^2(\pi)\rb$ and  $D(L)  = L^2(\pi)  = \mathcal{B}(E)$. Furthermore, for all $f \in \mathcal{B}(E)$ and all $x \in E$ 
\begin{equation}
    \label{eq: contraction_semigroup_MJP}
    (P_tf)(x) = \sum_{y \in E}p_{xy}(t) f(y)
\end{equation}
and
\begin{equation}
    \label{eq: infinitesimal_generator_MJP}
    (Lf)(x) = \sum_{y \in E}q_{xy}f(y).
\end{equation}
In other words, if $(e_x)_{x \in E}$ denotes the basis of $\mathcal{B}(E) = L^2(\pi)$ given by $e_x(y) = \delta_{xy}$, then the transformation matrices of $P_t$ and $L$ in this basis are given by $P(t)$ and $Q$, where $P(t)$ and $Q$ denote the transition function and $Q$-matrix of the MJP.
\end{lemma}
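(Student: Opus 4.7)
The plan is to exploit the finite dimensionality of the state space heavily: almost every claim reduces to a statement about the finite matrices $P(t)$ and $Q$ from Theorem \ref{th: characterization_MJP}, and then Remark \ref{rem: simplification_finite_state_space} lets us transport everything to $L^2(\pi)$.

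First I would verify formula \eqref{eq: contraction_semigroup_MJP}. This is immediate from the definition \eqref{eq: contraction_semigroup_Markov_process}: since $E$ is finite,
\begin{equation*}
(P_t f)(x) = \mathbb{E}_x[f(X_t)] = \sum_{y \in E} f(y)\, \mathbb{P}_x(X_t = y) = \sum_{y \in E} p_{xy}(t) f(y),
\end{equation*}
which is just matrix-vector multiplication of $P(t)$ with $f$ written in the basis $(e_x)_{x \in E}$. Next I would check the contraction property on $L^2(\pi)$. For fixed $x$, the weights $p_{xy}(t)$ form a probability distribution on $E$, so by Jensen's inequality $|(P_t f)(x)|^2 \leq \sum_y p_{xy}(t)\, f(y)^2$. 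Summing against $\pi_x$ and using the invariance $\sum_x \pi_x p_{xy}(t) = \pi_y$ (Definition \ref{def: invariant_measure}) gives $\|P_t f\|_{L^2(\pi)}^2 \leq \|f\|_{L^2(\pi)}^2$. Strong continuity is equally quick: by property 2 of the transition function, $p_{xy}(t) \to \delta_{xy}$ as $t \downarrow 0$; since $E$ is finite this gives $(P_h f)(x) \to f(x)$ pointwise for every $x$, which on the finite-dimensional space $\mathbb{R}^E$ is equivalent to norm convergence in any norm (Remark \ref{rem: simplification_finite_state_space}), in particular in $L^2(\pi)$.

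For the generator formula \eqref{eq: infinitesimal_generator_MJP}, I would use the representation $P(t) = \exp(tQ)$ from Theorem \ref{th: characterization_MJP}$(a)$. The matrix exponential is a smooth function of $t$, hence differentiable at $0$ with $\frac{d}{dt}\bigr|_{t=0} P(t) = Q$. Therefore, for every $f \in \mathbb{R}^E$ and every $x \in E$,
\begin{equation*}
\lim_{h \to 0} \frac{(P_h f)(x) - f(x)}{h} = \lim_{h \to 0} \sum_{y \in E} \frac{p_{xy}(h) - \delta_{xy}}{h} f(y) = \sum_{y \in E} q_{xy} f(y),
\end{equation*}
where the interchange of limit and finite sum is trivial. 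Again by Remark \ref{rem: simplification_finite_state_space}, pointwise convergence equals norm convergence, so the limit defining $Lf$ in \eqref{eq: def_infinitesimal_generator} exists in $L^2(\pi)$ (and in $\mathcal{B}(E)$) for every $f$. This simultaneously shows $D(L) = \mathbb{R}^E = L^2(\pi) = \mathcal{B}(E)$ and yields the formula $(Lf)(x) = \sum_y q_{xy} f(y)$, i.e.\ $L$ is represented by $Q$ in the basis $(e_x)$.

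There is no real obstacle here; the only point where one has to be slightly careful is the identification of the various function spaces and norms, which is handled wholesale by Remark \ref{rem: simplification_finite_state_space}. Once that identification is in place, the lemma is essentially a restatement of the facts that $P(t) = \exp(tQ)$ is a stochastic-matrix-valued smooth one-parameter group and that $\pi$ is left-invariant under $P(t)$.
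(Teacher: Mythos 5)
Your proposal is correct and follows essentially the same route as the paper's own proof: identify all function spaces via Remark \ref{rem: simplification_finite_state_space}, read off \eqref{eq: contraction_semigroup_MJP} from the definition, get contraction from Jensen's inequality combined with invariance of $\pi$, deduce strong continuity from $p_{xy}(h)\to\delta_{xy}$ plus finite-dimensionality, and obtain the generator by differentiating $P(t)=\exp(tQ)$ at $t=0$ and interchanging the (finite) sum with the derivative. The only cosmetic difference is that you spell out the contraction estimate directly with the row-stochasticity and $\pi^T P(t)=\pi^T$, whereas the paper phrases the same calculation compactly as $\|P_tf\|_2^2=\mathbb{E}_\pi((P_tf)^2)\le \mathbb{E}_\pi(f^2(X_t))=\|f\|_2^2$.
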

\begin{remark}
\label{rem: lemma_infinitesimal_generator_MJP}
\begin{enumerate}[\lb $a$\rb]
    \item  The works \cite{wu}, \cite{lezaud}, \cite{guillin}, \cite{bernstein} considered here treat $(P_t)_{ t \geq 0}$ as a contraction semigroup on $L^2(\pi)$ (for more general Markov processes than MJPs) and consequently in these works the notion of $L^2$-infinitesimal generator (defined as in Definition \ref{def: infinitesimal_generator}) is used. Note that for general Markov processes we may not have $D(L) = L^2(\pi)$.
    \item The above Lemma can be quite easily generalized (using a similar proof) to general (not necessarily irreducible) MJPs on a finite state space, where $\pi$ is replaced by some invariant distribution  $\mu$, and we would have the identification $L^2(\mu) = \mathcal{B}(\text{supp}(\mu))$, where $\text{supp}(\mu)$ denotes the support of $\mu$. Furthermore, in  \eqref{eq: contraction_semigroup_MJP} and \eqref{eq: infinitesimal_generator_MJP} the summation over $x,y \in E$ would be replaced by a summation over $x,y \in \text{supp}(\mu)$.
\end{enumerate}

\end{remark}

\begin{proof}[Proof of Lemma \ref{lem: contraction_semigroup_MJP}]
In the following proof we will throughout use  the identification  $\mathcal{B}(E) = L^2(\pi)$ and that the convergence with respect to $\norm{\cdot}_\infty$ and $\norm{\cdot}_2$ respectively, is equivalent to pointwise convergence (Remark \ref{rem: simplification_finite_state_space}). Equality \eqref{eq: contraction_semigroup_MJP} follows directly from the definitions.
  Thus, using \eqref{eq: contraction_semigroup_MJP}, finiteness of $E$ and continuity of $p_{xy}(t) $ we obtain the continuity of $(P_t)_{ t \geq 0}$. Furthermore, $(P_t)_{t \geq 0}$ is a contraction semigroup, because by Jensen's inequality (Theorem \ref{th: jensen_inequality}) for all $f \in L^2(\pi)$ and $x \in E$ we have $(P_tf)(x)^2= \F{E}_x(f(X_t))^2 \leq \F{E}_x( f^2(X_t)) $. Thus,
 \begin{equation}
    \norm{P_tf}_2^2 =  \F{E}_\pi( (P_tf)^2) \leq \F{E}_\pi(f^2(X_t)) = \norm{f}_2^2,
 \end{equation}
 where we used that $\pi$ is a stationary distribution.
 Furthermore for all $f \in \mathcal{B}(E)$  and $x \in E$ it holds
\begin{equation*}
    \frac{d}{dt}\Bigr|_{t = 0}(P_tf)(x) = \frac{d}{dt}\Bigr|_{t = 0} \sum_{y \in E}p_{xy}(t)f(y) = \sum_{y \in E}\frac{d}{dt}\Bigr|_{t = 0}p_{xy}(t)f(y) = \sum_{y \in E}q_{xy}f(y),
\end{equation*}
where Theorem \ref{th: characterization_MJP}.1$(a)$ and finiteness of $E$ (to interchange sum and derivative) were used.
So $D(L) =  \mathcal{B}(E) = L^2(\pi)$ and \eqref{eq: infinitesimal_generator_MJP} holds.

\end{proof}

The above lemma shows that in our setting (of an irreducible MJP) we can identify $L \; \widehat{=} \; Q$.
We will now discuss some properties of the infinitesimal generator $L$, that will be used later to derive concentration inequalities.
Let $\textbf{1} : E \rightarrow \{1 \}$ denote the constant $1$-function, and denote by $L^*$ the adjoint of the infinitesimal generator $L : L^2(\pi) \rightarrow L^2(\pi)$ of an irreducible MJP with invariant distribution $\pi$. Moreover, denote by $\langle \cdot, \cdot \rangle$ the scalar product on $L^2(\pi)$ and by $\norm{\cdot}_2$ the $L^2$ norm (for functions and operators).
\begin{lemma}[Properties of the infinitesimal Generator]
\label{lem: properties_infinitesimal_generator}
Let $L$ be the infinitesimal generator of an irreducible MJP and denote by $\sigma(\cdot)$ the spectrum of an operator. Then,
\begin{enumerate}[\lb a\rb]
    \item $\mathrm{Ker}(L) = \mathrm{Ker}(L+ L^*) = \mathrm{span}( \textup{\textbf{1}} )$, in particular $0$ is a simple eigenvalue.
    \item $\mathrm{Im}(L) = \mathrm{Im}(L + L^*) = \{ \textup{\textbf{1}} \}^{\perp}  = \{ f \in L^2(\pi) | \pi(f) = \langle \textup{\textbf{1}}, f \rangle_{L^2(\pi)} = 0 \}$
    \item $-L, -( L+L^*)$ are positive semidefinite.
    \item $\sigma(L+ L^*) \subset \F{R}_{\leq 0} $
    
\end{enumerate}
\end{lemma}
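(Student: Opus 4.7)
The plan is to reduce all four statements to a single key identity, namely the Dirichlet form representation
\begin{equation*}
\langle -Lf, f\rangle_{L^2(\pi)} = \frac{1}{2}\sum_{\substack{x,y \in E \\ x \neq y}} \pi_x q_{xy}\bigl(f(y)-f(x)\bigr)^2,
\end{equation*}
which I would establish first. To derive it, I start from the identification $L \,\widehat{=}\, Q$ of Lemma \ref{lem: contraction_semigroup_MJP}, expand the right-hand side into three sums $\tfrac12\sum_{x,y}\pi_x q_{xy} f(y)^2 - \sum_{x,y}\pi_x q_{xy} f(x)f(y) + \tfrac12\sum_{x,y}\pi_x q_{xy} f(x)^2$ (freely including $x=y$ since that term is zero in $(f(y)-f(x))^2$), and then use the two orthogonality relations $\sum_y q_{xy}=0$ (the $Q$-matrix property of Definition \ref{def: Q_matrix}) and $\sum_x \pi_x q_{xy}=0$ (invariance of $\pi$, i.e.\ $\pi^T Q = 0$) to kill the first and third sums. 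The middle sum is exactly $\langle Lf, f\rangle$.

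With this identity in hand, parts $(c)$ and $(d)$ are immediate. Every summand is nonnegative since $\pi_x>0$ by Theorem \ref{th: uniqueness_existence_invariant_measure}, $q_{xy}\geq 0$ for $x\neq y$, and the square is nonnegative; hence $\langle -Lf,f\rangle\geq 0$ and $\langle -(L+L^*)f,f\rangle = 2\langle -Lf,f\rangle\geq 0$, proving $(c)$. For $(d)$, the operator $L+L^*$ is self-adjoint on the finite-dimensional real Hilbert space $L^2(\pi)$, so its spectrum consists of real eigenvalues; each such eigenvalue equals $\langle (L+L^*)v,v\rangle/\|v\|^2$ for a nonzero eigenvector $v$, hence is $\leq 0$ by $(c)$.

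For part $(a)$, the forward inclusions are easy: $L\mathbf{1}=0$ since each row of $Q$ sums to zero, while $L^*\mathbf{1}=0$ because for any $g$ we compute $\langle L^*\mathbf{1}, g\rangle = \langle \mathbf{1}, Lg\rangle = \sum_x \pi_x (Lg)(x) = \sum_y g(y)\sum_x \pi_x q_{xy} = 0$ by invariance of $\pi$; hence $(L+L^*)\mathbf{1}=0$ as well. For the reverse, suppose $Lf=0$ or $(L+L^*)f=0$. In both cases the Dirichlet form vanishes (for the second case using $\langle L^*f,f\rangle = \langle f,Lf\rangle = \langle Lf,f\rangle$), which by the identity above forces $f(x)=f(y)$ whenever $\pi_x q_{xy}>0$ with $x\neq y$. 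Since $\pi$ is strictly positive, this forces $f(x)=f(y)$ on every directed edge with $q_{xy}>0$, and by the irreducibility characterisation \ref{def: irreducibility}$(d)$ the equality propagates along paths connecting any two states, so $f$ must be constant. Thus each of the two kernels equals $\mathrm{span}(\mathbf{1})$, and $0$ is a simple eigenvalue.

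Part $(b)$ is then obtained by a dimension count using the finite-dimensional rank--nullity identity $\mathrm{Im}(T)^\perp = \mathrm{Ker}(T^*)$. Applying it to $T=L$ gives $\mathrm{Im}(L)^\perp = \mathrm{Ker}(L^*) = \mathrm{span}(\mathbf{1})$ by the previous paragraph, so $\mathrm{Im}(L) = \{\mathbf{1}\}^\perp = \{f \in L^2(\pi) : \pi(f)=0\}$; applying it to the self-adjoint operator $L+L^*$ directly gives $\mathrm{Im}(L+L^*) = \mathrm{Ker}(L+L^*)^\perp = \{\mathbf{1}\}^\perp$. The main obstacle I expect is bookkeeping in the Dirichlet form identity, in particular correctly handling the diagonal terms $x=y$ and invoking the two distinct orthogonality relations (row sums of $Q$ versus stationarity of $\pi$) in the right places; once the identity is secured, each of $(a)$--$(d)$ reduces to a short observation.
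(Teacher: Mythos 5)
Your proof is correct and takes a genuinely different route from the paper's. The paper first observes that $\tilde Q := L + L^*$ (with $\tilde q_{xy} = q_{xy}+q_{yx}\pi_y/\pi_x$) is itself the $Q$-matrix of an irreducible MJP with invariant measure $\pi$, reducing everything to statements about $L$ alone; it then proves $(a)$ by spectral machinery---Gershgorin's disc theorem gives $\Re\lambda\leq 0$ for $\lambda\in\sigma(Q)$, the spectral mapping theorem gives $\rho(P(t))=1$, and Perron's theorem for the strictly positive matrix $P(t)$ yields simplicity of that top eigenvalue, hence $\dim\mathrm{Ker}(L)=1$---and proves $(c)$ separately by differentiating the contraction inequality $\langle f,P_tf\rangle\leq\norm{f}_2^2$ at $t=0$. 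You instead prove the Dirichlet form identity $\langle -Lf,f\rangle=\tfrac12\sum_{x\neq y}\pi_x q_{xy}(f(y)-f(x))^2$ once and read off all four parts from it: $(c)$ and $(d)$ become manifest (nonnegative summands, Rayleigh quotient for self-adjoint eigenvalues), while $(a)$ becomes combinatorial---a vanishing Dirichlet form forces $f$ to be constant across every edge with $q_{xy}>0$, and the irreducibility characterisation \ref{def: irreducibility}$(d)$ propagates that constancy along paths. This is arguably the more natural argument for finite-state MJPs and bypasses Gershgorin and Perron entirely, at the cost of being more tied to the explicit rate structure. One step to make explicit in $(b)$: you invoke $\mathrm{Ker}(L^*)=\mathrm{span}(\textbf{1})$, which your part $(a)$ does not literally state (it covers $\mathrm{Ker}(L)$ and $\mathrm{Ker}(L+L^*)$)---but the same argument delivers it: $L^*f=0$ gives $\langle Lf,f\rangle=\langle f,L^*f\rangle=0$ and the vanishing-Dirichlet-form argument applies verbatim, or alternatively just combine $\dim\mathrm{Ker}(L^*)=\dim\mathrm{Ker}(L)=1$ with $L^*\textbf{1}=0$.
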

\begin{proof}
Let $L(x,y) = (Le_y)(x) = q_{xy}$ and $L^*(x,y) = (L^*e_y)(x) $, where $(e_x)_{x \in E}$ denotes the basis given by $e_x(y) = \delta_{xy}$, i.e. $(L(x,y))_{x,y \in E}$ and $(L^*(x,y))_{x,y \in E}$ are the transformation matrices of $L$ and $L^*$ with respect to that basis. A calculation using  $\langle f, g \rangle = \sum_{x \in E} f(x)g(x)\pi_x$ and Lemma \ref{lem: contraction_semigroup_MJP}, shows  $L^*(x,y) = \frac{L(y,x) \pi_y}{\pi_x} = \frac{q_{yx} \pi_y}{\pi_x}$. Furthermore, using  invariance of $\pi$, $\sum_{y \in E} q_{xy} = 0$, $L^*(x,y) \geq 0$ for $x \neq y$, and Lemma \ref{def: irreducibility}$(d)$ it is easily checked that the matrix $\Tilde{Q}$ defined via $\Tilde{q}_{xy} = L(x,y) + L^*(x,y)$ is a $Q$-Matrix (Definition \ref{def: Q_matrix}) defining an irreducible MJP  with (unique) invariant distribution $\pi$. Thus, it suffices to prove statements $(a) - (c)$ just for $L$, which we identify with the $Q$-matrix $Q$. We start by proving statement $(a)$. For all $\lambda \in \sigma(Q)$ we have $\Re \lambda \leq 0$. Indeed, by the Gershgorin disc theorem \cite[Ch.\,6.1,Thrm.\,6.1.1]{horn_matrix_analysis} for all $\lambda \in \sigma(Q)$ there is a $x \in E$ such that
\begin{equation*}
   \abs{\lambda + q_x} =  \abs{\lambda - q_{xx}} \leq \sum_{y \neq x}q_{xy} = q_x.
\end{equation*}
It follows that
\begin{equation*}
 \Re \lambda + q_x \leq  \abs{\Re \lambda + q_x} \leq \abs{ \lambda + q_x} \leq q_x
\end{equation*}
and thus 
\begin{equation}
\label{eq: Real_part_eigenvalue_Q}
\text{Re} \lambda \leq 0
\end{equation}
for any $\lambda \in \sigma(Q)$. Let $ t > 0$, note that  $P(t) = \exp(tQ)$ (Theorem \ref{th: characterization_MJP}.1$(a)$) has spectrum $\exp(t\sigma(Q))$, which follows, for example, by using the Jordan normal form of $Q$ to calculate $\exp(tQ)$. Thus, by \eqref{eq: Real_part_eigenvalue_Q}  we have $\rho(P(t)) \leq 1$, where $\rho$ denotes the spectral radius. But clearly $ 1 \in \sigma(P(t))$ (as $L\textbf{1} = 0$) so $\rho(P(t)) = 1$. By irreducibility we have    $p_{xy}(t) > 0$ for all $x,y \in E$ (Lemma \ref{def: irreducibility}), so by Perrons theorem \cite[Ch.8.2, P.667]{Meyer2000} $1 = \rho(P(t))$ is a simple eigenvalue. As any $v \in \mathrm{Ker}(Q)$ satisfies $P(t)v = v$, this implies that $\dim \mathrm{Ker}(L) = \dim \mathrm{Ker}(Q) \leq 1$. But $L\textbf{1} = 0$ and  consequently $\mathrm{Ker}(L) = \mathrm{span}(\textbf{1})$, which is statement $(a)$. As by the rank-nullity theorem $\dim \mathrm{Im}(L) = \dim L^2(\pi)- \dim \mathrm{Ker}(L) = \dim L^2(\pi) - 1 = \dim \{ \textbf{1} \}^{\perp}$, in order to show $(b)$ it suffices to show that $\mathrm{Im}(L) \subset \{ \textbf{1} \}^{\perp}$. This follows from $\pi^TQ = 0 $. Indeed, for any $f = Lg$ we have that
\begin{equation*}
    \langle \textbf{1}, f \rangle  = \langle \textbf{1}, Lg \rangle = \sum_{x \in E}\pi_x \sum_{y \in E} q_{xy}g(y) = \sum_{y \in E}g(y) \sum_{x \in E}\pi_x q_{xy} = 0,
\end{equation*}
which proves $\text{Im}(L) \subset \{\textbf{1}\}^\perp$ and thus $(b)$. To prove $(c)$, note that
for any $f \in L^2(\pi)$ we have
\begin{equation}
    \langle f , P_t f \rangle \leq \norm{f}_2^2 = \langle f, P_0f \rangle,
\end{equation}
where we used the Cauchy-Schwartz inequality and the fact that $P_t$ is a contraction (see Lemma \ref{lem: contraction_semigroup_MJP} or Remark \ref{rem: extension_semigroup_to_L^p}. Consequently \begin{equation}
    \langle f, Lf \rangle = \frac{d}{dt} \langle f, P_tf \rangle |_{t = 0}  \leq 0
\end{equation}
for any $f \in L^2(\pi)$, i.e. $-L$ is positive semidefinite. Finally, statement $(d)$ follows from the selfadjointness of $L+L^*$ and the negative semidefiniteness of $L+L^*$.
\end{proof}

We finish this section by analyzing the special case if the detailed balance condition (Definition \ref{def: detailed_balance}) is satisfied. We have
\begin{theorem}[Characterization of Detailed Balance]
\label{th: characterization_detailed_balance}
Let $\mu = (\mu_x)_{x \in E}$ be a probability measure on $E$ and $(\F{X}, (\F{P}_x)_{x \in E})$ an MJP. The following statements are equivalent:
\begin{enumerate}[\lb a\rb]
    \item $\mu$ satisfies the detailed balance condition.
    \item The process $(\F{X}, \F{P}_\mu)$ is reversible i.e.
    \begin{equation*}
      \mathscr{L}_\mu((X_t)_{0 \leq t \leq s }) = \mathscr{L}_\mu((X_{s-t})_{0 \leq t \leq s}) 
    \end{equation*}
    for any $s \geq 0$.
    \item $P_t$ is $\mu$ symmetric, i.e. for all $A, B \subset E$
    \begin{equation*}
        \int_A (P_t 1_{B})(x) \mu(dx) = \int_B (P_t 1_A)(x) \mu(dx).
    \end{equation*}
    \item $\mu$ is an invariant distribution and the infinitesimal generator on $L^2(\mu)$ (defined as in Definition \ref{def: infinitesimal_generator}) is selfadjoint, i.e.
\begin{equation*}
    \langle Lf, g \rangle_{L^2(\mu)} = \langle f, Lg \rangle_{L^2(\mu)}
\end{equation*}
for all $f, g \in L^2(\mu)$.
       
\end{enumerate}

\end{theorem}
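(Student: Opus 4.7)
My strategy is to establish the cycle of implications (a) $\Rightarrow$ (d) $\Rightarrow$ (c) $\Rightarrow$ (b) $\Rightarrow$ (a). This ordering is natural because each hypothesis gives direct access to the next and spares us from having to re-derive invariance of $\mu$ more than once.

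For (a) $\Rightarrow$ (d) I would first obtain invariance of $\mu$ by summing the detailed balance relation $\mu_x q_{xy} = \mu_y q_{yx}$ over $y \in E$ and using $\sum_y q_{xy} = 0$, which gives $\sum_y \mu_y q_{yx} = 0$, i.e. $\mu^T Q = 0$. Self-adjointness is then obtained by a direct computation using $(Lf)(x) = \sum_y q_{xy} f(y)$ from Lemma \ref{lem: contraction_semigroup_MJP}: expanding
\begin{equation*}
\langle Lf, g\rangle_{L^2(\mu)} = \sum_{x,y} \mu_x\, q_{xy}\, f(y)\, g(x)
\end{equation*}
and relabeling via $\mu_x q_{xy} = \mu_y q_{yx}$ turns the sum into $\langle f, Lg\rangle_{L^2(\mu)}$. (If $\mu$ vanishes on part of $E$ one restricts to $\operatorname{supp}(\mu)$ as in Remark \ref{rem: lemma_infinitesimal_generator_MJP}.) The step (d) $\Rightarrow$ (c) is then straightforward: self-adjointness of $L$ on $L^2(\mu)$ propagates to $P_t = \exp(tL)$ via Remark \ref{rem: semigroup_generator_finite_dim} (since $(e^{tL})^* = e^{tL^*} = e^{tL}$), and plugging $f = 1_A$, $g = 1_B$ into $\langle P_t f, g\rangle_{L^2(\mu)} = \langle f, P_t g\rangle_{L^2(\mu)}$ is exactly the identity stated in (c).

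The probabilistic half of the cycle closes via (c) $\Rightarrow$ (b) $\Rightarrow$ (a). For (c) $\Rightarrow$ (b) I would first note that taking $A = E$ with arbitrary $B$ in (c) yields $\sum_x \mu_x p_{xy}(t) = \mu_y$, so $\mu$ is invariant. Using the finite-dimensional distribution formula \eqref{eq: fidi_distributions}
\begin{equation*}
\mathbb{P}_\mu(X_{t_1} = x_1,\ldots,X_{t_n} = x_n) = \mu_{x_1}\, p_{x_1x_2}(t_2 - t_1) \cdots p_{x_{n-1}x_n}(t_n - t_{n-1})
\end{equation*}
and iterating the singleton version of (c), namely $\mu_a p_{ab}(r) = \mu_b p_{ba}(r)$, I would rewrite the right-hand side as $\mu_{x_n}\, p_{x_n x_{n-1}}(t_n - t_{n-1}) \cdots p_{x_2 x_1}(t_2 - t_1)$, which coincides with the finite-dimensional distribution of $(X_{s-t})_{0 \leq t \leq s}$ at the times $t_1,\ldots,t_n$ for any $s \geq t_n$. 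Since the law on path space is determined by its finite-dimensional distributions this gives (b). Finally, (b) $\Rightarrow$ (a) uses reversibility only at two time points: $\mathbb{P}_\mu(X_0 = x, X_t = y) = \mathbb{P}_\mu(X_0 = y, X_t = x)$ gives $\mu_x p_{xy}(t) = \mu_y p_{yx}(t)$, and differentiating at $t = 0$ recovers $\mu_x q_{xy} = \mu_y q_{yx}$ for $x \neq y$ (the case $x = y$ being trivial).

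The main obstacle I foresee is the bookkeeping in (c) $\Rightarrow$ (b): one must carefully align the time-reversed times $s - t_n \leq \cdots \leq s - t_1$ with the ordering required by \eqref{eq: fidi_distributions} and chain $n-1$ applications of the pointwise detailed balance identity without losing a factor. The remaining steps are essentially routine once invariance of $\mu$ is in hand and the matrix representation of $L$ from Lemma \ref{lem: contraction_semigroup_MJP} is invoked.
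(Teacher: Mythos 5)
Your proof uses exactly the same cycle of implications as the paper — the paper writes it as $(d) \to (c) \to (b) \to (a) \to (d)$ and you as $(a) \Rightarrow (d) \Rightarrow (c) \Rightarrow (b) \Rightarrow (a)$, which is the identical chain — and each individual step matches the paper's argument: self-adjointness propagates from $L$ to $P_t = e^{tL}$ in the finite-dimensional setting, the singleton case of $\mu$-symmetry yields $\mu_x p_{xy}(t) = \mu_y p_{yx}(t)$ which is iterated through the finite-dimensional distribution formula, reversibility at two time points plus differentiation at $t=0$ recovers detailed balance, and a direct relabeling of the double sum gives self-adjointness from detailed balance. The only cosmetic difference is that you derive invariance in $(c)\Rightarrow(b)$ by setting $A = E$ rather than summing the singleton identity over $y$; both are one-line computations and equally valid.
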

\begin{proof}
We follow the proof idea of \cite[Sec.\,4.3,Thrm.\,4.20]{eberle}. We show $(d) \to (c) \to (b) \to (a) \to (d)$. Assume $(d)$, then $(P_t)_{ t \geq 0}$ can be seen as a contraction semigroup on $L^2(\mu)$ (Remark \ref{rem: extension_semigroup_to_L^p}) and $D(L) = L^2(\mu)$ (Remark \ref{rem: lemma_infinitesimal_generator_MJP}$(b)$), so by finite dimensionality of $L^2(\mu)$ (see Remark \ref{rem: semigroup_generator_finite_dim}) we have $P_t = \exp(tL)$, as  operators on $L^2(\mu)$. Consequently, by selfadjointness of $L$, $P_t$ is also selfadjoint. In particular for $A,B \subset E$
\begin{equation}
    \int_A (P_t 1_{B})(x) \mu(dx) = \langle 1_A, P_t1_B \rangle_{L^2(\mu)} = \langle P_t1_A, 1_B \rangle_{L^2(\mu)} = \int_B (P_t 1_A)(x) \mu(dx),
\end{equation}
which proves $(c)$. Now assume $(c)$ and let $P(t) = (p_{xy}(t))_{x,y \in E}$ denote the transition function of the MJP, then in particular (set $A = \{x \}, B = \{y\}$)
\begin{equation}
\label{eq: detailed_balance_p_t}
    \mu_x p_{xy}(t) = \mu_y p_{yx}(t).
\end{equation}
Note that by summing over $y \in E$, this implies invariance of $\mu$, in particular $\mu = \mathscr{L}_\mu (X_t)$ for all $t \geq 0$.
As finite dimensional distributions uniquely determine the distribution of a stochastic process, to show $(b)$ it is sufficient to show that  for all $n \in \F{N}$ all $x_1,...,x_n \in E$ and all $0 \leq t_1 .... \leq t_n \leq s$
\begin{equation}
      \F{P}_\mu(X_{t_1} = x_1,..., X_{t_n} = x_n) = \F{P}_\mu ( X_{s - t_n } = x_n,...., X_{s - t_1} = x_1 ),
\end{equation}
which is equivalent to (we use \eqref{eq: fidi_distributions} and invariance of $\mu$)
\begin{equation}
    \mu_{x_1} p_{x_1x_2}(t_2-t_1) ... p_{x_{n-1}x_n}(t_n-t_{n-1}) =  \mu_{x_n} p_{x_nx_{n-1}}(t_n - t_{n-1}) ... p_{x_2x_1}(t_2 - t_1) .
\end{equation}
But the above equality follows from applying repeatedly  \eqref{eq: detailed_balance_p_t} with $x = x_j, y = x_{j+1}$ for $j = 1,...,n-1$ (from left to right). Assume $(b)$, then  in particular $\mu$ is invariant (as $(\F{X},\F{P}_\mu)$ is strongly stationary), and for all $s \geq 0$, $x,y \in E$
\begin{equation}
    \mu_x p_{xy}(s) = \F{P}_\mu(X_0 = x, X_s = y) = \F{P}_\mu( X_s = x, X_0 = y) = \mu_y p_{xy}(s).
\end{equation}
Thus, taking the derivative at $s = 0$ yields the detailed balance condition $(a)$. Finally, assume $(a)$. In particular, $\mu$ is an invariant distribution. Denote by $\text{supp}(\mu)$ the support of $\mu$ and let $f,g \in L^2(\mu)$. By Lemma \ref{lem: properties_infinitesimal_generator} and Remark \ref{rem: lemma_infinitesimal_generator_MJP} we have  (using the detailed balance condition $(a)$)
\begin{equation} 
    \langle f, Lg \rangle_{L^2(\mu)} = \sum_{x,y \in \text{ supp}(\mu)} f(x) g(y) \mu_x q_{xy} = \sum_{x, y \in \text{ supp}(\mu)} g(y) f(x) \mu_y q_{yx} =  \langle Lf, g \rangle_{L^2(\mu)},
\end{equation}
which proves self adjointness of $L$.
\end{proof}
\begin{remark}
The equivalence of the above statements is not explicitly used in this work. However, since the works \cite{lezaud}, \cite{wu}, \cite{guillin}, \cite{bernstein}  use (equivalent) notions of  'reversible' and 'symmetric' Markov processes, we decided to include the above lemma in order to clarify  these notions.
\end{remark}

\subsubsection{Limit Behavior}
\label{subsubsec: limit_behaviour}
We present here two central theorems that describe the limit behavior of an irreducible MJP. In this thesis we focus mainly on the ergodic theorem, the central limit theorem serves as background information, as for reversible MJPs the so called asymptotic variance (see Theorem \ref{th: CLT}) will appear as a parameter in a bound of $\mathbb{P}( \frac{1}{t}\int_{0}^{t}f(X_t)dt  \geq u)$. 
\begin{theorem}[Ergodic theorem]
\label{th: ergodic_theorem}
Let $(\F{X}, (\F{P}_x)_{x \in E})$ be an irreducible MJP with unique invariant distribution $\pi$ and let $f \in \mathcal{B}(E)$. Then for any probability measure $\nu$ on $E$
\begin{equation}
    \frac{1}{t}\int_0^t f(X_s) ds \xrightarrow[]{t \to \infty} \pi(f) 
\end{equation}
$\F{P}_\nu $ almost surely, where $\pi(f) = \int_E f d\pi$.
\end{theorem}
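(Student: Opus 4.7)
The plan is to reduce the statement to the classical strong law of large numbers via the regenerative structure of the MJP, and then identify the limit with $\pi(f)$ using the uniqueness of the invariant distribution. Fix any $x_0 \in E$; by irreducibility and finiteness of $E$, the state $x_0$ is visited infinitely often $\F{P}_\nu$-a.s.\ and the mean return time is finite. Define $\sigma_0 := \inf\{t \geq 0 : X_t = x_0\}$ and recursively $\sigma_{k+1} := \inf\{t > \sigma_k : X_{t-} \neq x_0,\; X_t = x_0\}$. By the strong Markov property (Lemma \ref{lem: strong_markov}), the pairs
\[ (T_k, I_k) := \Bigl(\sigma_k - \sigma_{k-1},\; \int_{\sigma_{k-1}}^{\sigma_k} f(X_s)\,ds\Bigr), \quad k \geq 1, \]
are i.i.d.\ under $\F{P}_\nu$, with $|I_k| \leq \|f\|_\infty T_k$ and $\F{E}_{x_0}[T_1] < \infty$ (positive recurrence on the finite state space).

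Applying the classical SLLN to $(T_k)$ and $(I_k)$ separately yields $n^{-1}\sum_{k=1}^n T_k \to \F{E}_{x_0}[T_1]$ and $n^{-1}\sum_{k=1}^n I_k \to \F{E}_{x_0}[I_1]$ almost surely. Setting $N(t) := \max\{n : \sigma_n \leq t\}$, one has $N(t) \to \infty$ and $\sigma_{N(t)}/t \to 1$ a.s., while the boundary contribution obeys
\[ \Bigl|\int_0^t f(X_s)\,ds - \int_{\sigma_0}^{\sigma_{N(t)}} f(X_s)\,ds\Bigr| \leq \|f\|_\infty\bigl(\sigma_0 + T_{N(t)+1}\bigr), \]
which divided by $t$ tends to $0$ a.s.\ (since $T_{N(t)+1}/(N(t){+}1) \to 0$ by the standard consequence of $\F{E}[T_1]<\infty$, and $N(t)/t$ stays bounded). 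Combining these observations gives
\[ \frac{1}{t}\int_0^t f(X_s)\,ds \xrightarrow[t\to\infty]{} \pi_\star(f) := \frac{\F{E}_{x_0}[I_1]}{\F{E}_{x_0}[T_1]} \quad \F{P}_\nu\text{-a.s.} \]

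The remaining step is to identify $\pi_\star(f)$ with $\pi(f)$. The functional $f \mapsto \pi_\star(f)$ is linear, positive and satisfies $\pi_\star(\textbf{1}) = 1$, so it corresponds to a probability measure on $E$. Specialising the almost sure convergence above to the initial distribution $\nu = \pi$ and applying bounded convergence (the integrand is bounded by $\|f\|_\infty$), we obtain
\[ \pi_\star(f) = \lim_{t \to \infty} \F{E}_\pi\Bigl[\frac{1}{t}\int_0^t f(X_s)\,ds\Bigr] = \lim_{t \to \infty} \frac{1}{t}\int_0^t \F{E}_\pi[f(X_s)]\,ds = \pi(f), \]
where the last equality uses that $\pi$ is stationary, so $\F{E}_\pi[f(X_s)] = \pi(f)$ for every $s \geq 0$. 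Since $\pi_\star$ is independent of $\nu$, this also confirms that the limit value does not depend on the starting distribution.

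The main obstacle I anticipate is the careful bookkeeping for the regenerative decomposition, especially handling the boundary contribution from the incomplete final excursion and justifying the finiteness of $\F{E}_{x_0}[T_1]$ (for which one essentially reuses the Borel--Cantelli argument already encountered in the non-explosion proof of Theorem \ref{th: characterization_MJP}). Once these pieces are secured, the identification $\pi_\star = \pi$ follows cleanly by evaluating under the stationary initial distribution, avoiding any explicit computation of hitting-time expectations.
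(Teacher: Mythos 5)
Your proof is correct and follows a genuinely different route from the paper. The paper's proof of Theorem \ref{th: ergodic_theorem} is essentially a citation: it invokes a version of the pointwise ergodic theorem from Brémaud after checking that an irreducible MJP on a finite state space is ergodic in the required sense. Your argument is self-contained and proceeds by the classical regenerative (renewal--reward) decomposition: fix a reference state $x_0$, cut the trajectory into i.i.d.\ excursions between successive re-entries into $x_0$ via the strong Markov property (Lemma \ref{lem: strong_markov}), apply the strong law of large numbers to the cycle lengths $T_k$ and cycle rewards $I_k$, and dominate the two incomplete-cycle boundary terms by $\|f\|_\infty(\sigma_0 + T_{N(t)+1})$. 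This yields a limit $\pi_\star(f) = \F{E}_{x_0}[I_1]/\F{E}_{x_0}[T_1]$ independent of the initial law. The identification step deserves special mention: rather than computing per-cycle occupation times and invoking the standard representation of $\pi$ in terms of them, you run the a.s.\ convergence under $\F{P}_\pi$, pass to expectations by bounded convergence, exchange expectation and time integral by Fubini, and use stationarity $\F{E}_\pi[f(X_s)] = \pi(f)$ to conclude $\pi_\star = \pi$. That is a clean shortcut that sidesteps any explicit hitting-time calculus. What the two approaches buy: the paper's citation is economical but opaque, while yours is elementary, exposes the renewal mechanism driving the ergodic theorem, and would extend verbatim to any Markov process possessing a positive-recurrent atom. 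Two small points worth a sentence each in a write-up: \textup{(i)} for $\sigma_1 < \infty$ a.s.\ you need $q_{x_0} > 0$, which follows from irreducibility as soon as $\#E \geq 2$ (the case $\#E = 1$ being trivial); \textup{(ii)} the finiteness of $\F{E}_{x_0}[T_1]$ should be justified explicitly --- for instance, the embedded jump chain is a finite irreducible Markov chain, hence has finite mean return time, and each holding time has rate bounded below by $\min_x q_x > 0$.
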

\begin{proof}
 Follows from \cite[Ch.\,8.6.1,Thrm.\,6.2]{Bremaud}. The requirement of \cite[Ch.\,8.6.1,Thrm.\,6.2]{Bremaud} that $\F{X}$ is ergodic is satisfied (see \cite[Ch.\,8.5.1,Def.\,5.4]{Bremaud} for a definition of ergodicity) because of \cite[Ch.\,8.5.1,Thrm.\,5.3]{Bremaud} and the alternative characterization \eqref{eq: def_invariant_measure_infinitesimal} of invariance of $\pi$.
\end{proof}
\begin{remark}
\begin{enumerate}[\lb $a$\rb]
    \item Note that the ergodic theorem is a asymptotic result which does not state anything about the rate of convergence or the deviation probability $\F{P} \left( \abs{\frac{1}{t}\int_0^t f(X_s) ds - \pi(f) } \geq u \right)$.
    \item There are more general versions of the above theorem considered in the context of so called ergodic theory. In the the context of ergodic theory a general probability space $(\mathcal{X}, \mathcal{A}, \F{P})$ with a product measurable map $\Theta : [0,\infty) \cross \Omega \rightarrow \Omega $, satisfying $\Theta_0 = \text{Id}_\Omega$ and $\Theta_t(\omega)\Theta_s(\omega) = \Theta_{s+t}(\omega)$ is considered (see \cite[Ch.\,2.2]{eberle} for details). Then for any $F \in L^p(\Omega, \mathcal{F}, \F{P})$ (c.f. \cite[Sec.\,2.2, Thrm.\,2.9]{eberle})
    \begin{equation}
        \frac{1}{t}\int_0^t F \circ \Theta_s ds \xrightarrow[]{t \to \infty} \F{E}(F | \mathcal{I}),
    \end{equation}
$\F{P}$ - almost surely and in $L^p$, where $\mathcal{I} = \{ A \in \mathcal{A} | \Theta_t^{-1}(A) = A \text{ for all $t \geq 0$} \}$ is the $\sigma$-algebra of invariant sets. \\ In our context $\mathcal{X}$ can be chosen to be the space of right continuous functions $(x_t)_{t \geq 0} : \F{R}_{\geq 0} \rightarrow E$, $\mathcal{A}$ is the $\sigma$-Algebra generated by the evaluation maps $(x_t)_{t \geq 0} \mapsto x_s $ for $s \geq 0$, $\F{P}$ is defined by $\F{P}(A) = \F{P}_\pi (\F{X} \in A)$ (where $(\F{P}_\pi, \F{X})$ is an MJP with invariant distribution $\pi$) and $F((x_t)_{t \geq 0}) = f(x_0)$.
\end{enumerate}
 
\end{remark}

\begin{theorem}[Central Limit Theorem]
\label{th: CLT}
Let $\F{X}$ be an irreducible MJP with stationary initial distribution $\pi$ and $f \in L^2(\pi)$ with $\pi(f) = 0$. Then
\begin{equation}
    \frac{1}{\sqrt{t}} \int_0^t f(X_s) ds  \xrightarrow[]{\mathcal{D}} \mathcal{N}(0, \sigma^2),
\end{equation}
where 
\begin{equation}
    \sigma^2 = - 2\langle g, Lg \rangle = - 2 \langle g, f \rangle ,
\end{equation}
for (all) $g \in L^2(\pi)$ with $Lg = f$. Furthermore, if $\F{X}$ is reversible then $\sigma^2$ is called asymptotic variance and
\begin{equation}
\label{eq: asymptotic variance}
   \sigma^2  = -2 \langle Sf, f \rangle  = \lim_{t \to \infty}t^{-1}\mathrm{Var}_\pi\left ( \int_0^tf(X_s) ds \right ),
\end{equation}
where $S$ is the reduced resolvent of $L$ with respect to the eigenvalue $0$.
\end{theorem}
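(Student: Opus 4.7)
The plan is to use the martingale approximation method of Kipnis--Varadhan / Gordin. Since $\pi(f)=0$, Lemma \ref{lem: properties_infinitesimal_generator}$(b)$ gives $f \in \{\textup{\textbf{1}}\}^{\perp} = \operatorname{Im}(L)$, so the Poisson equation $Lg = f$ admits a solution $g \in L^2(\pi)$, unique modulo $\operatorname{Ker}(L) = \operatorname{span}(\textup{\textbf{1}})$. By Remark \ref{rem: simplification_finite_state_space} we have $L^2(\pi) = \mathcal{B}(E) = \F{R}^E$, so $g$ is automatically bounded. Define
\begin{equation*}
M_t := g(X_t) - g(X_0) - \int_0^t (Lg)(X_s)\,ds.
\end{equation*}
By Dynkin's formula $(M_t)_{t \geq 0}$ is a mean-zero c\`adl\`ag martingale under $\F{P}_\pi$ with respect to the natural filtration, and since $Lg = f$, rearranging gives the decomposition
\begin{equation*}
\int_0^t f(X_s)\,ds \;=\; -M_t \;+\; \bigl(g(X_t) - g(X_0)\bigr).
\end{equation*}
The boundary term is bounded in absolute value by $2\norm{g}_\infty$, so after division by $\sqrt{t}$ it converges to $0$, and the desired CLT reduces to a CLT for $M_t/\sqrt{t}$.

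To apply a continuous-time martingale CLT I next compute the predictable quadratic variation. Using $\sum_y q_{xy}=0$, the carr\'e du champ simplifies to
\begin{equation*}
\Gamma(g,g)(x) \;:=\; (Lg^2 - 2g\cdot Lg)(x) \;=\; \sum_{y \neq x} q_{xy}\bigl(g(y)-g(x)\bigr)^2,
\end{equation*}
so that $\langle M \rangle_t = \int_0^t \Gamma(g,g)(X_s)\,ds$. The ergodic theorem (Theorem \ref{th: ergodic_theorem}) applied to the bounded function $\Gamma(g,g)$, together with $\pi(Lg^2)=0$ by invariance of $\pi$, yields
\begin{equation*}
\frac{\langle M \rangle_t}{t} \;\xrightarrow[t\to\infty]{}\; \pi(\Gamma(g,g)) \;=\; -2\langle g, Lg\rangle_{L^2(\pi)} \;=\; -2\langle g, f\rangle_{L^2(\pi)} \;=:\; \sigma^2.
\end{equation*}
Since the jumps of $M$ are uniformly bounded by $2\norm{g}_\infty$, the Lindeberg condition is trivially satisfied, and the martingale CLT gives $M_t/\sqrt{t}\xrightarrow{\mathcal{D}} \mathcal{N}(0,\sigma^2)$. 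The value of $\sigma^2$ is independent of the chosen solution $g$, because two solutions differ by a multiple of $\textup{\textbf{1}}$ and $\pi(f)=0$.

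In the reversible case, $L$ is selfadjoint on $L^2(\pi)$ by Theorem \ref{th: characterization_detailed_balance}, so its reduced resolvent $S$ at the eigenvalue $0$ is the selfadjoint operator vanishing on $\operatorname{Ker}(L)$ and inverting $L$ on $\{\textup{\textbf{1}}\}^{\perp}$. Taking the canonical choice $g = Sf$ immediately gives $\sigma^2 = -2\langle Sf, f\rangle_{L^2(\pi)}$. For the variance identity, a direct computation in the stationary case using the Markov property yields
\begin{equation*}
\operatorname{Var}_\pi\!\left(\int_0^t f(X_s)\,ds\right) \;=\; 2\int_0^t (t-r)\,\langle f, P_r f\rangle_{L^2(\pi)}\,dr.
\end{equation*}
By Lemma \ref{lem: properties_infinitesimal_generator} together with the simplicity of the eigenvalue $0$, the spectrum of $L$ on $\{\textup{\textbf{1}}\}^{\perp}$ lies strictly to the left of $0$, hence $\langle f, e^{rL}f\rangle$ decays exponentially. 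Dominated convergence then gives $t^{-1}\operatorname{Var}_\pi(\cdot) \to 2\int_0^\infty \langle f, P_r f\rangle\,dr = -2\langle Sf, f\rangle_{L^2(\pi)} = \sigma^2$, as claimed.

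The main obstacle I expect is the clean invocation of the continuous-time martingale CLT for purely discontinuous martingales: one must check both the convergence in probability of $\langle M \rangle_t/t$ (achieved via the ergodic theorem above) and a Lindeberg-type condition on the jumps. Fortunately both are painless here thanks to finiteness of $E$, which guarantees $\norm{g}_\infty < \infty$, bounded jump rates $q_x$, and deterministic bounds on all jumps of $M$. The remaining bookkeeping — the precise hypothesis list from a reference such as Ethier--Kurtz or Jacod--Shiryaev for jump martingales rather than the simpler diffusion formulation — is the only genuinely technical piece of the argument.
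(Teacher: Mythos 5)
Your proof is correct and follows the standard martingale-approximation (Kipnis--Varadhan/Gordin) route: Poisson equation $Lg=f$, Dynkin martingale $M_t$, carr\'e du champ $\Gamma(g,g)$ and the ergodic theorem for $\langle M\rangle_t/t$, bounded jumps for Lindeberg, and exponential decay of $\langle f, P_rf\rangle$ for the variance identity. The paper itself only records that $f\in\operatorname{Im}(L)$ and defers the remainder to its cited reference, which uses precisely this method, so your argument is the fully detailed version of the same proof.
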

\begin{proof}
Note that as $0 = \pi(f) = \langle f, \textbf{1} \rangle$ by Lemma \ref{lem: properties_infinitesimal_generator} $f \in \text{Im}(L)$. So the first statement follows by
 \cite[P.\,164-167]{summer_school}. The second statement follows by applying the first statement with $g = Sf$ and \cite[P.\,167]{summer_school}
 
\end{proof}

\begin{remark}
The reduced resolvent is defined in Section \ref{subsubsec: perturbation_theory}
\end{remark}

\subsection{Additional Tools}

\label{subsec: additional_tools}
In this short Section we provide additional notions and tools used later to derive concentration inequalities. In Section \ref{subsubsec: fenchel_conjugate} we present the concept of \textit{Fenchel conjugate}; an important concept which will be used throughout the main part  of this work. Hereby, we also consider an important example (Example \ref{ex: Legendre_transform_subgamma}) which is relevant later for Bernstein-type concentration inequalities (see Lemma \ref{lem: bernstein_inequality}). In Section \ref{subsubsec: perturbation_theory} we present some formulas and identities of linear algebra and perturbation theory, that will be needed later in the context of the vector space $L^2(\pi)$ ($\pi $ is the invariant distribution of an irreducible MJP). Here, we also define the concept of \textit{reduced resolvent} and present a result about the perturbation of a simple eigenvalue.

\subsubsection{Fenchel Conjugate}
\label{subsubsec: fenchel_conjugate}
Later, when deriving concentration inequalities with the Cramér-Chernoff method, we shall need the notion of the so called Fenchel conjugate. This section is based on \cite{convex_analysis}. In the following we always use the convention $r \pm \infty = \pm \infty$ for all $r \in \F{R}$. We define (c.f. \cite[Ch.\,3.3,P.\,49]{convex_analysis})

\begin{definition}(Fenchel conjugate)
\label{def: fenchel_conjugate}
Let $D \subset \F{R}$ and $F : D \rightarrow [-\infty, \infty]$ be a function. Define $F^* : \F{R} \rightarrow [-\infty, \infty]$ as
\begin{equation}
    F^*(u) := \sup_{r \in D}(ru - F(r))
\end{equation}
We call $F^*$ the Fenchel conjugate of $F$ (with respect to $D$).
\end{definition}
Furthermore, as we will work with functions that may possibly attain the value $\infty$, we clarify the notion of convexity. We define  (c.f. \cite[Ch.\,3.1, P.\,33]{convex_analysis})
\begin{definition}(Convexity)
\label{def: convexity}
We say a function $F : D \rightarrow (-\infty, \infty]$ is convex if 
\begin{equation}
\label{eq: def_convexity}
    F(s_1 r_1 + s_2 r_2) \leq s_1 F(r_1) + s_2 F(r_2)
\end{equation}
for all $r_1, r_2  \in \{ F < \infty \}$ and all $s_1, s_2 \geq 0$ such that $s_1 + s_2 = 1$, where we define $F(r) := \infty $ for $r \not \in D$.

\end{definition}
\begin{remark}
\begin{enumerate}[\lb $a$\rb]
    \item If $F$ is convex, then $F^*$ is also called the \textit{Legendre transform}.
    \item In \cite[Ch.\,3.1, P.\,33]{convex_analysis} the Fenchel conjugate is defined for functions $F : \F{R} \rightarrow \F{R}$, i.e. $D = \F{R}$. However, for our purposes we want to take Fenchel conjugates with respect to subsets $D \subset \F{R}$ (e.g. in Example \ref{ex: Legendre_transform_subgamma}) and thus we use the above Definition \ref{def: fenchel_conjugate}.
\end{enumerate}
\end{remark}
\begin{lemma}[Properties of the Fenchel conjugate]
\label{lem: properties_Legendre_transform}
Let $D,D' \subset \F{R} $ be some subsets with $D' \subset D$ and let  $F: D \rightarrow (-\infty, \infty]$, $G : D' \rightarrow (-\infty, \infty]$ be some functions. Then, the following statements hold

\begin{enumerate}[\lb a\rb]
    \item $F^*$ is convex.
    \item Suppose that $D = [a,b]$ for some $a <  b$, $F$ is continuous, and continuously differentiable on $(a,b)$ with strictly increasing derivative $F'$. Then,
    \begin{equation}
        F^*(u) = u \cdot (F')^{-1}(u) - F((F')^{-1}(u))
    \end{equation}
    for all $u \in F'((a,b))$.
     \item \lb Fenchel Biconjugation Theorem\rb Suppose that $D$ is a closed set, and $F$ is convex and lower semicontinuous on $D$. Hereby, lower semicontinuity means that  
     \begin{equation}
         \liminf_{n \to \infty } F(r_n) \geq F(r)
     \end{equation}
     for all $r \in D$ and all sequences $(r_n)_{n\in \F{N}}$ converging to $r$.
     Then,
    \begin{equation}
        F^{**} = F\;,
    \end{equation}
    where the convention $F(r) = \infty $ if $r \not \in D$ is used.
    \item If $F \leq G$ on $D'$, then $ F^* \geq G^*$
\end{enumerate}

\end{lemma}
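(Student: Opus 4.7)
The plan is to tackle the four parts in turn, since they are largely independent with (c) being by far the most substantial. Parts (a) and (d) are short. For (a), for each fixed $r \in D$ the map $u \mapsto ru - F(r)$ is affine (interpreted as identically $-\infty$ when $F(r)=\infty$), and the pointwise supremum of a family of affine, hence convex, functions is convex, so $F^*$ is convex. For (d), the inclusion $D' \subset D$ gives $F^*(u) = \sup_{r \in D}(ru - F(r)) \geq \sup_{r \in D'}(ru - F(r))$, and $F \leq G$ on $D'$ further yields $\sup_{r \in D'}(ru - F(r)) \geq \sup_{r \in D'}(ru - G(r)) = G^*(u)$; combining gives $F^* \geq G^*$.

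For part (b) I would exploit strict convexity. Since $F'$ is strictly increasing on $(a,b)$, the map $g(r) := ru - F(r)$ is strictly concave on $(a,b)$, and its derivative $u - F'(r)$ vanishes exactly at $r^{*} := (F')^{-1}(u)$ whenever $u \in F'((a,b))$. Sign analysis of $g'$ shows $g$ is strictly increasing on $[a, r^{*}]$ and strictly decreasing on $[r^{*}, b]$, so by continuity on $[a,b]$ the supremum is attained at $r^{*} \in (a,b)$. This yields $F^*(u) = u (F')^{-1}(u) - F((F')^{-1}(u))$, as claimed.

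For part (c) I would first extend $F$ by $+\infty$ outside $D$; closedness of $D$ together with lower semicontinuity of $F$ on $D$ ensures lower semicontinuity on all of $\F{R}$, and convexity is preserved. The inequality $F^{**} \leq F$ is immediate: by definition $F^*(u) \geq ru - F(r)$ for every $u$, so $ru - F^*(u) \leq F(r)$, and taking the sup in $u$ gives $F^{**}(r) \leq F(r)$. The reverse inequality $F \leq F^{**}$ is where the work lies. I plan to argue by contradiction: assume $F^{**}(r_0) < F(r_0)$ for some $r_0$. Then the epigraph $\mathrm{epi}(F) := \{(r,s) \in \F{R}^2 : s \geq F(r)\}$ is closed and convex and does not contain the point $(r_0, F^{**}(r_0))$, so by Hahn-Banach separation there exist $(a,b) \in \F{R}^2 \setminus \{0\}$, $c \in \F{R}$ and $\varepsilon > 0$ with
\begin{equation*}
ar + bs \geq c + \varepsilon \ \text{for all}\ (r,s) \in \mathrm{epi}(F), \qquad a r_0 + b F^{**}(r_0) \leq c.
\end{equation*}
Upward unboundedness of the epigraph forces $b \geq 0$, and in the nondegenerate case $b > 0$ (normalize $b = 1$) one reads off $-ar - F(r) \leq -c - \varepsilon$, so $F^*(-a) \leq -c - \varepsilon$ and hence $F^{**}(r_0) \geq -a r_0 - F^*(-a) \geq -a r_0 + c + \varepsilon$, contradicting the second separation inequality.

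The main obstacle will be the degenerate vertical case $b = 0$, which forces $r_0$ to lie outside the effective domain of $F$ (so $F(r_0) = \infty$). I would handle it by first obtaining, from the nondegenerate case applied at some point of the effective domain, a non-vertical affine minorant $\ell$ of $F$, and then considering $\ell(r) + \lambda (a r - c - \varepsilon/2)$ for $\lambda > 0$; this remains an affine minorant of $F$ (since $ar \geq c + \varepsilon$ on the effective domain), and letting $\lambda \to \infty$ its value at $r_0$ tends to $-\infty$ or $+\infty$ depending on the sign of $ar_0 - c - \varepsilon/2 < 0$, so choosing $\lambda$ large enough produces an affine minorant of $F$ whose value at $r_0$ exceeds $F^{**}(r_0)$, contradicting the characterization $F^{**}(r_0) = \sup\{\ell(r_0) : \ell \text{ affine}, \ell \leq F\}$. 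Alternatively, one may simply invoke the Fenchel biconjugation theorem as a standard result of convex analysis and cite \cite{convex_analysis}.
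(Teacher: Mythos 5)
Your proposal is correct and follows essentially the same route as the paper: for $(c)$ you extend $F$ by $+\infty$ to all of $\F{R}$ (preserving the conjugate, convexity and lower semicontinuity) and then appeal to the Fenchel--Moreau biconjugation theorem, which is exactly what the paper does by citing the textbook; you merely flesh out the elementary computations for $(a)$, $(b)$, $(d)$ that the paper calls standard, and optionally sketch the separation proof underlying the textbook theorem. One tiny gap in the separation sketch: you should dispose of the degenerate case $F\equiv +\infty$ (and hence $F^{**}\equiv+\infty$) before invoking Hahn--Banach, since then there is no point of $\mathrm{epi}(F)$ and no effective domain to start from; once $F$ is proper the remainder of your sketch is the standard argument.
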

\begin{remark}
In this work we will only make direct use of part $(c)$ and $(d)$, part $(a)$ and $(b)$ are included for completeness.
\end{remark}
\begin{proof}[Proof of Lemma \ref{lem: properties_Legendre_transform}]

    Statements $(a),(d)$ are standard results found in \cite[Ch.\,3.3,P.\,49]{convex_analysis}  and  $(b)$ follows from an elementary calculation , so we just show $(c)$. Let $F$ be  convex and lower semicontinuous on $D$.
     Extend $F$ to $F : \F{R} \rightarrow [- \infty, \infty]$ by defining $F(r) = \infty$ whenever $r \not \in D$. Note that the Fenchel conjugate is remains unchanged by this extension, because  
     \begin{equation}
         \sup_{ r \in D}(ru - F(r)) = \sup_{ r \in \F{R}}(ru - F(r))
     \end{equation}
     by the convention $ru - \infty = -\infty$.
     Furthermore it is straightforward to check that  $F : \F{R} \to (-\infty, \infty ]$ is  lower semicontinous (follows from the closedness of $D$) and convex on $\F{R}$ (follows from Definition \ref{def: convexity}). Consequently, by the Fenchel-Biconjugation theorem \cite[Ch.\,4.2, Thrm.\,4.2.1]{convex_analysis} 
    \begin{equation*}
        F = F^{**}.
    \end{equation*}
    Finally, it should be remarked that although \cite[Ch.4.2, Thrm.4.2.1]{convex_analysis} requires that $F$ is closed (see \cite[Ch.\,4.2,P.\,76]{convex_analysis} for a definition), this requirement is equivalent to lower semi-continuity of $F$ (see \cite[Ch.\,4.2,P.\,76]{convex_analysis}).

\end{proof}
The following example of a Fenchel conjugate will become important later, when we treat sub-gamma random variables (see Definition \ref{def: sub-gamma}).
\begin{example} Let $v,c > 0$ and define
\label{ex: Legendre_transform_subgamma}
\begin{equation}
    F(r) := \frac{r^2v}{2(1-rc)}
\end{equation}
for $r \in [0, \frac{1}{c})$. Let $u \geq  0$ ,
\begin{equation}
    r_0 := \frac{1}{c} \left ( 1-  \left ( 1 + \frac{2 u c}{v} \right )^{-\frac{1}{2}} \right)
\end{equation}
and 
\begin{equation}
    H(r) := ru - F(r)
\end{equation}
for $r \in [0, \frac{1}{c})$. Then $H'$ has a unique root in $[0, \frac{1}{c})$, which is given by $r_0$ and  $H'' < 0$ on $[0, \frac{1}{c})$. Consequently
\begin{equation}
    \label{eq: Legendre_transform_subgamma}
    F^*(u) = \sup_{r \in [0, \frac{1}{c})} H(r) =  H(r_0) = \frac{v}{c^2}\left (1 + \frac{uc}{v} - \sqrt{1 + \frac{2uc}{v}} \right)  = \frac{2u^2}{v(1+ \sqrt{1 + \frac{2uc}{v}})^2}
\end{equation}
for all $u \geq 0$.
\end{example}
\begin{proof}
That $H'$ has a unique root on $[0, \frac{1}{c})$, given by $r_0$ and $H'' < 0$ is checked by an elementary calculation. Thus, it follows that $H$ reaches at $r_0$ a unique maximum on $[0,\frac{1}{c})$ , and consequently
\begin{equation}
     \sup_{r \in [0, \frac{1}{c})} H(r) =  H(r_0) = \frac{v}{c^2}\left (1 + \frac{uc}{v} - \sqrt{1 + \frac{2uc}{v}} \right)  = \frac{2u^2}{v(1+ \sqrt{1 + \frac{2uc}{v}})^2} ,
\end{equation}
where the last two equalities may also be checked by elementary calculations (c.f. \cite[P.\,28]{concentration2013}).
\end{proof}

\subsubsection{Linear Algebra and Perturbation Theory}
\label{subsubsec: perturbation_theory}
In the following (unless otherwise stated) let $V$ denote a vector space and $H$ a   Hilbert space. All vector spaces are assumed to be real and finite dimensional. Denote by $\langle \cdot , \cdot \rangle$ the inner product on $H$ and by $\norm{\cdot}$ the induced norm (for vectors and operators). We call a linear map $T: V \rightarrow V$ an operator on V. This section is mainly based on Appendix \ref{subsec: app_perturbation_theory}, which considers a setting of complex vector spaces. However, we consider just real vector spaces as in our context later we will just work with the real vector space  $V = H = L^2(\pi)$ ($\pi$ is the invariant measure of an irreducible MJP). Furthermore, as an extensive rigorous presentation and proof of some of the corresponding results (in the case of complex vector spaces) is long and involves technicalities which are not important for our purposes, the more detailed and general presentation is found in Appendix \ref{subsec: app_perturbation_theory}. \\
\begin{lemma}
\label{lem: biggest_eigenvalue}
Let $T : H \rightarrow H$ be a selfadjoint operator. Then, the largest eigenvalue $\lambda_0$ is given by
\begin{equation*}
    \lambda_0  = \max  \left \{ \frac{\langle Tv, v \rangle }{\norm{v}^2} \middle | v \in H, v \neq 0 \right \}.
\end{equation*}
\end{lemma}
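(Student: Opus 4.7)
The plan is to invoke the spectral theorem for selfadjoint operators on a finite dimensional real Hilbert space, which gives an orthonormal basis $(e_1,\ldots,e_n)$ of $H$ consisting of eigenvectors of $T$ with real eigenvalues $\lambda_1 \geq \lambda_2 \geq \cdots \geq \lambda_n$. Write $\lambda_0 := \lambda_1$ for the largest one (which is indeed an eigenvalue of $T$, so the supremum in question will be attained). The argument is then a direct computation in coordinates with respect to this eigenbasis.

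Concretely, first I would expand an arbitrary nonzero $v \in H$ as $v = \sum_{i=1}^n a_i e_i$ with $a_i = \langle v, e_i\rangle \in \mathbb{R}$. Using bilinearity of the inner product, orthonormality of the $e_i$'s, and $Te_i = \lambda_i e_i$, this gives the two formulas
\begin{equation*}
\langle Tv, v\rangle = \sum_{i=1}^n \lambda_i a_i^2, \qquad \|v\|^2 = \sum_{i=1}^n a_i^2.
\end{equation*}
Since $\lambda_i \leq \lambda_0$ for every $i$ and $a_i^2 \geq 0$, the Rayleigh quotient satisfies
\begin{equation*}
\frac{\langle Tv, v\rangle}{\|v\|^2} = \frac{\sum_{i=1}^n \lambda_i a_i^2}{\sum_{i=1}^n a_i^2} \leq \lambda_0,
\end{equation*}
with equality achieved by the choice $v = e_1$ (for which the quotient equals $\lambda_0 \|e_1\|^2 / \|e_1\|^2 = \lambda_0$). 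This simultaneously shows that the supremum is an upper bound and that it is attained, so it is in fact a maximum equal to $\lambda_0$.

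There is no real obstacle here, since everything reduces once the spectral theorem is applied; the only subtlety is merely to justify that $T$ does admit an orthonormal eigenbasis with real eigenvalues, which follows from selfadjointness together with finite-dimensionality of $H$ (so this is genuinely a standard application of the spectral theorem rather than a perturbation-theoretic argument). If one wished to avoid invoking the spectral theorem as a black box, an alternative plan would be to use compactness of the unit sphere in $H$ (finite dimensionality ensures this) to argue that the continuous function $v \mapsto \langle Tv, v\rangle$ attains its maximum on $\{\|v\|=1\}$ at some $v_0$, and then to verify via a Lagrange-multiplier computation that $Tv_0 = \lambda_0 v_0$ with $\lambda_0 = \langle Tv_0, v_0 \rangle$; but since the spectral theorem is available and is in any case implicitly used in the preceding section, the shorter coordinate-expansion route above is preferable.
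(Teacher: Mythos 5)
Your proof is correct and follows essentially the same route as the paper's: orthogonally diagonalize $T$ via the spectral theorem (using finite dimensionality), expand $v$ in the eigenbasis, and bound the Rayleigh quotient $\langle Tv,v\rangle/\|v\|^2$ by the largest eigenvalue with equality at the corresponding eigenvector. The extra remarks about the Lagrange-multiplier alternative are fine but not needed.
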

\begin{proof}
It is a well known fact that any selfadjoint operator on $H$ is orthogonally diagonalizable (see \cite[Ch.\,6.7.2]{Fischer2020}) with real eigenvalues, so let $(e_i)_{i = 1,...n}$ be an orthonormal basis of $H$, diagonalizing $T$ with corresponding eigenvalues $\lambda_i \in \F{R}$. The statement follows directly by using 
\begin{equation}
    \langle Tv, v \rangle = \sum_{i = 1}^n\lambda_i v_i^2,
\end{equation}
where $v = \sum_{i=1}^n v_i e_i$.
\end{proof}

\begin{definition}(Reduced resolvent)
\label{def: reduced_resolvent}
Let $T: V \rightarrow V$ be a diagonalizable operator (with real eigenvalues) and $\sigma(T)$ denote the spectrum. For $\lambda \in \sigma(T)$ let $\text{pr}_\lambda$ denote the eigenprojection onto the eigenspace $\text{Ker}(T-\lambda)$ according to the decomposition $V = \bigoplus_{ \lambda \in \sigma(T)} \text{Ker}( T - \lambda )$. For $\lambda \in \sigma(T)$ define an operator $S_\lambda$ by
 \begin{equation}
 \label{eq: reduced_resolvent}
     S_\lambda v = 
 \begin{cases}
   (T - \lambda)|_{ \mathrm{Im}(1- \text{pr}_\lambda)}^{-1}v  \;; \; v \in \mathrm{Im}(1 - \text{pr}_\lambda) \\
   0 \;;\; v \in \mathrm{Im}(\text{pr}_\lambda)
 \end{cases}
 \end{equation}
 $S_\lambda$ is called the reduced resolvent of $T$ (with respect to $\lambda$).
\end{definition}
\begin{remark}
\label{rem: reduced_resolvent}
Note that $S_\lambda$ is well defined. Indeed, this is easily seen by writing $T = \sum_{\mu \in \sigma(T)} \mu \text{pr}_\mu $, then an elementary calculation shows that Definition \ref{def: reduced_resolvent} is equivalent to 
\begin{equation}
    S_\lambda = \sum_{ \mu \in \sigma(T)\backslash \{\lambda \}} \frac{\text{pr}_\mu}{\mu - \lambda }
\end{equation}
\end{remark}
\begin{remark}
We just consider the reduced resolvent of a diagonalizable operator as we will just work with the reduced resolvent of the selfadjoint, diagonalizable operator $\frac{L+L^*}{2}$ ($L$ is the infinitesimal generator of an (irreducible) MJP). See Appendix \ref{subsec: app_perturbation_theory} for a more extensive presentation of reduced resolvents.
\end{remark}

Perturbation theory is a widely used tool  that enables the calculation of eigenvalues and eigenvectors of a perturbed operator of the form 
\begin{equation*}
    T (r) = T + r T',
\end{equation*}
where $r \in \F{R}$ is some 'small' parameter. More generally, one can consider an analytic operator valued function $T(r) = \sum_{n=0}^\infty r^n T^{(n)}$ and ask how the eigenvalues  of $T(r)$ may be computed in terms of $( T^{(n)} )_n$ and $r$. We now give two results (Lemma \ref{lem: eigenvalues_continuous} and Theorem \ref{th: perturbation_simple_eigenvalue}) about the eigenvalues of $T(r)$. These results follow from the corresponding results  in the case of complex vector spaces (Lemma \ref{lem: app_eigenvalues_continuous} and Theorem \ref{th: app_perturbation_simple_eigenvalue}). The results of Appendix \ref{subsec: app_perturbation_theory} transfer to our setting of real vector spaces essentially by applying these results to the complexification  (c.f. Lemma \ref{lem: app_complexification}) of the vector spaces and operators. 

\begin{lemma}[Continuous dependence of eigenvalues]
 \label{lem: eigenvalues_continuous}
 Let $T(r)$ be a continuous operator-valued function \lb on $V$\rb \space  defined on some interval $I \subset \F{R}$. Furtheremore, let $N = \dim V$. Then, there are continuous functions $\lambda_k : I \rightarrow \F{C}$, $k = 1,...,n$ such that the $N$-tuple 
 \begin{equation*}
    (\lambda_1(r), ... , \lambda_N(r)).
 \end{equation*}
 represents the eigenvalues of $T(r)$, where the eigenvalues are repeated according to their algebraic multiplicity. 
\end{lemma}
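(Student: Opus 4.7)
The plan is to reduce the statement to the analogous result for complex vector spaces (Lemma \ref{lem: app_eigenvalues_continuous}) via complexification. First, I would fix a basis of $V$, so that $T(r)$ is represented by an $N \times N$ real matrix whose entries depend continuously on $r \in I$. I would then pass to the complexification $V_{\F{C}}$ (cf.\ Lemma \ref{lem: app_complexification}), in which $T(r)$ extends $\F{C}$-linearly to an operator $T(r)_{\F{C}}$ whose matrix with respect to the induced complex basis has the same entries as that of $T(r)$. Continuity of $r \mapsto T(r)$ as a real operator-valued function therefore translates directly into continuity of $r \mapsto T(r)_{\F{C}}$ as a complex operator-valued function.

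The next step is to observe that the characteristic polynomial of $T(r)_{\F{C}}$ agrees with that of $T(r)$ (they are computed from the same matrix), so the eigenvalues coincide together with their algebraic multiplicities. In particular, the list of eigenvalues of $T(r)$ promised in the lemma is precisely the list of eigenvalues of the complexified operator. Applying Lemma \ref{lem: app_eigenvalues_continuous} to the complex-valued continuous family $T(r)_{\F{C}}$ then yields $N$ continuous functions $\lambda_1,\ldots,\lambda_N : I \rightarrow \F{C}$ such that $(\lambda_1(r),\ldots,\lambda_N(r))$ enumerates the eigenvalues of $T(r)_{\F{C}}$ counted with algebraic multiplicity. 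By the previous paragraph, these are exactly the eigenvalues of $T(r)$, which proves the claim.

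The only technical point I would want to check carefully is that the complexification operation is compatible with continuity and with the characteristic polynomial, but both facts are essentially definitional once a basis is fixed. The substantive content is entirely contained in the complex version already proved in the appendix; in this sense there is no real obstacle here beyond bookkeeping. I would also note that one cannot in general demand the $\lambda_k$ to be real-valued, even when $T(r)$ is real, since a real matrix may of course have complex eigenvalues; this is why the codomain in the statement is $\F{C}$ rather than $\F{R}$.
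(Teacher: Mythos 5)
Your argument is exactly the paper's: complexify $T(r)$, observe that continuity and the characteristic polynomial (hence eigenvalues with multiplicities) are preserved under complexification (Lemma \ref{lem: app_complexification}$(c)$), and then invoke the complex version Lemma \ref{lem: app_eigenvalues_continuous} — this is precisely what Remark \ref{rem: app_eigenvalues_continuous} in the appendix does. The proof is correct and matches the paper's approach.
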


\begin{proof}
Follows from the corresponding complex version Lemma \ref{lem: app_eigenvalues_continuous}. For a more detailed explanation see Remark \ref{rem: app_eigenvalues_continuous}.
\end{proof}

\begin{theorem}[Perturbation of a simple eigenvalue]
\label{th: perturbation_simple_eigenvalue}
Let $T : H \rightarrow H$ be a self adjoint operator. Assume that $0$ is a simple eigenvalue of $T$. Let $\lambda_1 := \min_{\lambda \in \sigma(T) \backslash \{0\}} \abs{\lambda}$ be the spectral gap, $\mathrm{pr}$ the orthogonal projection onto the eigenspace with eigenvalue $0$ and $S = S_0$ the corresponding reduced resolvent. Furthermore, let $ T' : H \rightarrow H$ be some operator and define $ T(r) := T + r T'$ for $r \in \F{R}$. Then for all $\abs{r} < \frac{\lambda_1}{2 \norm{T}}$ the ball  $B_{\frac{\lambda_1}{2}}(0) \subset \F{C}$  contains exactly one simple eigenvalue $\mu_0(r)$ of $T(r)$ and we have
\begin{equation}
\label{eq: mu_0(r)}
    \mu_0(r) = \sum_{n = 1}^\infty \mu_0^{(n)}r^n,
\end{equation}
with 
\begin{equation}
 \mu_0^{(n)} =  \frac{(-1)^n}{n} \sum\limits_{\substack{k_1,..k_n \in \F{Z}_+ \\ k_1 + ... k_n = n-1}} \Tr( T'S^{(k_1)} ... T'S^{(k_n)}),
\end{equation}
where
\begin{equation}
    S^{(0)} = - \mathrm{pr} \quad \text{and} \quad S^{(k)} = S^k
\end{equation}
for $ k \geq 1$.
\end{theorem}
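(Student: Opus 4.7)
The plan is to reduce the real case to the complex version proven in Appendix \ref{subsec: app_perturbation_theory}. The key idea is that all objects in the statement (traces, spectra, spectral gap, reduced resolvent, eigenprojections) transport cleanly under complexification, so once the complex perturbation theorem is established, the real case follows almost by bookkeeping. Concretely, I would form $H_{\F{C}} := H \oplus iH$ with its natural sesquilinear inner product and, invoking Lemma \ref{lem: app_complexification}, extend $T$ and $T'$ to $\F{C}$-linear operators $T_{\F{C}}$ and $T'_{\F{C}}$ on $H_{\F{C}}$. Since $T$ is self-adjoint on $H$, $T_{\F{C}}$ is self-adjoint on $H_{\F{C}}$ with the same spectrum; in particular $0$ remains a simple eigenvalue with spectral gap $\lambda_1$, and the orthogonal projection $\mathrm{pr}$ and reduced resolvent $S$ extend to the corresponding objects for $T_{\F{C}}$.

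Next, I would apply the complex perturbation theorem (Theorem \ref{th: app_perturbation_simple_eigenvalue}) to $T_{\F{C}}(r) = T_{\F{C}} + r T'_{\F{C}}$, yielding, for $|r|$ small enough, a unique simple eigenvalue $\mu_0(r) \in B_{\lambda_1/2}(0)$ with the claimed series representation and coefficients given by traces of products of $T'_{\F{C}}$ and the iterates $(S_{\F{C}})^{(k)}$. Because the complexification functor preserves composition, sums, and projections, one has $(S_{\F{C}})^{(k)} = (S^{(k)})_{\F{C}}$ and hence $T'_{\F{C}} (S_{\F{C}})^{(k_1)} \cdots T'_{\F{C}} (S_{\F{C}})^{(k_n)}$ is the complexification of $T' S^{(k_1)} \cdots T' S^{(k_n)}$; taking traces in any real basis of $H$, extended to a complex basis of $H_{\F{C}}$, shows that the trace over $H_{\F{C}}$ equals the trace over $H$. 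In particular, each coefficient $\mu_0^{(n)}$ is real. Finally, because $T(r)$ is self-adjoint on $H$ for real $r$ and has only real eigenvalues, Lemma \ref{lem: eigenvalues_continuous} combined with the uniqueness of $\mu_0(r)$ inside $B_{\lambda_1/2}(0)$ implies that $\mu_0(r) \in \F{R}$ for all such $r$, so it is genuinely a (simple) eigenvalue of $T(r)$ acting on $H$.

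The main obstacle, I expect, is not this reduction itself (which is essentially a consistency check) but the justification that the complexified objects really do satisfy all the hypotheses of Theorem \ref{th: app_perturbation_simple_eigenvalue} with the same constants $\lambda_1$, $\|T'\|$, and the same reduced resolvent $S$. This requires verifying that complexification preserves self-adjointness, spectra with multiplicities, simplicity of the eigenvalue $0$, the orthogonal decomposition $H_{\F{C}} = \mathrm{Ker}(T_{\F{C}}) \oplus \mathrm{Im}(T_{\F{C}})$, and the operator norm. All of these are standard but must be spelled out, ideally as preparatory remarks in the appendix. Once they are in hand, the trace-formula coefficients follow immediately from the corresponding complex identities, and the real version of the theorem is proved.
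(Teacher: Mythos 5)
Your proposal follows the same route as the paper: the paper's proof is literally a one-line reference to Theorem \ref{th: app_perturbation_simple_eigenvalue} together with Remark \ref{rem: app_perturbation_simple_eigenvalue}, which carries out precisely the complexification transfer you describe, citing Lemma \ref{lem: app_complexification}$(c)$ for the preservation of self-adjointness, spectra, projections, and traces, and Remark \ref{rem: app_red_resolvent_real_case} for the reduced resolvent. Your additional observation that $\mu_0(r)$ is real (via self-adjointness of $T(r)$, alternatively via continuity of eigenvalues) is a harmless supplement; the paper obtains the same conclusion simply from the fact that $T(r)$ and $T(r)^{\F{C}}$ share the same spectrum.
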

\begin{proof}
Follows from the corresponding complex analogue \ref{th: app_perturbation_simple_eigenvalue}. For a detailed transfer to the real setting see Remark \ref{rem: app_perturbation_simple_eigenvalue}.
\end{proof}

\section{Concentration Inequalities}
\label{sec: concentration_inequalities}
\subsection{Outline and Goal}
\label{subsec: outline_and_goal}
The ergodic theorem (Theorem \ref{th: ergodic_theorem}) states that for an irreducible  MJP $(X_t)_{t \geq 0}$ with invariant distribution $\pi$ and any initial distribution $\nu$  

\begin{equation}
\label{eq: ergodic_jump}
   \frac{1}{t}\int_{0}^{t}f(X_t)dt \xrightarrow[]{t \rightarrow \infty} \pi(f)  \quad \mathbb{P}_\nu \text{ - a.s.} 
\end{equation}
holds for any $f \in \mathcal{B}(E)$.
The main goal of this section will be to give bounds for the deviation of the (finite) time average in the above equation from the long term limit $\pi(f)$ for irreducible MJPs. In more precise mathematical terms: For any $t >0 $ we want to bound the tail probabilities

\begin{equation}
\label{eq: probability_deviation}
    \mathbb{P}_\nu \left (\frac{1}{t}\int_{0}^{t}f(X_s)ds - \pi(f)  \geq u \right )
\end{equation}
for $u \geq 0$,
and 
\begin{equation}
\label{eq: lower_tail}
   \mathbb{P}_\nu \left (\frac{1}{t}\int_{0}^{t}f(X_s)ds - \pi(f)  \leq  u \right ) 
\end{equation}
for $u \leq 0$, in terms of some function of $u$. Since we consider a general $f \in \mathcal{B}(E)$ and the replacement of $f$ by $-f$ in the upper tail probability \eqref{eq: probability_deviation} yields the lower tail probability  \eqref{eq: lower_tail} we consider without loss of generality just the upper tail probability \eqref{eq: probability_deviation}. To derive concentration inequalities for this probability  we will apply the Cramér-Chernoff method, a method that can be quite generally used to bound upper tail  probabilities
\begin{equation}
    \F{P}( Z \geq u).
\end{equation}
This section consists of two main parts \textemdash Section \ref{subsec: cramer_chernoff_method} and Section \ref{subsec: application_cramer_chernoff}. First, the general Cramér-Chernoff method is introduced in Section \ref{subsec: cramer_chernoff_method}.   Here, the most important results are the Chernoff inequality and the application of it, which is the basis for all concentration inequalities of this thesis, and  Bernstein's inequality, which is an example of Chernoff's inequality. In the second part \textemdash Section \ref{subsec: application_cramer_chernoff}, based on the established Cramér-Chernoff method, we derive concentration inequalities for (irreducible) MJPs. Hereby, we summarize and combine results of the works \cite{wu}, \cite{lezaud}, \cite{guillin}, and  \cite{bernstein}, which  use the Cramér-Chernoff method to derive concentration inequalities for \eqref{eq: probability_deviation}. More precisely, in Section \ref{subsubsec: general_concentration_inequality} we apply directly the Cramér-Chernoff method to the setting of MJPs and obtain a general concentration inequality (see Theorem \ref{th: main_conc_inequality}). Then, in Sections \ref{subsubsec: further_concentration_inequalities_MJP} - \ref{subsubsec: concentration_via_information} based on this general concentration inequality we derive more explicit concentration inequalities using three different approaches:  perturbation theory (Section \ref{subsubsec: concentration_via_perturbation}), functional inequalities (Section \ref{subsubsec: concentration_via_functional_inequalities}) and information inequalities (Section \ref{subsubsec: concentration_via_information}).  \\
Finally, some reading advice: Throughout Section \ref{sec: concentration_inequalities} we will make use of (certain) results, which are presented in Section \ref{sec: preliminaries}. Thus, we pointed out at the beginning of many (sub)sections the most relevant results and notions of Section \ref{sec: preliminaries} (and also Section \ref{sec: concentration_inequalities}) used, so that the reader might have again a look at the mentioned results, before engaging with the material of the subsection. Furthermore, throughout Section \ref{subsec: application_cramer_chernoff} we will explain connections and give references to the works \cite{wu}, \cite{lezaud}, \cite{guillin} and \cite{bernstein}, which are denoted by \textbf{Reference}. These remarks are not essential for the content and can be skipped if the reader is not interested in putting this work into the context of \cite{wu}, \cite{lezaud}, \cite{guillin}, \cite{bernstein}.
\subsection{Cramér-Chernoff Method}
\label{subsec: cramer_chernoff_method}
The Cramér-Chernoff method is a very general way of bounding tail probabilities of the form $ \F{P}(Z \geq u) $. The bounds represent 'exponential' decay inequalities and are usually sharper than  polynomial decay inequalities, like the Chebychev inequality. Furthermore, the  Cramér-Chernoff method is the starting point to derive many concentration inequalities (c.f. \cite[Ch.\,2]{concentration2013}) e.g. Hoeffding's inequality, Bernstein's inequality, and Benett's inequality, which are applicable in large deviation theory (c.f. \cite[Ch.\,2]{Dembo2009}), learning theory  (c.f. \cite[Ch.\,2]{learning_theory}), randomized algorithms (c.f. \cite[Ch.\,4]{ramdomized_algorithms}) to name but a few, and in this work the Cramér-Chernoff method is the basis for all concentration inequalities derived. \\
In this section we first derive the Cramér-Chernoff method by motivating it via the optimization of the Markov inequality and we arrive at a general concentration inequality; Chernoff's inequality. Then, we explain how the method may be used to derive more explicit concentration inequalities and consider an important example; Bernstein's inequality. Finally, we also briefly discuss the application of the method to sums of i.i.d. random variables. The most relevant notion of Section \ref{sec: preliminaries} is the \textit{Fenchel conjugate} (defined in Section \ref{subsubsec: fenchel_conjugate}).
\subsubsection{Motivation}
\label{subsubsec: motivation}
In the following  assume that all random variables in this section are defined on an underlying probability space $(\Omega, \mathcal{F}, \F{P})$. Recall two well known basic inequalities: 
\begin{theorem}[Markov Inequality]
\label{th: markov_inequality}
Let $H : D \rightarrow [0,\infty)$ be a nondecreasing nonnegative function, where $D \subset \F{R} $ is some measurable subset. Let $Z$ be a $D$-valued random variable. Then for all $u \in D$ such that $H(u) > 0$
\begin{equation}
\label{eq: markov_inequality}
    \F{P}(Z \geq u) \leq \frac{\F{E}(H(Z))}{H(u)}.
\end{equation}
\end{theorem}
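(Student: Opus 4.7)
The plan is to reduce this to the classical one-line argument: since $H$ is nondecreasing and nonnegative, the event $\{Z \geq u\}$ is contained in $\{H(Z) \geq H(u)\}$, and on this event $H(Z) \geq H(u) > 0$, so dividing by $H(u)$ and taking expectations of the appropriate indicator bound yields the inequality. I would phrase this via a pointwise inequality on random variables rather than going through an intermediate Markov inequality for $H(Z)$, since the pointwise route is the shortest and avoids requiring integrability of $H(Z)$ as a separate hypothesis (if $\mathbb{E}(H(Z)) = \infty$, the bound is trivial anyway).

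Concretely, the key step is the pointwise inequality
\begin{equation*}
    H(u)\,\mathbbm{1}_{\{Z \geq u\}} \leq H(Z)\,\mathbbm{1}_{\{Z \geq u\}} \leq H(Z),
\end{equation*}
which is valid $\mathbb{P}$-almost surely: on $\{Z \geq u\}$ we use monotonicity $H(u) \leq H(Z)$, and on $\{Z < u\}$ the leftmost expression vanishes while $H(Z) \geq 0$ by nonnegativity. Taking expectations (allowed since all three quantities are nonnegative) and using $\mathbb{E}(H(u)\mathbbm{1}_{\{Z\geq u\}}) = H(u)\,\mathbb{P}(Z \geq u)$ gives
\begin{equation*}
    H(u)\,\mathbb{P}(Z \geq u) \leq \mathbb{E}(H(Z)),
\end{equation*}
and dividing by $H(u) > 0$ yields the claim.

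There is no real obstacle; the only point to be careful about is the measurability of $H(Z)$, which follows because $H$ is monotone (hence Borel measurable on $D$) and $Z$ takes values in $D$, so $H(Z)$ is a well-defined $[0,\infty)$-valued random variable and $\mathbb{E}(H(Z)) \in [0,\infty]$ is meaningful. The hypothesis $H(u) > 0$ is used only in the final division; without it, the inequality is trivial (both sides equal $0$ or the bound is vacuous). I would keep the write-up to a couple of lines, as this is the standard warm-up step that will be optimized later via the Cramér–Chernoff choice $H(r) = e^{\lambda r}$.
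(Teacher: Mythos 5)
Your proof is correct and is essentially the same as the paper's: both establish the pointwise bound $H(u)\mathbbm{1}_{\{Z\geq u\}} \le H(Z)\mathbbm{1}_{\{Z\geq u\}} \le H(Z)$, take expectations, and divide by $H(u)>0$. The extra remarks on measurability of $H(Z)$ and on the trivial case $\mathbb{E}(H(Z))=\infty$ are sensible but not present in (and not needed by) the paper's terse write-up.
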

\begin{proof}
As $H$ is nondecreasing and nonnegative we have 
\begin{equation*}
    H(u)1_{\{ Z \geq u\}} \leq H(Z) 1_{\{ Z \geq u\}} \leq H(Z).
\end{equation*}
Taking the expectation on both sides and then dividing by $H(u)$ yields the statement.
\end{proof}

\begin{theorem}[Jensen's Inequality]
\label{th: jensen_inequality}
Let $I \subset \F{R}$ be some interval and $Z$ an $I$-valued random variable with $\F{E}(\abs{Z}) < \infty$. Furthermore let $H: I \rightarrow \F{R}$ be some convex function. Then $\F{E}(H(Z)^{-}) < \infty$ and 
\begin{equation*}
    \F{E}(H(Z)) \geq H(\F{E}(Z)),
\end{equation*}
where $H(Z)^{-} = \max(0, - H(Z))$ denotes the nonpositive part of $H(Z)$.

\end{theorem}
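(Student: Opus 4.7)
The plan is to deduce Jensen's inequality from the existence of a supporting line to the convex function $H$ at the point $m := \F{E}(Z)$. Concretely, I would first argue that $m \in I$, then produce constants $a,b \in \F{R}$ such that $H(x) \geq a + bx$ for every $x \in I$ with equality at $x = m$, and finally integrate this pointwise inequality after checking that $H(Z)^{-}$ is integrable.

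The first step is to locate $m$ inside $I$. Since $\F{E}(|Z|) < \infty$, the expectation $m$ is a well-defined real number, and since $Z$ takes values in $I$, monotonicity of expectation applied to any bounds of $I$ yields $m \in \bar{I}$. If $m$ were an endpoint of $I$ not contained in $I$, then $Z$ would be $\F{P}$-a.s. strictly on one side of $m$, contradicting $\F{E}(Z) = m$. If $m$ were an endpoint included in $I$, then $Z \geq m$ (or $\leq m$) a.s.\ combined with $\F{E}(Z - m) = 0$ forces $Z = m$ a.s., reducing the inequality to the trivial identity $H(m) = H(m)$. So without loss of generality $m$ lies in the interior $I^\circ$.

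The second step, which I consider the real analytic content of the proof, is the supporting line inequality at $m \in I^\circ$. The monotonicity of the difference quotients
\begin{equation*}
x \longmapsto \frac{H(x) - H(m)}{x - m},
\end{equation*}
which is immediate from the definition of convexity, implies that the one-sided derivatives $H'_{-}(m), H'_{+}(m)$ exist in $\F{R}$ and satisfy $H'_{-}(m) \leq H'_{+}(m)$. Picking any slope $b \in [H'_{-}(m), H'_{+}(m)]$ gives $H(x) \geq H(m) + b(x - m)$ for all $x \in I$. Setting $a := H(m) - bm$ yields $H(Z) \geq a + bZ$ pointwise, whence
\begin{equation*}
H(Z)^{-} \;\leq\; (a + bZ)^{-} \;\leq\; |a| + |b|\,|Z|,
\end{equation*}
and taking expectations together with $\F{E}(|Z|) < \infty$ gives $\F{E}(H(Z)^{-}) < \infty$.

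With integrability established, linearity of expectation yields $\F{E}(H(Z)) \geq a + b\F{E}(Z) = H(m) - bm + bm = H(m) = H(\F{E}(Z))$, which is the claim. The main (and really only non-trivial) obstacle is the supporting line inequality; everything else is bookkeeping about which side of the interval $\F{E}(Z)$ lies on. The cleanest route to that inequality is the monotone difference-quotient argument sketched above, which avoids any appeal to differentiability beyond what convexity itself guarantees.
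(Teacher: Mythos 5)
Your proof is correct. The paper does not actually prove this statement but simply cites \cite[Ch.\,7.2, Thrm.\,7.9]{Klenke2013}, and the standard argument given there is precisely the supporting-line (subgradient) argument you reconstruct: reduce to $\F{E}(Z)$ lying in the interior of $I$, produce an affine minorant $a+bx \leq H(x)$ touching at $x=\F{E}(Z)$ via the monotone difference quotients, use it to bound $H(Z)^-$ by $|a|+|b|\,|Z|$ for integrability, and then integrate the pointwise inequality.
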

\begin{proof}
See \cite[Ch.\,7.2, Thrm.\,7.9]{Klenke2013}.
\end{proof}

Although the Markov inequality is quite simple, we can use it to obtain sharper bounds on the tail probability $\F{P}(Z \geq u)$ as follows. Let $(H_r)_{r \in I} $ be a family of nondecreasing nonnegative functions (defined on a common domain), parametrized by some parameter $r \in I$, where $I$ is some index set. Suppose that $H_r(u) > 0$ for any $r $, so that the Markov inequality implies
\begin{equation}
    \F{P}(Z \geq u) \leq \frac{\F{E}(H_r(Z))}{H_r(u)}
\end{equation}

for all $H_r $. Then, we can optimize  the bound in the Markov inequality  by minimizing over $r \in I$, i.e.
\begin{equation}
\label{eq: Markov_minimum_tail}
    \F{P}(Z \geq u) \leq \inf\limits_{r \in I} \frac{\F{E}(H_r(Z))}{H_r(u)}.
\end{equation}
The Cramér-Chernoff method consists of applying this principle using the family $H_r(u) =e^{r u}$ for $r \geq 0 $. This family  has a useful property: Notice that for all $u_1,...,u_n \in \F{R}$ we have $H_r(u_1 + ... + u_n) = H_r(u_1) H_r(u_2)...H_r(u_n)$, so if $Z_1,...Z_n$ are i.i.d.  then the bound in \eqref{eq: Markov_minimum_tail} for $Z = Z_1 + ... + Z_n$ factorizes via $\F{E}(H_r(Z) ) = \F{E}(H_r(Z_1))^n $.

\subsubsection{General Cramér-Chernoff Method}

\label{subsubsec: general_cramer_chernoff_method}
The following presentation of the Cramér-Chernoff method is based on \cite[Ch.\,2.1-2.2]{concentration2013}.
The Cramér-Chernoff method gives the optimal bound for the tail probability $\F{P}(Z \geq u)$ that can be obtained by minimizing the Markov inequality over the family $H_r(u) =e^{r u}$ for $r \geq 0$ as in  \eqref{eq: Markov_minimum_tail}.  In the following we use the conventions $\log(\infty) = \infty$, $\exp(-\infty) = 0$, $\exp(\infty) = \infty $ , $r \cdot \infty = \infty $ for any $r \in \F{R}\backslash \{0\}$, $r \pm \infty = \pm \infty $ for any $r \in \F{R}$ and $\infty^n = \infty$ for any $n \in \F{N}$. This avoids having to make a case distinction whether $\F{E}(e^{rZ}) = \infty$. The Cramér-Chernoff method is summarized in the following theorem:
\begin{theorem}[Cramér-Chernoff method and Chernoff inequality]
\label{th: cramer_chernoff}
Let Z be a  random variable and  $u \in \F{R}$. Furthermore, let $\Psi_Z(r) := \log \F{E}(e^{r Z})$ for $r \in \F{R}$ be the cumulant generating function and  $\Psi_Z^*(u) := \sup \limits_{r \geq 0}(r u - \Psi_Z(r))$ the Fenchel conjugate with respect to $\F{R}_{\geq 0}$. Then, Chernoff's inequality 
\begin{equation}
    \label{eq: Chernoff_inequality}
    \F{P}(Z \geq u) \leq e^{-\Psi_Z^*(u)}
\end{equation}
holds. If furthermore $\F{E}(Z)$ exists and $ u \geq \F{E}(Z)$, then
\begin{equation*}
    \Psi_Z^*(u) = \sup \limits_{r \in \F{R}}(u r - \Psi_Z(r)).
\end{equation*}
\end{theorem}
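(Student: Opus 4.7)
The plan is to derive Chernoff's inequality by instantiating the optimized Markov-inequality scheme from the previous subsection with the family $H_r(x) = e^{rx}$, $r \geq 0$, and then to handle the extension of the supremum from $[0,\infty)$ to $\mathbb{R}$ via Jensen's inequality. Both pieces follow from Theorems \ref{th: markov_inequality} and \ref{th: jensen_inequality}, which have already been recorded.

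For the first part, I would fix $r \geq 0$ and observe that $H_r$ is nondecreasing and nonnegative on $\mathbb{R}$, with $H_r(u) = e^{ru} > 0$. Theorem \ref{th: markov_inequality} therefore yields
\begin{equation*}
    \F{P}(Z \geq u) \;\leq\; \frac{\F{E}(e^{rZ})}{e^{ru}} \;=\; e^{\Psi_Z(r) - ru},
\end{equation*}
where the identity holds with the stated conventions even when $\F{E}(e^{rZ}) = \infty$, in which case the bound is vacuous. Since the left-hand side does not depend on $r$, I take the infimum over $r \geq 0$ on the right-hand side and use monotonicity of $\exp$:
\begin{equation*}
    \F{P}(Z \geq u) \;\leq\; \inf_{r \geq 0} e^{\Psi_Z(r) - ru} \;=\; e^{- \sup_{r \geq 0}(ru - \Psi_Z(r))} \;=\; e^{-\Psi_Z^*(u)}.
\end{equation*}
Note that the $r = 0$ term in the supremum equals $0 \cdot u - \Psi_Z(0) = 0$, which guarantees $\Psi_Z^*(u) \geq 0$ and so the bound $e^{-\Psi_Z^*(u)} \leq 1$ is consistent.

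For the second part, I need to show that enlarging the optimization domain from $[0,\infty)$ to all of $\mathbb{R}$ does not change the supremum when $u \geq \F{E}(Z)$. Because $\exp$ is convex, Jensen's inequality (Theorem \ref{th: jensen_inequality}) applied to $Z$ (using the stated conventions when $\F{E}(e^{rZ}) = \infty$) gives
\begin{equation*}
    \Psi_Z(r) \;=\; \log \F{E}(e^{rZ}) \;\geq\; \F{E}(rZ) \;=\; r\F{E}(Z)
\end{equation*}
for every $r \in \F{R}$. Hence for $r < 0$, using $u \geq \F{E}(Z)$,
\begin{equation*}
    ru - \Psi_Z(r) \;\leq\; r\bigl(u - \F{E}(Z)\bigr) \;\leq\; 0 \;=\; 0 \cdot u - \Psi_Z(0).
\end{equation*}
Thus every value attained for $r < 0$ is dominated by the value at $r = 0$, which already lies in $\F{R}_{\geq 0}$. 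Consequently $\sup_{r \in \F{R}}(ru - \Psi_Z(r)) = \sup_{r \geq 0}(ru - \Psi_Z(r)) = \Psi_Z^*(u)$, which is the claimed identity.

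There is no real obstacle here beyond bookkeeping with the $\pm \infty$ conventions so that Markov's inequality and Jensen's inequality may be applied without separate case analysis for when the moment generating function is infinite; with those conventions in place, the argument reduces to one application of each named inequality and a rewriting of the infimum of exponentials.
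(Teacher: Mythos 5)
Your proof is correct and follows essentially the same route as the paper's: Markov's inequality with the exponential family $H_r(x) = e^{rx}$ for the bound, and Jensen's inequality to show that the contributions from $r < 0$ are dominated by the $r = 0$ term when $u \geq \mathbb{E}(Z)$. The only cosmetic difference is the arrangement of the inequality chain in the second part, which does not affect the substance.
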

\begin{remark}
\begin{enumerate}[$(a)$]
    \item Note that we always have $\Psi_Z^*(u) \geq 0 \cdot u - \Psi_Z(0) = 0$, so the bound in Chernoff's inequality is always $\leq 1$.
    \item  Chernoff's inequality is trivial whenever $\Psi_Z^*(u) = 0$. If $\F{E}(Z) $ exists this happens for $u \leq \F{E}(Z)$, because then by Jensen's inequality (Theorem \ref{th: jensen_inequality}) we have that $r \F{E}(Z) \leq \Psi_Z(r)$  and consequently $ur - \Psi_Z(r) \leq 0$ for any $r \geq 0$. Furthermore, if $\F{E}(e^{r Z}) = \infty$ for all $ r > 0$, then also $\Psi_Z^*(u) = 0$.
\end{enumerate}
\label{rem: Chernoff_inequality}
\end{remark}
\begin{proof}[Proof of Theorem \ref{th: cramer_chernoff}]
Chernoff's inequality can be proved via Markov's inequality as already described in Section \ref{subsubsec: motivation}.
Applying Markov's inequality  with   $u \mapsto e^{ur}$ yields 
\begin{equation*}
     \F{P}(Z \geq u) \leq e^{-r u} \F{E}(e^{r Z}) = e^{-(ru - \Psi_Z(r))}
\end{equation*}
for all $r \geq 0$. Taking the infimum yields 
\begin{equation*}
 \F{P}(Z \geq u) \leq  \inf_{r \geq 0} e^{- (ru - \Psi_Z(r))} = e^{\inf_{r \geq 0}- (ru - \Psi_Z(r))} = e^{- \sup_{r \geq 0}(ru - \Psi_Z(r))} = e^{-\Psi_Z^*(u)},
\end{equation*}
where in the first equality the infimum can be pulled into the exponential function, as the  exponential function is nondecreasing and continuous.
 If $\F{E}(Z)$ exists, then by the convexity of the exponential function and Jensen's inequality, for all $r \in \F{R}$ it holds that $e^{r \F{E}(Z)} \leq \F{E}(e^{rZ})$ and thus $ r \F{E}(Z)  \leq \Psi_Z(r)$. But then for any $r \leq 0$ and $u \geq \F{E}(Z)$ it holds that
\begin{equation*}
    r u - \Psi_Z(r) \leq r \F{E}(Z) - \Psi_Z(r) \leq 0.
\end{equation*}
But $\Psi_Z^*(u) \geq 0\cdot u - \Psi_Z(0) = 0$, so $\Psi_Z^*(u) = \sup \limits_{r \in \F{R}}( r t - \Psi_Z(r))$ for any $ u \geq \F{E}(Z)$.
\end{proof}
The Fenchel conjugate $\Psi_Z^*$ is called the \textit{Cramér transform} of $Z$.
In other words, the Cramér-Chernoff method consists of computing the Cramér transform $\Psi_Z^*$ and then applying Chernoff's inequality.
In general, the cumulant generating function $\Psi_Z$ is not directly computable (as in the exact distribution of $Z$ is not known or too complicated) and thus the Cramér transform $\Psi_Z^*$ cannot be computed explicitly. However, the following lemma describes a method on how more explicit concentration inequalities can be obtained. We have
\begin{lemma}
\label{lem: more_explicit_concentration_inequality}
 Let $D \subset \F{R}_{\geq 0}$ and $\Phi : D \rightarrow \F{R}$ some function such that 
 \begin{equation}
     \Psi_Z(r)\leq \Phi(r)
 \end{equation}
for all $r \in D$, and denote by $\Phi^*$ its Fenchel conjugate \lb with respect to $D$\rb. Then, $\Psi_Z^* \geq \Phi^*$ and consequently 
\begin{equation}
\label{eq: concentration_inequality_with_Phi}
    \F{P}(Z \geq u) \leq e^{-\Psi_Z^*(u)} \leq e^{-\Phi^*(u)}.
\end{equation}
for all $u \in \F{R}$
\end{lemma}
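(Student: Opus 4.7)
The plan is to split the compound inequality into its two parts. The left inequality $\F{P}(Z \geq u) \leq e^{-\Psi_Z^*(u)}$ is precisely Chernoff's inequality from Theorem \ref{th: cramer_chernoff}, so nothing new needs to be done there. All the work is in establishing the pointwise bound $\Psi_Z^*(u) \geq \Phi^*(u)$, after which monotonicity of the exponential function immediately gives $e^{-\Psi_Z^*(u)} \leq e^{-\Phi^*(u)}$ and finishes the statement.

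To prove $\Psi_Z^*(u) \geq \Phi^*(u)$, I would work directly from the definition of the Fenchel conjugate rather than invoking Lemma \ref{lem: properties_Legendre_transform}$(d)$ as a black box, because the two conjugates are taken over slightly different index sets ($\F{R}_{\geq 0}$ for $\Psi_Z^*$, and $D \subset \F{R}_{\geq 0}$ for $\Phi^*$). The chain of inequalities I have in mind is
\begin{equation*}
\Psi_Z^*(u) \;=\; \sup_{r \geq 0}\bigl(ru - \Psi_Z(r)\bigr) \;\geq\; \sup_{r \in D}\bigl(ru - \Psi_Z(r)\bigr) \;\geq\; \sup_{r \in D}\bigl(ru - \Phi(r)\bigr) \;=\; \Phi^*(u).
\end{equation*}
The first inequality holds because enlarging the index set of a supremum can only increase its value (here we use $D \subset \F{R}_{\geq 0}$), and the second holds termwise from the assumption $\Psi_Z(r) \leq \Phi(r)$ on $D$, which after negating gives $ru - \Psi_Z(r) \geq ru - \Phi(r)$ for every $r \in D$.

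The only subtlety worth flagging is the convention handling: if $\F{E}(e^{rZ}) = \infty$ for some $r \in D$, then $\Psi_Z(r) = \infty$ and the summand $ru - \Psi_Z(r) = -\infty$ on the middle supremum. Under the conventions declared at the start of Section \ref{subsubsec: general_cramer_chernoff_method}, this is consistent with $ru - \Phi(r)$ being $-\infty$ as well if $\Phi(r) = \infty$, and otherwise the bound $-\infty \geq ru - \Phi(r)$ fails termwise. This potential issue disappears because by hypothesis $\Phi$ dominates $\Psi_Z$ on $D$, so whenever $\Psi_Z(r) = \infty$ we must also have $\Phi(r) = \infty$, and the termwise inequality $ru - \Psi_Z(r) \geq ru - \Phi(r)$ remains valid (both sides being $-\infty$). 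This is the only place where one needs to be mildly careful, and I expect it to be the main bookkeeping obstacle; otherwise the argument is a two-line direct computation from the definitions.
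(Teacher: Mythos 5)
Your argument is correct and is essentially the paper's proof: the paper cites Lemma \ref{lem: properties_Legendre_transform}$(d)$ for the step $\Psi_Z^* \geq \Phi^*$ and then invokes Chernoff's inequality, while you simply unpack that cited lemma into the two-step supremum comparison (enlarge the index set from $D$ to $\F{R}_{\geq 0}$, then bound termwise), which is the same content. One small simplification to your bookkeeping aside: since $\Phi$ is declared to take values in $\F{R}$, the hypothesis $\Psi_Z \leq \Phi$ on $D$ already forces $\Psi_Z(r) < \infty$ there, so the case $\Psi_Z(r) = \infty$ you worry about cannot occur and no appeal to the conventions is needed.
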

\begin{proof}
The bound $\Psi_Z \leq \Phi$ (on $D$) implies by Lemma \ref{lem: properties_Legendre_transform}$(d)$ $\Psi_Z^*(u) \geq \Phi^*(u)$ for all $u \in \F{R}$. Thus, \eqref{eq: concentration_inequality_with_Phi} follows directly by Chernoff's inequality
\end{proof}
For applications $\Phi$ must be such that $\Phi^*$ can be computed. The following example of a bound $\Phi$ will be relevant for our later discussion. We define (c.f.\cite[Ch.\,2.4]{concentration2013}) :
\begin{definition}
\label{def: sub-gamma}
We call a random variable $Z$ sub-gamma (on the right tail) with variance factor $v$ and scale parameter $c$ if
\begin{equation}
    \Psi_Z(r) \leq \frac{r^2v}{2(1-cr)}
\end{equation}
for all $0 \leq r < \frac{1}{c}$.
\end{definition}
\begin{example} We will show later that for an irreducible MJP the integral \\ $\int_0^t (f(X_s) - \pi(f)) ds$ is sub-gamma (on the right tail) with respect to stationary initial conditions, i.e. with respect to $\F{P}_\pi$. For details see Remarks \ref{rem: lemma_bound_lammba_(r)_via_perturbation_theory}$(b)$ and \ref{rem: poincare_sharp_bound_lambda}$(a)$.
\end{example}
As an immediate consequence of Lemma \ref{lem: more_explicit_concentration_inequality}  we obtain for sub-gamma random variables the concentration inequality, which will appear later again (e.g. in Theorems \ref{th: concentration_via_perturbation}, \ref{th: concentration_via_poincare}).

\begin{lemma}[Bernstein inequality]
\label{lem: bernstein_inequality}
Suppose $Z$ is sub-gamma on the right tail with variance factor $v$ and scale parameter $c$. Then,
\begin{equation}
    \label{eq: Bernstein_inequality}
    \F{P}(Z \geq u) \leq \exp(-\frac{v}{c^2}\left (1 + \frac{uc}{v} - \sqrt{1 + \frac{2uc}{v}} \right))  = \exp(-\frac{2u^2}{v(1+ \sqrt{1 + \frac{2uc}{v}})^2})
\end{equation}
for all $ u \geq 0$.
\end{lemma}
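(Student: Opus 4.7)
The plan is to recognize this as a direct composition of two earlier ingredients: the ``explicit bound via domination'' Lemma~\ref{lem: more_explicit_concentration_inequality} and the explicit Fenchel conjugate computation from Example~\ref{ex: Legendre_transform_subgamma}. Indeed, the sub-gamma hypothesis is precisely the statement that $\Psi_Z(r) \leq \Phi(r)$ on $D = [0, 1/c)$ for the function
\begin{equation*}
    \Phi(r) := \frac{r^2 v}{2(1-rc)},
\end{equation*}
which is exactly the function whose Fenchel conjugate (with respect to $[0,1/c)$) was computed in Example~\ref{ex: Legendre_transform_subgamma}.

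Concretely, I would first invoke Lemma~\ref{lem: more_explicit_concentration_inequality} with $D = [0, 1/c)$ and $\Phi$ as above; this gives
\begin{equation*}
    \F{P}(Z \geq u) \leq e^{-\Phi^*(u)}
\end{equation*}
for all $u \in \F{R}$, in particular for $u \geq 0$. Then I would substitute the explicit formula
\begin{equation*}
    \Phi^*(u) = \frac{v}{c^2}\left(1 + \frac{uc}{v} - \sqrt{1 + \frac{2uc}{v}}\right) = \frac{2u^2}{v\bigl(1 + \sqrt{1 + 2uc/v}\bigr)^2}
\end{equation*}
obtained in \eqref{eq: Legendre_transform_subgamma} of Example~\ref{ex: Legendre_transform_subgamma}, which is valid for every $u \geq 0$. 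Plugging this into the Chernoff-type bound yields the two equivalent forms of \eqref{eq: Bernstein_inequality}.

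There is essentially no obstacle: everything needed has already been established. The only minor care required is to check that the hypotheses of Lemma~\ref{lem: more_explicit_concentration_inequality} are met (the domination $\Psi_Z \leq \Phi$ on $D \subset \F{R}_{\geq 0}$ is exactly Definition~\ref{def: sub-gamma}) and that Example~\ref{ex: Legendre_transform_subgamma} is applied with the same $v,c > 0$ and the same domain $[0, 1/c)$ on which the Fenchel conjugate is taken. Hence the proof will be short — just a two-line citation of the preceding lemma and example.
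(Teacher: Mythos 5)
Your proposal is exactly the paper's own proof: invoke Lemma~\ref{lem: more_explicit_concentration_inequality} with $\Phi(r) = \frac{r^2 v}{2(1-cr)}$ on $D = [0,1/c)$, then substitute the Fenchel conjugate computed in Example~\ref{ex: Legendre_transform_subgamma}. Both the decomposition and the supporting citations match the paper's argument.
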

\begin{proof}
This follows immediately by Lemma \ref{lem: more_explicit_concentration_inequality} with $\Phi(r) = \frac{r^2v}{2(1-cr)}$ and Example \ref{ex: Legendre_transform_subgamma}.
\end{proof}

\begin{reference_paper}
It should be remarked that although the works considered here (i.e.  \cite{wu}, \cite{guillin}, \cite{bernstein}, \cite{lezaud}) do not all explicitly mention or state the Cramér-Chernoff method, all these works are based on the Cramér-Chernoff method as follows. To prove \cite[Thrm.\,1]{wu} (concentration inequality in \cite{wu}) Wu \cite{wu}  refers to the Cramér-Theorem (see \cite[P.\,438]{wu}) and then uses it in \cite[Eq.\,(10)]{wu}. Furthermore, the works \cite{guillin} and \cite{bernstein} are based on \cite[Thrm.\,1]{wu} (which is based on the Cramér-Chernoff method) and refer to it (see \cite[Thrm.\,1.1]{guillin} and \cite[Thrm.\,2.1]{bernstein}). Finally, Lezaud \cite{lezaud} does not directly refer to \cite{wu} but to prove his concentration inequality \cite[Thrm.\,2.4]{lezaud} he proceeds as in the proof of the Cramér-Chernoff method (Theorem \ref{th: cramer_chernoff}) as follows. His proof is based on  the Markov inequality
\begin{equation}
    \F{P}_\nu \left( \frac{\int_0^t f(X_s)ds }{t} - \pi(f)  \geq u \right)  \leq e^{-tu r} \F{E}_\nu \left( e^{r(t^{-1}\int_0^t f(X_s)ds - \pi(f))} \right) 
\end{equation}
which is a reformulation of \cite[Lemma\,2.1]{lezaud} (using \eqref{eq: E_nu_rewriting}). Then, he derives further upper bounds of the above Markov inequality (see \cite[Lemma\,2.2,Lemma\,2.3]{lezaud}) before finally computing a Fenchel conjugate by maximizing (see \cite[P.\,190]{lezaud}) a function of the form
\begin{equation}
   r \mapsto ru - F(r).
\end{equation}
Thus, our presentation here, which emphasizes the Cramér-Chernoff method, is a reformulation of the proofs and results contained in these works, which puts these results on a common footing.
\end{reference_paper}

\subsubsection{Cramér-Chernoff Method for Sums of independent Variables}
As already mentioned in Section 3.3.1, for $H_r(u) = e^{r u}$ and $Z_1,...,Z_n$ i.i.d.,  we have that $\F{E}(H_r(Z_1+...+Z_n)) = \F{E}(H_r(Z_1))^n $, so Chernoff's inequality generalizes quite easily to sums of i.i.d. random variables. More precisely we have the following corollary.
\begin{corollary}[Chernoff's inequality for sums of i.i.d. random variables]
\label{cor: cramer_chernoff_sum_iid}
Let $Z_1,...Z_n$ be i.i.d. random variables and define $ Z := \sum_{i=1}^n Z_i$. Then, the Cramér transform $\Psi_Z^*$ of $Z$ is given by 
\begin{equation*}
    \Psi_Z^*(u) = n\Psi_{Z_1}^*\left( \frac{u}{n} \right),
\end{equation*}
where $u \in \F{R}$ and $\Psi_{Z_1}^*$ is the Cramér transform of $Z_1$. Consequently,
\begin{equation}
    \label{eq: chernoff_for_iid}
    \F{P}( Z \geq u) \leq e^{-n \Psi_{Z_1}^*(\frac{u}{n})}.
\end{equation}

\end{corollary}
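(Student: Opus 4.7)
The plan is to reduce the corollary to a direct computation using the i.i.d. assumption, then apply Chernoff's inequality (Theorem \ref{th: cramer_chernoff}) to $Z$.

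First, I would exploit the multiplicativity of the moment generating function under independence. Since $Z_1, \dots, Z_n$ are independent, $e^{rZ_1}, \dots, e^{rZ_n}$ are also independent for every $r \in \F{R}$, so the expectation factorizes as $\F{E}(e^{rZ}) = \prod_{i=1}^n \F{E}(e^{rZ_i})$. Since the $Z_i$'s are identically distributed this yields $\F{E}(e^{rZ}) = \F{E}(e^{rZ_1})^n$. Taking the logarithm (using the conventions $\log(\infty) = \infty$ introduced before Theorem \ref{th: cramer_chernoff} so no case distinction for divergent moment generating function is needed) gives
\begin{equation*}
    \Psi_Z(r) = n \, \Psi_{Z_1}(r) \quad \text{for all } r \in \F{R}.
\end{equation*}

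Second, I would plug this identity into the definition of the Cramér transform and perform the change of variable $s = r/n$ inside the supremum (or, equivalently, factor out $n$ from the expression inside the $\sup$). Concretely,
\begin{equation*}
    \Psi_Z^*(u) = \sup_{r \geq 0}\bigl(ru - n\Psi_{Z_1}(r)\bigr) = n \sup_{r \geq 0}\left(\tfrac{u}{n} r - \Psi_{Z_1}(r)\right) = n\, \Psi_{Z_1}^*\!\left(\tfrac{u}{n}\right),
\end{equation*}
where in the middle step I factored out $n > 0$ which preserves the supremum.

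Finally, I would apply Chernoff's inequality \eqref{eq: Chernoff_inequality} from Theorem \ref{th: cramer_chernoff} to $Z$ to obtain $\F{P}(Z \geq u) \leq e^{-\Psi_Z^*(u)} = e^{-n \Psi_{Z_1}^*(u/n)}$, which is exactly \eqref{eq: chernoff_for_iid}. There is really no main obstacle here; the corollary is essentially a one-line consequence of the additivity of cumulant generating functions under independence combined with the positive homogeneity (under scaling of the argument) of the Fenchel conjugate operation. The only subtlety worth flagging is the consistent use of the extended-real conventions ($\log \infty = \infty$, $r \cdot \infty = \infty$) introduced before Theorem \ref{th: cramer_chernoff}, which ensures the identity $\Psi_Z = n \Psi_{Z_1}$ holds even when the moment generating functions are infinite for some $r > 0$.
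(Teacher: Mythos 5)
Your proof is correct and follows essentially the same route as the paper: establish $\Psi_Z = n\Psi_{Z_1}$ from independence, factor out $n$ inside the supremum defining the Cramér transform, and apply Chernoff's inequality. One small aside: no change of variable is actually needed (or used) in your middle display — factoring out $n$ is the whole step — so the parenthetical mention of substituting $s = r/n$ is a harmless distraction.
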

\begin{proof}
As $Z_1,...,Z_n$ are i.i.d. we have

\begin{equation*}
    \Psi_Z(r ) = \log ( \F{E}\left(\prod_{i=1}^n  e^{r  Z_i} \right)) = \log (\F{E}(e^{r  Z_1})^n) = n\Psi_{Z_1}(r ),
\end{equation*}
so \begin{equation*}
 \Psi_Z^* (u)=    \sup \limits_{r \geq 0}  ( r u - \Psi_Z(r)) = \sup \limits_{r \geq 0}  n \left (r \frac{u}{n}- \Psi_{Z_1}(r) \right) = n\Psi_{Z_1}^*\left (\frac{u}{n} \right).
\end{equation*}
Finally, the inequality \eqref{eq: chernoff_for_iid} follows by an application of Chernoff's inequality.
\end{proof}

\subsection{Application of the Cramér-Chernoff Method to Functionals of MJPs}
\label{subsec: application_cramer_chernoff}
\subsubsection{Setting and Notation}
\label{subsubsec: setting_notation}
Throughout Section \ref{subsec: application_cramer_chernoff} we consider  the following setting.  Let $E$ be a finite set, and  all functions on $E$ are considered to be real valued. For any probability measure $\mu $ on $E$ and any $f \in \mathcal{B}(E)$  let
\begin{equation}
    \mu(f) := \int f d\mu
\end{equation}
Furthermore, let $(\F{X}, (\F{P}_x)_{x \in E})$ be an irreducible  MJP on  $E$ with unique invariant distribution $\pi$ (according to Theorem \ref{th: uniqueness_existence_invariant_measure}).  Fix some $f \in \mathcal{B}(E) = \F{R}^E$ (c.f. Remark \ref{rem: simplification_finite_state_space}),
some probability measure $\nu = (\nu_x)_{x \in E} $ on $E$, and define $A_t := \int_0^t (f(X_s) - \pi(f))ds$. By centering $f$ if needed, we may also assume 
\begin{equation}
    \label{eq: pi(f) = 0}
    \pi(f) = 0,
\end{equation}
so in particular
\begin{equation}
    \label{eq: A_t}
    A_t = \int_0^t f(X_s) ds.
\end{equation}
To avoid trivialities we assume that $\# E \geq 2$ and $f$ is not constant (i.e. $f \neq 0$). Finally, denote by $\norm{\cdot}_2$ the $L^2(\pi)$-norm or the operator norm induced by the $L^2(\pi)$-norm, and by $\langle \cdot, \cdot \rangle  $ the inner product on $L^2(\pi)$. Furthermore, let $\Psi_{A_t}(r) = \log \F{E}_\nu(e^{rA_t})$.

\subsubsection{A general Concentration Inequality}
\label{subsubsec: general_concentration_inequality}

In this section based on the Cramér-Chernoff method we derive a general concentration inequality; Theorem \ref{th: main_conc_inequality}, which is our version of \cite[Thrm.\,1]{wu}.
The most relevant notions and results of Sections \ref{sec: preliminaries} and \ref{sec: concentration_inequalities} are infinitesimal generator (of an MJP) (Section \ref{subsubsec: infinitesimal_generators}) and Cramér-Chernoff method; in particular Lemma \ref{lem: more_explicit_concentration_inequality} (Section \ref{subsec: cramer_chernoff_method}).  This section is based on \cite{wu}  and \cite{lezaud}, in particular  \cite[Proof of Thrm.\,1]{wu} and \cite[Proof of Lemma\,2.3]{lezaud}.\par 
We proceed as follows. We use the Cramér-Chernoff method, more precisely Lemma \ref{lem: more_explicit_concentration_inequality}, i.e. we want to find a bound $\Phi(r) \geq \Psi_{A_t}(r)$ to obtain  a concentration inequality  
\begin{equation}
    \F{P}_\nu \left (\frac{A_t}{t} \geq u \right) = \F{P}_\nu (A_t \geq tu) \leq e^{-\Phi^*(tu)} .
\end{equation}
To find a bound $\Phi(r)$  we use a so called \textit{Feynman-Kac semigroup} $(P_t^{rf})_{ t \geq 0}$ on $L^2(\pi)$ to rewrite $\Psi_{A_t}(r)$ in terms of this semigroup. Then, by bounding $\norm{P_t^{rf}}_2$ we obtain a  bound $\Phi(r)$ and by computing $\Phi^*(tu)$ a concentration inequality. \\
  Define for $t \geq 0$, $h \in L^2(\pi)$ an operator $P_t^h$ on $L^2(\pi)$ via (c.f. \cite[Eq.\,(7)]{wu})
\begin{equation}
\label{eq: def_P_t^h}
    (P_t^hg)(x) := \F{E}_x \left (e^{\int_0^th(X_s) ds}g(X_t) \right )
\end{equation}
for $g \in L^2(\pi)$ and $x \in E$.  Recall that in our context  $L^2(\pi) = \mathcal{B}(E)$ (see Remark \ref{rem: simplification_finite_state_space}) and $\pi_x > 0$ for all $x \in E$ (see Theorem \ref{th: uniqueness_existence_invariant_measure}), so the above operator is clearly well defined. We will see in Lemma \ref{lem: feynman_kac} that $(P_t^h)_{t \geq 0}$ is indeed a semigroup. This semigroup is usually referred to as a \textit{Feynman-Kac semigroup} (see e.g. \cite[Eq.\,(7)]{wu}, \cite[S.\,13]{guillin}). Let  $\textbf{1} : E \rightarrow \{ 1 \}$ denote the constant 1-function. To rewrite $\Psi_{A_t}(r) = \log \F{E}_\nu (e^{rA_t})$ we can rewrite (by using directly the Definition \eqref{eq: def_P_t^h})
\begin{equation}
\label{eq: E_nu_rewriting}
    \F{E}_\nu(e^{rA_t}) =\sum_{x \in E}\nu_x (P_t^{rf}\textbf{1})(x)  =  \sum_{x \in E}\frac{\nu_x}{\pi_x} (P_t^{rf}\textbf{1})(x) \pi_x = \left \langle \frac{d \nu}{d \pi} ,P_t^{rf}\textbf{1} \right  \rangle ,
\end{equation}
where $\frac{d \nu}{d \pi}(x) = \frac{\nu_x}{\pi_x}$ denotes the Radon-Nikodym derivative of $\nu$ with respect to $\pi$ (note that $\frac{d \nu}{d \pi}$ is well defined, because by Theorem \ref{th: uniqueness_existence_invariant_measure} $\pi_x > 0$ for all $x \in E$).
Applying the Cauchy-Schwartz inequality to the above equality yields the bound
\begin{equation}
\label{eq: bound_E_nu}
    \F{E}_\nu(e^{rA_t}) \leq \norm{\frac{d\nu}{d\pi}}_2 \norm{P_t^{rf}}_2
\end{equation}
and consequently
\begin{equation}
\label{eq: bound_Psi_Z_prefinal}
    \Psi_{A_t}(r) \leq \log \norm{\frac{d\nu}{d\pi}}_2 + \log \norm{P_t^{rf}}_2.
\end{equation}
We will now derive the announced bound
\begin{equation*}
    \Phi(r) \geq \log \norm{\frac{d\nu}{d\pi}}_2 + \log \norm{P_t^{rf}}_2 \geq \Psi_{A_t}(r)
\end{equation*}
 by bounding  $\norm{P_t^{rf}}_2$. For that we will prove and use that $(P_t^h)_{ t \geq 0}$ is a continuous semigroup on $L^2(\pi)$ with infinitesimal generator $L + M_h$, where $L$ is the infinitesimal generator of the MJP and $M_h$ is the multiplication with $h$, i.e. $(M_hg)(x) = h(x)g(x)$  for $x \in E$, $g \in L^2(\pi)$. We have the following lemma:
\begin{lemma}[Feynman-Kac semigroup]
\label{lem: feynman_kac}
Let $(P_t^h)_{t \geq 0} $ be the family of operators defined as in \eqref{eq: def_P_t^h} and define $L_h = L + M_h$. Then $(P_t^h)_{t \geq 0} $ is a continuous semigroup on $L^2(\pi)$ with generator $L_h$, i.e.
\begin{equation}
    \label{eq: semigroup of tilted generator}
    P_t^h = \exp(tL_h).
\end{equation}
\end{lemma}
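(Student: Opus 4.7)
The plan is to verify that $(P_t^h)_{t \geq 0}$ is a continuous semigroup on the finite-dimensional space $L^2(\pi) = \mathcal{B}(E)$ and then identify its infinitesimal generator as $L + M_h$. Once this is done, Remark \ref{rem: semigroup_generator_finite_dim} immediately yields the identification $P_t^h = \exp(t L_h)$ as operators.

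First, I would observe that each $P_t^h$ is a well-defined linear operator on $\mathcal{B}(E)$: linearity in $g$ is inherited from the expectation, while $P_0^h = \mathrm{Id}$ follows from $\F{P}_x(X_0 = x) = 1$. Since $E$ is finite and $h$ is bounded, $e^{\int_0^t h(X_s)ds}$ is bounded by $e^{t \|h\|_\infty}$, so the expectation in \eqref{eq: def_P_t^h} exists. For the semigroup property I would split $\int_0^{t+s} h(X_u)du = \int_0^t h(X_u)du + \int_t^{t+s} h(X_u)du$ and condition on $(X_r)_{0 \leq r \leq t}$. By the Markov property (Definition \ref{def: Markov_jump_process}$(c)$) applied to the bounded measurable functional $(x_u)_{u \geq 0} \mapsto e^{\int_0^s h(x_u)du}g(x_s)$, the inner conditional expectation equals $(P_s^h g)(X_t)$, and pulling this out gives $P_{t+s}^h g = P_t^h (P_s^h g)$.

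Next, for continuity, I would use Remark \ref{rem: simplification_finite_state_space}, which reduces norm convergence to pointwise convergence. The pointwise statement $(P_t^h g)(x) \to g(x)$ as $t \downarrow 0$ then follows from dominated convergence, using the right continuity of paths of $\F{X}$ together with boundedness of $h$ and $g$.

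The main step, and the one where I expect the most care is required, is computing the infinitesimal generator. Fix $x \in E$ and decompose $\F{E}_x(e^{\int_0^t h(X_s) ds} g(X_t))$ according to whether the first jump time $\tau_1$ has occurred by time $t$. On $\{\tau_1 > t\}$, which has probability $e^{-q_x t}$ by the characterization in Theorem \ref{th: characterization_MJP}, the process remains at $x$ and the integrand equals $e^{t h(x)} g(x)$; this contributes $g(x) + t(h(x)-q_x)g(x) + O(t^2)$. On $\{\tau_1 \leq t\}$, conditioning on $\tau_1 = s$ and using the strong Markov property (Lemma \ref{lem: strong_markov}) together with the jump probabilities $q_{xy}/q_x$ from \eqref{eq: def_underlying_MC_2}, the contribution is $t \sum_{y \neq x} q_{xy} g(y) + O(t^2)$, where the $O(t^2)$ absorbs both the probability of two or more jumps and the fact that the exponential factor differs from $1$ only by $O(t)$. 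Summing,
\[
(P_t^h g)(x) = g(x) + t\Bigl(h(x) g(x) + \sum_{y \in E} q_{xy} g(y)\Bigr) + O(t^2) = g(x) + t\bigl((L + M_h) g\bigr)(x) + O(t^2),
\]
which identifies the generator as $L_h = L + M_h$. The potentially tricky point is controlling the $O(t^2)$ terms uniformly, but finiteness of $E$ and boundedness of $h, g$ make this routine. An appeal to Remark \ref{rem: semigroup_generator_finite_dim} then closes the proof.
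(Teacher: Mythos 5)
Your proof is correct, and the key step — identifying the infinitesimal generator — is carried out by a genuinely different computation than the paper uses. The paper splits the difference quotient into two pieces,
\[
\frac{(P_t^hg)(x) - g(x)}{t} = \F{E}_x\!\left(\frac{e^{I_t}-1}{t}\,g(X_t)\right) + \frac{\F{E}_x(g(X_t)) - g(x)}{t},
\]
observes that $\F{P}_x$-a.s.\ $\frac{e^{I_t}-1}{t}\to h(x)$ by right continuity, and lets dominated convergence handle the first term while the definition of $L$ handles the second. This cleanly isolates the contribution of $h$ from the contribution of the unperturbed process and never invokes the jump structure at all. You instead decompose the expectation according to $\{\tau_1 > t\}$ and $\{\tau_1 \le t\}$, use the explicit holding-time law $\tau_1 \sim \mathrm{Exp}(q_x)$ and jump probabilities $q_{xy}/q_x$ from Theorem \ref{th: characterization_MJP}, and expand everything to first order in $t$. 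Your computation works out: the $\{\tau_1>t\}$ contribution gives $g(x) + t(h(x)-q_x)g(x) + O(t^2)$, the $\{\tau_1\le t\}$ contribution gives $t\sum_{y\ne x}q_{xy}g(y) + O(t^2)$, and $q_x = -q_{xx}$ combines these into $g(x) + t(L_hg)(x) + O(t^2)$; the error terms are uniformly controlled because $E$ is finite and $h,g$ bounded. What the two approaches buy is different: the paper's split is more abstract and would transfer more readily to settings where $L$ is not given by a $Q$-matrix, while yours is more concrete, mirrors the paper's own proof of the integral equation \eqref{eq: integral_equation} in Theorem \ref{th: characterization_MJP}, and makes visible exactly where the off-diagonal and diagonal entries of $Q$ enter. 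One small remark on your semigroup-property step: conditioning on $(X_r)_{0\le r\le t}$ and applying the Markov property to the functional $(x_u)_{u\ge 0}\mapsto e^{\int_0^s h(x_u)du}g(x_s)$ requires that this map be product-measurable on $E^{[0,\infty)}$, which is why the paper writes the integral as a $\limsup$ of Riemann sums before invoking \eqref{eq: Markov_property}; this is a technicality, but worth noting if you want a fully rigorous version.
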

\begin{reference_paper}
\begin{enumerate}[$(a)$]
    \item  We decided to include here our own proof of Lemma \ref{lem: feynman_kac}, as \cite{wu} uses  this Lemma (see \cite[P.\,439, Case\,1]{wu}) but does not provide a proof.
    \item The above lemma can also be proved by using the proof idea of \cite[Lemma 2.1]{lezaud}: Using the Markov property it can be shown that  for any $g \in L^2(\pi)$, $k \in \F{N}$, $x \in E$ and any $t \geq 0$
    \begin{equation}
      \left( \left[\exp(\frac{t}{k}L)\exp(\frac{t}{k}M_h)\right]^k g \right)(x) = \F{E}_x \left ( e^{\sum_{i=1}^k h\left(X_{j\cdot \frac{t}{k}}\right) \frac{t}{k}} g(X_t)\right).
    \end{equation}
    Then, a combination of the Trotter-product formula (\cite[Theorem 2.11]{Hall2015})
    \begin{equation}
        \lim_{k \to \infty} \left[\exp(\frac{t}{k}L)\exp(\frac{t}{k}M_h)\right]^k = \exp(t(L+M_h))
    \end{equation}
    and the dominated convergence theorem yields Lemma \ref{lem: feynman_kac}
\end{enumerate}
\end{reference_paper}
\begin{remark}
\begin{enumerate}[$(a)$]
    \item In the context of the Feynman-Kac semigroup $(P_t^h)_{ t \geq 0}$ the operator $L_h = L + M_h$ is also frequently called  'tilted generator' (see e.g. \cite{Lapolla})
    \item As $M_{rf} = r M_f$, the generator of $(P_t^{rf})_{t \geq 0}$ is $L + rM_f$
    \item (Feynman-Kac formula) The above lemma implies the so called \textit{Feynman-Kac formula} (c.f \cite[P.\,118]{Liggett2010}), which states that the function 
    \begin{equation*}
        u(t,x) := \F{E}_x \left (e^{\int_0^th(X_s) ds}g(X_t) \right ) = (P_t^hg)(x)
    \end{equation*}
    is a probabilistic solution of the differential equation
    \begin{equation*}
        \frac{d}{dt}u(t,x) = Lu(t,x) + h(x)u(x), \, u(0,x) = g(x)
    \end{equation*}
     
\end{enumerate}

\end{remark}

\begin{proof}[Proof of Lemma \ref{lem: feynman_kac} ]
 As we are treating an MJP on a finite state space, $L^2(\pi)$ is finite dimensional and we can prove the claim by direct computation as follows.
In any finite dimensional vector space any two semigroups having the  same infinitesimal generator are identical (see Remark \ref{rem: semigroup_generator_finite_dim}). So to prove that $P_t^h = \exp(tL_h)$, we prove that $(P_t^h)_{t \geq 0}$ is a semigroup with generator $L_h$. Let $g \in L^2(\pi) $, $I_t = \int_0^th(X_s)ds$ and define $B_t : E^{[0,\infty)} \rightarrow \F{R}$ by 
\begin{equation}
\label{eq: def_A_t}
B_t((y_s)_{s \geq 0}) = \exp(\limsup_{k \to \infty }\sum_{i = 1}^k\frac{t}{k}h\left( y_{\frac{t i}{k}}\right))g(y_t).
\end{equation}
Note that we have $B_t((X_s)_{s \geq 0}) = e^{I_t}g(X_t)$ for all $t \geq 0$, because the Riemann sums in \eqref{eq: def_A_t} converge to $I_t$ by the right continuity of the paths of $\F{X}$ and boundedness of $h$.
Thus, for all $x \in E$ and $s,t \geq 0$ we have
\begin{align*}
  &(P_{t}^hP_{s}^hg)(x) = \F{E}_x \left [e^{I_t}(P_s^hg)(X_t) \right] = \F{E}_x \left [e^{I_t}\F{E}_{X_t}[e^{I_s}g(X_s)]\right] = \F{E}_x \left[ e^{I_t} \F{E}_{X_t} [ B_s((X_h)_{h \geq 0})] \right] \\
  &= \F{E}_x \left [e^{I_t}\F{E}_x[ B_s((X_{h+t})_{h \geq 0}) | (X_h)_{0 \leq h \leq t}] \right ] = \F{E}_x \left [\F{E}_x[ e^{I_t}B_s((X_{h+t})_{h \geq 0}) | (X_h)_{0 \leq h \leq t}] \right ] \\
  &= \F{E}_x\left [ e^{I_t}B_s((X_{h+t})_{h \geq 0})\right ] = \F{E}_x \left[e^{\int_0^th(X_u)du+\int_t^{t+s}h(X_u)du}g(X_{s+t})\right] = (P^h_{s+t}g)(x),
\end{align*}
where in the second line the Markov property \eqref{eq: Markov_property} of the MJP and the pullout-property of the conditional expectation were used.
Furthermore, as $\F{P}_x(X_0 = x) = 1$ and $S_0 \equiv 0$ we have $P_0^hg = g$, so $(P_t^h)_{t \geq 0}$ is a semigroup of operators. We now calculate the generator of $(P_t^h)_{t \geq 0}$. Let $x \in E$ and $ t > 0$, then
\begin{equation}
    \label{eq: lemma_tilted_generator}
    \frac{(P_t^hg)(x) - g(x)}{t} =  \F{E}_x \left ( \frac{e^{I_t}-1}{t} g(X_t) \right) + \frac{\F{E}_x( g(X_t)) - g(x)}{t}.
\end{equation}
We now consider the limit for $t \downarrow 0$ in the above expression.
The paths $ t \mapsto X_t$ are right continuous and $\F{P}_x(X_0 = x) = 1$ so $\F{P}_x$-a.s there is an $\varepsilon > 0$ such that $ I_t = t h(x)$ on $ [0, \varepsilon)$. It follows that $\F{P}_x$- a.s. 
\begin{equation*}
     h(x) = \lim \limits_{ t \downarrow 0 } \frac{e^{ t h(x)}-1}{t} = \lim \limits_{ t \downarrow 0 }\frac{e^{I_t}-1}{t}.
\end{equation*}
But as $\abs{I_t} \leq t \norm{h}_\infty$ by the monotonicity of the exponential function we have
\begin{equation*}
    \abs{\frac{e^{I_t}-1}{t}} \leq \max \left (\frac{e^{t \norm{h}_\infty}-1}{t},\frac{1-e^{-t\norm{h}_\infty}}{t} \right).
\end{equation*}
As the exponential function is differentiable at $0$, the right hand side of the above equation is bounded (for $t \to 0$). Furthermore, $g$ is bounded and $\lim_{t \downarrow 0}g(X_t) = x$ $\F{P}_x$-a.s., so using the dominated convergence theorem for the left term in \eqref{eq: lemma_tilted_generator} and the definition of $L$ for the right term in \eqref{eq: lemma_tilted_generator}, it follows that
\begin{equation*}
   \lim \limits_{t \downarrow 0}  \frac{(P_t^hg)(x) - g(x)}{t} = h(x)g(x) + (Lg)(x) = (M_hg)(x) + (Lg)(x)
\end{equation*}
for all $x \in E$. As pointwise convergence coincides with convergence in $L^2(\pi)$ (Remark \ref{rem: simplification_finite_state_space}) the claim follows.
\end{proof}

Using the above lemma we can bound $\norm{P_t^{rf}}_2$ in terms of the operators $L, M_{f}$. The following bound holds:

\begin{lemma}
\label{lem: lumer_phillips}
For all $r \in \F{R}$
\begin{equation}
\label{eq: bound_2_norm_feynman_kac}
    \norm{P_t^{rf}}_2 \leq \exp(t \lambda_0(r)),
\end{equation}
where $\lambda_0(r)$ is the largest eigenvalue of the selfadjoint operator $\tilde{L}(r) := \frac{L+L^*}{2} + r M_f$.
\end{lemma}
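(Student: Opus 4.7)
The plan is to use the Feynman-Kac identification $P_t^{rf} = \exp(t L_{rf})$ from Lemma \ref{lem: feynman_kac} (with $L_{rf} = L + rM_f$) and then obtain the operator-norm bound from a standard energy/Grönwall argument. The slogan is that on a Hilbert space the operator norm of $e^{tA}$ is controlled by the largest eigenvalue of the symmetric part $\tfrac{A+A^*}{2}$; here this symmetric part is exactly $\tilde L(r)$, which explains the statement.

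First, I would fix $g \in L^2(\pi)$ and set $u(t) := P_t^{rf} g = e^{t L_{rf}} g$, so that $\tfrac{d}{dt}u(t) = L_{rf} u(t)$ (this derivative makes sense because we work in a finite dimensional space, see Remark \ref{rem: semigroup_generator_finite_dim}). Differentiating the squared norm gives
\begin{equation*}
  \frac{d}{dt}\|u(t)\|_2^2
  = \langle L_{rf} u(t), u(t)\rangle + \langle u(t), L_{rf} u(t)\rangle
  = \langle (L_{rf} + L_{rf}^*) u(t), u(t)\rangle.
\end{equation*}
Next, I would identify the symmetric part of $L_{rf}$: since $f$ is real-valued, the multiplication operator $M_f$ is self-adjoint on $L^2(\pi)$ (for all $g,h \in L^2(\pi)$ one has $\langle M_f g, h\rangle = \sum_x f(x) g(x) h(x) \pi_x = \langle g, M_f h\rangle$), so $L_{rf}^* = L^* + r M_f$ and hence $\tfrac{1}{2}(L_{rf} + L_{rf}^*) = \tilde L(r)$.

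Now $\tilde L(r)$ is self-adjoint, so by Lemma \ref{lem: biggest_eigenvalue} its largest eigenvalue $\lambda_0(r)$ satisfies $\langle \tilde L(r) v, v\rangle \leq \lambda_0(r)\|v\|_2^2$ for every $v \in L^2(\pi)$. Substituting this into the identity above yields the differential inequality
\begin{equation*}
   \frac{d}{dt}\|u(t)\|_2^2 \leq 2 \lambda_0(r) \|u(t)\|_2^2,
\end{equation*}
and Grönwall's lemma gives $\|u(t)\|_2^2 \leq e^{2t\lambda_0(r)} \|g\|_2^2$. Taking square roots and the supremum over $g$ with $\|g\|_2 \leq 1$ yields \eqref{eq: bound_2_norm_feynman_kac}.

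There is no real obstacle here; the only subtle verification is that $M_f$ is self-adjoint on $L^2(\pi)$, which is what identifies the symmetric part of $L_{rf}$ with $\tilde L(r)$ and makes $\lambda_0(r)$ the natural quantity. (Equivalently, one could appeal to the Lumer--Phillips / logarithmic norm bound for $e^{tA}$ on a Hilbert space, but in our finite dimensional setting the direct Grönwall argument above is cleanest.)
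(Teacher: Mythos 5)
Your proof is correct and follows essentially the same route as the paper's: differentiate $\|P_t^{rf}g\|_2^2$, identify the symmetric part of $L+rM_f$ with $\tilde{L}(r)$ using the self-adjointness of $M_f$, bound the quadratic form by $\lambda_0(r)$ via Lemma \ref{lem: biggest_eigenvalue}, and close with Grönwall.
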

\begin{reference_paper}
\label{ref: lambda_0(r)_as_supremum}
\begin{enumerate}[$(a)$]
    \item The bound  \eqref{eq: bound_2_norm_feynman_kac} is also found in \cite[Eq.\,(8)]{wu}, \cite[Remark\,3$(a)$]{wu} and \cite[Lemma\,2.2]{lezaud}. In \cite[Eq.\,(8)]{wu} the bound is stated as 
    \begin{equation}
    \label{eq: wu_bound_feynman_kac_1}
        \norm{P_t^{V}}_2 \leq \exp(t \Lambda(V)),
    \end{equation}
    and in \cite[Remark 3(a)]{wu}
    \begin{equation}
    \label{eq: wu_bound_feynman_kac_2}
        \norm{P_t^{V}}_2 \leq \exp(t \Lambda_0(V)),
    \end{equation}
    where (see \cite[Eq.\,(9)]{wu})
    \begin{equation}
        \Lambda(V) = \sup \{- \mathcal{E}^{\sigma}(g,g) + \langle V, g^2 \rangle \, |\, \norm{g}_2 = 1, g \in D(\mathcal{E}^\sigma) \cap L^2(\abs{V}\cdot \mu)\},
    \end{equation}
    and (see \cite[Remark\,3(a)]{wu})
    \begin{equation}
        \Lambda_0(V) = \sup \{\langle Lg,g \rangle + \langle V, g^2 \rangle \, |\, \norm{g}_2 \leq  1, g \in D(L) \} .
    \end{equation}
    Here
    $(\mathcal{E}^\sigma, D(\mathcal{E}^\sigma))$ denotes the closure (see \cite[Ch.\,6.1.4]{Kato} for the definition of the closure of a bilinear form) of the symmetrized Dirichlet form $(\mathcal{E}^\sigma, D(L))$ ($D(L)$ denotes the domain of the $L^2$-infinitesimal generator)
    \begin{equation}
    \label{eq: symmetrized_dirichlet}
        \mathcal{E}^\sigma (g,h) = - \frac{1}{2} ( \langle Lg,h \rangle + \langle g, Lh \rangle ).
    \end{equation}
   In our setting $\mu = \pi$, $V = rf$, $D(L) = L^2(\pi)$, and $L$ is a bounded operator. Thus, $(\mathcal{E}^\sigma, D(L)) = (\mathcal{E}^\sigma, L^2(\pi))$ is (already) a closed symmetric form. Furthermore, $V$ is bounded (by finiteness of $E$), consequently $D(\mathcal{E}^\sigma) \cap L^2(\abs{V}\cdot \mu) = L^2(\mu)$ and
    \begin{align}
    \begin{split}
        \Lambda(V) &= \sup \{- \mathcal{E}^{\sigma}(g,g) + \langle V, g^2 \rangle \, |\, g \in L^2(\mu), \norm{g}_2 = 1 \} \\
        &= \sup \left \{\left \langle  \left (\frac{L+L^*}{2}+r M_f \right) g, g  \right \rangle \, \middle |\, g \in L^2(\mu), \norm{g}_2 = 1 \right \}  \\
        &= \lambda_0(r),
    \end{split}
    \end{align}
    where we used $\langle g, Lg \rangle = \langle L^*g, g \rangle$ and Lemma \ref{lem: biggest_eigenvalue}. Similarly one obtains
    \begin{equation}
        \Lambda_0(V) = \lambda_0(r)
    \end{equation}
    and consequently the bounds \eqref{eq: wu_bound_feynman_kac_1} and \eqref{eq: wu_bound_feynman_kac_2} coincide with the bound of Lemma \ref{lem: lumer_phillips}.
    \item In \cite{lezaud}, not the operator $\tilde{L}(r)$ is considered but instead the operator $\tilde{\Lambda}(r) = - \tilde{L}(r)$ (see \cite[Lemma 2.2]{lezaud}). Furthermore, $\lambda_0(r)$ is defined as the smallest eigenvalue of $\tilde{\Lambda}(r)$ (see \cite[Lemma 2.2]{lezaud}), consequently \cite{lezaud} obtains  \cite[Eq.(2.3)]{lezaud}
    \begin{equation}
        \norm{P_t(r)}_2 \leq \exp(-t\lambda_0(r)),
    \end{equation}
    where $P_t(r) = P_t^{rf}$ as \cite{lezaud} defines $P_t(r)$ as the semigroup generated by $L + rM_f$ (see \cite[P.\,187,Proof of Lemma 2.2]{lezaud}) 
    \item The above lemma follows also from the Lumer-Phillips theorem (following  \cite[P.\,439, Case\,1]{wu}). By writing $\lambda_0(r)$ as a supremum as Lemma \ref{lem: biggest_eigenvalue} and using the identity
    \begin{equation*}
        \langle Lg, g \rangle = \langle g, L^*g \rangle = \langle L^*g, g \rangle,
    \end{equation*}
    it is easy to see that
\begin{equation*}
    \langle (L+r M_f - \lambda_0(r)) g, g \rangle = \left \langle  \left (\frac{L+L^*}{2}+r M_f - \lambda_0(r) \right) g, g  \right \rangle \leq 0
\end{equation*}
for any $g \in L^2(\pi)$, so the operator $L + rM_f - \lambda_0(r)$ is dissipative (see \cite[P.\,81, Ch.\,II.3, Prop.\,3.23]{Engel2006} for a characterization of dissipativity). Furthermore, as the spectrum of any finite dimensional operator is bounded, for $\lambda > 0$ big enough the operator 
\begin{equation*}
    \lambda - (L + rM_f - \lambda_0(r))
\end{equation*}
is surjective
and consequently by the Lumer-Phillips theorem (\cite[P.\,76, Ch.\,II.3,Thrm.\,3.15]{Engel2006}), the semigroup generated by $L + rM_f - \lambda_0(r)$ is a contraction semigroup, i.e.
\begin{equation*}
   \norm{P_t^{rf}e^{-t\lambda_0(r)}}_2 = \norm{\exp [ t (L + rM_f - \lambda_0(r))]}_2 \leq 1,
\end{equation*}
which is the statement of Lemma \ref{lem: lumer_phillips}.
\end{enumerate}
\end{reference_paper}
\begin{remark}
\label{rem: lemma_bound_P_t^rf}
\begin{enumerate}[$(a)$]
    
    \item We will later (see Section \ref{subsubsec: concentration_via_perturbation}) justify the notation $\lambda_0(r)$ by proving that $\lambda_0(r)$ is the perturbation of the eigenvalue $0$ for the perturbed operator $\tilde{L}(r) = \frac{L+L^*}{2} + rM_f$ (recall by Lemma \ref{lem: properties_infinitesimal_generator}$(a)$ that $\lambda_0(0) = 0$ is an eigenvalue of the unperturbed operator $\Tilde{L}(0) = \frac{L+L^*}{2}$).
    \item  If $L$ is self adjoint, then $  L + r M_f$ is also selfadjoint (for $r \in \F{R})$ and it follows (e.g. by diagonalization of $\exp(t(L + rM_f))$ in an orthonormal basis) that
    \begin{equation*}
        \norm{P_t^{rf}}_2 = \norm{\exp(t(L + rM_f))}_2= \exp(t\lambda_0(r)),
    \end{equation*}
    so the bound is exact in the detailed balance case (by Theorem \ref{th: characterization_detailed_balance} $L$  is selfadjoint in the detailed balance case). 
    
\end{enumerate}
\end{remark}

\begin{proof}[Proof of Lemma \ref{lem: lumer_phillips}]
We follow the proof of \cite[Lemma 2.2]{lezaud} (but use our notation). Let $g  \in  L^2(\pi)$, define $\phi(t) = \norm{P_t^{rf}g}_2^2 = \langle P_t^{rf}g P_t^{rf}g\rangle $, and let $L(r) = L + rM_f$ be the generator of $(P_t^{rf})_{t \geq 0}$ . Using the usual product rule (for scalar products on finite vector spaces) yields
\begin{align}
\begin{split}
\label{eq: lemma2_derivative_phi}
    \frac{d}{dt}\phi(t) &=  \langle \frac{d}{dt}P_t^{rf}g, P_t^{rf}g\rangle + \langle P_t^{rf}g, \frac{d}{dt}P_t^{rf}g \rangle =  \langle L(r)P_t^{rf}g, P_t^{rf}g\rangle + \langle P_t^{rf}g, L(r)P_t^{rf}g \rangle \\
    &= \langle (L(r) + L(r)^*)P_t^{rf}g, P_t^{rf}g\rangle  =  2 \left \langle \left  (\frac{L+L^*}{2}+ rM_f \right )P_t^{rf}g, P_t^{rf}g \right \rangle  \\
    &= 2\langle \Tilde{L}(r)P_t^{rf}g, P_t^{rf}g \rangle,
\end{split}
\end{align}
where in the first line Lemma \ref{lem: feynman_kac} and in the second line the selfadjointness of $M_f$ were used. The operator $\Tilde{L}(r)$ is selfadjoint, so  by  Lemma \ref{lem: biggest_eigenvalue} 
\begin{equation}
\label{eq: lemma2_minimax}
    \lambda_0(r) = \sup \left \{ \frac{\langle \Tilde{L}(r)g, g \rangle }{\norm{g}_2^2} \middle | g \in L^2(\pi), g \neq 0 \right \}.
\end{equation}
Using \eqref{eq: lemma2_derivative_phi}  and \eqref{eq: lemma2_minimax} we get
\begin{equation*}
    \frac{d}{dt}\phi(t) =  2\langle \Tilde{L}(r)P_t^{rf}g, P_t^{rf}g \rangle \leq 2 \lambda_0(r) \norm{P_t^{rf}g}_2^2 = 2\lambda_0(r) \phi(t).
\end{equation*}
It follows that $\frac{d}{dt}(e^{-2\lambda_0(r)t}\phi(t)) \leq 0$, which means that $t \mapsto e^{-2\lambda_0(r)t}\phi(t) $ is nonincreasing and thus 
\begin{align*}
    &e^{-2\lambda_0(r)t}\phi(t) \leq \phi(0) = \norm{g}_2^2 \\
    &\Longleftrightarrow \phi(t) \leq e^{2\lambda_0(r)t}\norm{g}_2^2 \\
    &\Longleftrightarrow \norm{P_t^{rf}g}_2 \leq e^{\lambda_0(r)t}\norm{g}_2
\end{align*}
\end{proof}
Finally, by using the above lemma to bound $\norm{P_t^{rf}}_2$ and  recalling \eqref{eq: bound_Psi_Z_prefinal} we get the announced bound
\begin{equation}
\label{eq: final_bound_Phi}
    \Phi(r) := \log(\norm{\frac{d\nu}{d\pi}}_2) + t\lambda_0(r) \geq \Psi_{A_t}(r),
\end{equation}
which holds even for all $r \in \F{R}$. We can now take the Fenchel conjugate of $\Phi$ with respect to $\F{R}_{\geq 0}$ (as the Cramér transform is a Fenchel conjugate taken with respect to $\F{R}_{ \geq 0}$) to derive the
following  concentration inequality (c.f. \cite[Eq.\,(10), Thrm.\,1]{wu}).
\begin{theorem}
\label{th: main_conc_inequality}
In our setting the following concentration inequality holds.
For all $u \geq 0$
\begin{equation}
\label{eq: main_conc_inequality}
\F{P}_\nu \left (\frac{A_t}{t} \geq u \right ) \leq \norm{\frac{d\nu}{d \pi}}_2 e^{- t \lambda_0^*(u)},
\end{equation}
where 
\begin{equation*}
    \lambda_0^*(u)  = \sup_{ r \in \F{R}}(r u - \lambda_0(r)) = \sup_{r \geq 0}(r u - \lambda_0(r))
\end{equation*}
is the Fenchel conjugate of $\lambda_0$ \lb with respect to $\F{R}$\rb \space  for $u \geq 0$. Furthermore, define 
\begin{equation}
    I(u) := \inf \{ - \langle Lg, g \rangle \; | \, \norm{g}_2 = 1, \langle M_fg, g \rangle = u \}
\end{equation}
for $u \in \F{R}$. Then,  we have 
\begin{equation}
\label{eq: fenchel_dual_lambda_0(r)}
   \lambda_0^*(u) = \sup_{ r \in \F{R}}(r u - \lambda_0(r)) = I(u)
\end{equation}
for all $u \in \F{R}$.
\end{theorem}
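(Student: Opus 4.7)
For the inequality \eqref{eq: main_conc_inequality}, I would specialize the Cramér-Chernoff recipe of Lemma \ref{lem: more_explicit_concentration_inequality} to the control function $\Phi(r) := \log \norm{d\nu/d\pi}_2 + t\lambda_0(r)$ already derived in \eqref{eq: final_bound_Phi}. Concretely: apply Markov's inequality to $e^{rA_t}$, insert the Cauchy-Schwarz bound \eqref{eq: bound_E_nu} together with Lemma \ref{lem: lumer_phillips}, and take the infimum over $r \geq 0$. Since $\Phi$ is affine in $\log\norm{d\nu/d\pi}_2$, its Fenchel conjugate at $tu$ separates into $t\sup_{r \geq 0}[ru - \lambda_0(r)] - \log \norm{d\nu/d\pi}_2$, which is exactly the exponent claimed. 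To upgrade $\sup_{r \geq 0}$ to $\sup_{r \in \F{R}}$ when $u \geq 0$, I plug $g = \textbf{1}$ into the variational characterization of Lemma \ref{lem: biggest_eigenvalue} and use $L\textbf{1} = 0$ together with $\pi(f) = 0$ to get $\lambda_0(r) \geq 0$, with equality at $r = 0$; thus $ru - \lambda_0(r) \leq 0$ for $r \leq 0, u \geq 0$, and enlarging the domain to $\F{R}$ leaves the supremum unchanged.

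\textbf{Setting up $\lambda_0^*(u) = I(u)$.} Applying Lemma \ref{lem: biggest_eigenvalue} to the selfadjoint operator $\tilde{L}(r) = \frac{L+L^*}{2} + rM_f$, and using the identity $\langle \frac{L+L^*}{2}g, g \rangle = \langle Lg, g \rangle$, I obtain
\begin{equation*}
\lambda_0(r) = \sup_{\norm{g}_2 = 1}\bigl[\langle Lg, g\rangle + r\langle M_f g, g\rangle\bigr].
\end{equation*}
Partitioning the unit sphere by the value $u = \langle M_f g, g\rangle$ rewrites this as $\lambda_0(r) = \sup_{u \in \F{R}}[ru - I(u)]$, i.e.\ $\lambda_0$ is the Fenchel conjugate of $I$ on $\F{R}$. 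The easy direction $\lambda_0^*(u) \leq I(u)$ is then immediate: for any admissible $g$, the variational bound yields $ru - \lambda_0(r) \leq -\langle Lg, g\rangle$, and taking $\sup_r$ and then $\inf_g$ closes it.

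\textbf{The main obstacle} is the reverse inequality $I(u) \leq \lambda_0^*(u)$. I would exploit the convexity of $\lambda_0$ (a supremum of affine functions, hence convex, finite and continuous on $\F{R}$). For $u$ in the interior of the range $R = [\min f, \max f]$ of the map $g \mapsto \langle M_f g, g\rangle$, pick a maximizer $r^* \in \F{R}$ of $r \mapsto ru - \lambda_0(r)$ and a unit-norm eigenvector $g^*$ of $\tilde{L}(r^*)$ at eigenvalue $\lambda_0(r^*)$. A Hellmann--Feynman / envelope argument shows that $\langle M_f g^*, g^*\rangle$ lies in the subdifferential $\partial \lambda_0(r^*)$; the first-order optimality condition $u \in \partial \lambda_0(r^*)$ combined with a selection inside the $\lambda_0(r^*)$-eigenspace (if the eigenvalue is degenerate) lets us arrange $\langle M_f g^*, g^*\rangle = u$. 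Then $\langle Lg^*, g^*\rangle = \lambda_0(r^*) - r^* u$, so $-\langle Lg^*, g^*\rangle = r^* u - \lambda_0(r^*) = \lambda_0^*(u)$; hence $g^*$ is admissible in the definition of $I(u)$ and witnesses $I(u) \leq \lambda_0^*(u)$. Values of $u$ outside $R$ are handled directly: the constraint set defining $I(u)$ is empty so $I(u) = +\infty$, and the linear growth of $\lambda_0$ beyond $R$ forces $\lambda_0^*(u) = +\infty$ as well. Boundary values of $R$ are then resolved by a limiting / continuity argument.
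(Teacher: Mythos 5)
Your plan for the concentration bound itself matches the paper's: the Cauchy--Schwarz bound on $\F{E}_\nu(e^{rA_t})$ together with Lemma~\ref{lem: lumer_phillips} gives $\Psi_{A_t}(r) \leq \log\norm{\frac{d\nu}{d\pi}}_2 + t\lambda_0(r)$, Chernoff's inequality is applied, and the sign argument ($\lambda_0 \geq 0$ via $g = \textbf{1}$ and $\pi(f) = 0$) upgrades $\sup_{r\geq 0}$ to $\sup_{r\in\F{R}}$. Both you and the paper also establish $\lambda_0 = I^*$ by partitioning the unit sphere by the value of $\langle M_f g, g\rangle$, and the easy direction $\lambda_0^* \leq I$ is then immediate. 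The genuine divergence is in the hard direction $I(u) \leq \lambda_0^*(u)$. The paper goes through $I = I^{**} = \lambda_0^*$ via the Fenchel biconjugation theorem, which requires verifying that $I$ is convex (via the nonobvious lifting $g = \sqrt{s_1 g_1^2 + s_2 g_2^2}$) and lower semicontinuous on $\{I < \infty\} = [\min f, \max f]$, and that the infimum defining $I$ is attained. You instead produce an explicit witness: pick a maximizer $r^*$ of $r \mapsto ru - \lambda_0(r)$, characterize $\partial\lambda_0(r^*)$ (Danskin/Hellmann--Feynman) as $\{\langle M_f g, g\rangle : \norm{g}_2=1,\ g \text{ in the top eigenspace of }\tilde{L}(r^*)\}$---a compact interval, hence equal to its own convex hull---and select $g^*$ there with $\langle M_f g^*, g^*\rangle = u$; the eigenvalue equation then gives $-\langle Lg^*, g^*\rangle = r^*u - \lambda_0(r^*) = \lambda_0^*(u)$. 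This is a legitimate and in some ways more direct route: it bypasses the convexity-of-$I$ calculation entirely. The costs are two: (i) you must actually justify that $r^*$ exists for interior $u$, which does follow from coercivity (plugging $g = e_x/\sqrt{\pi_x}$ for $x$ achieving $\max f$ resp.\ $\min f$ into the variational formula for $\lambda_0$ shows $ru - \lambda_0(r) \to -\infty$ as $r\to\pm\infty$), and you should state this; (ii) at the endpoints $u \in \{\min f, \max f\}$ the supremum over $r$ need not be attained, and the "limiting/continuity argument" you gesture at would need continuity of $\lambda_0^*$ and lower semicontinuity of $I$ on the closed interval---precisely the regularity facts the paper establishes to invoke biconjugation. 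So your shortcut genuinely buys you the interior of $[\min f, \max f]$, but the boundary still requires roughly the same supporting lemmas.
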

\begin{reference_paper} 
The above theorem is a reformulation of \cite[Thrm.\,1]{wu} and \cite[Remark\,3]{wu}: Using the notation of \cite{wu} let $V = f$, $\mu = \pi $ and define (as in \cite{wu})
    \begin{equation}
        J_V(u) = \inf \{ \mathcal{E}^\sigma(g,g) \, | \, \norm{g}_2 = 1, g \in  D(\mathcal{E}^\sigma) \cap L^2(\abs{V}\cdot  \mu), \langle V, g^2 \rangle = u \},
    \end{equation}
    and
    \begin{equation}
        J_0(u) = \inf \{ \mathcal{E}^\sigma(g,g) \, | \, \norm{g}_2 = 1, g \in D(L), \langle V, g^2 \rangle = u \},
    \end{equation}
    where $(\mathcal{E}^\sigma, D(\mathcal{E}^\sigma))$ denotes the closure of the symmetrized Dirichlet form (defined in \eqref{eq: symmetrized_dirichlet}). Furthermore, let $I_V(u)$, respectively $I_0(u)$, denote the lower semi-continuous regularization of $J_V$, respectively $J_0$, defined in \cite[Eq.(3)]{wu}. Then, \cite[Thrm.\,1]{wu}, respectively \cite[Remark 3(a)]{wu} states that
    \begin{equation}
    \label{eq: wu_theorem1}
        \F{P}_\nu \left (\frac{A_t}{t} > u \right ) \leq \norm{\frac{d\nu}{d \mu}}_2 e^{- t I_V(u)}
    \end{equation}
    respectively
    \begin{equation}
        \F{P}_\nu \left (\frac{A_t}{t} > u \right ) \leq \norm{\frac{d\nu}{d \mu}}_2 e^{- t I_0(u)}
    \end{equation}
    However, as already explained in Remark \ref{rem: lemma_bound_P_t^rf}, in our setting  $D(L) = D(\mathcal{E}^\sigma) \cap L^2(\abs{V}\cdot  \mu) = L^2(\mu)$ and thus 
    \begin{equation}
        J_V(u) = J_0(u) = I(u) = \lambda_0^*(u),
    \end{equation}
    where in the last equality we used \eqref{eq: fenchel_dual_lambda_0(r)}. But $\lambda_0^* = J_V  = J_0$  is continuous on the interval $\{ \lambda_0^* < \infty \}$ (see the remark below) so $ I_V(u) = I_0(u) = J_V(u) = \lambda_0^*(u)$ (using the definition of the lower semi-continuous regularization; stated for example in \cite[Eq.(3)]{wu}).
\end{reference_paper}

\begin{remark}
\label{rem: th_main_concentration_inequality}
\begin{enumerate}[$(a)$]
    \item (Properties of $\lambda_0^*$) The Fenchel conjugate $\lambda_0^*$ has the following properties (see Appendix, Lemma \ref{lem: appendix_properties_lambda_0^*}) 
    \begin{enumerate}[1.]
        \item $\lambda_0^*\geq 0$
        \item $\{ \lambda_0^* < \infty \} = [\min_{x \in E} f(x), \max_{x \in E}f(x)]$
        \item $\lambda_0^*$ is  convex, continuous on $\{ \lambda_0^* < \infty \}$, and nondecreasing on $[0,\infty)$.
        
    \end{enumerate}
    Hereby, property 1. follows from $\lambda_0(0) = 0$, property 2. is a consequence of \eqref{eq: fenchel_dual_lambda_0(r)}, and property 1. follows from lower semicontinuity  of $\lambda_0^*$.
    \item  (Triviality of the inequality)
    The concentration inequality is trivial whenever 
    \begin{equation*}
        H(u) := \norm{\frac{d\nu}{d\pi}}_2e^{-t\lambda_0^*(u)} \geq 1.
    \end{equation*}
   Let $K := \sup \{ u \geq 0 \; | \; H(u) \geq 1 \}$. Then
    if $\nu \neq \pi$, we have $K  > 0$, $K \geq t^{-1}\F{E}_\nu(A_t)$ and the concentration inequality is trivial for $ u \in [0,K]$. This can be seen as follows. As $\nu \neq \pi$, $\frac{d\nu}{d\pi}$ is not equal to the constant 1-function $\mathbf{1}$ $\pi$- almost surely, so 
    \begin{equation}
        0 <  \text{Var}_\pi \left ( \frac{d \nu}{d \pi} \right )  =  \norm{\frac{d\nu}{d\pi}}_2^2 - 1,
    \end{equation}
    and thus $\norm{\frac{d\nu}{d\pi}}_2 > 1$. Furthermore, $\lambda_0^*(0) = 0$, which implies $H(0) > 1 $. But $\lambda_0^*$ is continuous on $\{ \lambda_0^* < \infty \}$  and nondecreasing on $[0,\infty)$, so $K > 0$ and $H(u) \geq 1$ for $ u \in [0,K]$. Furthermore, because  Theorem \ref{th: main_conc_inequality} is based on Chernoff's inequality (c.f. Lemma \ref{lem: more_explicit_concentration_inequality}), which is always trivial  for $u \leq t^{-1}\F{E}_\nu(A_t) $ by Remark \ref{rem: Chernoff_inequality}, we have $K \geq t^{-1}\F{E}_\nu(A_t)$.
    \item(Scale invariance of the bound)
    As $sM_f = M_{sf}$ the bound is invariant under the replacements $f \to sf$, $u \to su$ for $s > 0$. This is consistent with the invariance of $\F{P}_\nu \left (\frac{A_t}{t} \geq u \right )$ under those replacements. 
    \item (Bound for $\norm{\frac{d \nu}{d \pi}}_2)$ 
    We have 
    \begin{equation*}
        \max_{\nu \in \mathcal{M}_1(E)}\norm{\frac{d\nu}{d\pi}}_2 = \frac{1}{\sqrt{\min_{x \in E}\pi_x}},
    \end{equation*}
    where $\mathcal{M}_1(E)$ denotes the set of probability measures on $E$. The above inequality is checked by
    using $\norm{\frac{d \nu}{d \pi}}_2^2 = \sum_{x \in E} \frac{\nu_x^2}{\pi_x}$  and $\norm{\frac{d\delta_x}{d\pi}}_2 = \frac{1}{\sqrt{\pi_x}}$.
    \item (Optimality and Cramér's Theorem in the detailed balance case)
    In the case where $\pi$ satisfies the detailed balance condition, or equivalently $(\F{P}_\pi, \F{X})$ is reversible or $P_t$ is $\pi$-symmetric (see Theorem \ref{th: characterization_detailed_balance}), then \eqref{eq: main_conc_inequality} is asymptotically sharp; we have 
    \begin{equation}
    \label{eq: asymptotic_sharp}
        \lim_{ t \to \infty} \frac{ \log \F{P}_\nu  \left (\frac{A_t}{t} >  u \right )}{t} = - \lambda_0^*(u)
    \end{equation}
    for all $u \neq \max_{x \in E} f(x)$. The above result is a continuous time analogue of Cramérs theorem (c.f. \cite[Thrm.\,27.2]{Kallenberg2002}) for sums of i.i.d. random variables. The above equality  implies that if $\alpha : [0, \infty) \rightarrow \F{R}_{\geq 0}$ is a function such that a concentration inequality
    \begin{equation*}
        \F{P}_\nu  \left (\frac{A_t}{t} \geq u \right ) \leq \norm{\frac{d\nu}{d\pi}}_2 e^{-\alpha(u)t}
    \end{equation*}
    holds for all $u \geq 0$, then $\alpha(u) \leq \lambda^*(u)$ for all $u \geq 0$.  Thus, the concentration inequality \eqref{eq: main_conc_inequality} is optimal in this sense. The asymptotic sharpness \eqref{eq: asymptotic_sharp} is mentioned  in \cite[Eq.\,(4)]{wu} and \cite[P.\,13]{guillin}, and both works refer to  \cite[Thrm.\,5.3.10]{large_deviations_stroock}. However, as these works do not directly prove or explain how \cite[Thrm.\,5.3.10]{large_deviations_stroock} implies \eqref{eq: asymptotic_sharp} we included our own proof of \eqref{eq: asymptotic_sharp} (based on \cite[Thrm.\,5.3.10]{large_deviations_stroock}), see Lemma \ref{lem: appendix_asymptotic_sharp}. 
\end{enumerate}
\end{remark}

\begin{proof}[Proof of Theorem \ref{th: main_conc_inequality}]
We first show that 
\begin{equation}
    \sup_{r \geq 0} (ru- \lambda_0(r)) = \sup_{r \in \F{R}}( ru -\lambda_0(r))
\end{equation}
for $u \geq 0$. Let $\textbf{1}$ denote the constant 1-function on $E$.  Recall $0 = (L+L^*)\textbf{1}$, $\lambda_0(0) = 0$ (c.f. Lemma \ref{lem: properties_infinitesimal_generator}) and $\pi(f) = 0$ (by our assumption \eqref{eq: pi(f) = 0}). Consequently 
\begin{equation}
\label{eq: lambda_0^*_bigger_0}
    \lambda_0^*(u) = \sup_{ r \in \F{R}} ( ru - \lambda_0(r)) \geq 0 \cdot u - \lambda_0(0) = 0
\end{equation}
and
\begin{equation}
    \lambda_0(r) \geq r \langle M_f \textbf{1}, \textbf{1} \rangle + \frac{1}{2}\langle (L+L^*)\textbf{1}, \textbf{1} \rangle = r \pi(f) +  0 =  0
\end{equation}
for all $r \in \F{R}$, where the second inequality follows from Lemma \ref{lem: biggest_eigenvalue}. Thus,
\begin{equation*}
ru - \lambda_0(r) \leq 0
\end{equation*} 
for all $u \geq 0$ and $ r \leq 0$, which together with \eqref{eq: lambda_0^*_bigger_0} implies 
\begin{equation*}
    \sup_{r \geq 0} (ru- \lambda_0(r)) = \sup_{r \in \F{R}}( ru -\lambda_0(r)) = \lambda_0^*(u)
\end{equation*}
for $u \geq 0$.
As already explained in  Lemma \ref{lem: more_explicit_concentration_inequality}, the bound \eqref{eq: final_bound_Phi}
\begin{equation}
    \Phi(r) = \log(\norm{\frac{d\nu}{d\pi}}_2) + t\lambda_0(r) \geq \Psi_{A_t}(r),
\end{equation}
implies the concentration inequality 
\begin{equation*}
    \F{P}_\nu  \left (\frac{A_t}{t} \geq u \right ) \leq \exp (-\Phi^*(tu))  = \norm{\frac{d\nu}{d\pi}}_2e^{- t \sup_{r \geq 0}(ur - \lambda_0(r))} =  \norm{\frac{d\nu}{d\pi}}_2 e^{-t\lambda_0^*(u)},
\end{equation*}
proving \eqref{eq: main_conc_inequality}.
It remains to prove \eqref{eq: fenchel_dual_lambda_0(r)}. We follow the proof idea of \cite[Thrm.\,1]{wu} and add details. Note that 
\begin{equation}
    I(u) =  \min \{ - \langle Lg, g \rangle \; | \; \norm{g}_2 = 1, \langle M_fg, g \rangle = u \},
\end{equation}
i.e. the infimum is attained (we set $\min \emptyset = \inf \emptyset  = \infty$). Indeed, finite dimensionality of $L^2(\pi)$ implies that  the set $\{ g \in L^2(\pi) \; | \; \norm{g}_2 = 1, \langle M_fg,g \rangle = u \} $ is compact,  and $g \mapsto - \langle Lg,g \rangle$ is a continuous map. Consequently the infimum is attained as the infimum of a continuous function over a compact set is always attained. By definition of $\lambda_0(r)$, Lemma \ref{lem: biggest_eigenvalue} and the identity (which follows from the symmetry of the inner product)
\begin{equation}
    \left \langle \left ( \frac{L+L^*}{2} \right )g, g \right \rangle = \langle Lg, g \rangle
\end{equation}
we have
\begin{align}
\begin{split}
\label{eq: lambda_0(r)_is_Legendre_transform_of_I(u)}
    \lambda_0(r) &= \sup \{ r\langle M_f g, g \rangle +  \langle Lg, g \rangle \; | \; g \in L^2(\pi), \norm{g}_2 = 1 \}\\
    &= \sup \{  ru - (-  \langle Lg, g \rangle) \; | \; g \in L^2(\pi),u \in \F{R}, \norm{g}_2 = 1, \langle M_f g, g \rangle = u \} \\
    &= \sup \{ ru - \inf \{ -  \langle Lg, g \rangle \; | \; g \in L^2(\pi) , \norm{g}_2 =1 , \langle M_f g, g \rangle = u \} \; | \; u \in \F{R} \}\\
    &= I^*(r)
    \end{split}
\end{align}
for all $r \in \F{R}$
where $I^*(r) = \sup_{ u \in \F{R}} (ur - I(u)) = \sup_{u \in \F{R} : I(u)< \infty }(ur - I(u))$ is the Fenchel conjugate of $I$. To finish the proof we show now  that the conditions of the Fenchel-Biconjugation theorem (Lemma \ref{lem: properties_Legendre_transform}$(c)$) are satisfied, i.e. $\{ I < \infty \} = [a,b]$ for some $a < b$, $I$ is convex, and  $I$ is lower semicontinuous on $[a,b]$. Then, by the Fenchel-Biconjugation theorem
and \eqref{eq: lambda_0(r)_is_Legendre_transform_of_I(u)} 
\begin{equation*}
   I(u) =  I^{**}(u) = \sup_{ r \in \F{R}} (ru - \lambda_0(r) ) = \lambda_0^*(u)
\end{equation*}
for all $u \in \F{R}$. We first show $\{ I < \infty \} = [a,b]$. The sphere $S_1 := \{ g \in L^2(\pi) \; | \; \norm{g}_2 = 1 \} $ is compact and connected. Furthermore, the function $D : S_1 \rightarrow \F{R} $ defined via $D(g) := \langle M_f g, g \rangle$ is continuous  ($L^2(\pi)$ is finite dimensional), so the image $\text{Im}(D) \subset \F{R}$ is connected and compact, i.e.  $\text{Im}(D) = [a,b]$, where $a < b$  as $D$ is not constant (because $f$ is not constant). Thus, by definition of $I$ we have
\begin{equation}
    \label{eq: I_smaller_infty} 
    \{I < \infty \} = \text{Im}(D) = [a,b].
\end{equation}

 That $I$ is convex on $[a,b]$ can be seen as follows. Notice that for all $g_1, g_2 \in L^2(\pi)$, all $s_1, s_2 \geq 0$ and all $x,y \in E$ we have that
\begin{equation}
\label{eq: inequality1_for_convexity_of_I}
    s_1 g_1(y) g_1(x) + s_2 g_2(y) g_2(x) \leq \sqrt{s_1g_1(y)^2 + s_2 g_2(y)^2}\sqrt{s_1g_1(x)^2 + s_2 g_2(x)^2},
\end{equation}
which is easily checked by squaring the above inequality. Let $g := \sqrt{s_1g_1^2+s_2g_2^2}$, then  \eqref{eq: inequality1_for_convexity_of_I} implies (by setting $y = X_t$, $x = X_0$ and taking the expectation $\F{E}_x$) 
\begin{equation*}
    s_1 \F{E}_x(g_1(X_t))g_1(x) + s_2 \F{E}_x(g_2(X_t))g_2(x)  \leq \F{E}_x(g(X_t))g(x),
\end{equation*}
which  implies  (by multiplying with $\pi_x$ and summing over $x \in E$)
\begin{equation}
\label{eq: inequality2_for_convexity_of_I}
    s_1 \langle P_tg_1, g_1 \rangle + s_2 \langle P_tg_2, g_2 \rangle \leq \langle P_tg, g \rangle ,
\end{equation}
for all $t \geq 0$. Now we use the above equation to prove the convexity of $I$. For that
let $u_1, u_2 \in [a,b]$, $s_1 + s_2 = 1$ (with $s_1, s_2 \geq 0$), and choose $g_1,g_2 \in S_1$ such that $D(g_i) = u_i$ and $- \langle Lg_i, g_i \rangle = I(u_i)$ for $i = 1,2$ (this is possible as $I(u_i)$ is attained). For the convexity we have to show $I(s_1u_1 + s_2u_2) \leq s_1 I(u_1) + s_2 I(u_2)$. Define $g = \sqrt{s_1g_1^2+s_2g_2^2} $ as before. Then,  $g \in S_1$ and $D(g) = s_1u_1 + s_2u_2$. In particular  $I(s_1u_1+s_2u_2) \leq - \langle Lg, g \rangle$ as $I$ is the infimum. As both  sides of \eqref{eq: inequality2_for_convexity_of_I} equal $1$ at $t = 0$, the inequality holds also for the derivatives at $t = 0$ of both  sides, obtaining 
\begin{equation*}
    - s_1 I(u_1) - s_2 I(u_2) = s_1 \langle Lg_1 ,g_1\rangle + s_2 \langle Lg_2,g_2\rangle  \leq \langle Lg ,g \rangle \leq - I(s_1u_1+s_2u_2),
\end{equation*}
which proves convexity of $I$. Finally, the lower-semicontinuity of $I$ is checked as follows.  Let $ u \in [a,b]$ and $(u_n)_n$ be a sequence in $[a,b]$ converging to $u$. Furthermore let $(g_n)_n$ be a corresponding sequence in $S_1$ such that $D(g_n) = u_n$ and $ - \langle Lg_n ,g_n \rangle = I(u_n)$. Define $I_{-} := \lim \inf_{n \to \infty} I(u_n) $. We can choose a subsequence $(u_{n_k})_k$ such that $I(u_{n_k}) \to I_{-}$ as $k \to \infty $ and such that $g_{n_k}$ converges  to some $g \in S_1$ (as $k \to \infty $, by compactness of $S_1$). Continuity implies $D(g) = \lim_{k \to \infty} D(g_{n_k}) = \lim_{k \to \infty} u_{n_k} = u$ and $ - \langle Lg, g \rangle = \lim_{k \to \infty}- \langle Lg_{n_k}, g_{n_k} \rangle = \lim_{k \to \infty} I(u_{n_k}) = I_{-}$. But as $I(u)$ is defined as an infimum we have $I_{-} =  - \langle Lg, g \rangle  \geq I(u)$. Thus, $I$ is lower semicontinuous and all conditions for the Fenchel-Biconjugation theorem (Lemma \ref{lem: properties_Legendre_transform}$(c)$) are satisfied.

\end{proof}

\subsubsection{Further concentration Inequalities for MJPs}
\label{subsubsec: further_concentration_inequalities_MJP}
As we typically do not have the information about $L$ or $f$, we cannot compute the largest eigenvalue $\lambda_0(r)$ explicitly and thus Theorem \ref{th: main_conc_inequality}, which gives a bound containing
\begin{equation*}
   \lambda_0^*(u) =  \sup_{ r \geq 0} (r u - \lambda_0(r)) = \sup_{r \in \F{R}}(r u - \lambda_0(r)),
\end{equation*}
is not directly applicable.
In the following we will derive more explicit concentration inequalities based on Theorem \ref{th: main_conc_inequality} by finding a lower bound for $ \alpha(u) \leq \lambda_0^*(u) $, thus obtaining 
\begin{equation}
    \F{P}_\nu \left ( \frac{A_t}{t} \geq u  \right) \leq \norm{\frac{d\nu}{d\pi}}_2 e^{- t\alpha(u)}
\end{equation}
For that we present three different approaches following the works \cite{lezaud}, \cite{guillin}, \cite{bernstein}. All these works use implicitly or explicitly Theorem \ref{th: main_conc_inequality} to derive concentration inequalities. The three approaches we present are
\begin{enumerate}
    \item Perturbation theory: We follow \cite{lezaud} and arrive at our version of \cite[Thrm.\,2.4]{lezaud}; Theorem \ref{th: concentration_via_perturbation}.
    \item Functional Inequalities: Poincaré-Inequality, $F$-Sobolev inequality. We follow  \cite{guillin} and arrive at our versions of \cite[Prop.\,1.4]{guillin} and \cite[Thrm.\,2.3]{guillin}; Theorems \ref{th: concentration_via_poincare} and \ref{th: concentration_via_F_sobolev}.
    \item Information inequalities:
     We follow \cite{bernstein} and arrive at our versions of \cite[Thrm.\,2.2]{bernstein} and \cite[Thrm.\,1.2]{bernstein}; Theorems \ref{th: concentration_via_information} and \ref{th: extension_lezaud}. We also extend Theorem \ref{th: concentration_via_perturbation} (c.f. Theorem \ref{th: extension_lezaud})
\end{enumerate}
In approach 3 we derive a lower bound on $\lambda_0^*(u)$ by using the expression (Theorem \ref{th: main_conc_inequality})
\begin{equation}
    \lambda_0^*(u) = I(u) = \inf \{ - \langle Lg, g \rangle \; | \, \norm{g}_2 = 1, \langle M_fg, g \rangle = u \}.
\end{equation}
In approaches 1 and 2 we will make use of the following lemma:
\begin{lemma}[Concentration inequality by bounding $\lambda_0(r)$]
\label{lem: concentration_via_bound_on_lambda_0(r)}
Let $D \subset \F{R}$ and let $G$ be a function, defined on $D$ such that $G(r) \geq \lambda_0(r)$ for all $r \in D$. Then, for all $u \geq 0$
\begin{equation}
\label{eq: concentration_via_bound_on_lambda_0(r)}
  \F{P}_\nu \left (\frac{A_t}{t} \geq u \right ) \leq  \norm{\frac{d\nu }{ d \pi}}_2 \exp( - tG^*(u)), 
\end{equation}
where  $G^*(u) = \sup_{r\in D}(ru- G(r))$ denotes the Fenchel conjugate of $G$ \lb with respect to $D$\rb.
\end{lemma}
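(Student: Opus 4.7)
The plan is to deduce this lemma directly from Theorem \ref{th: main_conc_inequality}, using only the monotonicity of the Fenchel-conjugation operation (Lemma \ref{lem: properties_Legendre_transform}$(d)$) extended from ``smaller function'' to ``smaller function on a smaller domain''. Nothing new about the Feynman--Kac semigroup or the operator $\Tilde{L}(r)$ is needed.

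First I would recall from Theorem \ref{th: main_conc_inequality} that
\begin{equation*}
\F{P}_\nu \left (\frac{A_t}{t} \geq u \right ) \leq \norm{\frac{d\nu}{d \pi}}_2 \, e^{- t \lambda_0^*(u)},
\qquad \lambda_0^*(u) = \sup_{r \in \F{R}}(ru - \lambda_0(r)).
\end{equation*}
It therefore suffices to show that $G^*(u) \leq \lambda_0^*(u)$ for every $u \geq 0$; exponentiating and using monotonicity of $r \mapsto e^{-tr}$ then yields \eqref{eq: concentration_via_bound_on_lambda_0(r)}.

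To prove $G^*(u) \leq \lambda_0^*(u)$, the key observation is that the hypothesis $G(r) \geq \lambda_0(r)$ for $r \in D$ implies $ru - G(r) \leq ru - \lambda_0(r)$ pointwise on $D$, so that
\begin{equation*}
G^*(u) \;=\; \sup_{r \in D}\bigl(ru - G(r)\bigr) \;\leq\; \sup_{r \in D}\bigl(ru - \lambda_0(r)\bigr) \;\leq\; \sup_{r \in \F{R}}\bigl(ru - \lambda_0(r)\bigr) \;=\; \lambda_0^*(u).
\end{equation*}
The second inequality uses only the trivial fact that enlarging the domain of a supremum can only increase its value, which is why the hypothesis $D \subset \F{R}$ (rather than $D \subset \F{R}_{\geq 0}$) causes no difficulty. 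Combining this with the bound from Theorem \ref{th: main_conc_inequality} finishes the proof.

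There is no substantial obstacle: the result is essentially a packaging statement that converts any upper bound on the top eigenvalue function $\lambda_0$ (over any subset $D \subset \F{R}$) into a concrete concentration bound. The only thing worth flagging is that one should \emph{not} attempt to apply Lemma \ref{lem: more_explicit_concentration_inequality} directly to $\Psi_{A_t}$ via the bound $\Psi_{A_t}(r) \leq \log \|\tfrac{d\nu}{d\pi}\|_2 + t\,G(r)$, because that lemma requires $D \subset \F{R}_{\geq 0}$, whereas here $D$ is allowed to contain negative values. Routing the argument through the already-established Theorem \ref{th: main_conc_inequality} (whose proof internally reduces the supremum over $\F{R}$ to the supremum over $\F{R}_{\geq 0}$ using $\pi(f) = 0$) avoids this issue entirely.
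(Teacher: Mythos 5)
Your proposal is correct and takes essentially the same route as the paper: both deduce the result from Theorem \ref{th: main_conc_inequality} by showing $G^*(u) \leq \lambda_0^*(u)$, the only cosmetic difference being that the paper cites Lemma \ref{lem: properties_Legendre_transform}$(d)$ (which covers exactly the ``smaller function on a smaller domain'' situation since its hypotheses allow $D' \subset D$) while you spell out the same two-step supremum comparison inline.
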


\begin{proof}[Proof of Lemma \ref{lem: concentration_via_bound_on_lambda_0(r)}]
As $D \subset \F{R}$ and $G(r) \geq \lambda_0(r)$ for all $r \in D$, by Lemma \ref{lem: properties_Legendre_transform}$(d)$ 
we have 
\begin{equation}
    G^*(u) \leq \lambda_0^*(u) 
\end{equation}
for all $u \geq 0$. Applying this inequality to the  concentration inequality \eqref{eq: main_conc_inequality} (Theorem \ref{th: main_conc_inequality}) yields 
\begin{equation}
    \F{P}_\nu \left (\frac{A_t}{t} \geq u \right ) \leq \norm{\frac{d\nu}{d \pi}}_2 e^{- t \lambda_0^*(u)} \leq \norm{\frac{d\nu}{d \pi}}_2 e^{-tG^*(u)}
\end{equation}
for all $u \geq 0$.
\end{proof}

\subsubsection{Concentration Inequalities via Perturbation Theory}
\label{subsubsec: concentration_via_perturbation}
In this section we use Lemma \ref{lem: concentration_via_bound_on_lambda_0(r)} and bound $\lambda_0(r)$ using perturbation theory; based on \cite{lezaud}. The main result of this section is Theorem \ref{th: concentration_via_perturbation}, which is our version of \cite[Thrm.\,2.4]{lezaud}. The most relevant notions and results of Section \ref{sec: preliminaries} are: Example \ref{ex: Legendre_transform_subgamma}, perturbation theory (Section \ref{subsubsec: perturbation_theory}); in particular reduced resolvent and Theorem \ref{th: perturbation_simple_eigenvalue}, and properties of the infinitesimal generator (Lemma \ref{lem: properties_infinitesimal_generator}).\\
We proceed as follows. The basic idea is to see $\lambda_0(r)$ as the perturbation of the eigenvalue $0$ of $L$, then use the formulae of Theorem \ref{th: perturbation_simple_eigenvalue} (which yields an expression for $\lambda_0(r)$ in terms of a perturbation series) to compute a bound $G(r) \geq \lambda_0(r)$ and apply Lemma \ref{lem: concentration_via_bound_on_lambda_0(r)}. \\
 Let us now explain more precisely in the following lemma how Theorem \ref{th: perturbation_simple_eigenvalue} applies in our setting (defined in Section \ref{subsubsec: setting_notation}). 
\begin{lemma}
\label{lem: preparation_application_perturbation}
Refering to our setting, let $\lambda_1 = \min_{ \lambda \in \sigma \left (\frac{L+L^*}{2} \right) } \abs{\lambda}$ be the spectral gap of $\frac{L+L^*}{2}$. Then, for all $r \in [0, \frac{\lambda_1}{2 \norm{f}_\infty})$ the ball $B_{\frac{\lambda_1}{2}}(0)$ contains exactly one simple eigenvalue of $\tilde{L}(r)$ and this eigenvalue is $\lambda_0(r)$, the largest eigenvalue of $\tilde{L}(r)$. Furthermore,
\begin{equation}
\label{eq: lambda_0(r)_perturbation_series}
    \lambda_0(r) = \sum_{n = 1}^{\infty}\lambda_0^{(n)}r^n
\end{equation}
with
\begin{equation}
  \lambda_0^{(n)} = \frac{(-1)^n}{n} \sum\limits_{\substack{k_1,..k_n \in \F{Z}_+ \\ k_1 + ... k_n = n-1}} \Tr( M_fS^{(k_1)} ... M_fS^{(k_n)})
  \label{eq: lambda_0^(n)}
\end{equation}
for all $ r \in  [0, \frac{\lambda_1}{2 \norm{f}_\infty}) $. Hereby,
\begin{equation}
\label{eq: S^(k)}
    S^{(0)} = - \textup{pr} \;, \; S^{(k)} = S^k \text{\space for $k \geq 1$,}
\end{equation}
where $S$ is the reduced resolvent of $\frac{L+L^*}{2}$ with respect to the eigenvalue $0$ and $\textup{pr}$ is the orthogonal projection onto the eigenspace corresponding to the eigenvalue 0, i.e. 
\begin{equation}
\label{eq: projection_P}
    \textup{pr}(g) = \langle g, \textup{\textbf{1}} \rangle \textup{\textbf{1}} = \pi(g) \textup{\textbf{1}}
\end{equation}
for all $g \in L^2(\pi)$.

\end{lemma}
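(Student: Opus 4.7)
The strategy is a direct application of Theorem \ref{th: perturbation_simple_eigenvalue} with $T := \frac{L+L^*}{2}$ and $T' := M_f$, followed by a separate argument identifying the perturbed simple eigenvalue $\mu_0(r)$ produced by the theorem with the largest eigenvalue $\lambda_0(r)$ of $\tilde{L}(r)$.

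First, I verify the hypotheses. The operator $T = \frac{L+L^*}{2}$ is selfadjoint by construction. Lemma \ref{lem: properties_infinitesimal_generator}$(a)$ gives $\mathrm{Ker}(L+L^*) = \mathrm{span}(\textbf{1})$, so $0$ is a simple eigenvalue of $T$. Since $\norm{\textbf{1}}_2^2 = \pi(E) = 1$, the orthogonal projection onto the $0$-eigenspace acts by $\mathrm{pr}(g) = \langle g,\textbf{1}\rangle\textbf{1} = \pi(g)\textbf{1}$, giving \eqref{eq: projection_P}. A short computation shows that the operator norm of $M_f$ on $L^2(\pi)$ equals $\norm{f}_\infty$ (the upper bound $\norm{M_f g}_2 \leq \norm{f}_\infty \norm{g}_2$ is immediate from $\pi_x \geq 0$; the lower bound follows by testing on the normalized basis vector $e_{x_0}/\sqrt{\pi_{x_0}}$ for any $x_0$ attaining $\norm{f}_\infty$). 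Plugging these into Theorem \ref{th: perturbation_simple_eigenvalue} yields, for $r$ in the stated range, a unique simple eigenvalue $\mu_0(r) \in B_{\lambda_1/2}(0)$ of $\tilde{L}(r)$, together with the series representation in \eqref{eq: lambda_0(r)_perturbation_series}--\eqref{eq: S^(k)} (where the identifications $S^{(0)} = -\mathrm{pr}$ and $S^{(k)} = S^k$ are precisely those given by the theorem).

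It remains to show $\mu_0(r) = \lambda_0(r)$, i.e., that $\mu_0(r)$ is the largest eigenvalue of $\tilde{L}(r)$. By Lemma \ref{lem: properties_infinitesimal_generator}$(c)$--$(d)$ together with the simplicity of $0$, the spectrum of $T$ satisfies $\sigma(T) \subset \{0\}\,\cup\, (-\infty,-\lambda_1]$. Since $\tilde{L}(r) = T + rM_f$ is a selfadjoint perturbation with $\norm{rM_f}_2 \leq r\norm{f}_\infty$, the Courant--Fischer min--max principle gives $|\lambda_k(\tilde{L}(r)) - \lambda_k(T)| \leq r\norm{f}_\infty$ for every $k$, where $\lambda_k$ denotes the $k$-th largest eigenvalue. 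Hence every eigenvalue of $\tilde{L}(r)$ other than the one tracking $\lambda_0(0) = 0$ is at most $-\lambda_1 + r\norm{f}_\infty < -\lambda_1/2$ whenever $r < \lambda_1/(2\norm{f}_\infty)$. On the other hand, $\mu_0(r)$ is real (as an eigenvalue of a selfadjoint operator) and lies in $B_{\lambda_1/2}(0)$, so $\mu_0(r) > -\lambda_1/2$, forcing $\mu_0(r)$ to be the largest eigenvalue, i.e.\ $\mu_0(r) = \lambda_0(r)$.

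The only genuinely delicate step is the ordering argument in the last paragraph: one must pair the eigenvalues of $\tilde{L}(r)$ with those of $T$ in a consistent way as $r$ varies. This is automatic once one invokes the min--max characterization (or, alternatively, the continuous eigenvalue tracking from Lemma \ref{lem: eigenvalues_continuous}, which guarantees a continuous branch passing through $\lambda_0(0) = 0$). Everything else is a direct transcription of Theorem \ref{th: perturbation_simple_eigenvalue} to the present setting.
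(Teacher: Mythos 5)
Your proof is correct, and for the key step — identifying the perturbed eigenvalue $\mu_0(r)$ produced by Theorem \ref{th: perturbation_simple_eigenvalue} with the largest eigenvalue $\lambda_0(r)$ of $\tilde{L}(r)$ — you take a genuinely different route from the paper. The paper invokes Lemma \ref{lem: eigenvalues_continuous} to obtain continuous eigenvalue branches $\mu_0,\dots,\mu_{N-1}$ with $\mu_0(0)=0$, then runs a two-part topological argument: an infimum/contradiction argument (via the openness of $B_{\lambda_1/2}(0)$ and the uniqueness of the eigenvalue inside $B$) to show the branch $\mu_0$ cannot leave the ball, followed by a ``the branches cannot cross'' argument to conclude $\mu_0(r)$ stays the largest. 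You replace all of this with a single quantitative bound: Weyl's inequality $|\lambda_k(\tilde{L}(r))-\lambda_k(T)|\le r\|f\|_\infty$ pushes every eigenvalue of $\tilde L(r)$ except the top one strictly below $-\lambda_1/2$ whenever $r<\lambda_1/(2\|f\|_\infty)$, while the (real) eigenvalue $\mu_0(r)\in B_{\lambda_1/2}(0)$ satisfies $\mu_0(r)>-\lambda_1/2$, forcing it to be the maximum. This is cleaner and pointwise in $r$ — no tracking of branches is actually required, contrary to the slightly hedged wording in your final paragraph; the min-max bound alone suffices for each fixed $r$. The paper's approach pays the price of more bookkeeping but perhaps fits the narrative of ``perturbation of a simple eigenvalue'' more thematically; your Weyl-based argument buys brevity and gives an explicit separation between $\mu_0(r)$ and the rest of the spectrum. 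Your verification of the hypotheses (selfadjointness of $T$, simplicity of $0$, $\|M_f\|_2 = \|f\|_\infty$, identification of $\mathrm{pr}$) matches the paper's, with you supplying a bit more detail for the operator norm computation.
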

\begin{reference_paper}
The above lemma is not explicitly stated as a  lemma in \cite{lezaud}, so we include our own proof. In particular, we include the proof of the statement that $\lambda_0(r)$ is the perturbation of the eigenvalue $0$, i.e.  the unique, simple eigenvalue of $\tilde{L}(r)$ contained in $B_{\frac{\lambda_1}{2}}(0)$.
\end{reference_paper}

\begin{remark}
\label{rem: lambda_0^(1)}
For $n = 1$ we get 
\begin{equation*}
    \lambda_0^{(1)} = \Tr(M_f\textup{pr}) = \sum_{x \in E}(M_f\text{pr}(e_x))(x) = \sum_{x \in E}(M_f\textbf{1})(x)\pi_x =  \sum_{x \in E} \pi_x f(x) =  \pi(f) = 0,
\end{equation*}
where the trace was evaluated in the basis $e_x(y) = \delta_{xy}$ and $\text{pr}(e_x) = \langle \textbf{1}, e_x \rangle \textbf{1} = \pi_x \textbf{1}$ was used.

\end{remark}

\begin{proof}[Proof of Lemma \ref{lem: preparation_application_perturbation}]
Using the notation of Theorem \ref{th: perturbation_simple_eigenvalue} set $T = \frac{L+L^*}{2}$ and $T' = M_f$. Note that $\norm{M_f}_2 = \norm{f}_\infty$. Furthermore, let 
\begin{equation}
    I = \left[0, \frac{\lambda_1}{2 \norm{M_f}_2}\right ) = \left[0, \frac{\lambda_1}{2 \norm{f}_\infty}\right ) \text{\space and \space } B = B_{\frac{\lambda_1}{2}}(0).
\end{equation}
Note that $0$ is a simple eigenvalue of $T = \frac{L+L^*}{2}$ (Lemma \ref{lem: properties_infinitesimal_generator}$(a)$) and $T = \frac{L+L^*}{2}$ is self adjoint, so the conditions of Theorem \ref{th: perturbation_simple_eigenvalue} are satisfied.  According to this theorem, for all $ r \in I $ the ball $B$ contains exactly one eigenvalue of $\tilde{L}(r)$ and this eigenvalue is simple. We now show that for $r \in I$, this eigenvalue contained in $B$ is exactly $\lambda_0(r)$, the largest eigenvalue of $\tilde{L}(r)$, making it possible to express $\lambda_0(r)$ using the perturbation series of Theorem \ref{th: perturbation_simple_eigenvalue}. Let $N = \#E = \dim L^2(\pi)$. By a combination of Lemma \ref{lem: properties_infinitesimal_generator}$(a),(d)$ and Lemma \ref{lem: eigenvalues_continuous}, there are continuous (real valued) functions $\mu_0,...,\mu_{N-1}$ defined on $I$, representing the repeated eigenvalues of $\tilde{L}(r)$ such that (after relabeling) $\max\{\mu_1(0),...,\mu_{N-1}(0)\} < \mu_0(0) = 0$. We have that $\mu_0(r) \in B$ for all $r \in I $. Indeed, suppose there is an $r \in I$ such that $\mu_0(r) \not \in B$. Let $s \in I$ be the infimum of all such $r \in I$ with $\mu_0(r) \not \in B$, so that $\mu_0(r) \in B$ for all $r \in [0,s)$. Then, as $\mu_0(0) \in B$ and $B$ is open, the continuity of $\mu_0$ implies $ s > 0 $ and $\mu_0(s) \not \in B$. But, as $B$ always contains a simple eigenvalue of $\tilde{L}(r)$ for $r \in I$, there is a $\mu_k$ (with $ k \neq 0$) such that $\mu_k(s) \in B$. Because of continuity of $\mu_k$ it follows that $\mu_0(s-\varepsilon), \mu_k(s-\varepsilon) \in B$ for small enough $\varepsilon > 0$, contradicting the fact that $B$ contains exactly one simple eigenvalue of $\tilde{L}(r)$ for $r \in I$. Thus, for all $r \in I$, $\mu_0(r)$ is the unique, simple eigenvalue of $\Tilde{L}(r)$ contained in $B$. But $\mu_0(r)$ is also the largest eigenvalue of $\tilde{L}(r)$, i.e. $\mu_0(r) = \lambda_0(r)$. Indeed, because the functions $\mu_1,...,\mu_{N-1}$ are continuous, $\max \{\mu_1(0),...,\mu_{N-1}(0)\} < \mu_0(0) = 0$ and $\mu_0(r)$ is the unique, simple eigenvalue contained in $B$, it follows that $\max \{\mu_1(r),...,\mu_{N-1}(r)\} < \mu_0(r)$  for all $r \in I$ and consequently $\mu_0(r)$ is also the largest eigenvalue of $\tilde{L}(r)$. Thus, the formulae \eqref{eq: lambda_0(r)_perturbation_series}, \eqref{eq: lambda_0^(n)} and \eqref{eq: S^(k)} follow from the formulae of Theorem \ref{th: perturbation_simple_eigenvalue}. Finally, \eqref{eq: projection_P} follows from Lemma \ref{lem: properties_infinitesimal_generator}$(a)$ and the computation of orthogonal projections in Hilbert spaces.
\end{proof}

As the above expression of $\lambda_0(r)$ as an infinite series in $r$ is not practical for computing $\lambda_0^*$, we derive an upper bound $G(r) \geq \lambda_0(r)$  using the above series representation of $\lambda_0(r)$ to obtain a explicit concentration inequality (invoking Lemma \ref{lem: concentration_via_bound_on_lambda_0(r)}). The derivation of the upper bound $G(r)$ we will involve algebraic manipulations with the reduced resolvent $S$. We will make use of the following lemma, which summarizes the most important properties of the reduced resolvent.
\begin{lemma}[Properties of the reduced resolvent]
\label{lem: properties_reduced_resolvent}
Let $L$ be the infinitesimal generator of an irreducible $MJP$. Let $S $ be the reduced resolvent of $\frac{L + L^*}{2}$ with respect to the eigenvalue $0$ (see Definition \ref{def: reduced_resolvent}). Let $\hat{S} := - S$ and let $\lambda_1 :=  \min \limits_{\lambda \in \sigma(\frac{L+L^*}{2}) \backslash \{0\}} \abs{\lambda }$ be the spectral gap. Then, the following statements hold:
\begin{enumerate}[\lb a\rb]
    \item $S$ is selfadjoint and negative semidefinite.
    \item $\norm{S}_2 = \frac{1}{\lambda_1}$ and $\textup{Ker}(S) = \textup{span}(\textup{\textbf{1}})$.
    \item For all $r \in \F{R}$, a unique selfadjoint operator $\hat{S}^r$ can be defined such that for any eigenvector $h \in L^2(\pi)$, with eigenvalue $\mu \geq 0$, of $\hat{S}$ we have 
    \begin{equation}
    \label{eq: definition_S_hat}
       \hat{S}^rh =  \begin{cases}
                        \mu^r \cdot h \; \mathrm{for} \; \mu > 0 \\
                        0 \; \mathrm{for} \; \mu = 0.
                      \end{cases}
    \end{equation}
    \item Furthermore,
    \begin{equation}
        \norm{\hat{S}^r}_2 = \frac{1}{\lambda_1^r},\text{\,} \hat{S}^{r_1 + r_2} = \hat{S}^{r_1}\hat{S}^{r_2},
    \end{equation}
    and for all $f \in \{\textup{\textbf{1}\}}^\perp $ we have $\hat{S}^{-r}\hat{S}^rf = f$.
\end{enumerate}

\end{lemma}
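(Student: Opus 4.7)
The plan is to derive everything from the spectral decomposition of $\frac{L+L^*}{2}$ and the explicit representation of the reduced resolvent given in Remark \ref{rem: reduced_resolvent}. Since $\frac{L+L^*}{2}$ is selfadjoint on the finite-dimensional Hilbert space $L^2(\pi)$, the spectral theorem yields an orthogonal decomposition
\begin{equation*}
L^2(\pi) \;=\; \bigoplus_{\mu \in \sigma\left(\frac{L+L^*}{2}\right)} \mathrm{Ker}\!\left(\tfrac{L+L^*}{2} - \mu\right),
\end{equation*}
with real eigenvalues and selfadjoint eigenprojections $\mathrm{pr}_\mu$. By Lemma \ref{lem: properties_infinitesimal_generator}\,(a),(d), the spectrum lies in $\mathbb{R}_{\leq 0}$, the eigenvalue $0$ is simple, and the eigenspace at $0$ is $\mathrm{span}(\textbf{1})$. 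Together with Remark \ref{rem: reduced_resolvent} this gives the key working expression
\begin{equation*}
S \;=\; \sum_{\mu \in \sigma\left(\frac{L+L^*}{2}\right)\setminus\{0\}} \frac{\mathrm{pr}_\mu}{\mu}, \qquad \hat{S} \;=\; -S \;=\; \sum_{\mu \neq 0} \frac{\mathrm{pr}_\mu}{-\mu},
\end{equation*}
with coefficients $\frac{1}{\mu} < 0$ and $\frac{1}{-\mu} > 0$ respectively.

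From this representation parts (a) and (b) are immediate. Selfadjointness of $S$ follows since each $\mathrm{pr}_\mu$ is selfadjoint and the coefficients are real; for any $v \in L^2(\pi)$ one computes $\langle Sv, v\rangle = \sum_{\mu\neq 0} \mu^{-1}\|\mathrm{pr}_\mu v\|_2^2 \leq 0$, giving negative semidefiniteness. For (b), orthogonality of the projections yields $\|Sv\|_2^2 = \sum_{\mu\neq 0} \mu^{-2}\|\mathrm{pr}_\mu v\|_2^2$, which is maximized (under $\|v\|_2=1$) by choosing $v$ in the eigenspace whose $|\mu|$ is minimal, so $\|S\|_2 = \lambda_1^{-1}$. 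Likewise, $Sv=0$ forces $\mathrm{pr}_\mu v = 0$ for every $\mu \neq 0$, i.e.\ $v \in \mathrm{Im}(\mathrm{pr}_0) = \mathrm{span}(\textbf{1})$, and the reverse inclusion is clear from the formula.

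For (c), I would define $\hat{S}^r$ by functional calculus along the spectral decomposition of $\hat{S}$: set
\begin{equation*}
\hat{S}^r \;:=\; \sum_{\mu \in \sigma\left(\frac{L+L^*}{2}\right)\setminus\{0\}} (-\mu)^{-r}\,\mathrm{pr}_\mu,
\end{equation*}
which is well defined because every $-\mu > 0$. Selfadjointness is again inherited from the $\mathrm{pr}_\mu$'s and the reality of $(-\mu)^{-r}$. The action on eigenvectors of $\hat{S}$ recorded in \eqref{eq: definition_S_hat} follows from the observation that the eigenvectors of $\hat{S}$ with nonzero eigenvalue $\nu = -\mu^{-1}$ are exactly the nonzero vectors in $\mathrm{Im}(\mathrm{pr}_\mu)$, so that $\hat{S}^r$ acts on them by $(-\mu)^{-r} = \nu^r$. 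Uniqueness of an operator satisfying \eqref{eq: definition_S_hat} on a spanning set of eigenvectors is immediate by linearity.

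For (d), $\|\hat{S}^r\|_2 = \max_{\mu \neq 0}(-\mu)^{-r} = \lambda_1^{-r}$ by the same norm argument as in (b); the multiplicativity $\hat{S}^{r_1+r_2} = \hat{S}^{r_1}\hat{S}^{r_2}$ is a direct consequence of the orthogonality relations $\mathrm{pr}_\mu \mathrm{pr}_{\mu'} = \delta_{\mu\mu'}\mathrm{pr}_\mu$ combined with $(-\mu)^{-r_1}(-\mu)^{-r_2}=(-\mu)^{-(r_1+r_2)}$; and for $f \in \{\textbf{1}\}^\perp = \mathrm{Im}(1-\mathrm{pr}_0)$ we have $\hat{S}^{-r}\hat{S}^r f = \hat{S}^0 f = \sum_{\mu\neq 0}\mathrm{pr}_\mu f = f$. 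I do not foresee a substantial obstacle beyond being careful that $\hat{S}^r$ is defined only using the strictly positive eigenvalues of $\hat{S}$, so that the sole nontrivial bookkeeping concerns the kernel $\mathrm{span}(\textbf{1})$ on which $\hat{S}^r$ is defined to be $0$.
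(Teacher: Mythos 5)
Your proof is correct and takes essentially the same approach as the paper: both diagonalize $\frac{L+L^*}{2}$, obtain $S$ and $\hat S$ as spectral sums via Remark \ref{rem: reduced_resolvent}, and then read off all four parts from that representation (the paper indexes by an orthonormal basis $(b_k)$ rather than by eigenvalues $\mu$, but this is a cosmetic difference). One small caveat worth noting, inherited from the paper itself: the identity $\max_{\mu\neq 0}(-\mu)^{-r}=\lambda_1^{-r}$ that you (and implicitly the paper) use for $\norm{\hat S^r}_2$ is only valid for $r\geq 0$; for $r<0$ the maximum is attained at the eigenvalue of largest modulus, not at the spectral gap.
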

\begin{proof}
We prove all statements by diagonalizing $\frac{L+L^*}{2}$. Let  $(b_k)_{k = 1,...,N}$ be an orthonormal basis that diagonalizes $\frac{L+L^*}{2}$, where $N = \#E = \dim L^2(\pi)$. Furthermore, let $\mu_k$ denote the corresponding eigenvalues and let $\text{pr}_k$ denote the orthogonal projections onto $\text{span}(b_k)$, i.e. $\text{pr}_k(f) = \langle b_k, f \rangle b_k$ for all $f \in L^2(\pi)$. Using Lemma \ref{lem: properties_infinitesimal_generator}$(a)$ and $(d)$ we can assume  $\mu_1 = 0$ and $\mu_k < 0$ for $k \geq 2$, in particular 
\begin{equation}
    \frac{L+L^*}{2} = \sum_{k = 2}^N \mu_k \text{pr}_k
\end{equation}
and $\text{pr}_1$ is the projection onto the eigenspace corresponding to the eigenvalue $0$.
Using the expression of Remark \ref{rem: reduced_resolvent}  for the reduced resolvent $S$ we get 
\begin{equation}
    S = \sum_{k = 2}^N \frac{1}{\mu_k} \text{pr}_k.
\end{equation}
This equation implies immediately statements $(a)$ and $(b)$. Moreover, it also implies that 
\begin{equation}
    \hat{S} = \sum_{k = 2}^N \frac{1}{-\mu_k}\text{pr}_k.
\end{equation}
As $\frac{1}{-\mu_k} > 0$ for all $k \geq 2$ the operator $\hat{S}^r$ defined via
\begin{equation}
    \hat{S}^r = \sum_{k = 2}^N \frac{1}{(-\mu_k)^r} \textup{pr}_k
\end{equation}
is well defined, selfadjoint and satisfies \eqref{eq: definition_S_hat} by definition. Moreover, as $(b_k)_{k = 1,...,N}$ is a basis diagonalizing $\hat{S}$, \eqref{eq: definition_S_hat} uniquely determines $\hat{S}^r$ and thus statement $(c)$ is proved. The first two statements of $(d)$ follow directly from the above representation of $\hat{S}^r$, Finally by Lemma \ref{lem: properties_infinitesimal_generator}$(a)$ we have $\text{span}(b_1) = \text{span}(\textup{\textbf{1}})$, so $f \in \{\textbf{1}\}^\perp$ implies that $f = \sum_{k = 2}^N \text{pr}_k f$. The equality $\hat{S}^{-r}\hat{S}^rf = f$ then follows directly from the above representation of $\hat{S}^r$, respectively $\hat{S}^{-r}$. 
\end{proof}
\begin{reference_paper}
Notice that $Sf$ can also be written as (c.f. \cite[P.\,188]{lezaud}, \cite[P.\,363]{bernstein})
\begin{equation}
    Sf = \int_{0}^\infty \exp(t\frac{L+L^*}{2})f dt
\end{equation}
for any $f \in \{\textbf{1} \}^\perp$.
\end{reference_paper}
Using the above lemma we can now derive the following bound on $\lambda_0(r)$.

\begin{lemma}[Bound for $\lambda_0(r)$ via perturbation theory]
\label{lem: lezaud_bound_lambda_0(r)}
Define 
\begin{equation}
    \Phi(x) := \left ( \frac{1-x}{2} \right) \left( 1-  \sqrt{1-\frac{4x^2}{(1-x)^2}} \right)
\end{equation}
 for $ -1 \leq x \leq \frac{1}{3}$. Then, for all $ 0 \leq r \leq \frac{\lambda_1}{3 \norm{f}_\infty}$ 
\begin{equation}
\label{eq: lezaud_bound_lambda_0(r)}
    \lambda_0(r) \leq  \left( \frac{\hat{\sigma}_f^2\lambda_1^2}{2 \norm{f}_{\infty}^2} \right) \Phi\left (\frac{\norm{f}_{\infty}r}{\lambda_1} \right) \leq \frac{r^2\frac{\hat{\sigma}_f^2}{2}}{1-2\left(\frac{\norm{f}_{\infty}}{\lambda1}\right) r},
\end{equation}
 where  $S$ denotes the reduced resolvent of $\frac{L+L^*}{2}$ with respect to the eigenvalue $0$ and $\hat{\sigma}_f := - 2 \langle f, Sf \rangle $.
\end{lemma}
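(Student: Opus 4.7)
The plan is to use the perturbation series from Lemma \ref{lem: preparation_application_perturbation}, compute the first few coefficients explicitly, bound the higher-order coefficients via the reduced resolvent calculus of Lemma \ref{lem: properties_reduced_resolvent}, and sum the resulting majorizing series.

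First I would write $\lambda_0(r) = \sum_{n=1}^\infty \lambda_0^{(n)} r^n$ on $[0, \lambda_1/(2\|f\|_\infty))$. Remark \ref{rem: lambda_0^(1)} already shows $\lambda_0^{(1)} = 0$. To identify $\lambda_0^{(2)}$, I would use that the orthogonal projection onto $\mathrm{span}(\textbf{1})$ is the rank-one operator $\textup{pr}\,g = \langle \textbf{1},g\rangle\textbf{1}$; both ordered tuples $(k_1,k_2) \in \{(0,1),(1,0)\}$ give the same trace by cyclic invariance, and evaluating once yields $\lambda_0^{(2)} = -\mathrm{Tr}(M_f\,\mathrm{pr}\,M_f S) = -\langle f,Sf\rangle = \hat{\sigma}_f^2/2$, which is the announced quadratic term.

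For $n \geq 3$ I would bound each trace appearing in $\lambda_0^{(n)}$. Because $k_1 + \dots + k_n = n-1$, at least one $k_i$ vanishes, so by cyclic invariance I may assume a factor $-\textup{pr}$ appears in a prescribed position. Compositions containing two adjacent zeros produce a factor $\textup{pr}\,M_f\,\textup{pr} = \pi(f)\,\textup{pr} = 0$ and drop out. For the surviving terms I would use $\textup{pr} = \textbf{1}\otimes\textbf{1}$ to rewrite the trace as an inner product of the form $\langle f, M_f\,S^{(j_1)}\cdots M_f\,S^{(j_{n-2})}Sf\rangle$, isolate the pair $Sf$ (it carries the $\hat{\sigma}_f^2$ information), and absorb the remaining $n-2$ operator factors using $\|M_f\|_2 \leq \|f\|_\infty$, $\|S\|_2 = 1/\lambda_1$, and the refined Cauchy-Schwarz estimate
\begin{equation*}
\|Sf\|_2^2 \;=\; \langle f,\hat{S}^2 f\rangle \;\leq\; \|\hat{S}\|_2\,\langle f,\hat{S}f\rangle \;=\; \frac{\hat{\sigma}_f^2}{2\lambda_1},
\end{equation*}
obtained by diagonalising $\hat{S}$ as in Lemma \ref{lem: properties_reduced_resolvent}(d). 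A careful count of the non-vanishing cyclically inequivalent compositions, combined with the prefactor $1/n$ from Lemma \ref{lem: preparation_application_perturbation}, produces a Catalan-type generating function for $\sum_n |\lambda_0^{(n)}|\,r^n$ whose closed form is precisely $\bigl(\hat{\sigma}_f^2\lambda_1^2/(2\|f\|_\infty^2)\bigr)\,\Phi\bigl(\|f\|_\infty r/\lambda_1\bigr)$; the radius of convergence $1/3$ emerges from the condition $4x^2/(1-x)^2 \leq 1$ under which the square root in $\Phi$ stays real.

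The second inequality I would derive from the elementary estimate $\Phi(x) \leq x^2/(1-2x)$ on $[0,1/3]$. Writing $y := 2x/(1-x)$ and using $1-\sqrt{1-y^2} = y^2/(1+\sqrt{1-y^2})$ rearranges $\Phi$ into $2x^2/[(1-x)(1+\sqrt{1-y^2})]$, so the inequality amounts to $(1-x)(1+\sqrt{1-y^2}) \geq 2(1-2x)$, which I would verify by a direct algebraic computation (equality holds at both endpoints $x=0$ and $x=1/3$). The main obstacle is the combinatorial bookkeeping in the paragraph above: producing the sharp generating function $\Phi$ rather than a crude geometric bound requires tracking exactly which compositions $(k_1,\dots,k_n)$ survive cyclic cancellation and how the $1/n$ factor combines with orbit sizes so that precisely one copy of $\hat{\sigma}_f^2/2$ is extracted in every term uniformly in $n$.
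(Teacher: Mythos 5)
Your high-level plan mirrors the paper's: expand $\lambda_0(r)$ in the perturbation series, identify $\lambda_0^{(1)}=0$ and $\lambda_0^{(2)}=\hat\sigma_f^2/2$, bound the higher coefficients using the reduced-resolvent calculus, count the surviving cyclically-inequivalent compositions, and recognize a Motzkin-type generating function. But the trace-bounding step you sketch does not produce the sharp prefactor $\hat\sigma_f^2\lambda_1^2/(2\norm{f}_\infty^2)$, and this is exactly where the proof lives.

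Your proposal is to isolate a single pair $Sf$ at the end of the chain and apply the estimate $\norm{Sf}_2^2\le \hat\sigma_f^2/(2\lambda_1)$. After Cauchy--Schwarz you would then be left with $\norm{f}_2\cdot\norm{Sf}_2\cdot(\text{operator norms of the remaining factors})$, and $\norm{f}_2\norm{Sf}_2$ is \emph{not} equal to $\hat\sigma_f^2/2$ in general — by Cauchy--Schwarz it is only $\ge\hat\sigma_f^2/2$. The paper's bound $\lvert\mathrm{Tr}([k_1,\dots,k_{n-1},0])\rvert\le(\norm{f}_\infty^n/\lambda_1^n)(\hat\sigma_f^2\lambda_1^2/(2\norm{f}_\infty^2))$ is obtained by a different, symmetrized device: it writes $\hat{S}^{k}=\hat{S}^{k-1/2}\hat{S}^{1/2}$ on the left, inserts $\hat{S}^{-1/2}\hat{S}^{1/2}=\mathrm{Id}$ (valid on $\{\textbf{1}\}^\perp$) on the right, and pairs \emph{two} copies of $\hat{S}^{1/2}f$ under Cauchy--Schwarz so that $\norm{\hat{S}^{1/2}f}_2^2=\langle f,\hat{S}f\rangle=\hat\sigma_f^2/2$ appears exactly. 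Without that $\hat{S}^{1/2}$ insertion the prefactor comes out as $\norm{f}_2\sqrt{\hat\sigma_f^2/(2\lambda_1)}$, which is generically strictly larger, and the coefficient-by-coefficient match with $\Phi$ fails.

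There is a second gap that you acknowledge but wave away as "bookkeeping": when an equivalence class contains $m>1$ non-adjacent zeros, the trace does not reduce to a single inner product $\langle f,\cdots Sf\rangle$ — it factorizes into a \emph{product} of $m$ scalar inner products, each of which, via the same $\hat{S}^{1/2}$ trick, contributes a factor of $\hat\sigma_f^2/2$ (up to $\norm{f}_\infty$ and $\lambda_1$ powers). The resulting bound is $(\hat\sigma_f^2\lambda_1/(2\norm{f}_\infty^2))^{m-1}(\norm{f}_\infty^n/\lambda_1^n)(\hat\sigma_f^2\lambda_1^2/(2\norm{f}_\infty^2))$, and you still need the Cauchy--Schwarz observation $\hat\sigma_f^2\lambda_1/(2\norm{f}_\infty^2)\le 1$ to make this uniform in $m$ before the class counts $\beta(n,m)$ can be summed to $\beta_n$ and matched to the Motzkin generating function for $\Phi$. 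Your proposal neither produces the product structure nor handles the $m$-dependence, so the claimed identification of the generating function is not warranted by the argument as written.
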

\begin{reference_paper}
 As \cite{lezaud} defines $\lambda_0(r)$ as the smallest eigenvalue of $- \frac{L+L^*}{2}$ the corresponding bound of \cite[Lemma 2.3]{lezaud} is stated as 
    \begin{equation}
       - \lambda_0(r) \geq - \left( \frac{\hat{\sigma}_f^2\lambda_1^2}{2 \norm{f}_{\infty}^2} \right) \Phi\left (\frac{\norm{f}_{\infty}r}{\lambda_1} \right) \geq  -\frac{r^2\frac{\hat{\sigma}_f^2}{2}}{1-2\left(\frac{\norm{f}_{\infty}}{\lambda1}\right) r}.
    \end{equation}
\end{reference_paper}

\begin{remark}
\label{rem: lemma_bound_lammba_(r)_via_perturbation_theory}
\begin{enumerate}[$(a)$]
    \item Although the presented proof below just works for \\ $0 \leq r \leq \frac{\lambda_1}{3 \norm{f}_\infty}$ (see Remark \ref{rem: lezauds_fault}) the bound 
    \begin{equation*}
    \lambda_0(r)  \leq \frac{r^2\frac{\hat{\sigma}_f^2}{2}}{1-2\left(\frac{\norm{f}_{\infty}}{\lambda1}\right) r}
    \end{equation*}
    actually holds also for $0 \leq r < \frac{\lambda_1}{2 \norm{f}}$ (see Theorem \ref{th: extension_lezaud}).
    \item If the   process starts with stationary initial conditions, i.e. $\nu = \pi$, then  using that this lemma holds also for $0 \leq r < \frac{\lambda_1}{2 \norm{f}}$ and the upper bound $\Phi(r) = t \lambda_0(r)$  on $\Psi_{A_t}$ (see \eqref{eq: final_bound_Phi}), it follows immediately that $A_t$ is sub-gamma (on the right tail) with variance $t \hat{\sigma}_f^2$ and scale parameter $\frac{2 \norm{f}_\infty}{\lambda_1}$. 
    
\end{enumerate}
\end{remark}
\begin{proof}[Proof of Lemma \ref{lem: lezaud_bound_lambda_0(r)}]
We follow the proof idea of \cite[Lemma 2.3]{lezaud} and add computational details. To simplify the notation and for enhanced readability of the proof we adopt the following notation for the product of operators. Let $m \geq l$, define for operators $A_{l},A_{l+1},...,A_{m}$ (defined on a common vector space)
\begin{equation}
     \prod_{i=l, \curvearrowright}^{m} A_i := A_lA_{l+1} ... A_m.
\end{equation}
If $m < l$ define 
\begin{equation}
    \prod_{i = l, \curvearrowright}^m A_i := 1.
\end{equation}
To get an upper bound for $\lambda_0(r)$ we bound each coefficient $\lambda_0^{(n)}$ in the series (c.f. Lemma \ref{lem: preparation_application_perturbation})
\begin{equation}
    \lambda_0(r) = \sum_{n = 1}^\infty \lambda_0^{(n)}.
\end{equation}
Recall  that $\lambda_0^{(1)} = 0$ (see Remark \ref{rem: lambda_0^(1)}), so we only have to consider $n \geq 2$. Fix  some $n \geq 2$. We rewrite the expression for $\lambda_0^{(n)}$ (c.f. Lemma \ref{lem: preparation_application_perturbation})
\begin{equation}
  \lambda_0^{(n)} = \frac{(-1)^n}{n} \sum\limits_{\substack{k_1,..k_n \in \F{Z}_+ \\ k_1 + ... k_n = n-1}} \Tr( M_fS^{(k_1)} ... M_fS^{(k_n)})
  \label{eq: lambda_0^(n)_proof}
\end{equation}
as follows. Call a permutation $\sigma : \{1,...,n \} \rightarrow \{1,...,n \} $ circular if it is a multiple of the permutation $(1 \; 2 \;... \; n)$ (in cycle notation). Then using the well known trace identity  for finite dimensional operators $A_1,A_2 $, i.e.  $\Tr(A_1A_2) = \Tr(A_2A_1)$, it follows that 
\begin{equation}
\label{eq: Tr_invariance_circular_permutation}
     \Tr( \prod_{i = 1,   \curvearrowright}^{k} A_i) = \Tr( \prod_{i = 1, \curvearrowright}^{k}A_{\sigma(i)})
\end{equation}
where $A_1,...A_k$ are arbitrary operators and $\sigma$ is some circular permutation. Rewriting \eqref{eq: lambda_0^(n)_proof} yields 
\begin{equation}
    \lambda_0^{(n)} = \frac{(-1)^n}{n} \sum\limits_{\substack{k_1,...,k_n \in \F{Z}_+ \\ k_1 + ... + k_n = n-1}} \Tr( \prod_{i=1,\curvearrowright}^nM_fS^{(k_i)}) .
\end{equation}

To sum over the  Tr-terms having the same value in the above expression of $\lambda_0^{(n)}$ (due to invariance with respect to circular permutations), we define an equivalence relation on the set of all $\F{Z}_+$ valued sequences $(k_1,...,k_n)$ with $k_1 + ... + k_n = n-1$ by $(k_1,...,k_n) \sim  (m_1,...,m_n)$ if and only if there is a circular permutation $\sigma $ such that $(m_1,..., m_n) = (k_{\sigma(1)},...,k_{\sigma(n)})$. It can be shown (see Lemma \ref{lem: appendix_equivalence_classes}) that all equivalence classes $[k_1,...,k_n]$ contain exactly $n$ elements. Furthermore, because $m_1 + ... +m_n = n-1$, at least one $m_i$ is zero, so the equivalence class $[m_1,...,m_n]$ of $(m_1,...,m_n)$ always has a representation $[k_1,..k_{n-1},0]$, where $k_1,...,k_{n-1} \in \F{Z}_+$ and $k_1 + ...+ k_{n-1} = n -1 $. Thus, combining the aforementioned facts with \eqref{eq: Tr_invariance_circular_permutation} yields  that  $\lambda_0^{(n)}$ can be rewritten as 
\begin{align}
\begin{split}
    \label{eq: lemma3_lambda_0(n)_2}
    \lambda_0^{(n)} &= (-1)^n \sum_{[k_1,...,k_{n-1},0]}\Tr(\left(\prod_{i = 1, \curvearrowright}^{n-1}M_fS^{(k_i)}\right)M_fS^{(0)}) \\
    &= (-1)^n\sum_{[k_1,...,k_{n-1},0]}\Tr(M_f\left(\prod_{i=1,\curvearrowright}^{n-1}S^{(k_i)}M_f\right)S^{(0)}),
\end{split}
\end{align}
where the sum is taken over all equivalence classes. To bound $\abs{\lambda_0^{(n)}}$ we will now bound the traces in \eqref{eq: lemma3_lambda_0(n)_2}. Define 
\begin{equation}
    \Tr([k_1,...,k_{n-1},0]) := \Tr(M_f\left(\prod_{i=1,\curvearrowright}^{n-1}S^{(k_i)}M_f\right)S^{(0)}).
\end{equation}
Note that only the equivalence classes $[k_1,...,k_{n-1},0]$ without two consecutive zeros lead to nonzero traces in \eqref{eq: lemma3_lambda_0(n)_2}. This can be seen as follows.  Let $(e_x)_{x \in E}$ be the basis of $L^2(\pi)$ defined via $e_x(y) := \delta_{xy}$ for $x,y \in E$. In this basis the trace $\Tr([k_1,...,k_{n-1},0])$ is given by
\begin{align}
\label{eq: lemma_3_lambda_0^(n)_3}
\begin{split}
    &\Tr([k_1,...,k_{n-1},0]) = 
    \sum_{x \in E}-\left (M_f \left(\prod_{i=1, \curvearrowright}^{n-1}S^{(k_i)}M_f \right) \text{pr}(e_x) \right )(x) \\
    &= \sum_{x\in E}-  \left(M_f \left(\prod_{i=1, \curvearrowright}^{n-1}S^{(k_i)}M_f \right) \textbf{1} \right)(x) \cdot \pi_x = \sum_{x\in E}-f(x) \cdot  \left( \left(\prod_{i=1, \curvearrowright}^{n-1}S^{(k_i)}M_f \right) \textbf{1} \right)(x) \cdot \pi_x\\
    &= - \left \langle f,  \left(\prod_{i=1, \curvearrowright}^{n-1}S^{(k_i)}M_f \right) \textbf{1}  \right\rangle ,
    \end{split}
\end{align}
where we used  the identities 
(c.f. Lemma \ref{lem: preparation_application_perturbation}) 
\begin{equation}
    S^{(0)}e_x = - \text{pr}(e_x) = -\langle e_x, \mathbf{1} \rangle \textbf{1}= -\pi_x \textbf{1}
\end{equation}
and 
\begin{equation}
   ( M_fg)(x) = f(x) g(x) .
\end{equation}
 If $[k_1,..k_{n-1},0]$ has two consecutive zeros, then by applying a circular permutation, we can assume that $k_{n-1} = 0$. But if $k_{n-1} = 0$, then using \eqref{eq: lemma_3_lambda_0^(n)_3} and $S^{(0)}M_f\textbf{1} = -PM_f\textbf{1} = -\pi(f) \mathbf{1} = 0$  yields $\Tr([k_1,...,k_{n-1},0]) =   0$. Thus it is sufficient to bound the traces in \eqref{eq: lemma3_lambda_0(n)_2} described by equivalence classes $[k_1,...,k_{n-1},0]$ with no adjacent zeros. \par In the following  let $[k_1,...k_{n-1},0]$ be such an equivalence class and let $m \geq 1$ be the number of (non adjacent) zeros in $[k_1,...,k_{n-1},0]$. We will now derive an upper bound for $\abs{\Tr([k_1,...,k_{n-1},0])}$, leading via \eqref{eq: lemma3_lambda_0(n)_2} to a upper bound for $\abs{\lambda_0^{(n)}}$. We first rewrite the last term in \eqref{eq: lemma_3_lambda_0^(n)_3}
 \begin{equation}
     - \left \langle f,  \left(\prod_{i=1, \curvearrowright}^{n-1}S^{(k_i)}M_f \right) \textbf{1}  \right\rangle
 \end{equation}
 as follows. Let $1 \leq j_1 < ... < j_m \leq n$ be the positions of the zeros in $(k_1,...,k_{n-1},0)$. In particular, $j_m = n$ and $k_{j_l} = 0$ for $l = 1,...,m$. Furthermore, because there are no consecutive zeros we have $k_1 \neq 0$ and $k_{j_l \pm 1} \neq 0$ for all $l = 1,...,m$. Then, using $j_1,...,j_m$ and $k_{j_l} = 0$
 we rewrite 
\begin{align}
\begin{split}
     &- \left \langle f,  \left(\prod_{i=1, \curvearrowright}^{n-1}S^{(k_i)}M_f \right) \textbf{1}  \right\rangle =  - \left \langle f, \left (\prod_{i = 1, \curvearrowright}^{j_1 -1}S^{(k_i)}M_f \right ) \left (\prod_{l = 1, \curvearrowright}^{m-1}  \left ( \prod_{i = j_{l}, \curvearrowright}^{j_{l+1}-1} S^{(k_i)}M_f \right ) \right ) \textbf{1} \right \rangle  \\
     &= - \left \langle f, \left (\prod_{i = 1, \curvearrowright}^{j_1 -1}S^{(k_i)}M_f \right ) \left (\prod_{l = 1, \curvearrowright}^{m-1}  S^{(0)}M_f \left ( \prod_{i = j_{l}+1, \curvearrowright}^{j_{l+1}-1} S^{(k_i)}M_f \right ) \right ) \textbf{1} \right \rangle .
\label{eq: scalar_product_with_f}
\end{split}
\end{align}
Now for all $m \in \F{N}$ and all operators $A_1,...,A_{m-1}$ (on $L^2(\pi)$) it holds that
\begin{equation}
\label{eq: iterated_product}
   \left ( \prod_{l = 1, \curvearrowright}^{m-1} S^{(0)}M_f A_l\right )\textbf{1} = (-1)^{m-1} \left (\prod_{l = 1}^{m-1} \langle f, A_l \textbf{1} \rangle \right ) \textbf{1},
\end{equation}
which follows by induction on $m$, using
\begin{equation*}
    S^{(0)}M_fg = - \text{pr}(fg) = - \langle \textbf{1},fg \rangle \textbf{1} = - \langle f, g \rangle \textbf{1}
\end{equation*}
for $g \in L^2(\pi)$.
 Applying \eqref{eq: iterated_product} to the product $\prod_{l = 1, \curvearrowright}^{m-1} ...$ in the last line of \eqref{eq: scalar_product_with_f}, setting $A_l = \left ( \prod_{i = j_{l}+1, \curvearrowright}^{j_{l+1}-1} S^{(k_i)}M_f \right )  $ and $j_0 := 0$, gives (continuing the chain of equalities in \eqref{eq: scalar_product_with_f})
\begin{align}
\begin{split}
\label{eq: lemma3_expression_Tr}
    & - \left \langle f,  \left(\prod_{i=1, \curvearrowright}^{n-1}S^{(k_i)}M_f \right) \textbf{1}  \right\rangle \\
    &=  (-1)^m\left \langle f , \left (\prod_{i = 1, \curvearrowright}^{j_1 -1}S^{(k_i)}M_f \right ) \left (\prod_{l = 1}^{m-1} \left \langle f, \left ( \prod_{i = j_{l}+1, \curvearrowright}^{j_{l+1}-1} S^{(k_i)}M_f \right) \textbf{1} \right \rangle \right ) \textbf{1} \right \rangle  \\
    &= (-1)^m \prod_{l = 1}^m \left \langle f, \left ( \prod_{i = j_{l-1}+1, \curvearrowright}^{j_l-1} S^{(k_i)}M_f \right) \textbf{1} \right \rangle 
      = (-1)^m \prod_{l = 1}^m \left \langle f, \left ( \prod_{i = j_{l-1}+1, \curvearrowright}^{j_l-1} S^{k_i}M_f \right) \textbf{1} \right \rangle,
\end{split}
\end{align}
where in the last equality  $S^{(k)} = S^k$ for $k \geq 1$ was used (by definition of $j_1,...j_m$ we have that $k_i \geq 1 $ for all $i \in \{1,...,n\} \backslash \{j_1,...,j_m \}$). Thus, we obtain (recalling \eqref{eq: lemma_3_lambda_0^(n)_3}) 
\begin{equation}
\label{eq: Tr[k_1,...,k_n]_absolute_value}
   \abs{ \Tr([k_1,...,k_{n-1},0])} =  \prod_{l = 1}^m \abs{\left \langle f, \left ( \prod_{i = j_{l-1}+1, \curvearrowright}^{j_l-1} S^{k_i}M_f \right) \textbf{1} \right \rangle}.
\end{equation}
The next objective is to bound each of the $m$ factors in the above equation to get a bound for the trace. We show the following bound for each factor: 
\begin{equation}
\label{eq: bound_for_each_factor}
    \abs{\left \langle f, \left ( \prod_{i = j_{l-1}+1, \curvearrowright}^{j_l-1} S^{k_i}M_f \right) \textbf{1} \right \rangle} \leq \frac{\hat{\sigma}_f^2 \lambda_1}{2} \left (\frac{1}{\lambda_1} \right)^{ \sum_{i = j_{l-1}+1}^{j_l -1}k_i} \norm{f}_\infty^{j_{l}-j_{l-1}-2}.
\end{equation}
To show this bound we will use results of Lemma \ref{lem: properties_reduced_resolvent}.
Notice that the product $\prod_{i = j_{l-1}+1, \curvearrowright}^{j_l-1}...$ in each factor is nonempty, i.e. $j_{l-1}+1 \leq j_l -1$ for all $l = 1,..., m$, because we consider equivalence classes $[k_1,...,k_{n-1},0]$ with no adjacent zeros.  Let $l \in \{1,...,m \}$ and define $\hat{S} = - S$ as in Lemma \ref{lem: properties_reduced_resolvent}. Then,

\begin{align}
\begin{split} 
&\abs{\left \langle f, \left ( \prod_{i = j_{l-1}+1, \curvearrowright}^{j_l-1} S^{k_i}M_f \right) \textbf{1} \right \rangle} = \abs{ \left \langle \hat{S}^{k_{j_{l-1}+1}}f , \left ( \prod_{i = j_{l-1} + 2, \curvearrowright}^{j_l -1}M_f S^{k_i} \right) f \right \rangle } \\
&= \abs{ \left \langle \hat{S}^{k_{j_{l-1}+1}-\frac{1}{2}}\hat{S}^{\frac{1}{2}}f , \left ( \prod_{i = j_{l-1} + 2, \curvearrowright}^{j_l -1}M_f S^{k_i} \right) \hat{S}^{-\frac{1}{2}}\hat{S}^{\frac{1}{2}} f \right \rangle }  \\
&\leq \norm{\hat{S}^{k_{j_{l-1}+1}-\frac{1}{2}}}_2 \norm{\hat{S}^{\frac{1}{2}}f}_2 \left (\prod_{i = j_{l-1}+2}^{j_l-1}\norm{M_f}_2 \norm{S^{k_i}}_2  \right) \norm{\hat{S}^{-\frac{1}{2}}}_2 \norm{\hat{S}^{\frac{1}{2}}f}_2 \\
&= \norm{\hat{S}^{\frac{1}{2}}f}_2^2\lambda_1  \left (\frac{1}{\lambda_1} \right)^{\sum_{i = j_{l-1}+1}^{j_l - 1}k_i} \norm{f}_\infty^{j_{l}-j_{l-1}-2}.
\end{split} 
\end{align}
In the first line we used selfadjointness of $S$ (Lemma \ref{lem: properties_reduced_resolvent}$(a)$), and $M_f\textbf{1} = f$. In the second line we used Lemma \ref{lem: properties_reduced_resolvent}$(d)$ (the conditions are satisfied because $\langle f, \textbf{1} \rangle = \pi(f) =  0$). Finally, in the last two lines we used the Cauchy-Schwartz inequality, submultiplicativity of the operator norm $\norm{\cdot}_2$, and Lemma \ref{lem: properties_reduced_resolvent}$(b),(d)$ (to replace $\norm{S^r}$ respectively $\norm{\hat{S}^r}$ by $\left (\frac{1}{\lambda_1}\right )^r$),  and $\norm{M_f}_2 = \norm{f}_\infty $. Because $\hat{S}^{\frac{1}{2}}$ is selfadjoint, $\hat{S}^{\frac{1}{2}}\hat{S}^{\frac{1}{2}} = \hat{S}$ (see Lemma \ref{lem: properties_reduced_resolvent}) and $\hat{\sigma}_f^2 = - 2 \langle f, Sf \rangle  $ (by definition) we have 
\begin{equation*}
    \norm{\hat{S}^{\frac{1}{2}}f}_2^2 = \langle \hat{S}^{\frac{1}{2}}f,\hat{S}^{\frac{1}{2}}f \rangle = \langle f, \hat{S}f \rangle = \frac{\hat{\sigma}_f^2}{2},
\end{equation*}
which proves the desired bound \eqref{eq: bound_for_each_factor}.
Multiplying these $m$ bounds in \eqref{eq: bound_for_each_factor} yields (recalling \eqref{eq: Tr[k_1,...,k_n]_absolute_value}) 
\begin{align}
\begin{split}
    &|\Tr([k_1, ...,k_{n-1},0])| \leq \left(\frac{\hat{\sigma}_f^2\lambda_1}{2}\right)^m\left(\frac{1}{\lambda_1}\right)^{\sum_{i = 1}^n k_i}\norm{f}_{\infty}^{\sum_{l = 1}^m( j_l - j_{l-1} -2)} \\
    &= \left(\frac{\hat{\sigma}_f^2\lambda_1}{2}\right)^{m-1}\left(\frac{1}{\lambda_1}\right)^{n-1}\norm{f}_{\infty}^{n-2m} \frac{\hat{\sigma}_f^2\lambda_1}{2} = \left(\frac{\hat{\sigma}_f^2 \lambda_1}{2 \norm{f}_\infty^2} \right )^{m-1}\left (\frac{\norm{f}_{\infty}^n}{\lambda_1^{n}} \right) \left( \frac{\hat{\sigma}_f^2\lambda_1^2}{2 \norm{f}_{\infty}^2} \right),
\end{split}
\end{align}
where in the first line we used $k_{j_l} = 0$ for all $l = 1,...,m $ and in the second line we used $j_0 = 0 $ and $j_m = n$.
But by using the Cauchy-Schwartz inequality and $\norm{S} = \frac{1}{\lambda_1}$ (Lemma \ref{lem: properties_reduced_resolvent}$(b)$) it is easily checked that $ \frac{\hat{\sigma}_f^2 \lambda_1}{2 \norm{f}_\infty^2}  \leq 1$, so the above inequality implies the bound
\begin{equation}
\label{eq: bound_Tr[k_1,...,k_n]}
    |\Tr([k_1,...,k_{n-1},0])| \leq \left (\frac{\norm{f}_{\infty}^n}{\lambda_1^{n}} \right) \left( \frac{\hat{\sigma}_f^2\lambda_1^2}{2 \norm{f}_{\infty}^2} \right),
\end{equation}
which is independent of $m$, the number of zeros in $[k_1,...,k_{n-1},0]$.
At last, using the above inequality we can now bound $\abs{\lambda_0^{(n)}}$ by bounding each trace in \eqref{eq: lemma3_lambda_0(n)_2}; 
\begin{equation}
    \lambda_0^{(n)} = (-1)^n\sum_{[k_1,...,k_{n-1},0]}\Tr([k_1,...,k_{n-1}])
\end{equation}

and counting the number of equivalence classes $[k_1,...k_{n-1},0]$ that contribute to the sum in \eqref{eq: lemma3_lambda_0(n)_2}. To count the number of contributing summands we group the equivalence classes with no adjacent zeros according to the number $m$ of zeros. Recall that these are the only equivalence classes that contribute to the sum and that necessarily $m\geq 1$. Let $\beta(n,m)$ be the number of equivalence classes $[k_1,...,k_n]$ with $m$ non-adjacent zeros. Let $\lfloor \cdot \rfloor $ denote the floor function. Then for $ m  > \lfloor \frac{n}{2} \rfloor $  there must exist two adjacent zeros and consequently $\beta(n,m) = 0$. For $1 \leq  m \leq \lfloor \frac{n}{2} \rfloor$ it can be shown  that (Lemma \ref{lem: appendix_beta(n,m)})
\begin{equation}
\label{eq: beta(n,m)}
   \beta(n,m) = \frac{1}{n-1} \binom{n-1}{m} \binom{n-1-m}{n-2m}.
\end{equation}

So the total number $\beta_n$ of summands that contribute in \eqref{eq: lemma3_lambda_0(n)_2} is given by
\begin{equation}
\label{eq: beta_n}
    \beta_n =\sum_{m = 1}^{\lfloor \frac{n}{2} \rfloor} \beta(n,m).
\end{equation}
Applying the bound \eqref{eq: bound_Tr[k_1,...,k_n]} on $\abs{\Tr([k_1,...,k_{n-1},0])}$ to \eqref{eq: lemma3_lambda_0(n)_2} yields
\begin{equation}
    \abs{\lambda_0^{(n)}} \leq  \beta_n\left (\frac{\norm{f}_{\infty}^n}{\lambda_1^{n}} \right) \left( \frac{\hat{\sigma}_f^2\lambda_1^2}{2 \norm{f}_{\infty}^2} \right).
\end{equation}
Let $x := \frac{r \norm{f}_\infty}{\lambda_1}$, then the above inequality implies that (by summing over $n \geq 2$, recall that $\lambda_0^{(1)} = 0$)
\begin{equation}
\label{eq: lezaud_final_bound_lambda_0(r)}
    \lambda_0(r) \leq  \left( \frac{\hat{\sigma}_f^2\lambda_1^2}{2 \norm{f}_{\infty}^2} \right) \sum_{n = 2}^\infty \beta_n x^n,
\end{equation}
which implies the first inequality of  \eqref{eq: lezaud_bound_lambda_0(r)},  because 
\begin{equation}
    \Phi(x) = \sum_{n = 2}^\infty \beta_n x^n
\end{equation}
 for $0 \leq x \leq \frac{1}{3}$ (Lemma \ref{lem: appendix_phi(x)}). Finally, the second inequality of \eqref{eq: lezaud_bound_lambda_0(r)} follows by the fact that
 \begin{equation}
     \Phi(x) \leq \frac{x^2}{1-2x}
 \end{equation}
for $0 \leq x \leq \frac{1}{3}$
which is easy to check by noting that $\Phi(0) = 0$ and calculating that $\Phi'(x) \leq \frac{d}{dx}(\frac{x^2}{1-2x})$ for $0 \leq x \leq \frac{1}{3}$.
\end{proof}
\begin{reference_paper}
\label{rem: lezauds_fault}
\begin{enumerate}[$(a)$]
    \item An elementary calculation using factorials shows that $\beta(n,m)$ (defined in \eqref{eq: beta(n,m)}) can also be written as
\begin{equation}
\label{eq: def_beta(n,m)_lezaud}
    \beta(n,m) = \frac{1}{m} \binom{n-m-1}{m-1} \binom{n-2}{n-m-1},
\end{equation}
which is the definition used by \cite{lezaud}. Thus, we have (c.f. \eqref{eq: beta_n})
\begin{equation}
\label{eq: def_beta_n_lezaud}
    \beta_n = \sum_{i = 1}^{\lfloor \frac{n}{2} \rfloor} \frac{1}{m} \binom{n-m-1}{m-1} \binom{n-2}{n-m-1},
\end{equation}
which is the definition in \cite[P.\,189]{lezaud}. However, in \cite[P.\,189]{lezaud} the sum ends at $\lceil \frac{n}{2} \rceil $ ($\lceil \cdot \rceil $ denotes the ceil function, which is not correct; the sum should end at $\lfloor \frac{n}{2} \rfloor$, because if the number $m$ of  zeros in $[k_1,...k_n]$ is bigger than $\lfloor \frac{n}{2} \rfloor$ then there must exist two adjacent zeros and consequently $\beta(n,m) = 0$.  
\item In contrast to  \cite[Lemma 2.3]{lezaud}, which states that the bound \eqref{eq: lezaud_bound_lambda_0(r)} even holds for $0 \leq r < \frac{\lambda_1}{2 \norm{f}}$, the proof presented in \cite{lezaud} just shows that the bound
    \eqref{eq: lezaud_bound_lambda_0(r)} holds for $0 \leq r < \frac{\lambda_1}{3 \norm{f}}$, because the series  $\sum_{n = 2} \beta_n x^n$ has convergence radius $\frac{1}{3}$ (Lemma \ref{lem: appendix_phi(x)}), and consequently the bound \eqref{eq: lezaud_final_bound_lambda_0(r)}
    \begin{equation}
    \lambda_0(r) \leq  \left( \frac{\hat{\sigma}_f^2\lambda_1^2}{2 \norm{f}_{\infty}^2} \right) \sum_{n = 2}^\infty \beta_n x^n,
\end{equation}
    is trivial for $ \frac{r \norm{f}_\infty}{\lambda_1} = x > \frac{1}{3} $.
\end{enumerate}

\end{reference_paper}

By computing the Fenchel conjugate of the above bound for $\lambda_0$ we get the following concentration inequalities.
\begin{theorem}[Concentration inequality via perturbation theory]
\label{th: concentration_via_perturbation}
Let $u \geq 0$, define 
\begin{equation}
    r_0 := \frac{\lambda_1}{2 \norm{f}_\infty} \left ( 1 -\left ( 1 + \frac{4 u \norm{f}_\infty
    }{\lambda_1 \hat{\sigma}_f^2} \right )^{-\frac{1}{2}} \right ).
\end{equation}
and 
\begin{equation}
    G(r) := \frac{r^2\frac{\hat{\sigma}_f^2}{2}}{1-2\left(\frac{\norm{f}_{\infty}}{\lambda1}\right) r}
\end{equation}
for $0 \leq r < \frac{\lambda_1 }{2 \norm{f}_\infty }$. The following concentration inequalities hold:
\begin{enumerate}[\lb a\rb]
    \item If $r_0 \leq \frac{\lambda_1}{3 \norm{f}_\infty} $, or equivalently $u \leq  \frac{2\hat{\sigma}_f^2\lambda_1}{\norm{f}_\infty}$, then 
    \begin{equation*}
      G^*(u) :=  \sup_{ r \in \left [0, \frac{\lambda_1}{3 \norm{f}_\infty} \right ]} (r u - G(r)) = r_0 u - G(r_0)
    \end{equation*}
    and consequently
    \begin{equation}
    \label{eq: concentration_via_perturbation_a}
        \F{P}_\nu \left (\frac{A_t}{t} \geq u \right ) \leq \norm{\frac{d \nu}{d \pi}}_2 \exp\left(-\frac{2tu^2}{\hat{\sigma}_f^2(1+ \sqrt{1+ \frac{4\norm{f}_\infty u}{\lambda_1 \hat{\sigma}_f^2}})^2} \right)
    \end{equation}
    \item If $r_0  > \frac{\lambda_1}{3 \norm{f}_\infty}$, or equivalently $u >  \frac{2\hat{\sigma}_f^2\lambda_1}{\norm{f}_\infty}$,  then
    \begin{equation*}
       G^*(u) =  \sup_{ r \in \left [0, \frac{\lambda_1}{3 \norm{f}_\infty} \right ]} (r u - G(r)) = \frac{\lambda_1}{3 \norm{f}_\infty} u - G\left (\frac{\lambda_1}{3 \norm{f}_\infty} \right )
    \end{equation*}
    and consequently
    \begin{equation} 
    \label{eq: concentration_via_perturbation_b}
        \F{P}_\nu \left (\frac{A_t}{t} \geq u \right ) \leq  \norm{\frac{d\nu }{ d \pi}}_2 \exp \left(- t \frac{\lambda_1}{3 \norm{f}_\infty} \left (u - \frac{\lambda_1 \hat{\sigma}_f^2}{2 \norm{f}_\infty} \right ) \right )
    \end{equation}
    
\end{enumerate}
\end{theorem}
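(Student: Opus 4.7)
The plan is to combine Lemma \ref{lem: lezaud_bound_lambda_0(r)} with Lemma \ref{lem: concentration_via_bound_on_lambda_0(r)}, applied to $G$ on the domain $D = [0,\lambda_1/(3\norm{f}_\infty)]$. Since Lemma \ref{lem: lezaud_bound_lambda_0(r)} furnishes $\lambda_0(r) \leq G(r)$ on $D$, Lemma \ref{lem: concentration_via_bound_on_lambda_0(r)} immediately gives
\begin{equation*}
\F{P}_\nu\left(\frac{A_t}{t} \geq u\right) \leq \norm{\frac{d\nu}{d\pi}}_2 \exp(-tG^*(u)), \qquad G^*(u) = \sup_{r \in D}(ru - G(r)).
\end{equation*}
Both inequalities in the theorem then reduce to computing this \emph{constrained} Fenchel conjugate and comparing the constrained maximizer to the unconstrained one.

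The core ingredient is Example \ref{ex: Legendre_transform_subgamma}: setting $v = \hat{\sigma}_f^2$ and $c = 2\norm{f}_\infty/\lambda_1$ we have $G(r) = r^2 v/(2(1-rc))$, and the example tells us that on $[0,1/c)$ the map $H(r) := ru - G(r)$ is strictly concave with a unique stationary point at $c^{-1}(1 - (1+2uc/v)^{-1/2})$, which upon substituting the values of $v$ and $c$ equals exactly the $r_0$ of the statement. I would first record the elementary equivalence
\begin{equation*}
r_0 \leq \frac{\lambda_1}{3\norm{f}_\infty} \iff \bigl(1+4u\norm{f}_\infty/(\lambda_1\hat{\sigma}_f^2)\bigr)^{1/2} \leq 3 \iff u \leq \frac{2\hat{\sigma}_f^2 \lambda_1}{\norm{f}_\infty},
\end{equation*}
which separates the two regimes.

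In case (a), the unconstrained maximizer $r_0$ lies inside $D$, so the constrained supremum coincides with the unconstrained one. Example \ref{ex: Legendre_transform_subgamma} directly yields
\begin{equation*}
G^*(u) = r_0 u - G(r_0) = \frac{2u^2}{v(1+\sqrt{1+2uc/v})^2} = \frac{2u^2}{\hat{\sigma}_f^2\bigl(1+\sqrt{1+4u\norm{f}_\infty/(\lambda_1 \hat{\sigma}_f^2)}\bigr)^2},
\end{equation*}
which plugged into the Chernoff-type bound gives \eqref{eq: concentration_via_perturbation_a}. In case (b), strict concavity of $H$ on $[0,1/c)$ together with $H'(r) > 0$ for $r < r_0$ implies that $H$ is strictly increasing throughout $D$ whenever $r_0 > \lambda_1/(3\norm{f}_\infty)$. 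Hence the constrained supremum is attained at the right endpoint $\lambda_1/(3\norm{f}_\infty)$, and a short evaluation gives $G(\lambda_1/(3\norm{f}_\infty)) = \lambda_1^2 \hat{\sigma}_f^2/(6\norm{f}_\infty^2)$, leading to
\begin{equation*}
G^*(u) = \frac{\lambda_1}{3\norm{f}_\infty}\left(u - \frac{\lambda_1 \hat{\sigma}_f^2}{2\norm{f}_\infty}\right),
\end{equation*}
and thus \eqref{eq: concentration_via_perturbation_b}.

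There is no real obstacle here; once Lemma \ref{lem: lezaud_bound_lambda_0(r)} and Example \ref{ex: Legendre_transform_subgamma} are in hand, the proof is bookkeeping. The only points requiring mild care are (i) checking the equivalence between the threshold for $r_0$ and the threshold for $u$, and (ii) justifying that in case (b) the constrained supremum sits at the right endpoint of $D$, which follows from the strict concavity of $H$ established in Example \ref{ex: Legendre_transform_subgamma}.
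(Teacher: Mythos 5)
Your proof is correct and follows essentially the same route as the paper's: combine Lemma~\ref{lem: lezaud_bound_lambda_0(r)} with Lemma~\ref{lem: concentration_via_bound_on_lambda_0(r)}, then compute the constrained Fenchel conjugate via Example~\ref{ex: Legendre_transform_subgamma} by distinguishing whether the unconstrained stationary point $r_0$ lies in $[0,\lambda_1/(3\norm{f}_\infty)]$. The only cosmetic difference is that you invoke strict concavity ($H''<0$) to place the maximizer at the right endpoint in case (b), whereas the paper argues from $H'(0)=u\geq 0$ together with $r_0$ being the unique root of $H'$; both are immediate consequences of the same facts in Example~\ref{ex: Legendre_transform_subgamma}.
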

\begin{reference_paper}
As the bound \eqref{eq: lezaud_bound_lambda_0(r)} of Lemma \ref{lem: lezaud_bound_lambda_0(r)} just was proven for  \\ $0 \leq r < \frac{\lambda_1 }{3 \norm{f}_\infty }$ (and the presented proof just works for this range) we had to make a case distinction (whether $r_0 \leq \frac{\lambda_1}{3 \norm{f}_\infty})$ and obtain a weaker statement (but with a legitimate proof) than stated in \cite[Thrm.\,1.1]{lezaud}.
\end{reference_paper}
\begin{remark}
\begin{enumerate}[$(a)$]
    \item For $\nu = \pi$ Theorem \ref{th: concentration_via_perturbation}$(a)$ is exactly  Bernstein's inequality (Lemma \ref{lem: bernstein_inequality}) for $A_t$, which is sub-gamma on the right tail with variance $t \hat{\sigma}_f^2$ and scale parameter $\frac{2 \norm{f}_\infty}{\lambda_1}$ (Remark \ref{rem: lemma_bound_lammba_(r)_via_perturbation_theory})
    \item We will later show in Theorem \ref{th: extension_lezaud} that Theorem \ref{th: concentration_via_information}$(a)$ actually holds for all $u \geq 0$ and obtain a sharper Bernstein-type  bound.
\end{enumerate}

\end{remark}

\begin{proof}[Proof of Theorem \ref{th: concentration_via_perturbation}]
An elementary calculation shows that we have $r_0 \leq \frac{\lambda_1}{3 \norm{f}_\infty}$ if and only if
    $u \leq  \frac{2\hat{\sigma}_f^2\lambda_1}{\norm{f}_\infty}$.
Recall that $G(r)$ is a bound for $\lambda_0(r)$ (Lemma \ref{lem: lezaud_bound_lambda_0(r)}), i.e. we have 
\begin{equation}
    G(r) \geq \lambda_0(r)
\end{equation}
 for all $r \leq \frac{\lambda_1}{3 \norm{f}_\infty }$. Consequently, by Lemma \ref{lem: concentration_via_bound_on_lambda_0(r)} the concentration inequality 
\begin{equation}
\label{eq: concentration_g^*(u)}
  \F{P}_\nu \left (\frac{A_t}{t} \geq u \right ) \leq  \norm{\frac{d\nu }{ d \pi}}_2 \exp( - G^*(u)).  
\end{equation}
holds for all $u \geq 0$.
To calculate $G^*(u) = \sup_{ r \in \left [0, \frac{\lambda_1}{3\norm{f}_\infty}\right]}(ur - G(r))$, we use Example \ref{ex: Legendre_transform_subgamma} with scale parameter $c = \frac{2 \norm{f}_\infty}{\lambda_1}$ and variance $v = \hat{\sigma}_f^2$. Let $u \geq 0$ and define
\begin{equation}
    H(r) = ur - G(r)
\end{equation}
as in Example \ref{ex: Legendre_transform_subgamma}. By Example \ref{ex: Legendre_transform_subgamma} we have
\begin{equation}
    \sup_{r \in \left [0, \frac{\lambda_1}{2 \norm{f}_\infty}\right)}H(r) = H(r_0).
\end{equation}
If $r_0 \leq \frac{\lambda_1}{3\norm{f}_\infty }$ then clearly
\begin{equation}
    H(r_0) = \sup_{r \in \left [0, \frac{\lambda_1}{2 \norm{f}_\infty}\right)}H(r) =  \sup_{r \in \left [0, \frac{\lambda_1}{3\norm{f}_\infty}\right ]} H(r) = G^*(u).
\end{equation}\
The desired concentration inequality \eqref{eq: concentration_via_perturbation_a} of part $(a)$  follows by  plugging  the formula for $H^*(u) = H(r_0)$ from Example \ref{ex: Legendre_transform_subgamma} (see \eqref{eq: Legendre_transform_subgamma}) into \eqref{eq: concentration_g^*(u)}. Thus, part $(a)$ is proved. Part $(b)$ can be proved as follows. We have $H'(0) = u \geq 0$ and by Example \ref{ex: Legendre_transform_subgamma}, the derivative $H'$ has a unique root in $[0, \frac{\lambda_1}{2 \norm{f}_\infty})$ located at $r_0$. Consequently $H' \geq 0$ in $[0,r_0]$ and $H$ is nondecreasing on $[0,r_0]$. So if $r_0 > \frac{\lambda_1}{3 \norm{f}_\infty}$, then $H$ is nondecreasing on $ \left [0 , \frac{\lambda_1}{3 \norm{f}_\infty} \right]$ and
\begin{equation}
    G^*(u) = \sup_{r \in \left [0 , \frac{\lambda_1}{3 \norm{f}_\infty}\right ]} H(r) = H \left( \frac{\lambda_1}{3 \norm{f}_\infty}\right).
\end{equation}
Finally, the concentration inequality \eqref{eq: concentration_via_perturbation_b} of part $(b)$ follows by explicitly computing $G^*(u) = H\left (\frac{\lambda_1}{3 \norm{f}_\infty} \right )$ and plugging it into \eqref{eq: concentration_g^*(u)}.

\end{proof} 
.

\subsubsection{Concentration Inequalities via functional Inequalities }
\label{subsubsec: concentration_via_functional_inequalities}
In this section we follow \cite{guillin} and bound $\lambda_0(r)$ using a Poincaré and $F$-Sobolev inequality  to obtain concentration inequalities (invoking Lemma \ref{lem: concentration_via_bound_on_lambda_0(r)}) . For a more general presentation on how functional inequalities (like the aforementioned) relate to the theory of Markov processes see e.g. \cite{functional_inequalities}. The concentration inequalities of this section are formulated  in Theorems \ref{th: concentration_via_poincare} and \ref{th: concentration_via_F_sobolev}, which are our versions of \cite[Prop.\,1.4]{guillin} and \cite[Thrm.\,2.3]{guillin}. The most relevant material of Section \ref{sec: preliminaries} is  Lemma \ref{lem: biggest_eigenvalue} and Example \ref{ex: Legendre_transform_subgamma}. \\ We now define the inequalities of interest:
\begin{definition}(Poincaré inequality)
We say that $L$ satisfies a Poincaré inequality with constant $C > 0$ if for all $g \in L^2(\pi)$ (c.f. \cite[Eq.\,(1.3)]{guillin})
\begin{equation}
\label{eq: poincare_inequality}
    \text{Var}_\pi(g)  \leq - C \langle Lg, g \rangle = - C \left \langle \frac{L+L^*}{2}g, g  \right \rangle .  
\end{equation}
Here $\text{Var}_\pi(g) = \pi(g^2) - \pi(g)^2$ denotes the variance of $g$ with respect to $\pi$.
\end{definition}
\begin{definition}($F$-Sobolev inequality)
Let $F : (0, \infty) \rightarrow \F{R}$ be a strictly increasing, continuous, concave function satisfying $F(1) = 0$ and $\lim_{t \to \infty } F(t) = \infty$. Then $F^{-1}$ exists and is defined on $(F(0), \infty)$, where $F(0) := \lim_{t \downarrow 0}F(t)$ (note that this limit exists in $[-\infty, 0)$). In addition, assume that
\begin{equation}
    \label{eq: F-sobolev_multiplicative_condition}
    F(st) \leq F(t) + F(s)
\end{equation}
for all $s, t > 0$. We say $L$ satisfies a $F$-Sobolev inequality (c.f. \cite[Def. 2.1]{guillin}) if 
\begin{equation}
    \label{eq: F-sobolev_inequality}
    \pi( g^2 F(g^2)) \leq  - \langle Lg, g \rangle = - \left \langle \frac{L+L^*}{2}g, g \right \rangle
\end{equation}
for all $g \in L^2(\pi)$ with $\norm{g}_2 = 1$, where $ 0 \cdot F(0) = 0$.

\end{definition}
\begin{remark}
\begin{enumerate}[$(a)$]
    \item (Poincaré inequality and spectral gap) By writing $g = h + a\textbf{1}$ with $h \in \{\textbf{1}\}^{\perp}$, using $\text{Var}_\pi(g) = \text{Var}_\pi(h) = \norm{h}_2^2$, and Lemma \ref{lem: properties_infinitesimal_generator}$(a)$ it is easily seen that the Poincaré inequality holds if and only if
\begin{equation}
\label{eq: poincare_1}
    \norm{h}_2^2 \leq - C \left \langle \frac{L+L^*}{2}h, h \right \rangle 
\end{equation}
for all $h \in \{\textbf{1}\}^{\perp}$. But by diagonalizing $\frac{L+L^*}{2}$ and using Lemma \ref{lem: properties_infinitesimal_generator}$(a),(c)$ it is straightforward to check that \eqref{eq: poincare_1} is equivalent to 
\begin{equation}
\label{eq: spectral_gap_inequality}
    C \geq \frac{1}{\lambda_1}
\end{equation}
where $\lambda_1 = \min \limits_{\lambda \in \sigma \left( \frac{L+L^*}{2} \right) \backslash \{0\}} \abs{\lambda}$ is the spectral gap. Thus, the Poincaré inequality with constant $C$ is equivalent to the above  inequality for the spectral gap.

\item (Log-Sobolev inequality) If $F = C \log$ for some $C > 0 $, the $F$-Sobolev inequality is called log-Sobolev inequality with constant $\frac{1}{C}$ (c.f. \cite[Ch.\,5.1, Eq.\,5.1.1]{functional_inequalities}).
\end{enumerate}
\end{remark}

Using  the above inequalities we can derive upper bounds for $\lambda_0(r)$ to then apply Lemma \ref{lem: concentration_via_bound_on_lambda_0(r)} and obtain concentration inequalities. The following two bounds hold:

\begin{lemma}[Bound for $\lambda_0(r)$ via Poincaré inequality] Denote by   $\lambda_1$ the spectral gap of $\frac{L+L^*}{2}$ and let $C \geq \frac{1}{\lambda_1}$, i.e. $L$ satisfies a Poincaré inequality with constant $C$. Then,
\label{lem: poincare_bound_lambda_0(r)}
for all $0 \leq r < \frac{1}{C \norm{f}_\infty}$ 
\begin{equation}
\label{eq: poincare_bound_lambda_0(r)}
    \lambda_0(r) \leq \frac{r^2\mathrm{Var}_\pi(f)C}{1 - r C\norm{f}_\infty}.
\end{equation}.
\end{lemma}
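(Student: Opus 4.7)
The plan is to exploit the variational characterization
\[
\lambda_0(r) \;=\; \sup_{\|g\|_2 = 1} \left\langle \tilde{L}(r)g, g\right\rangle \;=\; \sup_{\|g\|_2 = 1}\Bigl(\bigl\langle \tfrac{L+L^*}{2}g, g\bigr\rangle + r\,\pi(fg^2)\Bigr),
\]
which follows from selfadjointness of $\tilde{L}(r)$ together with Lemma \ref{lem: biggest_eigenvalue}. Fix $g \in L^2(\pi)$ with $\|g\|_2 = 1$ and decompose it orthogonally as $g = a\,\textbf{1} + h$ where $a := \pi(g)$ and $h \in \{\textbf{1}\}^\perp$, so that $a^2 + \|h\|_2^2 = 1$. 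Since $(L+L^*)\textbf{1} = 0$ by Lemma \ref{lem: properties_infinitesimal_generator}(a), the quadratic form collapses to $\langle \tfrac{L+L^*}{2}g, g\rangle = \langle \tfrac{L+L^*}{2}h, h\rangle$, and the Poincaré assumption applied to $h$ (noting $\pi(h) = 0$, so $\mathrm{Var}_\pi(h) = \|h\|_2^2$) yields
\[
\bigl\langle \tfrac{L+L^*}{2}h, h\bigr\rangle \;\leq\; -\tfrac{1}{C}\|h\|_2^2.
\]

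Next I would expand $\pi(fg^2)$ using $\pi(f) = 0$:
\[
\pi(fg^2) \;=\; \pi(fh^2) + 2a\,\pi(fh) + a^2\pi(f) \;=\; \pi(fh^2) + 2a\,\pi(fh).
\]
The first term is controlled trivially by $|\pi(fh^2)| \leq \|f\|_\infty \|h\|_2^2$. For the cross term, since both $f$ and $h$ lie in $\{\textbf{1}\}^\perp$, Cauchy--Schwarz gives $|\pi(fh)| \leq \|f\|_2 \|h\|_2 = \sqrt{\mathrm{Var}_\pi(f)}\,\|h\|_2$, so that (using $|a| \leq 1$)
\[
2a\,r\,\pi(fh) \;\leq\; 2r\sqrt{\mathrm{Var}_\pi(f)}\,\|h\|_2.
\]

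The main (and really the only nontrivial) step is choosing the right Young-type inequality to extract a factor of $r^2$. With $\varepsilon > 0$ to be fixed,
\[
2r\sqrt{\mathrm{Var}_\pi(f)}\,\|h\|_2 \;\leq\; \varepsilon \|h\|_2^2 + \frac{r^2\,\mathrm{Var}_\pi(f)}{\varepsilon}.
\]
Putting the three estimates together,
\[
\langle \tilde{L}(r)g, g\rangle \;\leq\; \|h\|_2^2\Bigl(r\|f\|_\infty - \tfrac{1}{C} + \varepsilon\Bigr) + \frac{r^2\,\mathrm{Var}_\pi(f)}{\varepsilon}.
\]
Now pick $\varepsilon = \tfrac{1}{C} - r\|f\|_\infty$, which is strictly positive exactly when $r < \tfrac{1}{C\|f\|_\infty}$. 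The $\|h\|_2^2$ coefficient vanishes and the bound becomes independent of $g$:
\[
\langle \tilde{L}(r)g, g\rangle \;\leq\; \frac{r^2\,\mathrm{Var}_\pi(f)\,C}{1 - rC\|f\|_\infty}.
\]
Taking the supremum over $\|g\|_2 = 1$ gives the claim.

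The main obstacle is simply to keep track of the sign conventions and to choose $\varepsilon$ so that the $\|h\|_2^2$ coefficient cancels precisely; everything else is Cauchy--Schwarz and the orthogonal decomposition. No facts beyond Lemma \ref{lem: biggest_eigenvalue}, Lemma \ref{lem: properties_infinitesimal_generator}(a), the Poincaré inequality itself, and the standing assumption $\pi(f) = 0$ are needed.
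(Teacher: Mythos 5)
Your proof is correct and follows essentially the same route as the paper. Both arguments start from the variational formula for $\lambda_0(r)$ (Lemma \ref{lem: biggest_eigenvalue}), decompose a unit vector $g$ into its component along $\textbf{1}$ and its mean-zero part $h$, kill the $\textbf{1}$-component of the quadratic form via $(L+L^*)\textbf{1}=0$, apply the Poincaré inequality to $h$, and bound $\pi(fh^2)$ by $\norm{f}_\infty\norm{h}_2^2$ and the cross term $\pi(fh)$ by Cauchy--Schwarz. The only cosmetic difference is how the resulting one-parameter optimization is carried out: the paper parametrizes $g = \frac{\textbf{1}+\varepsilon h}{\sqrt{1+\varepsilon^2}}$ with $\norm{h}_2=1$ and maximizes the explicit quadratic polynomial in $\varepsilon$, whereas you keep $g=a\textbf{1}+h$ with $a^2+\norm{h}_2^2=1$ and invoke Young's inequality with a free parameter chosen so the $\norm{h}_2^2$ coefficient cancels exactly when $r<\frac{1}{C\norm{f}_\infty}$. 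These are arithmetically equivalent and yield the same bound.
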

\begin{reference_paper}
The above bound is found in \cite[P.\,14]{guillin}. There this bound is stated as 
    \begin{equation}
    \label{eq: guillin_bound_lambda_0(r)}
        \Lambda(\lambda V) \leq \frac{\lambda^2\text{Var}_\mu(V)}{\frac{1}{C_P} - \lambda} = \frac{\lambda^2C_P\text{Var}_\mu(V)}{1 - C_P\lambda},
    \end{equation}
    where (see \cite[Thrm.\,1.1]{guillin}) $\mu$ denotes the invariant measure,
    \begin{equation}
        \Lambda(\lambda V) = \sup \{\langle Lg,g \rangle + \lambda \langle V, g^2 \rangle \, |\, \norm{g}_2 =  1, g \in D(L) \},
    \end{equation}
    and $C_P$ is the constant in the Poincaré inequality \cite[Eq.\,(1.3)]{guillin}. Furthermore, \cite{guillin} assumes that $\norm{V}_\infty = 1 $. In our case $V = f$, $\mu = \pi$, $\lambda = r$, $C_P = C$ and consequently (as $\norm{f}_\infty = 1$ is assumed)  \eqref{eq: guillin_bound_lambda_0(r)} is exactly the bound of Lemma \ref{lem: poincare_bound_lambda_0(r)}.
\end{reference_paper}
\begin{remark}
\label{rem: poincare_sharp_bound_lambda}
\begin{enumerate}[$(a)$]
    \item Note the similarity of this bound to the bound obtained using perturbation theory in Lemma \ref{lem: lezaud_bound_lambda_0(r)} . In particular, this bound implies (as in Remark \ref{rem: lemma_bound_lammba_(r)_via_perturbation_theory}) that for $\nu = \pi$ $A_t$ is sub-gamma (on the right tail) with variance $2t C \text{Var}_\pi(f)$ and scale parameter $\frac{1}{C\norm{f}_\infty}$.
    \item The best bound (i.e. the sharpest) is obtained for $C = \frac{1}{\lambda_1}$, consistent with the fact that  the Poincaré inequality is also the sharpest for $C = \frac{1}{\lambda_1}$. This follows because if $C_1 \leq C_2$, then
    \begin{equation}
        \frac{r^2\mathrm{Var}_\pi(f)C_1}{1 - r C_1\norm{f}_\infty} \leq \frac{r^2\mathrm{Var}_\pi(f)C_2}{1 - r C_2\norm{f}_\infty}
    \end{equation}
    for all $ 0 \leq r < \frac{1}{C_2 \norm{f}_\infty}$.
\end{enumerate}
\end{remark}
\begin{proof}[Proof of Lemma \ref{lem: poincare_bound_lambda_0(r)}]
To bound $\lambda_0(r)$
we follow the proof of \cite[Prop.\,1.4]{guillin} but modify it to make it applicable to a general $f \in L^2(\pi)$ with $\pi(f) = 0$ (\cite[Prop.\,1.4]{guillin} assumes that $\norm{f}_\infty = 1)$. Let $K := \frac{1}{C}$, then by applying the Poincaré inequality \eqref{eq: poincare_inequality} to Lemma \ref{lem: biggest_eigenvalue} we obtain
\begin{align}
\begin{split}
\label{eq: bound_lambda_0(r)_with_poincare}
    \lambda_0(r) &\leq \sup\{ \langle rM_fg,g \rangle - K \text{Var}_\pi(g) \;| \; g \in L^2(\pi) , \norm{g} = 1 \} \\
    &= \sup  \left \{ \frac{1}{1+\varepsilon^2} \langle rf, \textbf{1} + 2 \varepsilon h + \varepsilon^2 h^2 \rangle - K \frac{\varepsilon^2}{1+ \varepsilon^2}  \;| \; h \in \{ \textbf{1} \}^{\perp}, \norm{h} = 1, \varepsilon \geq 0 \right \} \\
    &= \sup  \left \{ \frac{\varepsilon}{1+\varepsilon^2}( 2 r \langle f, h \rangle + \varepsilon \langle rf - K \textbf{1}, h^2 \rangle ) \;| \; h \in \{ \textbf{1} \}^{\perp}, \norm{h} = 1, \varepsilon \geq 0 \right \} \\
    &\leq \sup \{ \varepsilon ( 2r \norm{f}_2 + \varepsilon ( r \norm{f}_\infty - K) \;| \; \varepsilon \geq 0 \}.
\end{split}
\end{align}
In the second line we wrote $g = \frac{\textbf{1}+ \varepsilon h}{\sqrt{1+\varepsilon^2}}$ with $h \in \{\textbf{1} \}^{\perp}, \norm{h}_2 = 1$ and used $\text{Var}_\pi(g) = \frac{\varepsilon^2}{1+ \varepsilon^2} $. In the third line we used $\langle f, \textbf{1} \rangle = 0$ and $ \langle \textbf{1}, h^2 \rangle = 1$. Finally, in the last line we used the Cauchy-Schwartz inequality  $\langle f, h \rangle \leq \norm{f}_2 \cdot 1$ and the estimates $\frac{1}{1+\varepsilon^2} \leq 1$, and
\begin{equation*}
    \langle rf - K \textbf{1}, h^2 \rangle  = \pi((rf - K \textbf{1})h^2) \leq \pi((r\norm{f}_\infty - K)h^2) = r\norm{f}_\infty - K.
\end{equation*}
Notice that if $r \geq \frac{K}{\norm{f}_\infty}$, then \eqref{eq: bound_lambda_0(r)_with_poincare} is the trivial estimate $\lambda_0(r) \leq \infty $. So assume that $r < \frac{K}{\norm{f}_\infty}$. Then, to calculate the supremum in \eqref{eq: bound_lambda_0(r)_with_poincare}  note that (by an elementary calculation)  the quadratic polynomial 
\begin{equation*}
   P(\varepsilon) = \varepsilon^2(r \norm{f}_\infty -K) + \varepsilon 2 r \norm{f}_2
\end{equation*}
reaches a maximum at $\varepsilon_0 = \frac{r \norm{f}_2}{K- r \norm{f}_\infty }$. Finally, the desired bound on $\lambda_0(r)$ \eqref{eq: poincare_bound_lambda_0(r)} follows by \eqref{eq: bound_lambda_0(r)_with_poincare}, as 
\begin{equation}
\label{eq: maximum_quadratic_polynomial_poincare}
   \lambda_0(r) \leq P(\varepsilon_0) =  P \left( \frac{r \norm{f}_2}{K- r \norm{f}_\infty }\right) = \frac{r^2 \norm{f}_2^2}{K- r \norm{f}_\infty } = \frac{r^2\frac{\text{Var}_\pi(f)}{K}}{1 - r \frac{\norm{f}_\infty}{K}}.
\end{equation}

\end{proof}
\begin{lemma}[Bound for $\lambda_0(r)$ via $F$-sobolev inequality]
\label{lem: bound_lambda_0(r)_F-sobolev}
Suppose $L$ satisfies an F-Sobolev inequality.
Let $r_f := \frac{F(0)}{\min_{x \in E}f(x)}$, so that $F^{-1}(rf)$ is defined for all $0 \leq r < r_f $. Then for all $0 \leq r < r_f$ we have
\begin{equation}
\label{eq: bound_lambda_0(r)_F-sobolev}
    \lambda_0(r) \leq F( \pi( F^{-1}( rf))). 
\end{equation}

\end{lemma}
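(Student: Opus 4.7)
The plan is to combine the variational characterization of the largest eigenvalue (Lemma \ref{lem: biggest_eigenvalue}) with the $F$-Sobolev inequality and a Jensen-type argument exploiting the sub-multiplicativity property $F(st) \leq F(s) + F(t)$. This is the natural generalisation, to arbitrary $F$, of the classical Herbst argument that converts a logarithmic Sobolev inequality into an exponential moment bound: for $F = \log$, the claim would read $\lambda_0(r) \leq \log \pi(e^{rf})$, which is the usual Herbst output of a log-Sobolev inequality.

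First, I would note that $\tilde L(r) = \frac{L+L^*}{2} + rM_f$ is self-adjoint and $\langle \frac{L+L^*}{2}g, g\rangle = \langle Lg, g\rangle$ (real inner product), so Lemma \ref{lem: biggest_eigenvalue} yields
\[
\lambda_0(r) = \sup_{\|g\|_2 = 1}\bigl(\langle Lg, g\rangle + r\pi(fg^2)\bigr).
\]
Using the $F$-Sobolev inequality \eqref{eq: F-sobolev_inequality} to bound $\langle Lg, g\rangle \leq -\pi(g^2 F(g^2))$ and substituting $h := g^2$ (so $h \geq 0$ and $\pi(h)=1$), the problem reduces to showing
\[
\pi\bigl(h(rf - F(h))\bigr) \leq F\bigl(\pi(F^{-1}(rf))\bigr)
\]
for every admissible $h$.

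For the reduced inequality, since $r \in [0, r_f)$ the definition of $r_f$ ensures $rf(x) > F(0)$ for every $x$, so $F^{-1}(rf(x))$ is well defined and strictly positive. Assuming momentarily $h > 0$ pointwise, I would apply \eqref{eq: F-sobolev_multiplicative_condition} with $s = F^{-1}(rf(x))/h(x)$ and $t = h(x)$ to obtain the pointwise estimate
\[
rf(x) = F\!\bigl(F^{-1}(rf(x))\bigr) \leq F(h(x)) + F\!\bigl(F^{-1}(rf(x))/h(x)\bigr),
\]
multiply by $h(x) \geq 0$, integrate against $\pi$, and then invoke Jensen's inequality for the concave function $F$ with respect to the probability measure $h\pi$:
\[
\pi\bigl(h(rf - F(h))\bigr) \leq \pi\!\bigl(h \cdot F(F^{-1}(rf)/h)\bigr) \leq F\!\bigl(\pi(h \cdot F^{-1}(rf)/h)\bigr) = F\bigl(\pi(F^{-1}(rf))\bigr).
\]

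The main technical obstacle I expect is handling the case where $h$ vanishes at some sites, since $F^{-1}(rf)/h$ is then literally undefined and Jensen's argument above breaks down. I would bypass this by approximating $h$ by $h_\varepsilon := (h+\varepsilon)/(1+\varepsilon)$, which is strictly positive and satisfies $\pi(h_\varepsilon) = 1$, applying the chain of inequalities to $h_\varepsilon$, and letting $\varepsilon \downarrow 0$ using continuity of $F$ on $(0,\infty)$, finiteness of $E$ (to interchange sum and limit), and the convention $0 \cdot F(0) = 0$ for any residual boundary terms. A secondary sanity check is that Jensen is being applied in the correct direction for concave $F$, namely $\mu(F(\phi)) \leq F(\mu(\phi))$, which is the form used above.
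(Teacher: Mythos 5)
Your proof is correct and matches the paper's argument step for step: the variational bound from Lemma~\ref{lem: biggest_eigenvalue} combined with the $F$-Sobolev inequality, then the pointwise estimate coming from the sub-multiplicativity $F(st)\le F(s)+F(t)$, then Jensen for the concave $F$ against the probability measure $g^2\pi$. The only cosmetic difference is at the zero set of $g$: the paper keeps the indicator $1_{\{g\neq 0\}}$ throughout and uses the convention $0\cdot F(0)=0$, while you regularize with $h_\varepsilon=(h+\varepsilon)/(1+\varepsilon)$; both work, but when $F(0)=-\infty$ the passage $\varepsilon\downarrow 0$ should be presented as a one-sided bound rather than a genuine limit, since $h_\varepsilon F(h_\varepsilon)$ need not tend to $0$ on $\{h=0\}$ --- it suffices that $-\pi_x h_\varepsilon(x)F(h_\varepsilon(x))\ge 0$ there (as $h_\varepsilon<1$ on that set), which already yields $\pi\bigl(h(rf-F(h))\bigr)\le\liminf_{\varepsilon\downarrow 0}\pi\bigl(h_\varepsilon(rf-F(h_\varepsilon))\bigr)\le F\bigl(\pi(F^{-1}(rf))\bigr)$.
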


\begin{reference_paper}
The above bound is stated in \cite[P.\,17]{guillin} as 
    \begin{equation}
    \label{eq: guillin_bound_F_sobolev}
        \Lambda(\lambda V) \leq F \left ( \int F^{-1}(\lambda V) d\mu \right ).
    \end{equation}
    Recall (see Remark \ref{rem: poincare_sharp_bound_lambda}) $ V = f, \lambda = r,  \Lambda(\lambda V) = \lambda_0(r)$ so \label{eq: guillin_bound_F_sobolev} this is exactly the bound of Lemma \ref{lem: bound_lambda_0(r)_F-sobolev}.
\end{reference_paper}
\begin{remark}
     Notice that $r_f > 0$ because $F(0) < 0 $ and $ \min_{x \in E}f(x) < 0$. Indeed, $F$ is strictly increasing and $F(1) = 0$, so $F(0) = \lim_{ t \downarrow 0}F(t)  < 0$. Furthermore, $\sum_{x \in E }f(x) \pi_x = 0$ and $f$ is not constant, so $\min_{x \in E}f(x) < 0$.
\end{remark}
\begin{proof}[Proof of Lemma \ref{lem: bound_lambda_0(r)_F-sobolev}]
We follow the proof of \cite[Thrm.\,2.3]{guillin} and add computational details. Use  Lemma \ref{lem: biggest_eigenvalue} and the $F$-Sobolev inequality \eqref{eq: F-sobolev_inequality} to obtain  
\begin{align}
\begin{split}
\label{eq: bound_lambda_0(r)_F_sobolev_prototype}
    & \lambda_0(r) \leq \sup \{ \langle rM_fg, g \rangle - \pi( g^2 F(g^2)) \; | \; \norm{g}_2 = 1 \}\\
    &= \sup \{ \pi((rf - F(g^2))g^2) \; | \; \norm{g}_2 = 1 \}.
\end{split}
\end{align}
The desired bound on $\lambda_0(r)$ \eqref{eq: bound_lambda_0(r)_F-sobolev} follows now from the above equation and the following estimate:
If $\norm{g}_2 = 1$ and $0 \leq r < r_f$, then
\begin{align}
    \begin{split}
    \label{eq: estimate_F_sobolev}
       & \pi((rf- F(g^2))g^21_{\{ g \neq 0 \}}) \leq \pi \left ( F \left (\frac{F^{-1}(rf)}{g^2} \right) g^2 1_{\{ g \neq 0 \}} \right )  \\
       &\leq F \left ( \pi \left ( \frac{F^{-1}(rf)}{g^2} g^2 1_{\{ g \neq 0 \}} \right) \right ) = F ( \pi( F^{-1}(rf) 1_{\{ g \neq 0 \}})) \\
       &\leq F( \pi( F^{-1}(rf)),
    \end{split}
\end{align}
where the first line we used the inequality 
\begin{equation*}
    (rf - F(g^2))1_{\{ g \neq 0 \}} \leq F \left (\frac{F^{-1}(rf)}{g^2} \right) 1_{\{ g \neq 0 \}},
\end{equation*}
which follows from \eqref{eq: F-sobolev_multiplicative_condition}
\begin{equation}
    F(st) \leq F(s) + F(t)
\end{equation}
with $t = \frac{F^{-1}(rf)}{g^2} $ and $s = g^2$. In the second line we used Jensen's inequality with the concave function $F$ and with respect to integration with the probability measure $g^2 \pi$. In the last line the monotonicity of $F$ and $F^{-1}(rf) > 0$ were used. We conclude by plugging the estimate \eqref{eq: estimate_F_sobolev} into \eqref{eq: bound_lambda_0(r)_F_sobolev_prototype}.
\end{proof}

Similarly as in Section \ref{subsubsec: concentration_via_perturbation} we use the above bounds for $\lambda_0(r)$ to obtain the following concentration inequalities.

\begin{theorem}[Concentration inequality via Poincaré inequality]
\label{th: concentration_via_poincare}
Let $\tilde{\sigma}_f^2 := 2 \frac{Var_\pi(f)}{\lambda_1 }$ In our setting the following concentration inequality holds:
\begin{equation}
\label{eq: concentration_via_poincare}
        \F{P}_\nu \left (\frac{A_t}{t} \geq u \right ) \leq \norm{\frac{d \nu}{d \pi}}_2 \exp\left(-\frac{2tu^2}{\tilde{\sigma}_f^2(1+ \sqrt{1+ \frac{2\norm{f}_\infty}{\lambda_1 \tilde{\sigma}_f^2}})^2} \right).
    \end{equation}
\end{theorem}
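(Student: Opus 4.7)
The proof will follow the same pattern used to derive Theorem \ref{th: concentration_via_perturbation}: take the bound on $\lambda_0(r)$ supplied by the preceding lemma, recognize it as a sub-gamma type bound, compute its Fenchel conjugate via Example \ref{ex: Legendre_transform_subgamma}, and apply Lemma \ref{lem: concentration_via_bound_on_lambda_0(r)}.

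First I would invoke Lemma \ref{lem: poincare_bound_lambda_0(r)} with the optimal Poincaré constant $C = 1/\lambda_1$. Since by the remark following the definition of the Poincaré inequality, $L$ always satisfies a Poincaré inequality with constant $1/\lambda_1$ (this is an equivalent formulation of the spectral gap), this choice is legitimate. Substituting $C = 1/\lambda_1$ into the bound of Lemma \ref{lem: poincare_bound_lambda_0(r)} and rewriting in terms of $\tilde{\sigma}_f^2 = 2\,\mathrm{Var}_\pi(f)/\lambda_1$ yields
\begin{equation*}
\lambda_0(r) \leq \frac{r^2 \tilde{\sigma}_f^2/2}{1 - r \,\norm{f}_\infty/\lambda_1}
\end{equation*}
for all $0 \leq r < \lambda_1/\norm{f}_\infty$. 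Note this is of the form $F(r) = r^2 v/(2(1-cr))$ of Example \ref{ex: Legendre_transform_subgamma} with variance factor $v = \tilde{\sigma}_f^2$ and scale parameter $c = \norm{f}_\infty/\lambda_1$.

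Next, I would define $G(r)$ to be the right-hand side of the above bound on the domain $D = [0, \lambda_1/\norm{f}_\infty)$ and compute its Fenchel conjugate $G^*$ with respect to $D$. Example \ref{ex: Legendre_transform_subgamma} gives directly, for every $u \geq 0$,
\begin{equation*}
G^*(u) = \frac{2u^2}{\tilde{\sigma}_f^2 \Bigl(1 + \sqrt{1 + \tfrac{2 u \norm{f}_\infty}{\lambda_1\, \tilde{\sigma}_f^2}}\,\Bigr)^2}.
\end{equation*}

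Finally, applying Lemma \ref{lem: concentration_via_bound_on_lambda_0(r)} with this $G$ and $G^*$ gives the desired inequality \eqref{eq: concentration_via_poincare}. The argument is essentially mechanical once the bound on $\lambda_0(r)$ is in place; there is no real obstacle, only bookkeeping. The one subtlety worth double-checking is that the Fenchel conjugate computed over the restricted domain $[0, \lambda_1/\norm{f}_\infty)$ coincides with the unrestricted supremum — this is exactly the content of Example \ref{ex: Legendre_transform_subgamma}, which shows the unique critical point $r_0$ always lies in $[0, 1/c) = [0, \lambda_1/\norm{f}_\infty)$, so no case distinction as in Theorem \ref{th: concentration_via_perturbation} is needed here.
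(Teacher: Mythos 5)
Your proposal is correct and follows essentially the same route as the paper's proof: bound $\lambda_0(r)$ via Lemma \ref{lem: poincare_bound_lambda_0(r)} with the optimal constant $C = 1/\lambda_1$, set $G(r) = r^2\tilde{\sigma}_f^2/(2(1 - r\norm{f}_\infty/\lambda_1))$ on $[0,\lambda_1/\norm{f}_\infty)$, compute $G^*$ via Example \ref{ex: Legendre_transform_subgamma} with $v = \tilde{\sigma}_f^2$ and $c = \norm{f}_\infty/\lambda_1$, and conclude by Lemma \ref{lem: concentration_via_bound_on_lambda_0(r)}. Your remark that no case split is needed (unlike Theorem \ref{th: concentration_via_perturbation}, where the bound on $\lambda_0$ only held up to $\lambda_1/(3\norm{f}_\infty)$) is exactly right, and your final formula with $2u\norm{f}_\infty/(\lambda_1\tilde{\sigma}_f^2)$ under the square root is the correct Fenchel conjugate — the factor $u$ appears to have been dropped in the theorem as printed.
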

\begin{reference_paper}
The above result is a generalization and reformulation of \cite[Prop.\,1.4]{guillin}: The proposition \cite[Prop.\,1.4]{guillin} states that  (\cite{guillin} used the notation $V = f$, $R = u$, $\mu = \pi$, $C_P = C$)
    \begin{equation}
    \label{eq: guillin_concentration_inequality_poincare}
        \F{P}_\nu \left (\frac{A_t}{t} \geq u \right ) \leq \norm{\frac{d\nu}{d\pi}}_2 e^{-tA(u)}
    \end{equation}
    for $\norm{f}_\infty = 1$ and $0 < u \leq 1$,
    where 
    \begin{equation}
        A(u) = \frac{1}{C} \left ( 1 - \sqrt{1- \frac{u}{u+ \text{Var}_\pi(f)}} \right ) \left( u - \frac{\text{Var}_\pi(f) \left (1- \sqrt{1- \frac{u}{u + \text{Var}_\pi(f)}} \right)}{\sqrt{1- \frac{u}{u+ \text{Var}_\pi(f)}}} \right)
    \end{equation}
    and  $C = \frac{1}{\lambda_1}$ (we choose $C$ to be the optimal constant for the Poincaré inequality). As the above bound \eqref{eq: guillin_concentration_inequality_poincare} is quite intricate it is not directly clear how Theorem \ref{th: concentration_via_poincare} implies \cite[Prop.\,1.4]{guillin} as a special case. However, this follows from the proof (found in \cite[P.\,14]{guillin}) of \cite[Prop.\,1.4]{guillin}: We have  \\(see \cite[P.\,14]{guillin}: 'The supremum is attained for ... and is equal to $A$') 
    \begin{equation}
        A(u) = \sup_{r \in [0, \frac{1}{C})} \left (ur - \frac{r^2 \text{Var}_\pi(f)}{\frac{1}{C}- r}\right ) = \sup_{r \in [0, \frac{1}{C})}\left ( ur - \frac{r^2 C\text{Var}_\pi(f)}{1- Cr} \right) ,
    \end{equation}
    where $C = \frac{1}{\lambda_1}$. This implies (by using Example \ref{ex: Legendre_transform_subgamma} with variance $v = \tilde{\sigma}_f^2$ and scale parameter $c =  \frac{1}{\lambda_1}$) that
    \begin{equation}
        A(u) = -\frac{2u^2}{\tilde{\sigma}_f^2(1+ \sqrt{1+ \frac{2}{\lambda_1 \tilde{\sigma}_f^2}})^2}.
    \end{equation}
    Thus, \cite[Prop.\,1.4]{guillin} is a special case of Theorem \ref{th: concentration_via_poincare}.
\end{reference_paper}
\begin{remark}
\label{rem : concentration_via_poincare}
\begin{enumerate}[$(a)$]
    \item For $\nu = \pi$ Theorem \ref{th: concentration_via_poincare} is exactly  Bernstein's inequality (c.f. Lemma \ref{lem: bernstein_inequality}) for $A_t$, which is sub-gamma on the right tail with variance $t \Tilde{\sigma}_f^2$ and scale parameter $\frac{ \norm{f}_\infty}{\lambda_1}$ (Remark \ref{rem: poincare_sharp_bound_lambda})
    \item We will later obtain in Theorem \ref{th: extension_lezaud} a sharper Bernstein-type  bound.
    \item The used constant for the Poincaré inequality is $C = \frac{1}{\lambda_1}$, as this yields the sharpest bound (c.f. Remark \ref{rem: poincare_sharp_bound_lambda})
    
\end{enumerate}

\end{remark}
\begin{proof}[Proof of Theorem \ref{th: concentration_via_poincare}]
The proof is similar to the proof of Theorem \ref{th: concentration_via_perturbation}. Let
\begin{equation}
   G(r) := \frac{r^2\frac{\text{Var}_\pi(f)}{\lambda_1}}{1 - r \frac{\norm{f}_\infty}{\lambda_1}}   =\frac{r^2\tilde{\sigma}_f^2}{2(1 - r \frac{\norm{f}_\infty}{\lambda_1})}  
\end{equation}
 for $r \in \left [ 0, \frac{\lambda_1}{\norm{f}_\infty} \right)$ be the bound for $\lambda_0(r)$ of Lemma \ref{lem: poincare_bound_lambda_0(r)} for $C = \frac{1}{\lambda_1}$. Then, invoking Lemma \ref{lem: concentration_via_bound_on_lambda_0(r)} yields 
 \begin{equation}
      \F{P}_\nu \left (\frac{A_t}{t} \geq u \right ) \leq  \norm{\frac{d\nu }{ d \pi}}_2 \exp( - tG^*(u)),
 \end{equation}
 where the Fenchel conjugate is taken with respect to $\left [0, \frac{\lambda_1}{\norm{f}_\infty}\right)$.
The claim now follows by computing $G^*(u)  $ using Example \ref{ex: Legendre_transform_subgamma} with variance $v = \Tilde{\sigma}_f^2$ and scale factor $c = \frac{\norm{f}_\infty}{\lambda_1}$.
\end{proof}

\begin{theorem}[Concentration inequality via $F$-Sobolev inequality]
\label{th: concentration_via_F_sobolev}
Suppose that the MJP satisfies an $F$-Sobolev inequality. Then,
\begin{equation}
\F{P}_\nu \left (\frac{A_t}{t} \geq u \right ) \leq \norm{\frac{d \nu}{d \pi}}_2 \exp( - t \sup_{ r \in [0, r_f)} (ru -F(\pi( F^{-1}(rf))))) 
\end{equation}

\end{theorem}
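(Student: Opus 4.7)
The plan is to proceed in direct parallel with the proof of Theorem \ref{th: concentration_via_poincare}: combine the upper bound on $\lambda_0(r)$ furnished by the $F$-Sobolev inequality (Lemma \ref{lem: bound_lambda_0(r)_F-sobolev}) with the general recipe of Lemma \ref{lem: concentration_via_bound_on_lambda_0(r)}, which converts any pointwise bound on $\lambda_0$ into a concentration inequality via its Fenchel conjugate.

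Concretely, I would set $D := [0, r_f)$ and define
\begin{equation*}
    G(r) := F(\pi(F^{-1}(rf))), \quad r \in D.
\end{equation*}
By Lemma \ref{lem: bound_lambda_0(r)_F-sobolev}, one has $\lambda_0(r) \leq G(r)$ for all $r \in D$, so the hypothesis of Lemma \ref{lem: concentration_via_bound_on_lambda_0(r)} is satisfied (note that $r_f > 0$ by the remark following Lemma \ref{lem: bound_lambda_0(r)_F-sobolev}, so $D$ is a nondegenerate interval containing $0$, and hence taking Fenchel conjugates with respect to $D$ is meaningful). Applying that lemma immediately yields
\begin{equation*}
    \F{P}_\nu\!\left(\frac{A_t}{t} \geq u\right) \leq \norm{\frac{d\nu}{d\pi}}_2 \exp(-tG^*(u))
\end{equation*}
for all $u \geq 0$, where $G^*(u) = \sup_{r \in [0,r_f)}(ru - G(r)) = \sup_{r \in [0,r_f)}(ru - F(\pi(F^{-1}(rf))))$. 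This is exactly the claimed bound.

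No real obstacle is anticipated here, because the two cited lemmas do all the work: Lemma \ref{lem: bound_lambda_0(r)_F-sobolev} already absorbed the analytic content of the $F$-Sobolev inequality (Jensen's inequality applied to the concave $F$, together with the subadditivity $F(st) \leq F(s) + F(t)$), and Lemma \ref{lem: concentration_via_bound_on_lambda_0(r)} is a purely formal application of the Cramér--Chernoff method via Lemma \ref{lem: properties_Legendre_transform}(d). Unlike Theorems \ref{th: concentration_via_perturbation} and \ref{th: concentration_via_poincare}, here one cannot expect to evaluate $G^*(u)$ in closed form, since $G$ depends on the specific choices of $F$ and $f$; the bound is therefore left in its variational form. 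The only minor point worth flagging in the write-up is the verification that $G$ is well defined on the whole of $D$, which follows because $rf(x) \geq r \min_x f(x) > F(0)$ for every $r \in [0, r_f)$ and every $x \in E$, so that $F^{-1}(rf)$ is defined pointwise and the expectation under $\pi$ exists by finiteness of $E$.
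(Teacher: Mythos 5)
Your proposal is correct and follows essentially the same route as the paper's own proof: apply Lemma \ref{lem: bound_lambda_0(r)_F-sobolev} to obtain $\lambda_0(r) \leq G(r) := F(\pi(F^{-1}(rf)))$ on $[0,r_f)$ and then invoke Lemma \ref{lem: concentration_via_bound_on_lambda_0(r)} to convert this into the stated concentration inequality via the Fenchel conjugate $G^*$. The additional remarks on the well-definedness of $G$ on all of $[0,r_f)$ are sound and a useful clarification that the paper leaves implicit.
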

\begin{reference_paper}
The corresponding Theorem in \cite{guillin} is the second part of \cite[Thrm.\,2.3]{guillin}. Using the notation of \cite{guillin} we have
\begin{equation}
    H_c^*(a) = \sup_{ r \in [0, r_f)} (ra -F(\pi( F^{-1}(rf)))).
\end{equation}
\end{reference_paper}
\begin{remark}
\begin{enumerate}[$(a)$]
    \item Unlike for the Poincaré inequality, which is satisfied in our setting for $C \geq \frac{1}{\lambda_1}$ , for a general $F$ the $F$-Sobolev inequality may not be satisfied, thus we need the extra assumption.
    \item If $\nu = \pi$ and $F = C \log $ (for $C > 0$), we get a continuous time analogue of Chernoff's inequality for sum of i.i.d random variables (see Corollary \ref{cor: cramer_chernoff_sum_iid}). Indeed, an elementary calculation shows that
    \begin{align}
    \begin{split}
        &\sup_{ r \in [0, r_f)} (ru -F(\pi( F^{-1}(rf)))) = C \sup_{r \geq 0}(ru - \log \pi(e^{rf})) \\
        &=  C \sup_{r \geq 0}(ru - \log \F{E}_\pi(e^{rf(X_0)})) =  C \Psi_{f(X_0)}^*(u),
        \end{split}
    \end{align}
    consequently 
    \begin{equation}
        \F{P}_\nu \left (\frac{A_t}{t} \geq u \right ) \leq e^{-tC \Psi_{f(X_0)}^*(u) }
    \end{equation}
\end{enumerate}

\end{remark}
\begin{proof}[Proof of Theorem \ref{th: concentration_via_F_sobolev}]
Follows immediately by recalling (see Lemma \ref{lem: bound_lambda_0(r)_F-sobolev}). 
\begin{equation}
    \lambda_0(r) \leq F(\pi( F^{-1}(rf)))
\end{equation}
for all $ r \in [0, r_f)$ and then applying Lemma \ref{lem: concentration_via_bound_on_lambda_0(r)} with $G(r) = F(\pi( F^{-1}(rf))) $.

\end{proof}

\subsubsection{Concentration Inequalities via Information Inequalities}
\label{subsubsec: concentration_via_information}
In this section we follow \cite{bernstein} and use a inequality for the so called \textit{Donsker-Varadhan information} to  derive a concentration inequality. Finally, we use this inequality to extend Theorem \ref{th: concentration_via_perturbation}, Lemma \ref{lem: lezaud_bound_lambda_0(r)}, and sharpening the Bernstein-type bounds Theorem \ref{th: concentration_via_perturbation}$(a)$ and Theorem \ref{th: concentration_via_poincare}. The main results of this section are Theorems \ref{th: concentration_via_information} and  \ref{th: extension_lezaud}, which are our versions of \cite[Thrm.\,2.2]{bernstein} and \cite[Thrm.\,1.2]{bernstein}. The most relevant background knowledge for this section is Theorem \ref{th: main_conc_inequality}, Example \ref{ex: Legendre_transform_subgamma} and the notion of reduced resolvent (Section \ref{subsubsec: perturbation_theory}). \\ 
Denote by $\mathcal{M}_1(E)$ the set of all probability measures on $E$. The  Donsker-Varadhan information $ I(\beta | \mu)$ for $\beta,\mu \in \mathcal{M}_1(E)$ is defined as (c.f. \cite[Eq.\,(2.2)]{bernstein}) 

\begin{equation}
\label{eq: def_donsker_varadhan_info}
    I(\beta | \mu) := 
    \begin{cases}
    - \left \langle L \left(\frac{d\beta}{d\mu} \right)^{\frac{1}{2}}, \left (\frac{d\beta}{d\mu} \right)^{\frac{1}{2}} \right\rangle \; &; \; \beta << \mu \\
     \infty \; &; \; \text{otherwise}, \\
    \end{cases}
\end{equation}
where $\beta << \mu$ means that $\beta$ is absolutely continuous with respect to $\mu$.
\begin{remark}
In our case, where $\pi$ is the invariant measure of an irreducible MJP
 we always have $I(\nu | \pi) < \infty $ for all $\nu \in \mathcal{M}_1(E)$, because the Radon-Nikodym derivative $\frac{d\nu}{d\pi}(x) = \frac{\nu_x}{\pi_x}$ always exists as $\pi_x > 0$ for all $x \in E$ (by Theorem \ref{th: uniqueness_existence_invariant_measure}).
\end{remark}
We start by proving
\begin{lemma}
\label{lem: information_Legendre_transform_lambda_0(r)}
Let $I$ be defined as in Theorem \ref{th: main_conc_inequality}, i.e. 
\begin{equation}
    I(u) = \inf \{ - \langle Lg, g \rangle \; | \, \norm{g}_2 = 1, \langle M_fg, g \rangle = u \}.
\end{equation}
Then, 
\begin{equation}
    I(u) = \inf \{ I(\beta | \pi) \; | \; \beta \in \mathcal{M}_1(E), \beta(f) = u \}
\end{equation}
for all $u \in \F{R}$.
\end{lemma}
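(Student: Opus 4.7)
The plan is to establish a correspondence between admissible functions $g \in L^2(\pi)$ in the definition of $I(u)$ and probability measures $\beta \ll \pi$ with $\beta(f) = u$. Since $\pi_x > 0$ for all $x \in E$ (Theorem \ref{th: uniqueness_existence_invariant_measure}), every probability measure $\beta$ on $E$ is absolutely continuous with respect to $\pi$ and $I(\beta|\pi)$ is always finite. The natural bijection is $g \leftrightarrow \beta = g^2 \cdot \pi$ for $g \geq 0$, under which $\|g\|_2^2 = \beta(E) = 1$, $\langle M_f g, g\rangle = \int f g^2 d\pi = \beta(f)$, and $-\langle L g, g\rangle = I(\beta|\pi)$ since $(d\beta/d\pi)^{1/2} = g$.

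For the inequality $I(u) \leq \inf\{ I(\beta|\pi) : \beta(f) = u\}$, given such a $\beta$, I would simply set $g := (d\beta/d\pi)^{1/2} \geq 0$. Then $g$ is admissible for $I(u)$ and $-\langle Lg, g\rangle = I(\beta|\pi)$, so the infimum over admissible $g$ is at most $I(\beta|\pi)$. Taking the infimum over $\beta$ gives the inequality.

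For the reverse inequality, given an admissible $g$ with $\|g\|_2 = 1$ and $\langle M_f g, g\rangle = u$, I would set $\beta := g^2 \cdot \pi$. Then $\beta \in \mathcal{M}_1(E)$, $\beta(f) = u$, and $I(\beta|\pi) = -\langle L|g|, |g|\rangle$. The main obstacle is that $g$ need not be nonnegative, so one needs the inequality
\begin{equation*}
-\langle Lg, g\rangle \geq -\langle L|g|,|g|\rangle .
\end{equation*}
This is the standard Markovian energy comparison and I would prove it as follows. Using $\langle Lg, g\rangle = \langle \tfrac{L+L^*}{2} g, g\rangle$ for real $g$, and recalling from the proof of Lemma \ref{lem: properties_infinitesimal_generator} that $\tilde{Q} = L + L^*$ is a $Q$-matrix of an irreducible (reversible) MJP with invariant distribution $\pi$, the Dirichlet form admits the representation
\begin{equation*}
-\langle Lg, g\rangle = \tfrac{1}{4}\sum_{x,y \in E} \pi_x \tilde{q}_{xy}(g(x) - g(y))^2.
\end{equation*}
Since $\tilde{q}_{xy} \geq 0$ for $x \neq y$ and $(g(x)-g(y))^2 \geq (|g(x)| - |g(y)|)^2$ by the reverse triangle inequality, the desired comparison follows. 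Combining this with $-\langle Lg,g\rangle \geq -\langle L|g|,|g|\rangle = I(\beta|\pi)$ yields $I(u) \geq \inf\{I(\beta|\pi) : \beta(f) = u\}$, completing the proof.
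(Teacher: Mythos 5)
Your proof is correct and uses the same core bijection as the paper: identifying nonnegative unit vectors $g \in L^2(\pi)$ with probability measures $\beta = g^2\pi$ (possible since $\pi_x > 0$), under which $\norm{g}_2^2 = \beta(E)$, $\langle M_f g, g\rangle = \beta(f)$, and $-\langle Lg,g\rangle = I(\beta|\pi)$. The only substantive difference lies in the argument for the key comparison $-\langle Lg,g\rangle \geq -\langle L\abs{g},\abs{g}\rangle$. The paper derives it by noting $P_t\abs{g} \geq \abs{P_t g}$ for the Markov semigroup, hence $\langle P_t g, g\rangle \leq \langle P_t \abs{g}, \abs{g}\rangle$ for all $t\geq 0$ with equality at $t=0$, and then differentiating at $t=0$. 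You instead compute the Dirichlet form explicitly: since $\tilde Q = L+L^*$ is the (selfadjoint) generator of a reversible irreducible MJP with invariant distribution $\pi$ (cf.\ the proof of Lemma \ref{lem: properties_infinitesimal_generator} and Theorem \ref{th: characterization_detailed_balance}$(d)$), one has $\pi_x\tilde q_{xy} = \pi_y\tilde q_{yx}$ and $\tilde q_{xy}\geq 0$ for $x\neq y$, which gives $-\langle Lg,g\rangle = \tfrac14\sum_{x,y}\pi_x\tilde q_{xy}(g(x)-g(y))^2$, and the comparison then follows termwise from $\abs{\,\abs{g(x)}-\abs{g(y)}\,}\leq\abs{g(x)-g(y)}$. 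Both routes are valid. Yours is more elementary and exploits the finite state space directly, yielding a clean closed-form expression for the Dirichlet form. The paper's semigroup argument, by contrast, makes no reference to the matrix representation and would transfer verbatim to the setting of general Markov processes on Polish spaces, which is the setting of the works \cite{wu}, \cite{bernstein} this section follows. One minor point worth stating explicitly is that detailed balance of $\tilde Q$ with respect to $\pi$ (not merely invariance) is what makes the kernel $\pi_x\tilde q_{xy}$ symmetric and the Dirichlet form identity hold; this is automatic here because $L+L^*$ is selfadjoint, but deserves a sentence.
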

\begin{reference_paper}
The above lemma is not directly stated in \cite{bernstein} but implicitly used as follows. To prove \cite[Thrm.\,2.2]{bernstein}, \cite{bernstein} uses \cite[Thrm.\,1]{wu} (which is Theorem \ref{th: main_conc_inequality}), which states (in our case)
\begin{equation}
\label{eq: repetition_wu_theorem1}
 \F{P}_\nu \left (\frac{A_t}{t} > u \right ) \leq \norm{\frac{d\nu}{d \pi}}_2 e^{- t I(u)}.
 \end{equation}
However, this theorem is formulated in \cite[Thrm.\,2.1]{bernstein} as (\cite{bernstein} used the notation $g = f, \beta = \nu , \mu = \pi, r = u$)
\begin{equation}
\label{eq: bernstein_theorem_2.1}
    \F{P}_\nu \left (\frac{A_t}{t} > u \right ) \leq \norm{\frac{d\nu}{d \pi}}_2 e^{- t \tilde{I}(u-)} 
\end{equation}
for $ t,u > 0$, where  $\tilde{I}(u) = \inf \{ I(\beta | \pi) \; | \; \beta \in \mathcal{M}_1(E), \beta(f) = u \}$ and $\tilde{I}(r-) = \lim_{\varepsilon \downarrow 0}I(r-\varepsilon) $. The authors do not explain further how \eqref{eq: repetition_wu_theorem1} implies \eqref{eq: bernstein_theorem_2.1} and it is not directly clear that $I(u) = \tilde{I}(u-)$. Thus, we decided to formulate and prove the above lemma. 
\end{reference_paper}

\begin{proof}[Proof of Lemma \ref{lem: information_Legendre_transform_lambda_0(r)}]
Our starting point is \eqref{eq: fenchel_dual_lambda_0(r)} (see Theorem \ref{th: main_conc_inequality}): 
\begin{equation}
\label{eq: infimum_over_smaller_set_1}
    \lambda_0^*(u) = \sup_{ r \in \F{R}}(r u - \lambda_0(r)) = \inf \{ - \langle Lg, g \rangle \; | \, \norm{g}_2 = 1, \langle M_fg, g \rangle = u \}.
\end{equation}
Furthermore, the following inequality holds for all $g \in L^2(\pi)$:
\begin{equation}
\label{eq: infimum_over_smaller_set_2}
    - \langle L g, g \rangle \geq -  \langle L \abs{g}, \abs{g} \rangle.
\end{equation}
This can be seen as follows.
Using the definition of the contraction semigroup $(P_t)_{t \geq 0}$ of the MJP (see \eqref{eq: contraction_semigroup_Markov_process}), it follows that 
\begin{equation}
\label{eq: inequality_semigroup}
    P_t\abs{g} \geq \abs{P_tg}
\end{equation}
for all $t \geq 0$.
Consequently, for all $ t \geq 0$ 
\begin{equation*}
    \langle P_tg, g \rangle  = \pi(P_tg g) \leq  \pi(P_t\abs{g}\abs{g}) = \langle P_t\abs{g}, \abs{g} \rangle,
\end{equation*}
with equality for $t = 0$. Thus, the above inequality \eqref{eq: inequality_semigroup} extends to the derivative at  $t = 0$, i.e.
\begin{equation*}
    \langle Lg, g \rangle \leq \langle L\abs{g}, \abs{g} \rangle,
\end{equation*}
which implies \eqref{eq: infimum_over_smaller_set_2}. Notice that $\norm{g}_2 = \norm{\abs{g}}_2$ and $\langle M_fg,g \rangle = \langle M_f \abs{g}, \abs{g} \rangle $. Combining these facts with
\eqref{eq: infimum_over_smaller_set_1} and \eqref{eq: infimum_over_smaller_set_2} yields
\begin{align}
\begin{split}
     \lambda_0^*(u) &= \inf \{ - \langle Lg, g \rangle \; | \, \norm{g}_2 = 1, \langle M_fg, g \rangle = u \} \\
    &= \inf \{ - \langle L g, g \rangle \; | \; \norm{g}_2 = 1, \langle M_fg,g \rangle = u, g \geq 0 \} .
\end{split}
\end{align}
To complete the proof note that there is a one to one correspondence of \\ $\{ \beta \in \mathcal{M}_1(E) \; | \; \beta(f) = u \}$ with $ \{ g \in L^2(\pi) \; | \; \norm{g}_2 = 1, \langle M_fg,g \rangle = u, g \geq 0 \} $ via the two functions
\begin{equation*}
 \mathcal{M}_1(E)\rightarrow L^2(\pi); \; \beta \mapsto \left ( \frac{d\beta}{d\pi} \right)^{\frac{1}{2}} \; \text{and} \;  L^2(\pi) \rightarrow \mathcal{M}_1(E); \; g \mapsto g^2\pi  .
\end{equation*}
Furthermore, by the definition of the Donsker-Varadhan information  for the corresponding $\beta$ and $g$ by we have
\begin{equation*}
    I(\beta | \pi) = - \langle L g, g \rangle.
\end{equation*}
Finally, using  this correspondence we get
\begin{align}
\begin{split}
   & \inf \{ I(\beta | \pi) \; | \; \beta \in \mathcal{M}_1(E), \beta(f) = u \} \\
   &= \inf \{ - \langle L g, g \rangle \; | \; \norm{g}_2 = 1, \langle M_fg,g \rangle = u, g \geq 0 \} \\
    &= \lambda_0^*(u).
\end{split}
\end{align}
\end{proof}
Now, using the above expression for $\lambda_0^*(u)$ 
\begin{equation}
     \lambda_0^*(u) = \inf \{ I(\beta | \pi) \; | \; \beta \in \mathcal{M}_1(E), \beta(f) = u \},
\end{equation}
and assuming additionally an inequality involving the Donsker-Varadhan information one
can derive concentration inequalities using the following theorem.
\begin{theorem}[Concentration inequality via information inequalities]
\label{th: concentration_via_information}
Let $\alpha : [0, \infty) \rightarrow [0, \infty]$ be some function satisfying 
\begin{equation}
\label{eq: condition_concentration_via_information}
    \alpha(\beta(f)) \leq I (\beta | \pi)
\end{equation}
for all $\beta \in \mathcal{M}_1(E)$ such that $\beta(f) \geq 0$.
Then,
\begin{equation}
\label{eq: concenration_inequality_information}
    \F{P}_\nu \left( \frac{A_t}{t} \geq u \right ) \leq \norm{\frac{d\nu}{d\pi}}_2 e^{-t \alpha(u)}.
\end{equation}
Furthermore,
\begin{equation}
\label{eq: bound_lambda_0(r)_information}
    \lambda_0(r) \leq \alpha^*(r)
\end{equation}
for all $r \geq 0$, where $\alpha^*$ denotes the Fenchel conjugate of $\alpha$ with respect to $\F{R}_{ \geq 0}$.
\end{theorem}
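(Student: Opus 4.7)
The plan is to deduce both assertions directly from Theorem \ref{th: main_conc_inequality} together with Lemma \ref{lem: information_Legendre_transform_lambda_0(r)}, which identify the Cramér rate $\lambda_0^*(u)$ with an infimum of Donsker--Varadhan informations.

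For the first assertion, I will invoke Lemma \ref{lem: information_Legendre_transform_lambda_0(r)} to rewrite
\begin{equation*}
\lambda_0^*(u) \;=\; \inf\bigl\{I(\beta\,|\,\pi)\,:\,\beta\in\mathcal{M}_1(E),\ \beta(f)=u\bigr\}
\end{equation*}
for every $u\ge 0$. By the hypothesis $\alpha(\beta(f))\le I(\beta\,|\,\pi)$ for all $\beta$ with $\beta(f)\ge 0$, specializing to $\beta(f)=u$ and taking the infimum over such $\beta$ yields $\alpha(u)\le\lambda_0^*(u)$. Plugging this into the general inequality \eqref{eq: main_conc_inequality} of Theorem \ref{th: main_conc_inequality} gives \eqref{eq: concenration_inequality_information}.

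For the second assertion, I will use the formula $\lambda_0(r)=I^{*}(r)=\sup_{u\in\F{R}}(ru-I(u))$ with $I=\lambda_0^*$, which was established in equation \eqref{eq: lambda_0(r)_is_Legendre_transform_of_I(u)} inside the proof of Theorem \ref{th: main_conc_inequality}. I then split the supremum according to the sign of $u$. On $\{u\ge 0\}$, the first part of the theorem gives $I(u)\ge\alpha(u)$, so
\begin{equation*}
\sup_{u\ge 0}\bigl(ru-I(u)\bigr)\;\le\;\sup_{u\ge 0}\bigl(ru-\alpha(u)\bigr)\;=\;\alpha^{*}(r).
\end{equation*}
On $\{u<0\}$, for $r\ge 0$ one has $ru\le 0$ and $I(u)\ge 0$ (from Lemma \ref{lem: properties_infinitesimal_generator}(c), since $-L$ is positive semi-definite), hence $ru-I(u)\le 0$. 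Combining both pieces gives $\lambda_0(r)\le\max(\alpha^{*}(r),0)$, and finally $\alpha^{*}(r)\ge r\cdot 0-\alpha(0)\ge 0$ (using $\alpha(0)\le I(\pi\,|\,\pi)=0$ from the hypothesis applied to $\beta=\pi$, together with $\alpha\ge 0$) closes the argument.

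The only subtle point is the negative-$u$ branch of the Legendre transform in the second assertion: since $\alpha$ is defined only on $[0,\infty)$, its conjugate $\alpha^{*}(r)$ is a supremum over $u\ge 0$, so the contribution of $u<0$ to the representation $\lambda_0(r)=\sup_{u\in\F{R}}(ru-I(u))$ must be controlled separately, which is what the non-negativity of $I$ provides. The rest of the argument is essentially bookkeeping around Theorem \ref{th: main_conc_inequality}, Lemma \ref{lem: information_Legendre_transform_lambda_0(r)}, and the monotonicity property of the Fenchel conjugate recorded in Lemma \ref{lem: properties_Legendre_transform}(d).
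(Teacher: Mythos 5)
Your proof is correct and follows essentially the same route as the paper: identify $\lambda_0^*$ with the infimum of Donsker--Varadhan informations via Lemma \ref{lem: information_Legendre_transform_lambda_0(r)}, deduce $\alpha\le\lambda_0^*$ on $[0,\infty)$, plug into Theorem \ref{th: main_conc_inequality}, and for the second claim use $\lambda_0(r)=\sup_{u}(ru-\lambda_0^*(u))$ together with non-negativity of $\lambda_0^*$ to drop the $u<0$ contribution. The only cosmetic difference is that you explicitly split the supremum and then verify $\alpha^*(r)\ge 0$ (via $\alpha(0)\le I(\pi\,|\,\pi)=0$), whereas the paper simply notes that the supremum restricts to $u\ge 0$ because $\lambda_0^*\ge 0$ and $\lambda_0^*(0)=0$; both are instances of the same observation.
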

\begin{reference_paper}
A similar theorem as Theorem \ref{th: concentration_via_information} is stated  in \cite[Thrm.\,2.2]{bernstein}. Hereby, statements \cite[Thrm.\,2.2]{bernstein}$(a),(c),(e)$ correspond to \eqref{eq: condition_concentration_via_information}, \eqref{eq: concenration_inequality_information},\eqref{eq: bound_lambda_0(r)_information}. However, we reformulated \cite[Thrm.\,2.2]{bernstein} as follows:  \cite[Thrm.\,2.2]{bernstein} considers a symmetric Markov process, assumes left continuity and convexity of $\alpha$, and states equivalence of \eqref{eq: condition_concentration_via_information}, \eqref{eq: concenration_inequality_information} and \eqref{eq: bound_lambda_0(r)_information}. For our purposes just the implications $\eqref{eq: condition_concentration_via_information} \Rightarrow \\ \eqref{eq: concenration_inequality_information}, \eqref{eq: bound_lambda_0(r)_information}$ are of interest as we use Theorem \ref{th: concentration_via_information} to obtain a Bernstein-type  bound (c.f. Theorem \ref{th: extension_lezaud}). Furthermore, the proof of the implication $\eqref{eq: concenration_inequality_information} \Rightarrow \eqref{eq: condition_concentration_via_information}$ uses large deviation theory for symmetric Markov processes (c.f. \cite[Proof of Thrm.\,2.2]{Guillin2009}; cited in \cite[Thrm.\,2.2]{bernstein}), so we can not generalize directly this proof to the general, non-symmetric case. Furthermore, for the implications of interests we do not  need to assume that $\alpha$ is convex and left continuous. 
\end{reference_paper}
\begin{proof}[Proof of Theorem \ref{th: concentration_via_information}]
Combining the assumption \eqref{eq: condition_concentration_via_information} 
\begin{equation}
    \alpha(\beta(f)) \leq I (\beta | \pi)
\end{equation}
for all $\beta \in \mathcal{M}_1(E)$ such that $\beta(f) \geq 0$
with Lemma \ref{lem: information_Legendre_transform_lambda_0(r)}, which states that
\begin{equation}
    \lambda_0^*(u) = \inf \{ I(\beta | \pi) \; | \; \beta \in \mathcal{M}_1(E), \beta(f) = u \},
\end{equation}
yields  $\alpha(u) \leq \lambda_0^*(u)$ for all $u \geq 0$. Thus, the concentration inequality (Theorem \ref{th: main_conc_inequality})
\begin{equation}
    \F{P}_\nu \left (\frac{A_t}{t} \geq u \right ) \leq \norm{\frac{d\nu}{d \pi}}_2 e^{- t \lambda_0^*(u)}.
\end{equation}
implies 
\begin{equation}
     \F{P}_\nu \left( \frac{A_t}{t} \geq u \right ) \leq \norm{\frac{d\nu}{d\pi}}_2 e^{-t \alpha(u)}
\end{equation}
for all $u \geq 0 $.
To prove the second statement; $\lambda_0(r) \leq \alpha^*(r)$ for all $r \geq 0$, recall that (Remark \ref{rem: th_main_concentration_inequality}$(a)$)
 $\lambda_0^*(u) \geq 0$ for all $u \in \F{R}$. Furthermore, by the proof of Theorem \ref{th: main_conc_inequality} (see \eqref{eq: lambda_0(r)_is_Legendre_transform_of_I(u)} and use $I(u) = \lambda^*_0(u)$) we have 
\begin{equation*}
    \lambda_0(r) = \sup_{u \in \F{R}} (ur - \lambda_0^*(u))
\end{equation*}
for all $r \in \F{R}$. Now, if $r \geq 0$, then, because $\lambda_0^*(u) \geq 0$, it follows that the above supremum can be taken over $u \in \F{R}_{\geq 0}$, i.e.
\begin{equation}
    \lambda_0(r) = \sup_{u \in \F{R}_{\geq 0}} (ur - \lambda_0^*(u))
\end{equation}
for $r \geq 0$. Thus, the inequality $\alpha(u) \leq \lambda_0^*(u)$ for $u \geq 0 $ implies   (c.f. Lemma \ref{lem: properties_Legendre_transform}$(d)$)
\begin{equation}
    \lambda_0(r) \leq \alpha^*(r)
\end{equation}
for all $r \geq 0$.
\end{proof}
Now, using  the above theorem  we can extend Theorem \ref{th: concentration_via_perturbation}, Lemma \ref{lem: lezaud_bound_lambda_0(r)} and Theorem \ref{th: concentration_via_poincare} by choosing 
\begin{equation*}
    \alpha(u) = -\frac{2u^2}{\hat{\sigma}_f^2 \left (1+ \sqrt{1+ \frac{2\norm{f^+}_\infty u}{\lambda_1\hat{\sigma}_f^2}}\right)^2},
\end{equation*}
where $\Hat{\sigma}_f^2 = - \langle Sf, f \rangle$ and $\lambda_1 = \min_{\lambda \in \sigma \left ( \frac{L+L^*}{2} \right)\backslash \{0 \} }$ are defined as in Theorem \ref{th: concentration_via_perturbation}, and $f^+ = \max \{0, f \}$ denotes the nonnegative part of $f$.

\begin{theorem}[A general Bernstein-type bound]

\label{th: extension_lezaud}
The following  concentration inequality holds. For all $u \geq 0$
\begin{equation}
    \F{P}_\nu \left (\frac{A_t}{t} \geq u \right ) \leq \norm{\frac{d \nu}{d \pi}}_2 \exp\left(-\frac{2tu^2}{\hat{\sigma}_f^2 \left (1+ \sqrt{1+ \frac{2 \norm{f_+}_\infty u}{\lambda_1 \hat{\sigma}_f^2}} \right )^2} \right),
\end{equation}
where $\Hat{\sigma}_f^2 = - \langle Sf, f \rangle$ and $\lambda_1 = \min_{\lambda \in \sigma \left ( \frac{L+L^*}{2} \right)\backslash \{0 \} } \abs{\lambda} $ are defined as in Theorem \ref{th: concentration_via_perturbation} and $f^+ = \max \{0, f \}$ denotes the nonnegative part of $f$. Furthermore,
\begin{equation}
\label{eq: extension_lezaud_bound_lambda_0(r)}
   \lambda_0(r)  \leq \frac{r^2\frac{\hat{\sigma}_f^2}{2}}{1-\left(\frac{\norm{f_+}_{\infty}}{\lambda1}\right) r} 
\end{equation}
for all $0 \leq r < \frac{\lambda_1}{ \norm{f_+}_\infty}$.
\end{theorem}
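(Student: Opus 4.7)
The plan is to apply Theorem \ref{th: concentration_via_information} with the Bernstein-type choice
\begin{equation*}
\alpha(u) := \frac{2u^2}{\hat{\sigma}_f^2\Bigl(1+\sqrt{1+\tfrac{2\norm{f^+}_\infty u}{\lambda_1\hat{\sigma}_f^2}}\Bigr)^2}, \qquad u \geq 0.
\end{equation*}
By Example \ref{ex: Legendre_transform_subgamma} applied with variance factor $v=\hat{\sigma}_f^2$ and scale parameter $c=\norm{f^+}_\infty/\lambda_1$, this $\alpha$ is exactly the Fenchel conjugate on $\F{R}_{\geq 0}$ of $F(r):=r^2\hat{\sigma}_f^2/(2(1-cr))$ defined on $[0,1/c)$; extending $F$ by $+\infty$ past $1/c$ makes it convex and lower semicontinuous on the closed set $[0,\infty)$, so the Fenchel biconjugation Lemma \ref{lem: properties_Legendre_transform}(c) gives $\alpha^*=F$ on $[0,\lambda_1/\norm{f^+}_\infty)$. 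Hence, once the hypothesis $\alpha(\beta(f))\leq I(\beta\,|\,\pi)$ of Theorem \ref{th: concentration_via_information} is established, the concentration inequality is precisely \eqref{eq: concenration_inequality_information} and the bound $\lambda_0(r)\leq F(r)$ is precisely \eqref{eq: bound_lambda_0(r)_information}, so the theorem follows.

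For the verification, fix $\beta\in\mathcal{M}_1(E)$ with $\beta(f)\geq 0$ and set $g:=(d\beta/d\pi)^{1/2}\in L^2(\pi)$, so that $\norm{g}_2=1$, $\beta(f)=\langle M_f g,g\rangle$, and $I(\beta\,|\,\pi)=-\langle Lg,g\rangle$. Decompose $g=a\textbf{1}+h$ orthogonally with $h\in\{\textbf{1}\}^\perp$ and $a^2+\norm{h}_2^2=1$. Invariance of $\pi$ together with $L\textbf{1}=0$ yields $\langle Lg,g\rangle=\langle Lh,h\rangle$, and $\pi(f)=0$ gives
\begin{equation*}
\beta(f) \;=\; 2a\langle f,h\rangle + \pi(fh^2).
\end{equation*}

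The main obstacle is estimating these two terms so that $\hat{\sigma}_f^2$ emerges from the first and $\norm{f^+}_\infty$ from the second. For the cross term I mimic the resolvent manipulation of the proof of Lemma \ref{lem: lezaud_bound_lambda_0(r)}: since $f,h\in\{\textbf{1}\}^\perp$, Lemma \ref{lem: properties_reduced_resolvent}(c),(d) allows writing $\langle f,h\rangle=\langle \hat{S}^{1/2}f,\,\hat{S}^{-1/2}h\rangle$, and Cauchy-Schwarz yields $|\langle f,h\rangle|\leq \sqrt{\hat{\sigma}_f^2/2}\,\sqrt{-\langle Lh,h\rangle}$, using $\norm{\hat{S}^{1/2}f}_2^2=-\langle Sf,f\rangle=\hat{\sigma}_f^2/2$ and $\norm{\hat{S}^{-1/2}h}_2^2=-\langle Lh,h\rangle$ (the latter because $\hat{S}^{-1}=-(L+L^*)/2$ on $\{\textbf{1}\}^\perp$). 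For the quadratic term the key improvement is to exploit $h^2\geq 0$: $\pi(fh^2)\leq \pi(f^+h^2)\leq \norm{f^+}_\infty\norm{h}_2^2$; this is precisely where $\norm{f^+}_\infty$ replaces $\norm{f}_\infty$ and is responsible for the entire extension beyond Lemma \ref{lem: lezaud_bound_lambda_0(r)}. The Poincaré inequality with optimal constant $1/\lambda_1$ then absorbs $\norm{h}_2^2\leq -\langle Lh,h\rangle/\lambda_1$. Combining these bounds with $|a|\leq 1$ and writing $E:=-\langle Lg,g\rangle$ gives
\begin{equation*}
\beta(f) \;\leq\; \sqrt{2\hat{\sigma}_f^2\,E} \;+\; \frac{\norm{f^+}_\infty}{\lambda_1}\,E,
\end{equation*}
which, with $u:=\beta(f)$, is a quadratic inequality in $y:=\sqrt{E}$; solving for the positive root and rationalising yields $y\geq 2u\big/\bigl(\sqrt{2\hat{\sigma}_f^2}\,(1+\sqrt{1+2\norm{f^+}_\infty u/(\lambda_1\hat{\sigma}_f^2)})\bigr)$, which squares to $E\geq \alpha(u)$. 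This is the required inequality $\alpha(\beta(f))\leq I(\beta\,|\,\pi)$.
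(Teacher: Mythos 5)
Your proof is correct and follows essentially the same route as the paper: reduction to the Donsker--Varadhan condition of Theorem \ref{th: concentration_via_information}, orthogonal decomposition of $g$ into its $\textbf{1}$-component and the rest (your $g = a\textbf{1}+h$ is the paper's $g = \pi(g)\textbf{1}+(g-\pi(g))$), a Cauchy--Schwarz estimate via the reduced resolvent for the cross term (your $\hat{S}^{\pm 1/2}$ manipulation is equivalent to the paper's use of the bilinear form $b(h_1,h_2) = -\langle h_1, \tfrac{L+L^*}{2}h_2\rangle$), and $f^+$ together with the Poincar\'e inequality for the quadratic term. Your explicit biconjugation step establishing $\alpha^* = F$ supplies a detail the paper leaves implicit when it asserts that the $\lambda_0(r)$ bound ``follows by Theorem \ref{th: concentration_via_information}.''
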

\begin{remark}
\begin{enumerate}[$(a)$]
    \item The above theorem implies that Lemma \ref{lem: lezaud_bound_lambda_0(r)} holds for $0\leq r < \frac{\lambda_1}{2 \norm{f}_\infty }$.
    \item The above Bernstein-type concentration inequality is sharper than the previous Bernstein-type concentration inequalities; Theorem \ref{th: concentration_via_perturbation}$(a)$ and Theorem \ref{th: concentration_via_poincare}. Indeed, this follows by direct comparison using $\norm{f^+}_\infty \leq \norm{f}_\infty$ and $\hat{\sigma}_f^2 \leq \Tilde{\sigma}_f^2$, where the latter inequality follows from the Cauchy-Schwartz inequality.
\end{enumerate}
\end{remark}

\begin{proof}[Proof of Theorem \ref{th: extension_lezaud}]
We follow the proof idea of \cite[Theorem\,1.2, P.\,363-364]{bernstein}, add computational details and generalize it for the non-symmetric case. Let
\begin{equation*}
    \alpha(u) = \frac{2u^2}{\hat{\sigma}_f^2 \left (1+ \sqrt{1+ \frac{2\norm{f^+}_\infty u}{\lambda_1\hat{\sigma}_f^2}}\right )^2}.
\end{equation*}
The claim follows by Theorem \ref{th: concentration_via_information} if we 
 show that $\alpha $
satisfies the condition \eqref{eq: condition_concentration_via_information} of Theorem \ref{th: concentration_via_information}, i.e.
\begin{equation}
\label{eq: condition_alpha}
    \alpha(\beta(f)) \leq I (\beta | \pi)
\end{equation}
for all $\beta \in \mathcal{M}_1(E)$ with $\beta(f) \geq 0$. An elementary calculation shows that $\alpha $ is strictly increasing and its inverse is given by (c.f. \cite[P.\,363]{bernstein})
\begin{equation*}
    \alpha^{-1}(s) = \sqrt{2 \hat{\sigma}_f^2 s} + \frac{\norm{f^+}_\infty}{\lambda_1} s 
\end{equation*}
for $s \geq 0$. Thus, the condition \eqref{eq: condition_alpha} is equivalent to 
\begin{equation}
\label{eq: conditition_beta(f)}
    \beta(f) \leq \sqrt{2 \hat{\sigma}_f^2 I (\beta | \pi)} + \frac{\norm{f^+}_\infty}{\lambda_1} I (\beta | \pi)
\end{equation}
for all $\beta \in \mathcal{M}_1(E)$ with $\beta(f) \geq 0$.
Let $\beta \in \mathcal{M}_1(E)$ and let $g := (\frac{d\beta}{d\pi})^{\frac{1}{2}}$. We use $\pi(f) = 0$ to  rewrite
\begin{equation}
\label{eq: beta(f)}
    \beta(f) = \pi(fg^2) = \pi(f [(g - \pi(g))^2 + 2 g \pi(g)]) = 2 \pi(g) \langle f, g \rangle + \pi( f(g - \pi(g))^2) .
\end{equation}
We show \eqref{eq: conditition_beta(f)}  by bounding both summands of the right hand side of the above equation. Let $K_1 := 2 \pi(g) \langle f, g \rangle$ and $K_2 := \pi( f(g - \pi(g))^2)$. 
The first summand $K_1$ can be bounded by the two inequalities
\begin{equation}
\label{eq: first_inequality}
    \pi(g) \leq \pi(g^2)^{\frac{1}{2}} = 1,
\end{equation}
and 
\begin{equation}
\label{eq: second_inequality}
    \langle f, g\rangle  \leq \sqrt{\frac{\Hat{\sigma}_f^2}{2}I (\beta | \pi)},
\end{equation}
yielding 
\begin{equation}
    K_1 \leq \sqrt{2\Hat{\sigma}_f^2I (\beta | \pi)}.
\end{equation}
Hereby \eqref{eq: first_inequality}
follows from Jensens inequality and  \eqref{eq: second_inequality} can be the derived as follows. As $- (L + L^*)$ is positive semidefinite (Lemma \ref{lem: properties_infinitesimal_generator}$(d)$),
\begin{equation*}
    b(h,g) := -  \left \langle h, \frac{L+ L^*}{2}g \right \rangle 
\end{equation*}
defines a symmetric, positive semidefinite, bilinear form on $L^2(\pi)$. Furthermore, we have  $f = \frac{L+L^*}{2}Sf = S\frac{L+L^*}{2}f  $ (by Definition \ref{def: reduced_resolvent}). Consequently, using the Cauchy-Schwartz inequality for $b$ yields
\begin{align*}
\begin{split}
    \langle f, g\rangle &= \left \langle -S f,-\frac{L+L^*}{2} g \right \rangle  = b(-Sf, g) \\
    &\leq \sqrt{\langle -Sf, f \rangle \left   \langle -\frac{L+L^*}{2}g, g \right \rangle } = \sqrt{\frac{\Hat{\sigma}_f^2}{2}I (\beta | \pi)},
\end{split}
\end{align*}
which proves \eqref{eq: second_inequality}.
Finally, the second term $K_2$ can be bounded using the Poincaré-inequality \eqref{eq: poincare_inequality} with constant $C = \frac{1}{\lambda_1}$,  yielding
\begin{align*}
    K_1 &=  \pi( f(g - \pi(g))^2) \leq \pi( f^+(g - \pi(g))^2)  \leq   \norm{f^+}_\infty \mathrm{Var}_\pi(g) \\
    &\leq \frac{\norm{f^+}_\infty}{\lambda_1} \langle - Lg,g \rangle = \frac{\norm{f^+}_\infty}{\lambda_1} I (\beta | \pi).
\end{align*}
Combining the bounds for both summands $K_1$ and $K_2$ of the right hand side of \eqref{eq: beta(f)} yields the desired inequality \eqref{eq: conditition_beta(f)}.
\end{proof}
\begin{reference_paper}
We extended the proof of \cite[Thrm\,1.2]{bernstein} (found on \cite[P.\,363]{bernstein}) to the nonsymmetric case. The central 'trick' was to replace $L$ by $\frac{L+L^*}{2}$ in scalar products (which is easily possible because of the finite dimensionality of $L^2(\pi)$) and work with the reduced resolvent of $\frac{L+L^*}{2}$. 
\end{reference_paper}

\section{Summary, Further Theory and Applications in Physics}
\label{sec: schluss}
\subsection{Summary}
\label{subsec: Summary}
In this thesis we derived bounds in Theorems \ref{th: main_conc_inequality}, \ref{th: concentration_via_perturbation}, \ref{th: concentration_via_poincare}, \ref{th: concentration_via_F_sobolev}, \ref{th: concentration_via_information}, \ref{th: extension_lezaud}  of the form
\begin{equation}
    \F{P}_\nu \left ( \frac{A_t}{t} \geq u \right) \leq \norm{\frac{d\nu}{d\pi}}_2 e^{-t \alpha(u)}
\end{equation}
based on the Cramér-Chernoff method. First, we introduced the general Cramér-Chernoff method, which can be applied to arbitrary random variables $Z$ to obtain bounds  (c.f. Section \ref{subsubsec: general_cramer_chernoff_method})
\begin{equation}
\label{eq: general_chernoff}
    \F{P}( Z \geq u) \leq e^{-\Psi_Z^*(u)}.
\end{equation}
Moreover, as a special case of the above Chernoff inequality we obtained Bernstein's inequality for sub-gamma random variables. Then, we applied the Cramér-Chernoff method to to functionals of MJPs, i.e. $ Z = \int_0^t f(X_s) ds$. More precisely, we considered the setting: 
\begin{setting}
Consider an irreducible MJP $(\F{X}, (\F{P}_x)_{x \in E})$ on a finite state space $E$ with invariant distribution $\pi$ and infinitesimal generator $L$, some arbitrary, nonconstant $f \in \mathcal{B}(E) = \F{R}^E$ with $\pi(f) = 0$, some arbitrary initial distribution $\nu \in \mathcal{M}_1(E)$, and let $\lambda_0(r)$ (for $r \in \F{R})$ be the largest eigenvalue of the selfadjoint operator $\frac{L+L^*}{2} + r M_f$ (defined on $L^2(\pi)$), where $M_f$ is the multiplication with $f$.  Furthermore, let $A_t = \int_0^t. f(X_s)ds$, $\Psi_{A_t}(r) = \log \F{E}_\nu (e^{rA_t})$, and $\Psi_{A_t}^*(u) = \sup_{ r \geq 0} (ru - \Psi_{A_t}(r))$.
\end{setting}
First we followed Wu \cite{wu}, and starting from the general Chernoff inequality \eqref{eq: general_chernoff} for $Z = A_t$ and using a Feynman Kac semigroup $P_t^{rf} = \exp(t(L+rM_f))$ to bound $\Psi_{A_t}(r)$ we obtained a general concentration inequality
\begin{equation}
    \F{P}_\nu \left ( \frac{A_t}{t} \geq u \right ) \leq e^{ - \Psi_{A_t}^*(tu)} \leq     \norm{\frac{d\nu}{d\pi}}_2 e^{-t \lambda_0^*(u)},
\end{equation}
depending on $\lambda_0^*$, the Fenchel conjugate of $\lambda_0$. We noted that for $\nu \neq \pi$ the above bound is trivial for $u \in [0, K]$ (where $K > 0$), and that if $\pi$ obeys the detailed balance condition, then this bound is asymptotically sharp (see Remark \ref{rem: th_main_concentration_inequality} for details). Afterwards, based on this (general) concentration inequality we used three different approaches to get more explicit bounds $ \alpha(u) \leq \lambda_0^*(u) $, thus obtaining
\begin{equation}
\label{eq: summary_concentration_inequality}
    \F{P}_\nu \left ( \frac{A_t}{t} \geq u \right ) \leq e^{ - \Psi_{A_t}^*(tu)} \leq     \norm{\frac{d\nu}{d\pi}}_2 e^{-t \lambda_0^*(u)} \leq \norm{\frac{d\nu}{d\pi}}_2 e^{-t \alpha(u)}.
\end{equation}
In the first (Section \ref{subsubsec: concentration_via_perturbation}) approach we followed Lezaud \cite{lezaud} and applied perturbation theory to express $\lambda_0(r)$ as a perturbation series and derived (after a lengthy computation, see proof of Lemma \ref{lem: lezaud_bound_lambda_0(r)}) a sub-gamma type bound (c.f. Definition \ref{def: sub-gamma} and Lemma \ref{lem: lezaud_bound_lambda_0(r)})

\begin{equation}
\label{eq: summary_sub_gamma_bound}
    G(r) =  \frac{r^2v}{2(1-\frac{2r}{\epsilon})} \geq \lambda_0(r),
\end{equation}
which implied by computing the Fenchel conjugate $G^*(u) \geq \lambda_0^*(u)$ the concentration inequality of Theorem \ref{th: concentration_via_perturbation}. Hereby, we corrected the claim of \cite[Lemma\,2.3]{lezaud} as we noted that the proof (presented in \cite{lezaud}) of the sub-gamma type bound \eqref{eq: summary_sub_gamma_bound}  just proves this bound  for  $r \in [0, \frac{\epsilon}{3}]$ and not for $r \in [0, \frac{\epsilon}{2}] $ (as claimed in \cite[Lemma\,2.3]{lezaud}). As a result of this correction, the above sub-gamma type bound just implied a Bernstein-type concentration inequality for small $u$ (Theorem \ref{th: concentration_via_perturbation}(a)) and a weaker inequality for larger $u$ (Theorem \ref{th: concentration_via_perturbation}(b)).  In the second approach (Section \ref{subsubsec: concentration_via_functional_inequalities}) we followed Guillin \cite{guillin} and used functional inequalities; the Poincaré inequality and the F-Sobolev inequality, which give bounds for $\langle g, L g \rangle$. Hereby, in our setting, the Poincaré inequality 'automatically' holds whereas a general $F$-Sobolev inequality may not hold in general. Using these inequalities we derived again bounds $G(r) \geq \lambda_0(r)$, and by computing $G^*(u) \leq \lambda_0^*(u) $ we obtained Theorems \ref{th: concentration_via_poincare} and \ref{th: concentration_via_F_sobolev}. Herewith, we generalized, reformulated \cite[Prop.\,1.4]{guillin} and noted that the Poincaré inequality implies a Bernstein-type concentration inequality; Theorem \ref{th: concentration_via_poincare}. Furthermore, we noted that if an $\log$-Sobolev inequality holds, one obtains a continuous time analogue of Chernoff's inequality for sums of i.i.d. random variables. Finally, in the third approach (Section \ref{subsubsec: concentration_via_information}) we followed Gao \cite{bernstein} and used the Donsker-Varadhan information. First, we expressed $\lambda_0^*(u)$ in terms of the Donsker-Varadhan information. Then, using this expression we arrived at Theorem \ref{th: concentration_via_information}; a concentration inequality  \eqref{eq: summary_concentration_inequality} holds for a function $\alpha : [0,\infty) \rightarrow [0,\infty]$ if
\begin{equation*}
    \alpha(\beta(f)) \leq I( \beta | \pi)
\end{equation*}
 for all $\beta \in \mathcal{M}_1(E)$ with $\beta(f) \geq 0$. Afterwards, we showed that for a Bernstein-type 
 \begin{equation*}
     \alpha(u) = \frac{2u^2}{v(1+ \sqrt{1 + \frac{2uc}{v}})^2}
 \end{equation*}the above condition is automatically satisfied in our setting, thus obtaining a general Bernstein-type bound; Theorem \ref{th: extension_lezaud}, which is a strengthening and extension of the previous obtained Bernstein-type bounds; Theorem \ref{th: concentration_via_perturbation}(a) and Theorem \ref{th: concentration_via_poincare}. Herewith, we proved the expression of $\lambda_0^*(u)$ in terms of the Donsker-Varadhan information (a fact which is just stated in \cite{bernstein}) and extended the results of \cite{bernstein} to the general, non-symmetric case. \\
Thus, summarizing the (central, nonredundant) results, we get: Assume the above \textbf{Setting}. Then, the following general concentration inequality holds
\begin{customthm}{\ref{th: main_conc_inequality}}[A general inequality]
For any $u \geq 0$
\begin{equation*}
 \F{P}_\nu \left (\frac{A_t}{t} \geq u \right ) \leq \norm{\frac{d\nu}{d \pi}}_2 e^{- t \lambda_0^*(u)},  
\end{equation*}
where $\lambda_0^*(u)$ denotes the Fenchel conjugate of $\lambda_0(r)$ and we also have
\begin{equation*}
    \lambda_0^*(u) =  \inf \{ - \langle Lg, g \rangle \; | \, \norm{g}_2 = 1, \langle M_fg, g \rangle = u \}
\end{equation*}
\end{customthm}
Based on this inequality  more explicit concentration inequalities follow (by using bounds on $\lambda_0$ or $\lambda_0^*$): The following Bernstein-type bound holds without further assumptions (than made in the above setting)
\begin{customthm}{\ref{th: extension_lezaud}}[A general Bernstein-type bound]
Let $S$ denote the reduced resolvent of $\frac{L+L^*}{2}$ \lb with respect to the eigenvalue $0$\rb , $\hat{\sigma}_f^2 = - 2 \langle f, Sf \rangle$, $\lambda_1 = \min_{\lambda \in \sigma \left ( \frac{L+L^*}{2} \right)\backslash \{0 \} } \abs{\lambda} $ the spectral gap, and $f^+ = \max \{ 0, f \}$ the nonnegative part of $f$. For all $u \geq 0$
\begin{equation*}
    \F{P}_\nu \left (\frac{A_t}{t} \geq u \right ) \leq \norm{\frac{d \nu}{d \pi}}_2 \exp\left(-\frac{2tu^2}{\hat{\sigma}_f^2 \left (1+ \sqrt{1+ \frac{2 \norm{f_+}_\infty u}{\lambda_1 \hat{\sigma}_f^2}} \right )^2} \right).
\end{equation*}
\end{customthm}
Furthermore, if one assumes further conditions one gets:
\begin{customthm}{\ref{th: concentration_via_F_sobolev}}[A bound assuming an $F$-Sobolev inequality] Assume additionally that an $F$-Sobolev inequality holds and let $r_f = \frac{F(0)}{\min_{x \in E}f(x)}$. Then, for all $ u \geq 0$
\begin{equation*}
    \F{P}_\nu \left (\frac{A_t}{t} \geq u \right ) \leq \norm{\frac{d \nu}{d \pi}}_2 \exp( - t \sup_{ r \in [0, r_f)} (ru -F(\pi( F^{-1}(rf))))) .
\end{equation*}
\end{customthm}
and 
\begin{customthm}{\ref{th: concentration_via_information}}[A bound assuming an information inequality] Let $I (\cdot | \pi)$ denote the Donsker-Varadhan information, and assume additionally that $\alpha : [0, \infty) \rightarrow [0, \infty]$ is some function satisfying 
\begin{equation*}
    \alpha(\beta(f)) \leq I (\beta | \pi)
\end{equation*}
for all probability measures $\beta$ on $E$ such that $\beta(f) \geq 0$.
Then, for all $u \geq 0$
\begin{equation*}
    \F{P}_\nu \left( \frac{A_t}{t} \geq u \right ) \leq \norm{\frac{d\nu}{d\pi}}_2 e^{-t \alpha(u)}.
\end{equation*}
\end{customthm}
Furthermore, it should be remarked that although we focused on upper tail probabilities, by replacing $f$ by $-f$ one obtains bounds for the lower tail probabilities, i.e. 
\begin{equation}
    \F{P}_\nu \left (\frac{A_t}{t} \leq u \right) \leq \norm{\frac{d\nu}{d\pi}}_2 e^{-t\alpha(-u)}.
\end{equation}
Finally, by subbaditivity one obtains bounds of the form
\begin{equation}
    \F{P}_\nu \left (\abs{\frac{A_t}{t}} \geq u \right) \leq \norm{\frac{d\nu}{d\pi}}_2 ( e^{-t\alpha_1(u)} +   e^{-t\alpha_2(-u)}).
\end{equation}

\subsection{Outlook: Further Theory}
Let us present a brief outlook into further theory related to concentration inequalities for MJPs, which was not covered in this thesis. \\
In this work, we applied the Cramér-Chernoff method to functionals of irreducible MJPs on finite state spaces and obtained bounds
\begin{equation*}
    \F{P}_\nu \left( \frac{A_t}{t} \geq u \right ) \leq \norm{\frac{d\nu}{d\pi}}_2 e^{-t \alpha(u)}.
\end{equation*}
A question that was not covered (in detail), is the question on the sharpness of these bounds. It is not clear that these bounds are automatically  sharp; we even have seen that for $\nu \neq \pi$ the above bound is trivial for small $u$ (see Remark \ref{rem: th_main_concentration_inequality}$(b)$). However, the question of sharpness is particularly of interest for the application of such concentration inequalities, where one is interested to approximate $\pi(f) $ by $t^{-1} \int_0^t f(X_s)ds$, and consequently sharp estimates for the deviation probabilities $\F{P}_\nu \left( \frac{A_t}{t} \geq u \right )$ are desired. A possible approach to discuss the sharpness is large deviation theory; the study of the asymptotic limit
\begin{equation}
    \limsup_{t \to \infty} \frac{\log(\F{P}_\nu \left ( \frac{A_t}{t} \geq u \right))}{t}.
\end{equation}
One then could compare the above limit to the asymptotic limit 
\begin{equation}
    -\alpha(u) = \limsup_{t \to \infty} \frac{\log(\norm{\frac{d\nu}{d\pi}}_2 e^{-t \alpha(u)})}{t} 
\end{equation}
of the bound. This comparison would yield a first impression of the (asymptotic) sharpness of the concentration inequality. We have already seen such a result briefly in Remark \ref{rem: th_main_concentration_inequality}$(e)$, for an extensive presentation of large deviation theory for Markov processes see \cite{large_deviations_stroock}. Another aspect which limits the sharpness of the concentration inequalities of this work is the sharpness of Chernoff's inequality: All concentration inequalities in this work are based on this inequality, and are not sharper than it. So, to analyze the sharpness of the concentration inequalities of this work, it may be also of interest to have a look at (general) theory concerning the sharpness of the Chernoff inequality. Furthermore, recall that our general concentration inequality (Theorem \ref{th: main_conc_inequality}) was based on the Cauchy Schwartz inequality (c.f. Section \ref{subsubsec: general_concentration_inequality})
\begin{equation}
   \F{E}_\nu( e^{rA_t}) = \langle \frac{d\nu}{d\pi}, P_t^{rf}\textbf{1} \rangle \leq \norm{\frac{d\nu}{d\pi}}_2 \norm{P_t^{rf}}_2,
\end{equation}
which is in general not an equality. Another possible approach to bound $\F{E}_\nu(e^{rA_t})$ may be by using a Dyson identity to get an exact series representation of $P_t^{rf}\textbf{1} = \exp(L+rM_f)\textbf{1}$ and then using this series representation to obtain a bound for $\F{E}_\nu( e^{rA_t})$, c.f. for example \cite{Lapolla}.
Finally, a more pragmatic, direct approach to study the sharpness, is to consider concrete examples of MJPs, where one can calculate (analytically or computationally) $\F{P}_\nu \left( \frac{A_t}{t} \geq u \right )$ and then to directly compare this (exact) result to the bound given by the concentration inequality. \\
Another aspect that was not covered here, is the generalization of the present results to more general Markov processes on (uncountably) infinite state spaces. One can extend the presented approach to more general Markov Processes on (uncountably) infinite state spaces, in fact the works \cite{wu}, \cite{lezaud}, \cite{guillin}, \cite{bernstein} on which this thesis is based on, work with more general Markov processes and the qualitative form of these results (and the corresponding proofs) are quite similar. Formally, the presented approach, i.e. the application of the Cramér-Chernoff method,  can be directly transfered to more general Markov processes. However, a rigorous generalization of the results of this work leads to technical subtleties due to the infiniteness of the state space. Let us discuss briefly some changes and technicalities that have to be considered for a (rigorous) generalization of the results. \\ For a general Markov process the existence of an invariant measure is not guaranteed and even if such a measure exists, asymptotic properties of the Markov process may not be given by the invariant measure. Furthermore, one still needs some form of regularity of the paths $t \to X_t$, otherwise the time average $\int_0^t f(X_s) ds$ is not well defined. Thus, in more general settings (considered in \cite{wu}, \cite{lezaud}, \cite{guillin}, \cite{bernstein}) one considers a càdlàg Markov process $(\F{X}, (\F{P}_x)_{x \in E})$  on  a polish space $E$, where càdlàg (continue à droite, limite à gauche) means that it's paths $t \mapsto X_t)$ are right continuous and have everywhere existing left limits. Furthermore, one assumes the existence of an invariant measure $\pi$ such that $\pi$ is ergodic with respect to the Markov semigroup $(P_t)_{t \geq 0}$ (defined as in \eqref{eq: contraction_semigroup_Markov_process}). Here ergodic means that $\mathscr{L}_\nu(X_t)$ converges to 'fast enough' $\pi$ for all initial distributions. Moreover, note that the function $f$ over which the time average is taken can not be as general as in our case (where $f \in \F{R}^E$ was arbitrary); as $\int f d\pi$ has to be defined one must have at least $f \in L^1(\pi)$. Furthermore, for unbounded $f$ many of our results and proofs can not be applied;  all computations involving $\norm{f}_\infty$ are invalid and also the Feynman-Kac semigroup $P_t^f$ must be treated differently; e.g. by approximating $f$ with bounded functions (c.f. \cite[Proof of Thrm.\,1]{wu}). Also, in a general setting not for all $\nu \in \mathcal{M}_1(E)$ a density $\frac{d\nu}{d\pi} \in L^2(\pi)$ might exist (whereas in our setting a density always exists); for example if $\nu$ is a delta distribution and $\pi$ is a measure with density with respect to the Lebesgue measure. However, the basic starting point stays the same; one considers  the Markov semigroup $(P_t)_{t \geq 0}$ on $L^2(\pi)$ (c.f. Remark \ref{rem: extension_semigroup_to_L^p}), its infinitesimal generator $L: D(L) \rightarrow L^2(\pi)$ with domain $D(L)$, a Feynman Kac semigroup $(P_t^{rf})_{ t \geq 0}$ and bounds the cumulant generating function $\Psi_{A_t}$ by $ \Psi_{A_t}(r) \leq \log \norm{\frac{d\nu}{d\pi}}_2 + \log \norm{P_t^{rf}}_2 $.  Based on this bound one derives concentration inequalities by bounding $\norm{P_t^{rf}}_2$ (c.f. \cite{wu}). Here  it has to be made sure, that $L$ still has the properties used in the proofs, like the simplicity of the eigenvalue $0$, a spectral gap, the Poincaré-inequality or other properties of Lemma \ref{lem: properties_infinitesimal_generator}, which may be in general not given. For example \cite{lezaud} assumes a spectral gap to be able to generalize the applied perturbation theory (see \cite[P.\,195]{lezaud}). Furthermore, in expressions involving the infinitesimal generator, such as $\langle g_1, Lg_2 \rangle, \langle g_1, \frac{L+L^*}{2}g_2 \rangle$ or $\langle g_1 , Sg_2 \rangle$ the functions $g_1,g_2$ must always be contained in the domain of the operators, which does not hold for all $g_1,g_2 \in L^2(\pi)$ as operators such as $L, L^*$ are just densely defined. Finally, $\lambda_0(r)$ may not be defined as the largest eigenvalue, but defined as a supremum (c.f. Lemma \ref{lem: biggest_eigenvalue}, Reference \ref{ref: lambda_0(r)_as_supremum}).

\subsection{Digression: Markov  Processes and Concentration Inequalities in Physics}
\label{subsec: conc_inequalities_physics}
The present section may be of interest for the reader who is interested in application of concentration inequalities in a physical context. First, we  briefly explain how stochastic processes and Markov (jump) processes arise in physical systems and then explain how concentration inequalities for MJPs may be applied in a physical context. For a more detailed  discussion of how stochastic processes arise in physics see \cite[Ch.\,III.2]{KAMPEN}. \\ Often one considers a physical system (with many degrees of freedom) that is observed in time, for example a colloidal particle in water  or a protein in a biological system (these two systems will be used as  examples throughout this section). One  is interested in the time evolution of the the state of some specific physical object or observable. Refering to the two aforementioned examples, this could be the position of  the colloidal particle or the conformation of the protein. Mathematically, the state (of the physical object of interest) at time $t$ is described as some $x_t \in E$, where $E$ is a set, which contains all possible states of the object of interest. In the case of the colloidal particle we would have (in three dimensions) $E = \F{R}^3$, in the case of the protein, $E$ could be some discrete finite set, where each $x \in E$ would represent some concrete conformation of the protein. In many particle systems (e.g. a biological system) the state space $E$ does not describe the whole physical system (classically the whole system would be completely described by the momentum and position of all particles). Consequently, as the physical object (described by $E$) of interest is coupled to the rest of the system (e.g. the colloidal particle collides with the surrounding water molecules), whose specific state is not known, the observed trajectory $(x_t)_{t \geq 0} \in E^{[0,\infty)}$, appears to be 'irregular' and 'non-deterministic'. Thus, the observed 'randomness' of the trajectory comes from the 'ignorance' of the rest of the physical system. \par For example,  in the case of the colloidal particle  the momenta and positions of the water molecules are unknown, consequently one cannot predict the movement of the colloidal particle (as the particle collides with the water molecules) and the observed motion of the particle seems random and irregular. This suggest, that for the description of the time evolution, the observed trajectory $(x_t)_{t \geq 0}$ should be considered as a 'random' trajectory, which is mathematically exactly a stochastic process. Thus, one transitions from a description considering a single trajectory $(x_t)_{t \geq 0}$ to an 'ensemble' $\{(X_t)_{t \geq 0}(\omega) |  \omega \in \Omega \}$ of trajectories, described by a stochastic process $(X_t)_{t \geq 0}$ on $E$, defined on some underlying probability space $(\Omega, \mathcal{F}, \F{P})$. Intuitively, $\omega \in \Omega$ can be thought of parametrizing the 'uncertainty' (or 'randomness'), due to the 'random' interaction of the rest of the (unknown) physical system with the physical object of interest. Physically, a concrete realization $X_t(\omega)$ corresponds to  one concrete observation (i.e. performing an experiment) of the physical object and doing $n$ observations would correspond to drawing $\omega_1,...,\omega_n$ from $\Omega$, independently according to $\F{P}$ (the observations would correspond to $(X_t)_{t \geq 0}(\omega_1),..., (X_t)_{t \geq 0}(\omega_n)$). The  distribution of the stochastic process depends on the physical system and the description of it. One may impose certain (physically or practically justified) conditions like Markovianity or continuity of paths. In our example of a protein with certain conformations, if one assumes Markovianity and right continuity, then $(X_t)_{t \geq 0}$ is just exactly an MJP (examples where biomolecules are modeled by an MJP are found in \cite{schuette}, \cite{Schuette2015}). Often also the 'random' influences of the surrounding system are modeled (e.g. the force on the colloidal particle resulting from collisions with water molecules may be modeled as white noise) or they are obtained by projection of the high dimensional many particle dynamics (including the object of interest and the surrounding system) onto the dynamics of the object of interest (see e.g. \cite{Chorin2002}), and in doing this one obtains stochastic differential equations (e.g. the Langevin equation, see  \cite[Ch.\,1]{Zwanzig2001}), which then determine the distribution of the stochastic process. \par 
As already mentioned, MJPs (on a finite state space) arise in physics always directly when one assumes Markovianity of the system. Markovianity is  justified when one has a time scale separation between the time scale of the dynamics of the physical object of interest and the hidden dynamics of the surrounding system, more precisely when the time scale governing the surrounding system is much smaller than of the physical object. The intuitive argument on why this implies Markovianity is as follows. \par Without the (local) interaction of the object with the surrounding system, the system is at an equilibrium state. Now, when the physical object interacts with its surrounding, the surrounding system gets perturbed out of equilibrium. However, because of the much smaller time scale, the surrounding system quickly again reaches the equilibrium state, whereas the state of the object does not change significantly, and thus, the surrounding system 'forgets' its past interaction with the object. Consequently, given the present state of the object, the future interaction of the object with its surrounding, determining the future time evolution of the state of the object, is independent of the past, which corresponds exactly to the Markov property. Moreover, MJPs also arise in the context  when one examines the time evolution of so called 'site populations' (see \cite{moro}).\\
Let us now discuss the possible applications of concentration inequalities for MJPs. Nowadays it is experimentally possible to probe individual trajectories $ t \mapsto X_t(\omega)$ (e.g. single particle tracking \cite{SP1}, \cite{SP2}, single molecule spectroscopy \cite{SM1}, \cite{SM2}) and  one is interested to deduce properties  of the physical system by the observed individual trajectories. A possible approach is the study of time averages 
\begin{equation}
   \frac{A_t}{t} =  \frac{1}{t}\int_0^t f(X_s) ds,
\end{equation}
discussed in this work. Functionals of this form arise  for example in the context of chemical inference  \cite{berg_purcell}, time average statistical mechanics \cite{Lapolla} and stochastic thermodynamics \cite{Seifert_2012}.  In the analysis of these time averages it is of particular interest to establish relations between the fluctuations (of the time average) and physical properties and quantities (c.f. \cite{Lapolla}, \cite{tur_1}, \cite{horowitz}). Although the asymptotic analysis of those time averages (and fluctuations) by application of asymptotic results, like the ergodic theorem or large deviation theory is already well established in the physics literature (see e.g. \cite{Maes_2008}, \cite{kaiser}), a general approach
to the non-asymptotic study of the statistics of the above time averages remains elusive (c.f. \cite{Lapolla}). Moreover, in single molecule experiments ergodic time scales (i.e. time scales where $\mathscr{L}(X_t)$ is close to $\pi$ and $t^{-1}A_t$ is close to $\pi(f)$) frequently cannot be reached  (see e.g. \cite{non_ergodic1}, \cite{non_ergodic2}), thus the
correspondence between time and ensemble averages breaks down and the typical behavior of time-averaged
observables is frequently found to be very different from ensemble-averages. \par Furthermore, from a practical point of view, the quantitative analysis of the rate of convergence of $t^{-1}A_t \to \pi(f)$ is of interest, as this yields quantitative results on the time scales when the time average $t^{-1}A_t$ (obtained in experiments) is a 'good' approximation for the  ensemble average $\pi(f)$.  Consequently, the study of concentration inequalities is natural, as they provide some insight to the fluctuations of time averages and  a quantitative bound for the time scales, where time averages become ensemble averages. Let us illustrate this idea in an example (c.f. \cite{free_energy}). One may be interested in establishing results about the energies $H(x)$ of the states $x \in E$, which are related (for systems which obey detailed balance) to $\pi$ by the Boltzmann distribution $\pi_x \propto \exp(\frac{-H(x)}{k_B T})$. Thus, to estimate the energies, one approximates $\pi(f)$ by  $t^{-1}A_t$ (which can be measured). However, the ergodic theorem (Theorem \ref{th: ergodic_theorem}) just yields the asymptotic result $t^{-1}A_t \to \pi(f)$ and consequently it is not clear that this strategy gives a 'good' approximation. Concentration inequalities 
\begin{equation}
    \F{P}(t^{-1}A_t \geq u) \leq B(u)
\end{equation}
provide quantitative results on time scales when '$A_t$ is close to $\pi(f)$ with a high probability' and thus yield sufficient conditions on when the above strategy (to estimate energies) yields a 'good approximation with a high probability'. Furthermore, if one applies the concentration inequalities of this work, which are exponential concentration inequalities ($B(u) = \exp(-\Phi(u))$ derived by the Cramér-Chernoff method, the concentration inequalities for the observation of a single trajectory (i.e. measuring once), directly generalize to concentration inequalities for $n$ independent observations by using Corollary \ref{cor: cramer_chernoff_sum_iid} to obtain 
\begin{equation}
\label{eq: conc_ineq_sum}
    \F{P} \left ( \frac{1}{n}\sum_{i = 1}^n t^{-1}A_t^{(i)} \geq u \right) \leq \exp(-n \Phi(u)),
\end{equation}
where $t^{-1}A_t^{(i)}$ are the independent time averages, obtained by independent MJPs. Practically, \eqref{eq: conc_ineq_sum} gives an sufficient condition on the number of times an experiment has to be conducted to obtain 'good approximation for $\pi(f)$ with  high probability'.

\newpage
\appendix
\section{Appendix}

\subsection{Perturbation Theory}
\label{subsec: app_perturbation_theory}
This section is devoted to proof Lemma \ref{lem: eigenvalues_continuous} and Theorem \ref{th: perturbation_simple_eigenvalue} for complex vector spaces and transfer them to real vector spaces (see Remarks \ref{rem: app_red_resolvent_real_case}, \ref{rem: app_eigenvalues_continuous}, \ref{rem: app_perturbation_simple_eigenvalue}). For proving the results in the complex case ; Lemma \ref{lem: app_eigenvalues_continuous} and Theorem \ref{th: app_perturbation_simple_eigenvalue}, we combine results of \cite{Kato}.  \\
To rigorously transfer the results to real vector spaces, we need 
\begin{lemma}[Complexification]
\label{lem: app_complexification}
Let $V$ be a real vector space, and $\langle \cdot, \cdot \rangle$ an inner product on $V$. Then, the complexification $V^{\F{C}} = \{ v_1 + iv_2 \, | \, v_1, v_2 \in V \}$ has the following properties:
\begin{enumerate}[\lb a\rb]
    \item For every operator $T : V \rightarrow V$, there is a unique complexification $T^{C}$, i.e. a unique \lb $\F{C}$ - linear\rb \space operator $T^{\F{C}}: V^{\F{C}} \rightarrow V^{\F{C}}$ that extends $T$, i.e. $T = T^\F{C} |_{V}$. For all $v_1, v_2 \in V$ we have
    \begin{equation}
    \label{eq: app_T^C}
        T^\F{C}(v_1 + iv_2) = Tv_1 + i Tv_2.
    \end{equation}
    \item There is a unique \lb complex\rb \space  inner product $\langle \cdot , \cdot \rangle^\F{C}$ on $V^\F{C}$ that extends $\langle \cdot , \cdot \rangle$. We have 
    \begin{equation}
    \label{eq: app_complex_inner_product}
        \langle (v_1 + iv_2), (v_1' + iv_2') \rangle^\F{C} = \langle v_1,v_1' \rangle + \langle v_2, v_2' \rangle + i ( \langle v_1, v_2' \rangle - \langle v_2, v_1' \rangle  ) 
    \end{equation}
    \item We have $(T_1T_2)^\F{C} = T_1^\F{C} T_2^\F{C}$ and the operation $T \mapsto T^\F{C}$ preserves properties of $T$, we have\begin{enumerate}[1.]
        \item $T$ and $T^\F{C}$ have the same characteristic polynomials   and $\Tr (T) = \Tr(T^\F{C})$, in particular $T$ and $T^\F{C}$ have the same eigenvalues and algebraic multiplicities of eigenvalues.
        \item If $\textup{pr}$ is a(n) (orthogonal) projection, $\textup{pr}^\F{C}$ is also a(n) (orthogonal) projection
        \item If $T$ is selfadjoint with respect to $\langle \cdot , \cdot \rangle$, $T^\F{C}$ is selfadjoint with respect to $\langle \cdot, \cdot \rangle^\F{C}$
        \item If $T$ is diagonalizable \lb with real eigenvalues\rb, i.e. $T = \sum_{\lambda \in \sigma(T)} \lambda \textup{pr}_\lambda$, where $\textup{pr}_\lambda$ is the projection onto $\textup{Ker}(T - \lambda)$ according to the decomposition $V = \bigoplus_{\lambda \in \sigma(T)} \textup{Ker}(T-\lambda)$, then $T^\F{C}$ is diagonalizable with $T^\F{C} = \sum_{\lambda \in \sigma(T)} \lambda \textup{pr}_\lambda^\F{C}$ and $\textup{pr}_\lambda^\F{C}$ is the projection onto $ \textup{Ker}(T - \lambda)^\F{C} = \textup{Ker}(T^\F{C} - \lambda)$ according to the decomposition $V^\F{C} = \bigoplus_{\lambda \in \sigma(T)} \textup{Ker}(T-\lambda)^\F{C} $ 
    \end{enumerate} 
\end{enumerate}

\end{lemma}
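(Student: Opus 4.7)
The plan is to handle the three parts in turn, leveraging throughout the key fact that every $v \in V^{\F{C}}$ admits a \emph{unique} representation $v = v_1 + iv_2$ with $v_1, v_2 \in V$, since by construction $V \cap iV = \{0\}$.

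For part (a), I would first verify that the formula $T^{\F{C}}(v_1 + iv_2) := Tv_1 + iTv_2$ is well-defined (by the uniqueness of the decomposition), and then check $\F{C}$-linearity by direct computation: $\F{R}$-linearity is inherited from $T$, and compatibility with multiplication by $i$ follows from $T^{\F{C}}(i(v_1+iv_2)) = T^{\F{C}}(-v_2 + iv_1) = -Tv_2 + iTv_1 = i(Tv_1 + iTv_2)$. Uniqueness is immediate: any $\F{C}$-linear extension $\tilde{T}$ must satisfy $\tilde{T}(v_1 + iv_2) = \tilde{T}(v_1) + i\tilde{T}(v_2) = Tv_1 + iTv_2$, forcing the formula \eqref{eq: app_T^C}. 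For part (b), the formula \eqref{eq: app_complex_inner_product} is forced by requiring sesquilinearity (say, linear in the first argument, conjugate linear in the second) together with the extension property, so uniqueness is automatic. That it actually defines a complex inner product is then a direct check: Hermitian symmetry follows by inspecting the imaginary part, sesquilinearity reduces to bilinearity of $\langle \cdot, \cdot \rangle$, and positivity follows from $\langle v_1 + iv_2, v_1 + iv_2\rangle^{\F{C}} = \langle v_1, v_1\rangle + \langle v_2, v_2\rangle \geq 0$ with equality iff $v_1 = v_2 = 0$.

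For part (c), $(T_1T_2)^{\F{C}} = T_1^{\F{C}} T_2^{\F{C}}$ follows by applying \eqref{eq: app_T^C} twice and expanding both sides. For property~1, I would fix a basis $e_1, \ldots, e_n$ of $V$ over $\F{R}$; this is automatically a basis of $V^{\F{C}}$ over $\F{C}$, and \eqref{eq: app_T^C} shows that $T$ and $T^{\F{C}}$ have \emph{identical} matrix representations in this basis, so their characteristic polynomials and traces coincide. Property~2 is a one-line consequence of multiplicativity: $(\mathrm{pr}^{\F{C}})^2 = (\mathrm{pr}^2)^{\F{C}} = \mathrm{pr}^{\F{C}}$, and the orthogonality statement will follow from property~3 applied to the selfadjoint projection $\mathrm{pr}$. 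Property~3 is a direct expansion: plugging \eqref{eq: app_T^C} and \eqref{eq: app_complex_inner_product} into both sides of $\langle T^{\F{C}}(v_1 + iv_2), w_1 + iw_2\rangle^{\F{C}} = \langle v_1 + iv_2, T^{\F{C}}(w_1 + iw_2)\rangle^{\F{C}}$ and using $\langle Tv, w\rangle = \langle v, Tw\rangle$ termwise gives the claim.

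For property~4, the key observation is that $\mathrm{Ker}(T - \lambda)^{\F{C}} = \mathrm{Ker}(T^{\F{C}} - \lambda)$. The inclusion ``$\subset$'' is clear from \eqref{eq: app_T^C}, and the reverse inclusion follows because $(T^{\F{C}} - \lambda)(v_1 + iv_2) = (T-\lambda)v_1 + i(T-\lambda)v_2 = 0$ forces both $(T-\lambda)v_1 = 0$ and $(T-\lambda)v_2 = 0$ by uniqueness of the decomposition. Given the decomposition $V = \bigoplus_{\lambda \in \sigma(T)} \mathrm{Ker}(T-\lambda)$, writing any $v = v_1 + iv_2 \in V^{\F{C}}$ via the corresponding decompositions of $v_1, v_2$ yields $V^{\F{C}} = \bigoplus_{\lambda} \mathrm{Ker}(T-\lambda)^{\F{C}}$, and comparing with the definition of $\mathrm{pr}_\lambda^{\F{C}}$ from \eqref{eq: app_T^C} confirms that $\mathrm{pr}_\lambda^{\F{C}}$ is the spectral projection of $T^{\F{C}}$ onto $\mathrm{Ker}(T^{\F{C}} - \lambda)$. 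I do not anticipate a serious obstacle here; the proof is essentially a sequence of bookkeeping verifications, with the main subtlety being to consistently exploit the uniqueness of the decomposition $v = v_1 + iv_2$ to reduce each statement to its real counterpart.
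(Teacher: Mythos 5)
Your proposal is correct and follows essentially the same route as the paper's very terse proof, which simply cites \eqref{eq: app_T^C}, \eqref{eq: app_complex_inner_product}, and the observation that a real basis of $V$ is a complex basis of $V^{\F{C}}$ with identical matrix representations for $T$ and $T^{\F{C}}$; you merely spell out the verifications the paper leaves to the reader. One small caveat: the convention you adopt (``linear in the first argument, conjugate linear in the second'') actually produces the complex conjugate of \eqref{eq: app_complex_inner_product} --- the formula as printed is conjugate-linear in the first slot --- but this sign convention has no bearing on the rest of the argument.
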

\begin{proof}
The proofs are elementary, so we omit the computations. Statement $(a)$ and  $(b)$ follow directly from the $\F{C}$ linearity for operators and  the sesquilinearity for complex inner products. Statement $(c)$.1 follows  by using that if $(v_i)_{i = 1,..., \dim V}$ is a basis of $V$, then it is also a basis of $V^{C}$ and that the matrix representations of $T$ and $T^\F{C}$ under this basis are the same. Statements $(c)$.2 - $(c)$.4 follow by direct computation using \eqref{eq: app_T^C} and \eqref{eq: app_complex_inner_product}.
\end{proof}

In the following (unless otherwise stated) let $V$ be a complex vector space, $H$ a complex Hilbert space and let $\sigma(T)$ denote the spectrum of an operator $T: V  \rightarrow V$.  We first present some basic definitions and concepts required for  proving Theorem \ref{th: app_perturbation_simple_eigenvalue}.

\begin{definition}(Resolvent)
Let $T : V \rightarrow V$ be an operator. The operator-valued function $R$ defined on $\F{C} \backslash \sigma(T)$ via 
\begin{equation*}
    R(\zeta) = (T - \zeta )^{-1}
\end{equation*}
is called the resolvent of $T$.
\end{definition}

For $\lambda \in \sigma(T)$ it can be shown that the Laurent series expansion of $R(\zeta)$ at $\lambda$ takes the following form \cite[P.\,39-40]{Kato}
\begin{equation}
\label{eq: resolvent_laurent_series}
    R(\zeta) = - (\zeta - \lambda)^{-1}\text{pr}_\lambda - \sum_{n = 1}^{m_\lambda-1}(\zeta - \lambda)^{-n-1}D_\lambda^n + S_\lambda(\zeta),
\end{equation}
where $\text{pr}_\lambda$ is a projection, i.e. $\text{pr}_\lambda^2 = \text{pr}_\lambda$, $D_\lambda$ is a nilpotent operator with $D_\lambda^{m_\lambda} = 0$, $m_\lambda = \mathrm{dim }\mathrm{Im}(\text{pr}_\lambda)$, and $S_\lambda(\zeta)$ is a holomorphic operator-valued function. Kato \cite{Kato} defines the projection $\text{pr}_\lambda$ as the \textit{eigenprojection} for the eigenvalue $\lambda$, the integer $m_\lambda$ as  the \textit{algebraic multiplicity} of $\lambda$, and the holomorphic operator valued function $S_\lambda(\zeta)$ as  the \textit{reduced resolvent} of $T$ with respect to the eigenvalue $\lambda$ \cite[P.\,40-41]{Kato}. An eigenvalue $\lambda$ is called \textit{simple} if $m_\lambda = 1$ \cite[P.\,41]{Kato}. A straightforward application of the residue theorem shows that for any positively oriented circle $\Gamma $ in $\F{C}$ containing exactly one eigenvalue $\lambda \in \sigma(T)$ we have
\begin{equation}
 \label{eq: eigenprojection_integral_definition}
    \text{pr}_\lambda = \frac{-1}{2 \pi i}\int_\Gamma R(\zeta) d\zeta.
\end{equation}
Furthermore, the eigenprojections satisfy \cite[P.\,40]{Kato}
\begin{align}
\label{eq: properties_pr_lambda}
\begin{split}
    &\sum_{\lambda \in \sigma(T)} \text{pr}_\lambda = \text{Id}_V, \\
    & \text{pr}_\lambda \text{pr}_\mu = \delta_{\lambda \mu} \text{pr}_\lambda,\\
    & \text{pr}_\lambda T = T \text{pr}_\lambda  ,
\end{split}
\end{align}
where $\delta_{\lambda \mu}$ denotes the Kroenecker delta. Thus, the eigenprojections define a decomposition $V = \bigoplus_{\lambda \in \sigma(T)} M_\lambda$, with $M_\lambda = \mathrm{Im}(\text{pr}_\lambda)$ and $TM_\lambda \subset M_\lambda$. To clarify Kato's notions we state and prove 
\begin{lemma}
\label{lem: katos_notions}
Kato's notions of eigenprojection, algebraic multiplicity , and the decomposition $V = \bigoplus_{\lambda \in \sigma(T)}M_\lambda $ defined as above by the Laurent series expansion \eqref{eq: resolvent_laurent_series} coincide  with  the 'usual' notions of eigenprojection and algebraic multiplicity, defined by the Jordan decomposition and the characteristic polynomial. In other words, if $m(\lambda)$ denotes the multiplicity of the root $\lambda$ in the characteristic polynomial of $T$, and
\begin{equation}
\label{eq: jordan_decomposition}
    T = \sum_{\lambda \in \sigma(T)} \lambda \hat{\textup{pr}}_\lambda + \hat{D}
\end{equation}
denotes the Jordan decomposition, where $\hat{\textup{pr}}_\lambda$  denotes the projection onto the generalized eigenspace $V_\lambda = \mathrm{Ker}(T - \lambda)^{m(\lambda)}$ according to the decomposition $V = \bigoplus_{\lambda \in \sigma(T)}V_\lambda$ and $\hat{D}$ denotes the nilpotent operator belonging to the Jordan decomposition,  then $\textup{pr}_\lambda = \hat{\textup{pr}}_\lambda$, $m_\lambda = m(\lambda)$ and $M_\lambda = V_\lambda $
\end{lemma}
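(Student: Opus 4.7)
The plan is to compute the Laurent expansion of $R(\zeta) = (T-\zeta)^{-1}$ directly from the Jordan decomposition \eqref{eq: jordan_decomposition}, and then invoke the uniqueness of the Laurent expansion to identify the coefficients with those appearing in \eqref{eq: resolvent_laurent_series}. The conclusions $\mathrm{pr}_\lambda = \hat{\mathrm{pr}}_\lambda$, $M_\lambda = V_\lambda$, and $m_\lambda = m(\lambda)$ will then drop out essentially for free.

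First I would collect the standard algebraic properties of the Jordan decomposition: the $\hat{\mathrm{pr}}_\mu$ are mutually orthogonal projections summing to $\mathrm{Id}_V$, each $V_\mu = \mathrm{Im}(\hat{\mathrm{pr}}_\mu)$ is $T$-invariant with $\dim V_\mu = m(\mu)$, and the nilpotent part $\hat{D}$ commutes with every $\hat{\mathrm{pr}}_\mu$ and restricts to a nilpotent operator $\hat{D}_\mu := \hat{D}\hat{\mathrm{pr}}_\mu$ on $V_\mu$ with $\hat{D}_\mu^{m(\mu)} = 0$. Using these properties I would write
\begin{equation*}
T - \zeta \;=\; \sum_{\mu \in \sigma(T)}\bigl[(\mu - \zeta)\hat{\mathrm{pr}}_\mu + \hat{D}_\mu\bigr],
\end{equation*}
so that on each generalized eigenspace $V_\mu$ the operator $T - \zeta$ acts as $(\mu - \zeta)\mathrm{Id}_{V_\mu} + \hat{D}_\mu$. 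For $\zeta \neq \mu$, nilpotency of $\hat{D}_\mu$ allows inversion via a terminating Neumann series, and after collecting terms across all $\mu$ I would obtain the explicit global formula
\begin{equation*}
R(\zeta) \;=\; \sum_{\mu \in \sigma(T)}\sum_{k=0}^{m(\mu)-1}\frac{(-1)^k}{(\mu-\zeta)^{k+1}}\,\hat{D}^k\hat{\mathrm{pr}}_\mu.
\end{equation*}

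Next, to obtain the Laurent expansion at a fixed $\lambda \in \sigma(T)$, I would split the above sum into the contribution from $\mu = \lambda$ and the contribution from $\mu \neq \lambda$. The latter contribution is holomorphic in a neighbourhood of $\lambda$ and thus contributes only to the holomorphic part. Rewriting $(\lambda - \zeta)^{-k-1} = (-1)^{k+1}(\zeta-\lambda)^{-k-1}$ in the remaining $\mu = \lambda$ sum yields
\begin{equation*}
R(\zeta) \;=\; -(\zeta-\lambda)^{-1}\hat{\mathrm{pr}}_\lambda \;-\; \sum_{n=1}^{m(\lambda)-1}(\zeta-\lambda)^{-n-1}\hat{D}^n\hat{\mathrm{pr}}_\lambda \;+\; (\text{holomorphic}).
\end{equation*}
Comparing with Kato's Laurent expansion \eqref{eq: resolvent_laurent_series} and invoking uniqueness of Laurent coefficients around an isolated singularity, I would read off $\mathrm{pr}_\lambda = \hat{\mathrm{pr}}_\lambda$, $D_\lambda^n = \hat{D}^n\hat{\mathrm{pr}}_\lambda$ for $1 \leq n \leq m(\lambda)-1$, and absence of principal part of order $\geq m(\lambda)+1$ in our expansion.

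Finally, from $\mathrm{pr}_\lambda = \hat{\mathrm{pr}}_\lambda$ it follows immediately that $M_\lambda = \mathrm{Im}(\mathrm{pr}_\lambda) = \mathrm{Im}(\hat{\mathrm{pr}}_\lambda) = V_\lambda$, and consequently $m_\lambda = \dim M_\lambda = \dim V_\lambda = m(\lambda)$, which is exactly the claim. The only point requiring a little care is the bookkeeping of signs in the Neumann inversion and the justification that the nilpotent parts truly agree with Kato's $D_\lambda$; but since both $\hat{D}\hat{\mathrm{pr}}_\lambda$ and Kato's $D_\lambda$ live in the commutant of $\mathrm{pr}_\lambda$, matching the coefficients of every negative power of $(\zeta-\lambda)$ determines them uniquely. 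I expect this to be the most delicate step, but it is essentially routine once the Neumann expansion is in hand.
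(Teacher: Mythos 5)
Your proof is correct, but you take a genuinely different route from the paper's. The paper argues abstractly: Kato already shows that $T = \sum_\lambda \lambda\,\mathrm{pr}_\lambda + D$ with $D$ nilpotent and commuting with the semisimple part; since the Jordan decomposition \eqref{eq: jordan_decomposition} is another semisimple-plus-commuting-nilpotent splitting of $T$, the uniqueness of the Jordan--Chevalley decomposition forces $\mathrm{pr}_\lambda = \hat{\mathrm{pr}}_\lambda$, and the remaining identities follow. You instead compute the Laurent expansion of the resolvent directly from the Jordan form via a terminating Neumann series on each generalized eigenspace, and compare coefficients against Kato's expansion \eqref{eq: resolvent_laurent_series}, reading $\mathrm{pr}_\lambda = \hat{\mathrm{pr}}_\lambda$ off the $(\zeta-\lambda)^{-1}$ coefficient. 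Your argument is more self-contained and only relies on the elementary uniqueness of Laurent coefficients (and as a bonus identifies Kato's $D_\lambda$ with $\hat{D}\hat{\mathrm{pr}}_\lambda$), whereas the paper's is shorter because it delegates to the heavier Jordan--Chevalley uniqueness theorem. One small note: your worry at the end about \emph{the nilpotent parts truly agreeing} is unnecessary for the lemma as stated; once $\mathrm{pr}_\lambda = \hat{\mathrm{pr}}_\lambda$ is in hand, $M_\lambda = V_\lambda$ and $m_\lambda = m(\lambda)$ are immediate, as you yourself note in your final paragraph.
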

And as a result we obtain immediately 
\begin{corollary}
\label{cor: selfadjoint case}
In particular,  if $T$ is diagonalizable, i.e. $T = \sum_{ \lambda \in \sigma(T)} \lambda \textup{pr}_\lambda$, then $\textup{pr}_\lambda$ is the projection onto the eigenspace $\mathrm{Ker}( T -\lambda)$ according to the decomposition $V = \bigoplus_{ \lambda \in \sigma(T)} \mathrm{Ker}( T -\lambda) $, and if $T $ is a self adjoint operator on a Hilbert space $H$, then $\textup{pr}_\lambda = \hat{\textup{pr}}_\lambda $ is the orthogonal projection onto the eigenspace $\mathrm{Ker}(T-\lambda)$ 
\end{corollary}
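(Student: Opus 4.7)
The plan is to derive the corollary as a direct specialization of Lemma \ref{lem: katos_notions}, invoking only standard facts about diagonalizable and selfadjoint operators in finite dimension.

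First I would handle the diagonalizable case. By Lemma \ref{lem: katos_notions}, Kato's eigenprojection $\text{pr}_\lambda$ equals the Jordan eigenprojection $\hat{\text{pr}}_\lambda$, which projects onto the generalized eigenspace $V_\lambda = \mathrm{Ker}(T-\lambda)^{m(\lambda)}$ along the direct sum decomposition $V = \bigoplus_{\mu \in \sigma(T)} V_\mu$. When $T$ is diagonalizable, the geometric multiplicity of each eigenvalue equals the algebraic one, so $V_\lambda = \mathrm{Ker}(T-\lambda)$ and the claim follows. One can also verify this more directly: starting from the given diagonal representation $T = \sum_{\lambda} \lambda \,\text{pr}_\lambda$ together with the relations \eqref{eq: properties_pr_lambda}, a short calculation shows $\mathrm{Im}(\text{pr}_\lambda) \subset \mathrm{Ker}(T - \lambda)$, and the reverse inclusion follows from $\sum_\lambda \text{pr}_\lambda = \mathrm{Id}_V$.

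Next I would handle the selfadjoint case. A selfadjoint operator on a (finite-dimensional) Hilbert space $H$ is orthogonally diagonalizable, so the first part of the corollary applies and $\text{pr}_\lambda = \hat{\text{pr}}_\lambda$ is the projection onto $\mathrm{Ker}(T-\lambda)$ along the decomposition $H = \bigoplus_{\mu \in \sigma(T)} \mathrm{Ker}(T-\mu)$. It then remains to argue that this projection is actually the orthogonal projection. For that I would verify that eigenspaces corresponding to distinct eigenvalues are mutually orthogonal: if $Tv = \lambda v$ and $Tw = \mu w$ with $\lambda \neq \mu$, then
\begin{equation*}
\lambda \langle v, w \rangle = \langle Tv, w \rangle = \langle v, Tw \rangle = \mu \langle v, w \rangle,
\end{equation*}
where selfadjointness and the reality of eigenvalues were used, forcing $\langle v, w \rangle = 0$. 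Hence the decomposition is an orthogonal direct sum, and the projection onto $\mathrm{Ker}(T-\lambda)$ along this decomposition coincides with the orthogonal projection onto $\mathrm{Ker}(T-\lambda)$.

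I do not expect any serious obstacle here, since both parts are essentially unpacking standard linear algebra once Lemma \ref{lem: katos_notions} has been established; the only mildly subtle point is remembering that diagonalizability is exactly the condition that makes generalized eigenspaces collapse to ordinary eigenspaces, and that selfadjointness upgrades the algebraic direct sum to an orthogonal one.
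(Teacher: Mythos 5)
Your proposal is correct and follows exactly the route the paper intends: the paper presents this corollary as an immediate consequence of Lemma \ref{lem: katos_notions} without spelling out a proof, and your argument simply fills in the standard linear-algebra details (generalized eigenspaces collapse under diagonalizability; orthogonality of eigenspaces for distinct eigenvalues of a selfadjoint operator upgrades the algebraic projection to an orthogonal one). Nothing to flag.
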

\begin{remark}
The 'usual' definitions and the Jordan decomposition that we refer to are found for example in \cite[Ch.\,14]{Kersten2} or \cite[Ch.\,4]{Fischer2020}. 
\end{remark}
\begin{proof}[Proof of Lemma \ref{lem: katos_notions}]
It can be shown that \cite[P.\,41]{Kato}
\begin{equation*}
    T = \sum_{\lambda \in \sigma(T)}\lambda \text{pr}_\lambda + D,
\end{equation*}
where $D = \sum_{\lambda \in \sigma(T)} D_\lambda $ is a nilpotent operator that commutes with $\sum_{\lambda \in \sigma(T)}\lambda \text{pr}_\lambda$. Furthermore, the representation $T = S  +  D$, where $S$ is diagonalizable and $D$ is nilpotent and commutes with D is unique \cite[P.\,41-42]{Kato} and the Jordan decomposition \eqref{eq: jordan_decomposition} is also such a representation of $T$ \cite[Ch.\,14.4]{Kersten2}. Thus, by the uniqueness of the representation $T = S + D$ it follows that $\hat{\textup{pr}}_\lambda = \text{pr}_\lambda$, consequently   $ m_\lambda = \dim \text{Im}(\text{pr}_\lambda) = m(\lambda)$ and $M_\lambda = \text{Im}(\text{pr}_\lambda) = V_\lambda$.
\end{proof}
In this work  we will only work with $S_\lambda = S_\lambda(\lambda)$ and we  simply refer to $S_\lambda$ as the reduced resolvent. We can give a more explicit representation of $S_\lambda$, we have 
\begin{lemma}[Alternative definition of the reduced resolvent]
\label{lem: app_reduced_resolvent_other_def}
The reduced resolvent $S_\lambda = S_\lambda(\lambda)$ is given by
 \begin{equation}
 \label{eq: app_reduced_resolvent}
     S_\lambda v = 
 \begin{cases}
   (T - \lambda)|_{ \mathrm{Im}(1- \textup{pr}_\lambda)}^{-1}v  \;; \; v \in \mathrm{Im}(1 - \textup{pr}_\lambda) \\
   0 \;;\; v \in \mathrm{Im}(\textup{pr}_\lambda)
 \end{cases}
 \end{equation}
\end{lemma}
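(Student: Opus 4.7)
The plan is to use the direct sum decomposition $V = \bigoplus_{\mu \in \sigma(T)} M_\mu$ with $M_\mu := \mathrm{Im}(\mathrm{pr}_\mu)$ (already established via the properties \eqref{eq: properties_pr_lambda} and Lemma \ref{lem: katos_notions}), together with the fact that each $M_\mu$ is $T$-invariant because $\mathrm{pr}_\mu$ commutes with $T$. On $M_\mu$, the operator $T$ acts as $\mu \,\mathrm{Id} + D_\mu$, where $D_\mu$ is nilpotent with $D_\mu^{m_\mu} = 0$; in particular $(T - \zeta)|_{M_\mu} = (\mu - \zeta)\,\mathrm{Id} + D_\mu$ is invertible for $\zeta \neq \mu$, and the Neumann series yields the closed form
\begin{equation*}
   R(\zeta)|_{M_\mu} = \left[(\mu - \zeta)\,\mathrm{Id} + D_\mu\right]^{-1} = -\sum_{n=0}^{m_\mu - 1} \frac{D_\mu^n}{(\zeta - \mu)^{n+1}},
\end{equation*}
where $D_\mu^0 := \mathrm{pr}_\mu$ (restricted to $M_\mu$, this is the identity).

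From this representation, writing $R(\zeta) = \sum_{\mu \in \sigma(T)} R(\zeta) \,\mathrm{pr}_\mu$ decomposes the resolvent into one holomorphic-at-$\lambda$ piece coming from the sum over $\mu \neq \lambda$, plus the singular piece coming from $\mu = \lambda$. Comparing with the Laurent expansion \eqref{eq: resolvent_laurent_series}, uniqueness of Laurent coefficients identifies the singular part as $-(\zeta - \lambda)^{-1} \mathrm{pr}_\lambda - \sum_{n=1}^{m_\lambda-1} (\zeta-\lambda)^{-n-1} D_\lambda^n$ (matching automatically) and the holomorphic part as
\begin{equation*}
    S_\lambda(\zeta) \;=\; \sum_{\mu \neq \lambda} R(\zeta) \, \mathrm{pr}_\mu.
\end{equation*}

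Specializing to $\zeta = \lambda$ and using that $\mathrm{Im}(1 - \mathrm{pr}_\lambda) = \bigoplus_{\mu \neq \lambda} M_\mu$ (a direct consequence of the idempotent relations in \eqref{eq: properties_pr_lambda}), I would then verify the two cases of the claim: for $v \in \mathrm{Im}(\mathrm{pr}_\lambda) = M_\lambda$, the right-hand side contains only $\mu \neq \lambda$ terms and $\mathrm{pr}_\mu v = 0$ for each such $\mu$, giving $S_\lambda v = 0$; for $v \in \mathrm{Im}(1-\mathrm{pr}_\lambda)$, on each component $M_\mu$ (with $\mu \neq \lambda$) the series in $n$ is precisely the Neumann inverse of $(T-\lambda)|_{M_\mu} = (\mu - \lambda)\,\mathrm{Id} + D_\mu$, which is invertible since $\mu \neq \lambda$, hence $S_\lambda v = ((T-\lambda)|_{\mathrm{Im}(1-\mathrm{pr}_\lambda)})^{-1} v$.

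The only subtlety — and the step I expect to demand the most care — is justifying cleanly that the restriction $(T-\lambda)|_{\mathrm{Im}(1-\mathrm{pr}_\lambda)}$ is genuinely invertible (rather than just formally so) and that its inverse is given block-diagonally by the Neumann sums computed on each $M_\mu$. This follows from the $T$-invariance of each $M_\mu$ combined with the fact that $\lambda \notin \{\mu : \mu \neq \lambda\}$, so $(\mu - \lambda)\,\mathrm{Id} + D_\mu$ is invertible on $M_\mu$ for every $\mu \neq \lambda$; the direct sum of these inverses then furnishes the required inverse on $\bigoplus_{\mu \neq \lambda} M_\mu = \mathrm{Im}(1-\mathrm{pr}_\lambda)$. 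No further heavy machinery is needed.
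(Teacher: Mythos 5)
Your argument is correct, but it takes a genuinely different route than the paper's. The paper's proof is short and algebraic: it cites from Kato the two identities $S_\lambda\,\textup{pr}_\lambda = \textup{pr}_\lambda S_\lambda = 0$ and $(T-\lambda)S_\lambda = S_\lambda(T-\lambda) = 1 - \textup{pr}_\lambda$, notes that $T$ commutes with $\textup{pr}_\lambda$ so both $T-\lambda$ and $S_\lambda$ preserve $\mathrm{Im}(1-\textup{pr}_\lambda)$, and then restricts the second identity to that subspace (where $1-\textup{pr}_\lambda$ acts as the identity) to read off $S_\lambda\vert_{\mathrm{Im}(1-\textup{pr}_\lambda)} = (T-\lambda)\vert_{\mathrm{Im}(1-\textup{pr}_\lambda)}^{-1}$; the first identity gives the vanishing on $\mathrm{Im}(\textup{pr}_\lambda)$. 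You instead reconstruct $S_\lambda$ from scratch: you decompose the resolvent block-diagonally along $V = \bigoplus_\mu M_\mu$, compute the Neumann series $R(\zeta)\vert_{M_\mu} = -\sum_{n=0}^{m_\mu-1} D_\mu^n/(\zeta-\mu)^{n+1}$ on each invariant block, use uniqueness of Laurent coefficients at $\zeta=\lambda$ to identify the holomorphic part $S_\lambda(\zeta)$ with $\sum_{\mu\neq\lambda}R(\zeta)\textup{pr}_\mu$, and evaluate at $\zeta=\lambda$. The paper's proof is more economical given the cited facts; yours is more self-contained and explicit, and as a byproduct gives the useful closed form $S_\lambda = \sum_{\mu\neq\lambda}\bigl((\mu-\lambda)\,\textup{Id} + D_\mu\bigr)^{-1}\textup{pr}_\mu$, which specializes to the expression in Remark \ref{rem: reduced_resolvent} when $T$ is diagonalizable. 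The one place where you flag a subtlety — that $(T-\lambda)$ is genuinely invertible on $\mathrm{Im}(1-\textup{pr}_\lambda)$ — you resolve correctly: since $(\mu-\lambda)\,\textup{Id} + D_\mu$ is invertible on $M_\mu$ for every $\mu\neq\lambda$ (a nonzero scalar plus a nilpotent), the direct sum of these inverses provides the required two-sided inverse on $\bigoplus_{\mu\neq\lambda}M_\mu = \mathrm{Im}(1-\textup{pr}_\lambda)$.
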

\begin{proof}
The reduced resolvent has the following properties  \cite[P.\,40]{Kato}
\begin{align}
    &S_\lambda \text{pr}_\lambda = \text{pr}_\lambda S_\lambda = 0  \label{eq: reduced_res_property_1}\\
    & (T - \lambda )S_\lambda = S_\lambda(T-\lambda) = 1 - \text{pr}_\lambda \label{eq: reduced_res_property_2}.
\end{align}
Furthermore, note that $T(1- \text{pr}_\lambda) =(1- \text{pr}_\lambda) T$ (follows from \eqref{eq: properties_pr_lambda}), thus 
\begin{equation}
\label{eq: T_preserves_Im(1-pr_lambda)}
    \text{Im}((T- \lambda)|_{\text{Im}(1-\text{pr}_\lambda)}) \subset \text{Im}(1-\text{pr}_\lambda)
\end{equation}
Moreover, \eqref{eq: reduced_res_property_1} implies
\begin{equation}
\label{eq: S_preserves_Im}
    \text{Im}(S_\lambda|_{\text{Im}(1-\text{pr}_\lambda)}) \subset \text{Im}(1-\text{pr}_\lambda)
\end{equation}
Combining \eqref{eq: T_preserves_Im(1-pr_lambda)} and \eqref{eq: S_preserves_Im} with the restriction of \eqref{eq: reduced_res_property_2} to $\text{Im}(1 - \text{pr}_\lambda)$ yields 
\begin{equation}
    S_\lambda(\lambda)|_{\text{Im}(1-\text{pr}_\lambda)} = (T - \lambda)|_{ \mathrm{Im}(1- \text{pr}_\lambda)}^{-1}.
\end{equation}
Finally, \eqref{eq: reduced_res_property_1} implies that $S_\lambda |_{\text{Im}(\text{pr}_\lambda)} = 0 $.
\end{proof}

\begin{remark}(Real reduced resolvent)
\label{rem: app_red_resolvent_real_case}
If $T = \sum_{\lambda \in \sigma(T)} \lambda \text{pr}_\lambda$ is a diagonalizable operator on a real vector space (with real eigenvalues). And $S_\lambda  $ denotes its reduced resolvent  (with respect to $\lambda$) in the sense of Definition \ref{def: reduced_resolvent}, then the reduced resolvent of $T^\F{C}$ with respect to $\lambda$ (in the sense of Lemma \ref{lem: app_reduced_resolvent_other_def}) is just the complexification $S_\lambda^\F{C}$ of $S_\lambda$. Indeed, this is easily checked by using Lemma \ref{lem: app_complexification}$(c)$.
\end{remark}
After we have now given and clarified the basic definitions and notions that are used in \cite{Kato} to analyze the perturbed operator $T(\chi) = T + \chi T',$ we present two results that will be needed for our later purposes.
 \begin{lemma}[Continuous Dependence of Eigenvalues]
 \label{lem: app_eigenvalues_continuous}
 Let $T(r)$ be a continuous operator-valued function \lb on $V$\rb \space  defined on some interval $I \subset \F{R}$. Furthermore, let $N = \dim V$. Then, there are continuous functions $\lambda_k : I \rightarrow \F{C}$, $k = 1,...,n$ such that the $N$-tuple 
 \begin{equation*}
    (\lambda_1(r), ... , \lambda_N(r)).
 \end{equation*}
 represents the eigenvalues of $T(r)$, where the eigenvalues are repeated according to their algebraic multiplicity.
\end{lemma}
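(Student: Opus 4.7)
The plan is to reduce the claim to the classical fact that the roots of a monic polynomial depend continuously on its coefficients (as an unordered tuple with multiplicities), and then to upgrade this unordered continuity to a continuous selection by exploiting that $I$ is one-dimensional.

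First, I would fix a basis of $V$ and represent $T(r)$ by its matrix with respect to this basis; the entries are continuous functions of $r$ by assumption. The characteristic polynomial $p_r(z) = \det(T(r) - z \, \mathrm{Id}_V)$ is then a monic polynomial of degree $N$ whose coefficients are polynomials in the matrix entries, hence continuous functions $r \mapsto c_k(r)$, $k = 0,\ldots,N-1$. By Lemma~\ref{lem: katos_notions}, the eigenvalues of $T(r)$ repeated according to algebraic multiplicity coincide with the roots of $p_r$ counted with multiplicity, so it suffices to construct continuous functions $\lambda_1(r),\ldots,\lambda_N(r)$ giving these roots.

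Second, I would invoke the classical continuity of roots as an unordered multiset: for each $r_0 \in I$, if we enclose each distinct root $\mu_j$ of $p_{r_0}$ in a small disk $B_{\varepsilon_j}(\mu_j) \subset \mathbb{C}$, chosen so that the disks are pairwise disjoint and contain no other roots of $p_{r_0}$, then by Rouch\'{e}'s theorem (applied to $p_r - p_{r_0}$) there is a neighborhood $U_{r_0} \subset I$ of $r_0$ such that for all $r \in U_{r_0}$ the number of roots of $p_r$ inside each $B_{\varepsilon_j}(\mu_j)$ equals the algebraic multiplicity $m_{\mu_j}$. Within each such disk, the sum $\sum_{i} \lambda_i(r)$ of the roots (taken with multiplicity) equals a contour integral of $z\, p_r'(z)/p_r(z)$ around the disk boundary, which is continuous in $r$; and more generally the elementary symmetric functions of the roots inside each disk are continuous, so the unordered multiset of roots inside each disk depends continuously on $r$ in the standard Hausdorff sense.

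Third, I would obtain the continuous \emph{selection} as follows. On each $U_{r_0}$, label the $m_{\mu_j}$ roots lying inside $B_{\varepsilon_j}(\mu_j)$ (with multiplicity) in any way by indices drawn from a fixed slot-partition $\{1,\ldots,N\} = \bigsqcup_j J_j$ with $|J_j| = m_{\mu_j}$; the resulting labels $r \mapsto (\lambda_k(r))_{k=1}^N$ are continuous on $U_{r_0}$ because when a disk $B_{\varepsilon_j}(\mu_j)$ contains a simple root slot we may take the root itself (continuous by the inverse-function argument on a locally monic factor), and when it contains a multiple-root cluster the individual labels may fail to be smooth but are still continuous by the classical continuity of roots of a polynomial of degree $m_{\mu_j}$ in its coefficients (Kato \cite[Ch.~II, \S1.8, Thm.~5.2]{Kato}). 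The main obstacle is to patch these local labelings consistently. Here I would use that $I$ is an interval: covering $I$ by such neighborhoods $U_{r_0}$ and using the Lebesgue number of a cover of any compact subinterval, one can inductively extend the labeling from left to right, permuting slot-indices at each overlap so that the restrictions match, and obtain continuous global functions $\lambda_1,\ldots,\lambda_N : I \to \mathbb{C}$. The hard part is precisely this global consistency — in higher-dimensional parameter spaces monodromy around a coalescence point would obstruct such a selection, but the one-dimensionality of $I$ removes this obstruction and makes the inductive patching unambiguous.
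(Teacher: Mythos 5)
The paper does not prove this lemma from scratch: its entire proof is the one-line citation to Kato, Theorems 5.1 and 5.2 of Ch.~II, \S5 of \cite{Kato}, and Theorem~5.2 there \emph{is} the continuous-selection statement. Your proposal tries to give a self-contained argument, and the outer scaffolding is sound: the reduction to the characteristic polynomial via Lemma~\ref{lem: katos_notions}, the Rouch\'{e}-based proof of unordered continuity of the roots, and the idea of exploiting the interval structure of $I$ to patch local selections. One repair is needed in the patching step as you state it: two continuous local selections of the same unordered root multiset need not be related by a single \emph{fixed} permutation on a whole overlap (for instance $(r,-r)$ and $(\abs{r},-\abs{r})$ are both continuous selections of the multiset $\{r,-r\}$ near $0$, but no one permutation identifies them on a neighborhood of $0$). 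The fix is easy on an interval: concatenate local pieces and permute the right-hand piece so it matches the left-hand one at the single gluing point; continuity there is then automatic and no matching on the full overlap is required.

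The genuine gap is the local construction inside a cluster. When the disk $B_{\varepsilon_j}(\mu_j)$ contains $m_j>1$ roots, you still need a continuous \emph{ordered} selection of those $m_j$ roots on $U_{r_0}$, and you justify this by appealing to ``the classical continuity of roots of a polynomial of degree $m_j$ in its coefficients,'' citing Kato's Theorem~5.2. But the classical Rouch\'{e}/Hurwitz fact only gives continuity of the \emph{unordered} multiset of roots; promoting that to a continuous ordered labelling is precisely the content of Theorem~5.2 — i.e.\ the statement you set out to prove — so the invocation is circular. Nor does this reduce by induction on degree: if $T(r_0)$ has a single eigenvalue of full algebraic multiplicity $N$, the cluster occupies all $N$ slots and the degree does not drop, so on the smaller interval $U_{r_0}$ you face the original degree-$N$ problem again with nothing precluding further total coalescences there. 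Your remark that one-dimensionality of $I$ removes the monodromy obstruction is exactly the right intuition for why the theorem holds, but it is an observation, not an argument. As it stands, your proof relies on Kato's Theorem~5.2 at the crucial step no less than the paper's one-line citation does; the surrounding machinery, while correct, does not make the argument self-contained.
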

 \begin{proof}
 Follows by  a combination of \cite[Ch.\,2.5.1,Thrm.\,5.1]{Kato} and  \cite[Ch.\,2.5.2,Thrm.\,5.2]{Kato}.
 \end{proof}
 \begin{remark}
 As $V$ is finite dimensional the notion of continuity of $T(r)$ does not depend on the underlying operator norm.
 \end{remark}
 \begin{remark}(Real case, Lemma \ref{lem: eigenvalues_continuous})
\label{rem: app_eigenvalues_continuous}
Consider the setting of Lemma \ref{lem: eigenvalues_continuous}. As $r \mapsto T(r)$ is continuous $r \mapsto T(r)^{\F{C}}$ is also continuous. Furthermore as $T(r)$ and $T(r)^\F{C}$ have the same spectral properties (Lemma \ref{lem: app_complexification}$(c)$) the statement of Lemma \ref{lem: eigenvalues_continuous} follows directly.
\end{remark}

\begin{theorem}
\label{th: app_perturbation_simple_eigenvalue}
Let $T : H \rightarrow H$ be a self adjoint operator. Assume that $0$ is a simple eigenvalue of $T$. Let $\lambda_1 := \min_{\lambda \in \sigma(T) \backslash \{0\}} \abs{\lambda}$ be the spectral gap, $\textup{pr}$ the orthogonal projection onto the eigenspace with eigenvalue $0$ and $S$ the corresponding reduced resolvent. Furthermore, let $ T' : H \rightarrow H$ be some operator and define $ T(\chi) := T + \chi T'$ for $\chi \in \F{C}$. Then for all $\abs{\chi} < \frac{\lambda_1}{2 \norm{T}}$ the ball $B_{\frac{\lambda_1}{2}}(0)$ contains exactly one simple eigenvalue $\mu_0(\chi)$ of $T(\chi)$ and we have
\begin{equation}
    \label{eq: theorem_perturbation_mu_0}
    \mu_0(\chi) = \sum_{n = 1}^\infty \mu_0^{(n)}\chi^n,
\end{equation}
with 
\begin{equation}
\label{eq: theorem_perturbation_mu_0_2}
 \mu_0^{(n)} =  \frac{(-1)^n}{n} \sum\limits_{\substack{k_1,..k_n \in \F{Z}_+ \\ k_1 + ... k_n = n-1}} \Tr( T'S^{(k_1)} ... T'S^{(k_n)}),
\end{equation}
where
\begin{equation}
\label{eq: theorem_perturbation_S^(n)}
    S^{(0)} = - \textup{pr} \quad \text{and} \quad S^{(k)} = S^k
\end{equation}
for $ k \geq 1$.
\end{theorem}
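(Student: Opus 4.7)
\medskip

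The plan is to proceed along the contour-integral / Neumann-series strategy that is standard in Kato's perturbation theory, based on the two building blocks already recalled in the excerpt: the eigenprojection formula \eqref{eq: eigenprojection_integral_definition} and the Laurent expansion \eqref{eq: resolvent_laurent_series}. Fix the contour $\Gamma := \partial B_{\lambda_1/2}(0)$, i.e.\ the positively oriented circle of radius $\lambda_1/2$ around $0$. Since $T$ is self-adjoint with $\sigma(T) \subset \F{R}$ and $\lambda_1 = \min_{\lambda \in \sigma(T)\setminus\{0\}}|\lambda|$, the only point of $\sigma(T)$ inside $\Gamma$ is the simple eigenvalue $0$, and on $\Gamma$ itself we have the uniform bound $\norm{R(\zeta)} \leq 2/\lambda_1$.

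First I would justify that $R(\zeta,\chi) := (T(\chi)-\zeta)^{-1}$ is well defined for every $\zeta \in \Gamma$ and every sufficiently small $|\chi|$, and expand it as a Neumann series. Writing $T(\chi)-\zeta = (T-\zeta)(I + \chi R(\zeta) T')$ and using $\norm{\chi R(\zeta) T'} \leq |\chi|\norm{T'}\cdot 2/\lambda_1 < 1$ (this is the step that pins down the radius of convergence; note that the statement's denominator $\norm{T}$ is most naturally read as $\norm{T'}$, matching the application in Lemma \ref{lem: preparation_application_perturbation}), I get
\begin{equation*}
R(\zeta,\chi) \;=\; \sum_{n=0}^{\infty}(-\chi)^n R(\zeta)\bigl[T'R(\zeta)\bigr]^n,
\end{equation*}
uniformly convergent in $\zeta\in\Gamma$. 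Defining the perturbed spectral projection $P(\chi) := -\frac{1}{2\pi i}\oint_\Gamma R(\zeta,\chi)\,d\zeta$, the uniform convergence makes $\chi \mapsto P(\chi)$ an analytic operator-valued function with $P(0)=\text{pr}$. Since $P(\chi)$ is a projection and its rank equals $\Tr P(\chi)$, which is an integer-valued continuous function of $\chi$, it equals $\Tr P(0) = 1$ throughout. Hence $\Gamma$ encloses exactly one simple eigenvalue $\mu_0(\chi)$ of $T(\chi)$.

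Next I would extract the formula for $\mu_0(\chi)$ via Jacobi's trace formula. Since $A(\chi) := \zeta - T(\chi)$ has $A'(\chi)=-T'$, one obtains $\frac{d}{d\chi}\log\det A(\chi) = \Tr(R(\zeta,\chi) T')$. On the other hand $\log\det(\zeta - T(\chi)) = \sum_i m_i\log(\zeta - \mu_i(\chi))$ yields, after integrating around $\Gamma$ and applying the residue theorem,
\begin{equation*}
\mu_0'(\chi) \;=\; -\,\frac{1}{2\pi i}\oint_\Gamma \Tr\bigl(R(\zeta,\chi)T'\bigr)\,d\zeta,
\end{equation*}
since only $\mu_0(\chi)$ is enclosed and it is simple. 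Plugging in the Neumann expansion and using the cyclic identity $\Tr(R[T'R]^n T') = \Tr\bigl((T'R)^{n+1}\bigr)$ reduces everything to contour integrals of $\Tr((T'R(\zeta))^m)$.

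Finally, I would evaluate these integrals by substituting the Laurent expansion at $\zeta=0$, which for the simple, non-nilpotent eigenvalue $0$ reads $R(\zeta) = \sum_{k\geq 0}\zeta^{k-1}S^{(k)}$ with $S^{(0)}=-\text{pr}$ and $S^{(k)}=S^k$ for $k\geq 1$ (this Laurent form with no nilpotent contribution is exactly where simplicity of the eigenvalue is used). Expanding
\begin{equation*}
\Tr\bigl((T'R(\zeta))^m\bigr) \;=\; \sum_{k_1,\dots,k_m\geq 0}\zeta^{\,k_1+\cdots+k_m-m}\,\Tr\bigl(T'S^{(k_1)}\cdots T'S^{(k_m)}\bigr),
\end{equation*}
the residue at $\zeta=0$ selects $k_1+\cdots+k_m = m-1$, so $\frac{1}{2\pi i}\oint_\Gamma\Tr((T'R(\zeta))^m)d\zeta = \sum_{k_1+\cdots+k_m=m-1}\Tr(T'S^{(k_1)}\cdots T'S^{(k_m)})$. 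Integrating $\mu_0'(\chi)$ term-by-term from $0$ (with $\mu_0(0)=0$) produces
\begin{equation*}
\mu_0^{(n)} \;=\; \frac{(-1)^n}{n}\!\!\sum_{\substack{k_1,\dots,k_n\in\F{Z}_+\\k_1+\cdots+k_n=n-1}}\!\!\Tr\bigl(T'S^{(k_1)}\cdots T'S^{(k_n)}\bigr),
\end{equation*}
which is \eqref{eq: theorem_perturbation_mu_0_2}. The main technical obstacle is the rank-constancy step for $P(\chi)$ and the uniform control of the Neumann series on $\Gamma$; the bookkeeping in the residue calculation is routine but must be done carefully to keep track of the combinatorial indices $k_1,\dots,k_n$ and of the sign $(-\chi)^n/n$ coming from integrating $\mu_0'$.
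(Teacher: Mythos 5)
Your proof is correct and sits in the same contour-integral framework (Neumann resolvent series, contour-integral projection, Laurent expansion, residue calculus) as the paper, which largely defers the computations to Kato. Your trace-based rank-constancy argument --- $\mathrm{rank}\,P(\chi) = \Tr P(\chi)$ is a continuous integer-valued function of $\chi$, hence constant --- is a clean self-contained substitute for the paper's citation of Kato's Lemma I.4.10. The genuinely different step is how you extract the coefficients $\mu_0^{(n)}$: the paper computes $\mu_0(\chi) = \Tr\!\bigl(T(\chi)\mathrm{pr}(\chi)\bigr)$ and cites Kato [Ch.~2.2] for the residue bookkeeping, whereas you apply Jacobi's determinant formula to obtain $\mu_0'(\chi) = -\frac{1}{2\pi i}\oint_\Gamma \Tr\bigl(R(\zeta,\chi)T'\bigr)\,d\zeta$, expand via the Neumann series and the Laurent expansion of $R(\zeta)$, and integrate back in $\chi$. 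Both routes land on the same residue computation; yours has the advantage of being fully explicit rather than citing Kato's formulas (2.21), (2.31).

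Two small remarks. First, the passage from $\log\det(\zeta - T(\chi)) = \sum_i m_i\log(\zeta-\mu_i(\chi))$ to the contour identity should be phrased so that you differentiate in $\chi$ \emph{before} integrating in $\zeta$: the $\chi$-derivative gives the single-valued integrand $-\sum_i m_i\mu_i'(\chi)/(\zeta-\mu_i(\chi))$, whereas as written one appears to integrate the multi-valued $\log$ itself around $\Gamma$. Second, your reading of the convergence radius is right: $\norm{T}$ in the statement is a typo for $\norm{T'}$, as the paper's own proof and the application in Lemma \ref{lem: preparation_application_perturbation} (where the perturbing operator is $M_f$ with $\norm{M_f}_2 = \norm{f}_\infty$) both use $\lambda_1/(2\norm{T'})$.
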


As the result is specifically formulated for our later use and \cite{Kato} considers a more general setting we give a short argument for the above theorem by  combining results of \cite[Ch.\,2.1-2.3]{Kato}.
\begin{proof}
Kato treats the more general case of a perturbed operator of the form $T(\chi) = \sum_{n=0}^\infty T^{(n)} \chi^n$ ,  in our case $T^{(0)} = T, T^{(1)} = T'$ and $T^{(k)} = 0$ for $k \geq 2$. In the following let  $\norm{\cdot} = \norm{\cdot}_2 $ denote the operator norm induced by the norm on the Hilbert space $H$. The formula for $\lambda_0(\chi)$ is based on the following formula for the resolvent $R(\zeta, \chi) = (T(\chi)-\zeta)^{-1}$ of $T(\chi)$. Let $R(\zeta)$ denote the resolvent of $T$, and $\zeta \in \F{C} \backslash \sigma(T)$ such that $\norm{\chi T' R(\zeta)} < 1$ (this condition is found in \cite[P.\,88]{Kato}). Then, $R(\zeta, \chi) $ exists and is given by \cite[P.~66-67]{Kato}
\begin{equation}
\label{eq: perturbation_resolvent}
    R(\zeta, \chi) = R(\zeta) \sum_{n=0}^\infty (- \chi T' R(\zeta))^n.
\end{equation}
Furthermore,   using an orthonormal basis that diagonalizes $T$, it is easy to see that $\norm{R(\zeta)} =  \norm{R(\zeta)}_2 = \max _{ \lambda \in \sigma(T)} \abs{\lambda - \zeta}^{-1}$. Thus, invoking submultiplicativity of $\norm{\cdot}$  we get for any $ \abs{\zeta} = \frac{\lambda_1}{2}$
\begin{equation*}
    \norm{T' R(\zeta)}\leq \norm{T'} \frac{2}{\lambda_1}
\end{equation*}
Consequently  for all $\abs{\chi}  < \frac{\lambda_1}{2 \norm{T'}}$ and $\abs{\zeta} = \frac{\lambda_1}{2}$ we have $\norm{\chi T' R(\zeta)} < 1$,  and $R(\chi,\zeta)$ is given by \eqref{eq: perturbation_resolvent}. Let $\Gamma$ be the circle with radius $\frac{\lambda_1}{2}$ around 0 , then for $\abs{\chi} < \frac{\lambda_1}{2\norm{T}}$ the operator
\begin{equation} 
    \label{eq: perturbation_projection}
    \text{pr}(\chi) := \frac{-1}{2 \pi i} \int_\Gamma R(\zeta, \chi) d\zeta 
\end{equation}
depends holomorphically on $\chi$ as the series \eqref{eq: perturbation_resolvent} converges uniformly on $\Gamma$. So by \cite[Ch.1,Lemma 4.10]{Kato}  we have $\mathrm{dim}\mathrm{Im}(\text{pr}(0)) = \mathrm{dim}\mathrm{Im}(\text{pr}(\chi)) = 1 $ for all $\abs{\chi} < \frac{\lambda_1}{2 \norm{T'}}$. Furthermore, $\text{pr}(\chi)$ is the sum of all eigenprojections  for all eigenvalues of $T(\chi)$ lying in $B_{\frac{\lambda_1}{2}}(0)$ (c.f. \cite[P.67]{Kato}), so $1 = \dim \text{pr}(\chi)$  implies that the ball $B_\frac{\lambda_1}{2}(0)$ contains exactly one simple eigenvalue $\lambda_0(\chi)$ of $T(\chi)$ for all  $\abs{\chi} < \frac{\lambda_1}{2\norm{T}}$. Finally, Eqs.\eqref{eq: theorem_perturbation_mu_0}, \eqref{eq: theorem_perturbation_mu_0_2} and \eqref{eq: theorem_perturbation_S^(n)} can be proved by noticing that hat $\lambda_0(\chi) = \Tr(T(\chi)\text{pr}(\chi))$, and using equations \eqref{eq: perturbation_resolvent}, \eqref{eq: perturbation_projection} to calculate $\Tr(T(\chi)\text{pr}(\chi))$. For details of this computation see \cite[Ch.\,2.2]{Kato}. Here, formula \eqref{eq: theorem_perturbation_mu_0} is stated in \cite[Ch.\,2.2.2,P.\,78,Eq.\,(2.21)]{Kato}, formula \eqref{eq: theorem_perturbation_mu_0_2} is obtained by using \cite[Ch.\,2.2.2,P.\,79,Eq.\,(2.31)]{Kato} and $T^{(k)} = 0 $ for $k \geq 2$ (we only treat a linear perturbation $T(\chi) = T + \chi T'$) and formula \eqref{eq: theorem_perturbation_S^(n)} is stated in \cite[Ch.\,2.2.1,P.\,76,Eq.\,(2.10)]{Kato}, where we also use $\text{pr}(0) = \text{pr}$ (Corollary \ref{cor: selfadjoint case}).

\end{proof}

\begin{remark}(Transfer to real operators, Theorem \ref{th: perturbation_simple_eigenvalue})
\label{rem: app_perturbation_simple_eigenvalue}
Consider the setting and notation of Theorem \ref{th: perturbation_simple_eigenvalue}. Transfer the result of Theorem \ref{th: app_perturbation_simple_eigenvalue} to the real case by using the complexification $T^\F{C}(r) = T^\F{C} + r T'^{\F{C}}$: Because of the properties of the complexification (Lemma \ref{lem: app_complexification}$(c)$), $T^\F{C}$ satisfies the requirements of  Theorem \ref{th: app_perturbation_simple_eigenvalue}. Furthermore, because the spectral properties of $T$ and $T^\F{C}$ are the same (Lemma \ref{lem: app_complexification}$(c)$), it does not matter whether we consider the eigenvalues of $T(r)^\F{C}$ or $T(r)$. Thus, Theorem \ref{th: app_perturbation_simple_eigenvalue} implies for all $\abs{r} < \frac{\lambda_1}{2 \norm{T'}}$ the ball  $B_{\frac{\lambda_1}{2}}(0) \subset \F{C}$  contains exactly one simple eigenvalue $\mu_0(r)$ of $T(r)$ and \eqref{eq: mu_0(r)} with  
\begin{equation}
    \mu_0^{(n)} =  \frac{(-1)^n}{n} \sum\limits_{\substack{k_1,..k_n \in \F{Z}_+ \\ k_1 + ... k_n = n-1}} \Tr( (T')^\F{C}\tilde{S}^{(k_1)} ... (T')^\F{C}\tilde{S}^{(k_n)}),
\end{equation}
where
\begin{equation}
    \tilde{S}^{(0)} = - \widetilde{\text{pr}} \quad \text{and} \quad \tilde{S}^{(k)} = \tilde{S}^k
\end{equation}
for $k \geq 1$.
Here $\widetilde{\textup{pr}}$ denotes the (complex) orthogonal projection onto $\text{Ker}( T^\F{C})$ and $\tilde{S}$ denotes the (complex) reduced resolvent  (see Lemma \ref{lem: app_reduced_resolvent_other_def}) of $T^\F{C}$ with respect to the eigenvalue $0$. But as the complexification respects orthogonal projections and diagonalizations (Lemma \ref{lem: app_complexification}$(c)$), and also reduced resolvents (Remark \ref{rem: app_red_resolvent_real_case}) we have
\begin{equation}
     \widetilde{\text{pr}} = \text{pr}^\F{C} \quad \text{and} \quad \tilde{S} = S^{\F{C}}.
\end{equation}
Consequently, $\tilde{S}^{(k)} = (S^{(k)})^\F{C}$ and
.\begin{align}
\begin{split}
    \mu_0^{(n)} &=  \frac{(-1)^n}{n} \sum\limits_{\substack{k_1,..k_n \in \F{Z}_+ \\ k_1 + ... k_n = n-1}} \Tr( (T')^\F{C}(S^{(k_1)})^{\F{C}} ... (T')^\F{C}(S^{(k_n)})^{\F{C}}) \\
    &= \frac{(-1)^n}{n} \sum\limits_{\substack{k_1,..k_n \in \F{Z}_+ \\ k_1 + ... k_n = n-1}} \Tr(T'S^{(k_1)} ... T'^{(k_n)}),
\end{split}
\end{align}
where we used Lemma \ref{lem: app_complexification}$(c)$. Thus, Theorem \ref{th: perturbation_simple_eigenvalue} is a consequence of the corresponding complex version Theorem \ref{th: app_perturbation_simple_eigenvalue}.

\end{remark}

\subsection{Remark \ref{rem: th_main_concentration_inequality}}
\begin{lemma}[Properties of $\lambda_0^*$]
\label{lem: appendix_properties_lambda_0^*}
Consider the setting of Theorem \ref{th: main_conc_inequality}. The Fenchel conjugate \\ $\lambda_0^*(u) = \sup_{ r \in \F{R}} (ru - \lambda_0(r))$ has the following properties :
\begin{enumerate}[$\lb a\rb$]
        \item $\lambda_0^* \geq 0$
        \item $\{ \lambda_0^* < \infty \} = [\min_{x \in E} f(x), \max_{x \in E}f(x)]$
        \item $\lambda_0^*$ is  convex, bounded and continuous on $\{ \lambda_0^* < \infty \}$, and nondecreasing on $[0,\infty)$.
        
    \end{enumerate}
\end{lemma}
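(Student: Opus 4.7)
The plan is to use the dual representation \eqref{eq: fenchel_dual_lambda_0(r)}, $\lambda_0^*(u) = I(u) = \inf\{-\langle Lg,g\rangle : \|g\|_2 = 1, \langle M_fg,g\rangle = u\}$, which was established inside the proof of Theorem \ref{th: main_conc_inequality}. All three properties then fall out of elementary convex analysis plus facts about the constant function $\mathbf{1}$.

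For part (a), I would simply plug $r = 0$ into the defining supremum to obtain $\lambda_0^*(u) \geq -\lambda_0(0)$, and then note that $\lambda_0(0) = 0$: by Lemma \ref{lem: properties_infinitesimal_generator}(a),(d), $\mathbf{1}$ is an eigenvector of $\frac{L+L^*}{2}$ with eigenvalue $0$, while all other eigenvalues are $\leq 0$, so $0$ is the largest eigenvalue of $\Tilde{L}(0)$. For part (b), recall from the proof of Theorem \ref{th: main_conc_inequality} (see \eqref{eq: I_smaller_infty}) that $\{I < \infty\} = \mathrm{Im}(D)$ where $D(g) = \langle M_fg,g\rangle$ on the unit sphere $S_1$ of $L^2(\pi)$. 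Writing $\langle M_fg,g\rangle = \sum_{x \in E}f(x)g(x)^2\pi_x$ exhibits $D(g)$ as a convex combination of the values $f(x)$ (since $\sum_xg(x)^2\pi_x = \|g\|_2^2 = 1$), so $D(g) \in [\min_x f(x),\max_x f(x)]$; conversely, taking $g(x) = \delta_{xx_*}/\sqrt{\pi_{x_*}}$ with $x_*$ attaining the minimum (resp.\ maximum) of $f$ yields $\|g\|_2 = 1$ and $D(g) = f(x_*)$, which together with the connectedness of $D(S_1)$ already used in the proof of Theorem \ref{th: main_conc_inequality} gives $\mathrm{Im}(D) = [\min_x f(x),\max_x f(x)]$.

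For part (c), convexity of $\lambda_0^*$ is immediate from Lemma \ref{lem: properties_Legendre_transform}(a) (it is the supremum of the affine functions $r \mapsto ru - \lambda_0(r)$). Boundedness on $\{\lambda_0^* < \infty\} = [\min_xf(x),\max_xf(x)]$ follows by writing $\lambda_0^*(u) = I(u) \leq \sup_{\|g\|_2 = 1}(-\langle Lg,g\rangle) < \infty$, since $g \mapsto -\langle Lg,g\rangle$ is continuous on the compact unit sphere (finite-dimensionality of $L^2(\pi)$). For continuity on the closed interval $[\min f,\max f]$, I would combine convexity with lower semicontinuity: $\lambda_0^*$ is l.s.c.\ as a supremum of continuous functions, convex functions are automatically continuous on the interior of their domain, and at the two endpoints the only discontinuity a convex function can have is an upward jump, which is ruled out by l.s.c.

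Finally, monotonicity on $[0,\infty)$: I would observe that $\lambda_0^*(0) = 0$ by taking $g = \mathbf{1} \in S_1$ in $I(0)$, which satisfies $\langle M_f\mathbf{1},\mathbf{1}\rangle = \pi(f) = 0$ and $-\langle L\mathbf{1},\mathbf{1}\rangle = 0$ by Lemma \ref{lem: properties_infinitesimal_generator}(a), combined with $\lambda_0^* \geq 0$ from (a). Thus the convex function $\lambda_0^*$ attains its global minimum at $u = 0$, and a convex function on $\mathbb{R}$ with a minimum at $u = 0$ is nondecreasing on $[0,\infty)$. No step here is a serious obstacle; the only mildly delicate point is continuity at the two endpoints of $\{\lambda_0^* < \infty\}$, which needs the l.s.c.\ of $\lambda_0^*$ to exclude the possibility of an upward jump permitted by convexity alone.
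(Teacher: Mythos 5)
Your proposal is correct, and parts (a) and (b) match the paper's proof essentially verbatim (same use of $\lambda_0(0)=0$, same reduction to $\mathrm{Im}(D)=[\min f,\max f]$ via convex combinations and scaled Dirac functions). The differences are in part (c). For continuity you invoke the general facts that a finite convex function is continuous on the interior of its domain and that l.s.c.\ rules out the downward boundary limit that convexity would otherwise permit; the paper instead runs a hands-on argument, extracting a monotone subsequence of $(u_n)$ and writing each term as a convex combination of $u$ and $u_{n_1}$ to bound $\limsup \lambda_0^*(u_n)$ by $\lambda_0^*(u)$. Both are sound; yours is shorter and leans on standard convex analysis, the paper's is self-contained. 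For monotonicity on $[0,\infty)$ the two routes genuinely diverge: you show $\lambda_0^*(0)=0$ by taking $g=\mathbf{1}$ in $I(0)$ and then use that a convex function with a global minimum at $0$ is nondecreasing to the right; the paper instead uses the identity $\lambda_0^*(u)=\sup_{r\ge 0}(ru-\lambda_0(r))$ from Theorem \ref{th: main_conc_inequality} and the monotonicity of $r\mapsto ru$ in $u$ for $r\ge 0$. Your argument buys a direct link between the sign structure of $\lambda_0^*$ and the normalization $\pi(f)=0$; the paper's is perhaps more in the spirit of the Cram\'er--Chernoff framework where the one-sided Fenchel conjugate is the natural object. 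Both are complete; there is no gap in your proposal.
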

\begin{proof}
Part $(a)$ follows directly from  $\lambda_0(0) = 0$ (Lemma \ref{lem: properties_infinitesimal_generator}) and $\lambda_0^*(u) \geq 0 \cdot u - \lambda_0(0) = 0$. For part $(b)$ recall that by Theorem \ref{th: main_conc_inequality} we have \begin{equation}
    \lambda_0^*(u) = I(u) =  \inf \{ - \langle Lg, g \rangle \; | \, \norm{g}_2 = 1, \langle M_fg, g \rangle = u \}.
\end{equation}
Furthermore, recall the proof of Theorem \ref{th: main_conc_inequality}: Let $D : S_1 \rightarrow \F{R}$ be defined by
\begin{equation}
    D(g) = \langle M_fg, g \rangle = \sum_{y \in E} g(y)^2 f(y) \pi_y,
\end{equation}
 where $S_1 = \{ g \in L^2(\pi) \, | \, \norm{g}_2 = 1 \}$. By \eqref{eq: I_smaller_infty} we have
\begin{equation}
    \{ \lambda_0^* < \infty \} = \{ I < \infty \}  = \text{Im}(D) = [a,b],
\end{equation}
for some $a < b $. Note that for $g \in S_1$ we have
\begin{align}
\begin{split}
\min_{x \in E} f(x) &= \sum_{y \in E} \min_{x \in E} f(x) g(y)^2 \pi_y \leq \sum_{y \in E} f(y) g(y)^2 \pi_y  \\
 &\leq \sum_{y \in E}  \max_{ x \in E}f(x) g(y)^2 \pi_y = \max_{x \in E} f(x).
\end{split}
\end{align}
Furthermore, if $g(x) = \delta_{yx} \frac{1}{\sqrt{\pi_y}}$ for some $y \in E$, then $f(y) = \langle g, M_f g \rangle$, so
\begin{equation}
\text{Im}(D) = [\min_{x \in E}f(x) , \max_{x \in E}f(x)],
\end{equation}
which proves part $(b)$. As a Fenchel conjugate, $\lambda_0^*$ is convex (Lemma \ref{lem: properties_Legendre_transform}$(a)$). Moreover, $\lambda_0^* = I$ is bounded on $\{ \lambda_0^* < \infty \}$ because by the Cauchy Schwartz inequality $I(u) \leq \norm{L}_2$ for any $u \in \{ \lambda_0^* < \infty \} $. To show the continuity of $\lambda_0^*$ on $ \{ \lambda_0^* < \infty \} = [a,b]$ let $u \in [a,b]$ and let $(u_n)_{n \in \F{N}}$ be a sequence in $[a,b]$ converging to $u$. Recall from the proof of Theorem \ref{th: main_conc_inequality} that $I = \lambda_0^*$ is lower semicontinuous, so 
\begin{equation}
\label{eq: appendix_lower_semicont}
    \lambda_0^*(u)  \leq \liminf_{n \to \infty } \lambda_0^*(u_n) \leq \limsup_{n \to \infty } \lambda_0^*(u_n).
\end{equation}
 Choose a subsequence $(u_{n_k})_{k \in \F{N}}$ such  that $\lim_{k \to \infty }\lambda_0^*(u_{n_k}) = \limsup_{n \to \infty} \lambda_0^*(u_n)$ and  $(u_{n_k})_{k \in \F{N}}$ is monotonous. Then, every $u_{n_k}$ can be expressed as $u_{n_k} = (1-s_k) u + s_ku_{n_1}$ for some $s_k \in [0,1]$. As $u_{n_k} \to u$, we have $s_k \to 0$. Consequently convexity and $\lambda_0^*(u_{n_1}) < \infty $ implies
 \begin{equation}
    \limsup_{n \to \infty } \lambda_0^*(u_n) =  \lim_{k \to \infty} \lambda_0^*(u_{n_k}) \leq \lim_{k \to \infty } (1-s_k) \lambda_0^*(u) + s_k \lambda_0^*(u_{n_1})  = \lambda_0^*(u).
 \end{equation}
Combining this with \eqref{eq: appendix_lower_semicont} implies $\lambda_0^*(u) = \lim_{n \to \infty} \lambda_0^*(u_n)$, which proves continuity. Finally, we show that $\lambda_0^*$ is nondecreasing on $[0,\infty)$. Let $0 \geq u_1 < u_2$ and let $\varepsilon > 0 $. By Theorem \ref{th: main_conc_inequality} we have
\begin{equation}
    \lambda_0^*(u_1) = \sup_{ r \in \F{R}_{\geq 0}} (ru_1 - \lambda_0(r)),
\end{equation}
so there is some $r \geq 0$ with $ (ru_1 - \lambda_0(r)) \geq \lambda_0^*(u_1) - \varepsilon$. This implies
\begin{equation}
    \lambda_0^*(u_2) \geq r(u_2-u_1) + ru_1 - \lambda_0(r) \geq  \lambda_0^*(u_1) - \varepsilon.
\end{equation}
 As the above inequality holds for all $\varepsilon > 0$, we have $\lambda_0^*(u_2) \geq \lambda_0^*(u_1)$ and the proof is complete.
\end{proof}

\begin{lemma}[Cramérs theorem for symmetric MJPs]
\label{lem: appendix_asymptotic_sharp}
Assume the setting of Section \ref{subsec: application_cramer_chernoff}. Furthermore, assume that $\pi$ satisfies the detailed balance condition. Then,

\begin{equation}
\label{eq: appendix_exact_asymptotic}
  \lim_{ t \to \infty} \frac{ \log \F{P}_\nu  \left (\frac{A_t}{t}  > u \right )}{t} = - \lambda_0^*(u) 
\end{equation}
for all $u \geq 0$ and $u \neq \max_{x \in E} f(x)$.
\end{lemma}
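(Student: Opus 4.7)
The plan is to establish the upper bound $\limsup_{t\to\infty} t^{-1}\log \mathbb{P}_\nu(A_t/t > u) \leq -\lambda_0^*(u)$ directly from our earlier Chernoff-type concentration inequality, and then derive the matching lower bound from a Donsker--Varadhan large deviation principle (LDP) for the empirical measure $L_t := t^{-1}\int_0^t \delta_{X_s}\,ds$, invoking the reversibility hypothesis at exactly one point.

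First I would obtain the upper bound: Theorem \ref{th: main_conc_inequality} gives
\begin{equation*}
\mathbb{P}_\nu(A_t/t > u) \leq \mathbb{P}_\nu(A_t/t \geq u) \leq \bigl\|\tfrac{d\nu}{d\pi}\bigr\|_2\,e^{-t\lambda_0^*(u)},
\end{equation*}
and taking $\log$, dividing by $t$, and letting $t\to\infty$ yields $\limsup \leq -\lambda_0^*(u)$; the prefactor $\|d\nu/d\pi\|_2$ is finite and $t$-independent, hence vanishes in the limit.

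For the lower bound I would invoke \cite[Thrm.\,5.3.10]{large_deviations_stroock}, which states that, under the reversibility assumption on $(\mathbb{X},\mathbb{P}_\pi)$, the family $(L_t)_{t\geq 0}$ satisfies a (full) LDP on $\mathcal{M}_1(E)$ with good rate function $\beta \mapsto I(\beta \mid \pi)$ (the Donsker--Varadhan information as defined in \eqref{eq: def_donsker_varadhan_info}), and this LDP holds under every initial distribution $\nu$ whose density $d\nu/d\pi$ lies in $L^2(\pi)$ (which in our finite-state setting is automatic). The map $\beta \mapsto \beta(f)$ is continuous, so by the contraction principle $A_t/t = L_t(f)$ satisfies an LDP on $\mathbb{R}$ with rate function
\begin{equation*}
\tilde{I}(v) \;=\; \inf\{I(\beta \mid \pi) : \beta \in \mathcal{M}_1(E),\ \beta(f) = v\} \;=\; \lambda_0^*(v),
\end{equation*}
where the second equality is exactly Lemma \ref{lem: information_Legendre_transform_lambda_0(r)}. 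Applying the LDP lower bound to the open set $(u,\infty)$ gives
\begin{equation*}
\liminf_{t\to\infty}\frac{1}{t}\log \mathbb{P}_\nu(A_t/t > u) \;\geq\; -\inf_{v > u}\lambda_0^*(v).
\end{equation*}
It remains to show $\inf_{v > u}\lambda_0^*(v) = \lambda_0^*(u)$. By Remark \ref{rem: th_main_concentration_inequality}(a) (equivalently, by Lemma \ref{lem: appendix_properties_lambda_0^*}) $\lambda_0^*$ is convex with effective domain $[\min_E f,\max_E f]$ and is continuous on this domain. The assumption $u \neq \max_{x\in E} f(x)$ rules out the single point where right-continuity could fail (there are points $v > u$ with $v \leq \max f$, so the infimum is over a nontrivial range on which $\lambda_0^*$ is finite); hence right-continuity at $u$ gives $\inf_{v > u}\lambda_0^*(v) = \lambda_0^*(u)$, completing the argument. (If $u > \max_E f$, the probability is eventually zero and $\lambda_0^*(u) = \infty$, so both sides equal $-\infty$ and the statement is trivial.)

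The main obstacle is not in the manipulations above but rather in correctly citing/verifying the applicability of \cite[Thrm.\,5.3.10]{large_deviations_stroock} in our setting: one must check that the reversibility hypothesis is precisely what makes the rate function coincide with the symmetric form \eqref{eq: def_donsker_varadhan_info} used throughout this work (for non-reversible MJPs the Donsker--Varadhan rate function is given by a different variational expression, and Lemma \ref{lem: information_Legendre_transform_lambda_0(r)} could not then be used to identify $\tilde{I}$ with $\lambda_0^*$), and that the LDP lower bound holds under the given initial distribution $\nu$ and not merely under $\pi$. Both points are standard in the finite-state reversible setting, but they are the reason the hypothesis of detailed balance is essential to the statement.
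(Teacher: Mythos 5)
Your proof is correct and follows essentially the same route as the paper's: both rely on \cite[Thrm.\,5.3.10]{large_deviations_stroock} for the large deviation bounds (with reversibility entering precisely to make the rate function equal to the symmetric Donsker--Varadhan information $I(\cdot\mid\pi)$), on Theorem \ref{th: main_conc_inequality} for the upper bound $\limsup \leq -\lambda_0^*(u)$, and on Lemma \ref{lem: information_Legendre_transform_lambda_0(r)} to identify the contracted rate with $\lambda_0^*$. The one organizational difference is that you pass through the contraction principle to get a real-valued LDP for $A_t/t$ and close the argument via right-continuity of $\lambda_0^*$ at $u$ (which indeed suffices: the LDP lower bound gives $\liminf \geq -\inf_{v>u}\lambda_0^*(v)$, and $\inf_{v>u}\lambda_0^*(v)\leq\lambda_0^*(u)$ by right-continuity, so the sandwich closes), whereas the paper works directly with the open set $\Gamma=\{\nu : \nu(f)>u\}\subset\mathcal{M}_1(E)$ and uses continuity of $\beta\mapsto I(\beta\mid\pi)$ on the compact simplex to show $\inf_\Gamma I = \inf_{\overline{\Gamma}}I$, then sandwiches against $\lambda_0^*(u)=\inf\{I(\beta\mid\pi):\beta(f)=u\}$. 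Both routes are valid and interchangeable in this finite-state setting; the paper additionally spells out the verification of Stroock's hypotheses (goodness of $I(\cdot\mid\pi)$, uniqueness of its zero at $\pi$, and non-singularity of $\nu$ with respect to $\pi$), which your write-up gestures at but would need to make explicit in a full proof.
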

\begin{proof}
If \hfill $u > \max_{x \in E} f(x)$, \hfill then \hfill the \hfill claim follows \hfill directly \hfill by \hfill the  \hfill estimate \\ $A_t \leq t \max_{x \in E} f(x)$ and $\lambda_0^*(u) = \infty $ (Lemma \ref{lem: appendix_properties_lambda_0^*}). So assume $0 \leq u < \max_{x \in E } f(x) $. We apply \cite[Ch.\,5.3., Thrm.\,5.3.10, Eq.\,(5.3.12)]{large_deviations_stroock}. The context and notation of this theorem are the following: We endow the set of probability measures  $\mathcal{M}_1(E)$ on $E$ with the so called $\tau$-topology (see \cite[Ch.\,3.2, P.\,64]{large_deviations_stroock} for a definition), i.e. the topology generated by the maps 
\begin{equation}
    \nu \mapsto \nu(g) 
\end{equation}
for $g \in \mathcal{B}(E)$ and $\nu \in \mathcal{M}_1(E)$. Furthermore, $L_t = t^{-1}\int_0^t \delta_{X_s} ds = \sum_{x \in E} \delta_x \cdot  t^{-1} \int_0^t 1_{\{ X_s = x\}}ds$ denotes the 'empirical measure', $m$ denotes the invariant measure $\pi$ and $J_{\mathcal{E}}(\nu)  $ denotes the Donsker-Varadhan information $I(\nu | \pi)$ defined in \eqref{eq: def_donsker_varadhan_info}. Notice that $L_t(f) = t^{-1}A_t$ and $J_{\mathcal{E}}(\nu) =  - \left \langle L \left(\frac{d\nu}{d\pi} \right)^{\frac{1}{2}}, \left (\frac{d\nu}{d\pi} \right)^{\frac{1}{2}} \right\rangle$ (as we always have $\nu << \pi$). Furthermore, set $\Gamma = \{ \nu \in \mathcal{M}_1(E) \; | \; \nu(f) >  u \}$. Now, we prove that all requirements of \cite[Ch.\,5.3, Thrm.\,5.3.10, Eq.\,(5.3.12)]{large_deviations_stroock} are satisfied, i.e. we show that $J_\mathcal{E}$ is a good rate function (in the sense of \cite[Ch.\,2.2, P.\,33]{large_deviations_stroock}), $J_\mathcal{E}(\nu) = 0$ if and only if $\nu = m = \pi$, and $\nu$ is not singular to $m$. By using the canonical identifications
\begin{equation}
    \mathcal{M}_1(E) \Tilde{=} \left \{ \nu \in \F{R}^E \; | \; \nu_x \in [0,1] \text{ for all }x \in E, \sum_{x \in E} \nu_x = 1 \right \}  =: M
\end{equation}
and 
\begin{equation}
    \mathcal{B}(E) = \F{R}^E
\end{equation}
it is straightforward to see that the $\mathcal{M}_1(E)$ endowed with the $\tau$-topology is homeomorphic to $M \subset \F{R}^E$, endowed with the subspace topology of the standard topology on $\F{R}^E \Tilde{=} \F{R}^N$, where $N = \# E$. Using this homeomorphism  and $\frac{d\nu}{d\pi}(x) = \frac{\nu_x}{\pi_x}$, it is straightforward to check that $\nu \mapsto J_{\mathcal{E}}(\nu)$ is continuous and $\mathcal{M}_1(E)$ is compact. Thus, $J_{\mathcal{E}} $ is a good rate function (see \cite[Ch.\,2.2, P.\,32-33]{large_deviations_stroock} for a definition). Furthermore, using that $-L$ is positive semidefinite (Lemma \ref{lem: properties_infinitesimal_generator}) and selfadjoint (Theorem \ref{th: characterization_detailed_balance}) with $\text{Ker}(L) = \text{span}(\textbf{1})$ (Lemma \ref{lem: properties_infinitesimal_generator}), by diagonalizing $-L$, it is straightforward to check  that $J_{\mathcal{E}}(\nu) = 0$ if and only if $\nu = m = \pi$. Finally, $\nu$ is not singular to $m$ as $\nu << m$ and consequently all requirements are satisfied. As $u < \max_{x \in E} f(x)$, $\Gamma$ is open (with respect to the $\tau$-topology) and non-empty. By the continuity of $J_{\mathcal{E}}$ we have
\begin{equation}
\label{eq: appendix_same_infima}
    \inf_{\nu \in \Gamma } J_{\mathcal{E}}(\nu) =  \inf_{\nu \in \overline{\Gamma} } J_{\mathcal{E}}(\nu).
\end{equation}
 Using \cite[Ch.\,5.3, Thrm.\,5.3.10, Eq.\,(5.3.12)]{large_deviations_stroock} 
 \begin{equation}
     - \inf_{\nu \in \Gamma} I(\nu|\pi) \leq    \liminf_{ t \to \infty} \frac{ \log \F{P}_\nu  \left (\frac{A_t}{t} \geq u \right )}{t} \leq \limsup_{ t \to \infty} \frac{ \log \F{P}_\nu  \left (\frac{A_t}{t} \geq u \right )}{t} \leq  - \inf_{\nu \in \overline{\Gamma}} I(\nu|\pi)
 \end{equation}
 and \eqref{eq: appendix_same_infima} it follows that 
\begin{equation}
    - \inf_{\nu \in \Gamma} I(\nu|\pi) =   \lim_{ t \to \infty} \frac{ \log \F{P}_\nu  \left (\frac{A_t}{t} \geq u \right )}{t} = - \inf_{\nu \in \overline{\Gamma }} I(\nu|\pi).
\end{equation}
To finish the proof, note that  
\begin{equation}
\label{eq: appendix_sandwich_prep}
  - \inf_{\nu \in \overline{\Gamma} } I(\nu|\pi) =   \lim_{ t \to \infty} \frac{ \log \F{P}_\nu  \left (\frac{A_t}{t} \geq u \right )}{t} \leq - \lambda_0^*(u) = -\inf \{  I(\nu|\pi) \; | \; \nu(f) = u \},
\end{equation}
where we used Theorem \ref{th: main_conc_inequality} and Lemma \ref{lem: information_Legendre_transform_lambda_0(r)}.
Finally, note that (by definition) $\overline{\Gamma} = \{ \nu \in \mathcal{M}_1(E) \,| \nu(f) \geq u \}$, thus  $-\inf \{  I(\nu|\pi) \; | \; \nu(f) = u \} \leq  - \inf_{\nu \in \overline{\Gamma} } I(\nu|\pi)$ (the infimum over a bigger set is smaller), which together with \eqref{eq: appendix_sandwich_prep} implies the desired result \eqref{eq: appendix_exact_asymptotic}.
\end{proof}

\subsection{Proof of Lemma \ref{lem: lezaud_bound_lambda_0(r)}} 
\begin{lemma}
\label{lem: appendix_equivalence_classes}
Let $n ,k \in \F{N}$, $M = \{ (k_1,...,k_n) \in \F{Z}_+^n \;|\; k_1 + ... + k_n = k \}$ and let $G = \{ (1 \; 2 \;... \; n),..., (1 \; 2 \;... \; n)^{n-1}, \text{Id} \} $ be the \lb abelian\rb \space  group of all circular permutations of $n$ elements \lb we used cycle notation\rb. Furthermore, define a  \lb right\rb \space  group action via $(k_1,...,k_n) \cdot \sigma  = (k_{\sigma(1)},...,k_{\sigma(n)})$ and let $G_{(k_1,...,k_n)}$ denote the stabilizer of $(k_1,...,k_n)$, i.e.
\begin{equation}
    G_{(k_1,...,k_n)} = \{ g \in G \; |  \ (k_1,...,k_n) \cdot g = (k_1,...,k_n) \} 
\end{equation}
Then, $\# G_{(k_1,...,k_n)} \leq \textup{GCD}(n,k) $, where $\textup{GCD}(n,k) $ denotes the greatest common divisor. Moreover, if $ \textup{GCD}(n,k) = 1$ then all the equivalence classes \lb defined by the orbits of the group action\rb  \space  $[k_1,...,k_n] = (k_1,...,k_n) \cdot G$  contain exactly $n$ elements.
\end{lemma}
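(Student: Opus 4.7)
The plan is to exploit the cyclic structure of $G$ together with the sum constraint $k_1 + \ldots + k_n = k$. First I would observe that $G$ is cyclic of order $n$, generated by $\sigma_0 = (1\;2\;\ldots\;n)$. Since subgroups of a finite cyclic group of order $n$ are in bijection with divisors of $n$, the stabilizer $G_{(k_1,\ldots,k_n)}$ is of the form $\langle \sigma_0^m\rangle$ for some $m$ dividing $n$, and has order $d := n/m$. Thus $d$ automatically divides $n$, so it only remains to prove that $d$ divides $k$; this combined with $d \mid n$ gives $d \leq \gcd(n,k)$, which is the first claim.

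To show $d \mid k$, I would unravel what it means for $\sigma_0^m$ to fix $(k_1,\ldots,k_n)$: writing indices modulo $n$, we get $k_{i+m} = k_i$ for all $i$, i.e.\ the sequence is $m$-periodic. Partitioning $\{1,\ldots,n\}$ into the $m$ orbits of $\langle \sigma_0^m\rangle$, each of size $n/m = d$, the sum decomposes as
\begin{equation*}
k = \sum_{i=1}^n k_i = d \cdot \sum_{i=1}^m k_i,
\end{equation*}
so $d \mid k$, as wanted. This yields $\#G_{(k_1,\ldots,k_n)} = d \leq \gcd(n,k)$.

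For the second part, if $\gcd(n,k) = 1$, then by the bound just proved $\#G_{(k_1,\ldots,k_n)} = 1$, i.e.\ the stabilizer is trivial. The orbit–stabilizer theorem then gives $\#[k_1,\ldots,k_n] = \#G / \#G_{(k_1,\ldots,k_n)} = n$, which proves the second claim.

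The main obstacle, if any, is purely bookkeeping: one must be careful that the cyclic subgroups of $G$ are indexed as $\langle \sigma_0^m\rangle$ with $m \mid n$ and have order $n/m$, and that the $m$-periodicity of $(k_1,\ldots,k_n)$ really produces $m$ orbits of equal size $n/m$ under $\langle \sigma_0^m\rangle$ (so that the sum factors cleanly as above). No deeper ingredient is needed beyond the orbit–stabilizer theorem and the structure of subgroups of a finite cyclic group.
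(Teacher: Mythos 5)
Your proof is correct and follows essentially the same route as the paper's: identify the stabilizer as a cyclic subgroup $\langle\sigma_0^m\rangle$ with $m\mid n$ and order $d=n/m$, use the $m$-periodicity that this forces on $(k_1,\ldots,k_n)$ to conclude $d\mid k$ (hence $d\le\gcd(n,k)$), and finish with orbit–stabilizer. Your presentation is if anything a touch cleaner, since you note that $d\mid n$ is automatic from $d=n/m$ rather than invoking Lagrange separately, but there is no substantive difference in the argument.
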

\begin{proof}
The \hfill statement \hfill $\# G_{(k_1,...,k_n)} \leq \text{GCD}(n,k) $ \hfill is \hfill equivalent \hfill to: \hfill 
$\# G_{(k_1,...,k_n)} | n $ \hfill and   \\ $\# G_{(k_1,...,k_n)} | k$. Let $H \subset G$ be some subgroup. By Lagrange's theorem (\cite[Ch.\,1.2, Kor.\,3]{Bosch2020}) $\# H | \#G = n$. Furthermore, as $G $ is a cyclic group (it is  generated by $(1 \; 2 \;... \; n)$), every subgroup $H$ is also cyclic: It is generated by some $(1 \; 2 \;... \; n)^{j}$ where $j | n$, $j \in \{ 1,..., n \}$ and $\#H = \frac{n}{j}$ (c.f. \cite[Ch.\,1.3, Ex.\,2]{Bosch2020} and \cite[P.\,333-334]{Bosch2020} for a solution). Let  $G_{(k_1,...,k_n)}$ be generated by $(1 \; 2 \;... \; n)^{j}$. By the definition of the stabilizer, this implies $k_i = k_{i\;\text{mod}\;j}$  for any $i \in \{1,...,n\}$, where $i\;\text{mod}\;j $ is chosen such that $i\;\text{mod}\;j \in \{1,...,j\}$. Consequently, using $j | n$ we obtain
\begin{equation*}
    k = \sum_{i = 1}^n k_i = \frac{n}{j} \sum_{i=1}^{j}k_j = \# G_{(k_1,...,k_n)} \cdot \sum_{i=1}^{j}k_j.
\end{equation*}
Thus, we also have $\#G_{(k_1,...,k_n)} | k$. Finally, the other statement $\#G_{(k_1,...,k_n)} | n$ follows from the orbit stabilizer theorem (see \cite[Ch.\,5.1, Remark\,6]{Bosch2020}): We have
\begin{equation*}
   \#[k_1,...,k_n] = \# (k_1,...,k_n) \cdot G = \frac{\#G}{\#G_{(k_1,...,k_n)}} = \frac{n}{\#G_{(k_1,...,k_n)}}.
\end{equation*}
For the second part of the Lemma, note that if $\text{GCD}(n,k) =  1$, then $\#G_{(k_1,...,k_n)} = 1$ and thus $\#[k_1,...,k_n] = n$ (by the first part and the orbit stabilizer theorem).
\end{proof}

\begin{lemma}
\label{lem: appendix_beta(n,m)}
Let $n \in \F{N}$. Consider the same setup as in Lemma \ref{lem: appendix_equivalence_classes} with $n = n$ and $k = n-1$, i.e. $M = \{ (k_1,...,k_n) \in \F{Z}_+^n \;|\; k_1 + ... + k_n = n-1 \}$. Furthermore, let $1 \leq m \leq \lfloor \frac{n}{2} \rfloor$. Then, the number $\beta(n,m)$ of equivalence classes $[k_1,...,k_n] \in M/G$ with exactly $m$ non-adjacent zeros is given by
\begin{equation}
\label{eq: appendix_lemma_beta(n,m)}
    \beta(n,m) = \frac{1}{n-1} \binom{n-1}{m} \binom{n-1-m}{n-2m}.
\end{equation}
\end{lemma}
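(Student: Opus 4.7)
The plan is to first count the tuples in $M$ (not equivalence classes) with exactly $m$ non-adjacent zeros, and then pass to equivalence classes by dividing by the orbit size. To enable the second step, I would begin by applying Lemma \ref{lem: appendix_equivalence_classes}: since the relevant parameters are $n$ and $k=n-1$, and $\gcd(n,n-1)=1$, the lemma guarantees that every equivalence class has exactly $n$ elements. Consequently, if $N(n,m)$ denotes the number of ordered tuples $(k_1,\ldots,k_n)\in M$ with exactly $m$ zero entries, no two of which are cyclically adjacent, then $\beta(n,m)=N(n,m)/n$.

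Next, I would compute $N(n,m)$ by factoring the count into two independent choices: (i) choosing which of the $n$ cyclic positions hold the zeros, subject to the non-adjacency constraint, and (ii) filling the remaining $n-m$ positions with positive integers whose sum is $n-1$. For step (i) I invoke the classical result that the number of ways to select $m$ pairwise non-adjacent positions out of $n$ arranged on a cycle equals $\tfrac{n}{n-m}\binom{n-m}{m}$ (provable e.g. by a standard bijection that cuts the cycle at each of the $n$ positions and reduces to the linear problem $\binom{n-m}{m}+\binom{n-m-1}{m-1}$, or by a gap-insertion argument). For step (ii), the number of compositions of $n-1$ into $n-m$ positive parts is $\binom{n-2}{n-m-1}$ by stars and bars. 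Multiplying gives
\begin{equation*}
N(n,m)=\frac{n}{n-m}\binom{n-m}{m}\binom{n-2}{n-m-1},
\end{equation*}
and therefore
\begin{equation*}
\beta(n,m)=\frac{1}{n-m}\binom{n-m}{m}\binom{n-2}{n-m-1}.
\end{equation*}

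Finally, I would reconcile this with the target form \eqref{eq: appendix_lemma_beta(n,m)} by an elementary manipulation with factorials. Writing both expressions explicitly, one checks
\begin{equation*}
\frac{1}{n-m}\binom{n-m}{m}\binom{n-2}{n-m-1}=\frac{(n-2)!}{m!(m-1)!(n-2m)!}=\frac{1}{n-1}\binom{n-1}{m}\binom{n-1-m}{n-2m},
\end{equation*}
which yields the claimed formula. The main obstacle I anticipate is the proof of the cyclic non-adjacency count $\tfrac{n}{n-m}\binom{n-m}{m}$; this is the only genuinely combinatorial step and I would either cite it as a standard fact or give a short bijective argument between cyclic non-adjacent selections and linear non-adjacent selections on $n-1$ positions, accounting carefully for the rotational overcounting. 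Everything else reduces to straightforward bookkeeping with binomial coefficients.
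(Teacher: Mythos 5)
Your proposal is correct, and it takes a genuinely different route from the paper. The paper constructs a bijection from equivalence classes $[k_1,\ldots,k_n]$ with $m$ non-adjacent zeros to circular partitions of the $n-1$ primitive $(n-1)$-th roots of unity into boxes, encoded as tuples $(b_1,\ldots,b_{n-1})\in\{0,1,2\}^{n-1}$ with $\sum b_i = n$, and then applies Lemma \ref{lem: appendix_equivalence_classes} with shifted parameters (a cycle of $n-1$ positions, sum $n$) to conclude each such orbit has $n-1$ elements, giving $\beta(n,m) = \frac{1}{n-1}\binom{n-1}{m}\binom{n-1-m}{n-2m}$ directly. You instead count ordered tuples $N(n,m)$ in $M$ itself, invoke the classical Kaplansky-type formula $\frac{n}{n-m}\binom{n-m}{m}$ for cyclically non-adjacent selections, distribute $n-1$ over $n-m$ positive parts by stars and bars, and then apply Lemma \ref{lem: appendix_equivalence_classes} with the original parameters $n$ and $k=n-1$ (which is exactly the pair for which the lemma's hypothesis $\gcd(n,k)=1$ is stated) to divide by $n$. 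The factorial manipulation at the end checks out: both $\frac{1}{n-m}\binom{n-m}{m}\binom{n-2}{n-m-1}$ and $\frac{1}{n-1}\binom{n-1}{m}\binom{n-1-m}{n-2m}$ reduce to $\frac{(n-2)!}{m!\,(m-1)!\,(n-2m)!}$, which is also the form the paper arrives at in its Reference remark. Your approach avoids the paper's circle-partition bijection (which the paper itself acknowledges is argued only informally) and stays closer to the setup in which $G$ and $M$ were introduced; the cost is the external appeal to the cyclic non-adjacency count, which the paper sidesteps by converting to a linear count of box configurations. Both are legitimate, and yours is arguably the more self-contained once one accepts the standard cyclic formula.
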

\begin{proof}
As a formal, detailed proof tedious and not the main focus of this work, we leave some technical details to the reader and use intuitive arguments.
Let $A = \{ [k_1,...,k_n] \in M/G \, | \, [k_1,...,k_n] \text{ has exactly $m$ non-adjacent zeros} \}$. Note that by definition $\beta(n,m) = \# A$. Let $\rho = \exp(\frac{2 \pi i}{n-1})$, so that $1, \rho,..., \rho^{n-2} \in S^1$ denote the $n-1$-th primitive roots, where $S^1$ denotes the unit circle. To calculate $\#A$ we  identify uniquely and element $[k_1,...,k_n] \in A$ with a partition of the circle $S^1$ (more precisely a partition modulo  $\frac{2\pi}{n-1}$ rotation) into $n$ boxes as follows: Denote by $(1, \circ, \rho, \circ , ..., \rho^{n-2}, \circ , \rho^{n-1})$ the circle, where $\circ $ refer to the (curved) intervals $(\rho^k, \rho^{k+1})$ and by definition $\rho^{n-1} = 1$. For intuition, it is useful to imagine that the tuple is glued together at both ends. Using this notation we now define the partition of $S^1$ into boxes that contain the primitive roots. Denote by $|$ the edges of the boxes, and let $(1,b_1, \rho,b_2 , ...,\rho^{n-2},b_{n-1} ,\rho^{n-1})$ for $b_i \in \{\circ, |, ||, |||, ....\} \cong \F{N}_0$ denote a partition of $S^1$ into $\sum_{i = 1}^{n-1}b_i$ boxes  (the number of boxes equals the total number of edges $\sum_{i = 1}^{n-1} b_i$). These boxes contain in total the $n-1$ primitive roots $1,\rho,...,\rho^{n-2}$. Hereby, $||$ is interpreted as an empty box. For sake of notation we identify $(1,b_1, \rho,b_2 , ...,\rho^{n-2},b_{n-1} ,\rho^{n-1}) = (b_1,...,b_{n-1})$. Recall that for $[k_1,...,k_n] \in A$ we have $\sum_{i = 1}^n k_i = n-1$, so we want to identify $[k_1,...,k_n]$  with a partition of $S^1$ into $n$ boxes, where the corresponding boxes contain  exactly $k_i$ of the $n-1$ primitive roots. As $[k_1,...,k_n]$ is invariant under circular permutations this identification should by invariant, with respect to the $\frac{2\pi}{n-1}$ rotation of $S_1$ (which corresponds to a circular shifting of the boxes). Thus, we define equivalence classes $[b_1,...,b_{n-1}]$ via the (right) group action 
\begin{equation}
    (b_1,...,b_{n-1}) \cdot \sigma = (b_{\sigma(1)},...,b_{\sigma(n-1)}),
\end{equation} 
where $\sigma \in  \{ (1 \; 2 \; ... \; n-1),..., (1 \; 2 \;... \; n -1)^{n-2}, \text{Id} \} $ is a circular permutation of $n-1$ elements. This corresponds to a circular shifting of the boxes. As $[k_1,...,k_n]$ has exactly $m$ non-adjacent zeros, the corresponding partition $[b_1,....,b_{n-1}]$ must satisfy $b_i \in \{\circ,|,|| \} \cong \{0,1,2\} $ (otherwise there would be at least two adjacent empty boxes) and $m = \#\{ b_i \, |\,  b_i = ||, \, i = 1,...,n-1 \} $ (which corresponds to the $m$ zeros). Thus, we just consider 
\begin{equation}
    B =  \{(b_1,...,b_n) \in \{0,1,2\}^{n-1} \, | \,\sum_{i=1}^{n-1}b_i = n, \#\{ b_i \, |\,  b_i = ||, \, i = 1,...,n-1 \} = m\}
\end{equation}
and 
\begin{equation}
    B/H = \{[b_1,...,b_{n-1}] \, | \, (b_1,...,b_{n-1}) \in B \},
\end{equation}
where $H = \{ (1 \; 2 \; ... \; n-1),..., (1 \; 2 \;... \; n -1)^{n-2}, \text{Id} \} $. Note that there is a one to one correspondence between $B/H$ and $A$: Each $[k_1,...,k_n] \in A$ defines a unique $[b_1,...,b_{n-1}] \in B/H$ and vice versa (we leave the detailed proof of this intuitive statement to the reader). Intuitively, this is seen by imagining the boxes and the primitive roots $\rho^k$ on a circle and using the invariance with respect to circular shifts of the boxes. Hereby a box containing $l$ primitive roots corresponds to some $k_i = l$. Thus, 
\begin{equation}
    \#A = \#B/H.
\end{equation}
Finally, \eqref{eq: appendix_lemma_beta(n,m)} follows by the above identification and Lemma \ref{lem: appendix_equivalence_classes}: Let $(b_1,...,b_{n-1}) \in B$, then  $ \#\{ b_i \, |  b_i = 2, \, i = 1,...,n-1 \} = m$ and $\sum_{i = 1}^{n-1}b_i = n$ implies also $ \#\{ b_i \, |  b_i = 1, \, i = 1,...,n-1 \} = n-2m$ . Thus, a simple combinatorial argument shows (there are $\binom{n-1}{m}$ positions to choose the zeros and $\binom{n-1-m}{n-2m}$ remaining positions to chose the ones)
\begin{equation}
    \#B = \binom{n-1}{m} \binom{n-1-m}{n-2m}
\end{equation}
Furthermore, we always have $\text{GCD}(n,n-1) = 1$ so Lemma \ref{lem: appendix_equivalence_classes} (with $n = n-1$ and $k = n$) implies that $\# [b_1,...,b_{n-1}] = n-1 = \# H$ for any $[b_1,...,b_{n-1}] \in B/H$ so 
\begin{equation}
    \beta(n,m) = \#B/H = \frac{1}{n-1} \#B = \frac{1}{n-1} \binom{n-1}{m} \binom{n-1-m}{n-2m}.
\end{equation}
\end{proof}

\begin{lemma}
\label{lem: appendix_phi(x)}
Let $\beta(n,m)$ be defined as in Lemma \ref{lem: appendix_beta(n,m)} and define
\begin{equation}
    \beta_n =\sum_{m = 1}^{\lfloor \frac{n}{2} \rfloor} \beta(n,m).
\end{equation}
Then, the series 
\begin{equation}
 \sum_{n = 2}^\infty \beta_n x^n
\end{equation}
has convergence radius $\frac{1}{3}$ and
\begin{equation}
    \sum_{n = 2}^\infty \beta_n x^n = \Phi(x),
\end{equation}
where $\Phi(x) = \left ( \frac{1-x}{2} \right) \left( 1-  \sqrt{1-\frac{4x^2}{(1-x)^2}} \right) $.
\end{lemma}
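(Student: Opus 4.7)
My plan is to verify the two claims separately, but both by exploiting the substitution $y = \frac{x}{1-x}$, which rewrites the right-hand side as
\begin{equation*}
    \Phi(x) = (1-x)\cdot\frac{1 - \sqrt{1-4y^2}}{2}.
\end{equation*}
The expression $\frac{1 - \sqrt{1-4z}}{2}$ is the classical generating function $\sum_{k\geq 0} C_k z^{k+1}$ of the Catalan numbers $C_k = \frac{1}{k+1}\binom{2k}{k}$. Substituting $z = y^2$ and re-indexing via $m = k+1$ gives $\frac{1-\sqrt{1-4y^2}}{2} = \sum_{m\geq 1} \frac{1}{m}\binom{2m-2}{m-1} y^{2m}$, and multiplying through by $1-x$ yields the intermediate identity
\begin{equation*}
    \Phi(x) = \sum_{m\geq 1}\frac{1}{m}\binom{2m-2}{m-1}\frac{x^{2m}}{(1-x)^{2m-1}}.
\end{equation*}

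Next I would expand $(1-x)^{-(2m-1)} = \sum_{j\geq 0}\binom{j+2m-2}{2m-2}x^j$ via the negative binomial series, substitute $n = j + 2m$, interchange the order of summation, and collect the coefficient of $x^n$ to obtain
\begin{equation*}
    \Phi(x) = \sum_{n\geq 2} \left(\sum_{m=1}^{\lfloor n/2 \rfloor}\frac{1}{m}\binom{2m-2}{m-1}\binom{n-2}{2m-2}\right) x^n.
\end{equation*}
The last step is then to verify that the inner coefficient is exactly $\beta(n,m)$. A direct factorial computation shows that both $\frac{1}{m}\binom{2m-2}{m-1}\binom{n-2}{2m-2}$ and $\frac{1}{n-1}\binom{n-1}{m}\binom{n-1-m}{n-2m}$ simplify to $\frac{(n-2)!}{m!(m-1)!(n-2m)!}$, establishing the equality of coefficients termwise and hence $\sum_{n\geq 2}\beta_n x^n = \Phi(x)$ as formal power series.

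For the convergence radius, I would argue analytically: $\Phi$ is holomorphic on the set $\left\{x \in \F{C} : \left|\tfrac{2x}{1-x}\right| < 1\right\}$, and the radicand $1 - \frac{4x^2}{(1-x)^2}$ vanishes precisely when $\frac{2x}{1-x} = \pm 1$, i.e. at $x = \tfrac{1}{3}$ and $x = -1$. Thus $\Phi$ extends holomorphically to the disk $B_{1/3}(0)$ and has a branch-point singularity at $x = 1/3$ (and another one further out at $x = -1$). Since the power series of an analytic function converges in the largest open disk around the expansion point avoiding singularities of the analytic extension, the series $\sum_{n\geq 2}\beta_n x^n$ has radius of convergence exactly $\tfrac{1}{3}$.

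The main obstacle will be to make the interchange of summations and the manipulations of formal power series fully rigorous in a short amount of space; strictly speaking the identity of coefficients above is an identity of formal power series, but once both sides are known to be analytic on $B_{1/3}(0)$ (the left side because of the convergence argument via the singularity analysis, the right side because $\beta_n \geq 0$ and the formal identity forces the same radius of convergence), the equality $\sum_{n\geq 2}\beta_n x^n = \Phi(x)$ holds as an identity of analytic functions on $B_{1/3}(0)$. The binomial/Catalan expansions used are the standard ones and require no convergence subtleties inside this disk, so the only thing that needs care is the factorial identity $\frac{1}{m}\binom{2m-2}{m-1}\binom{n-2}{2m-2} = \frac{1}{n-1}\binom{n-1}{m}\binom{n-1-m}{n-2m}$, which is a routine (if tedious) manipulation using $\binom{n-1-m}{n-2m} = \binom{n-1-m}{m-1}$.
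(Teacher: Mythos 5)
Your argument is correct and reaches the same conclusion, but takes a somewhat different and more self-contained route than the paper. The paper recognizes $\beta_{n+2}$ as the $n$-th Motzkin number $m_n$, invokes a known closed-form identity for Motzkin numbers (which is itself a Catalan-type sum), and then identifies $\sum \beta_n x^n$ with $x^2 m(x)$ using the quadratic functional equation defining the Motzkin generating function. You instead go directly: rewrite $\Phi$ via $y = \tfrac{x}{1-x}$ to expose the Catalan generating function $\tfrac{1-\sqrt{1-4z}}{2}$, expand $(1-x)^{-(2m-1)}$ with the negative binomial series, interchange sums, and check that the resulting coefficient $\tfrac{1}{m}\binom{2m-2}{m-1}\binom{n-2}{2m-2}$ matches $\beta(n,m)$ by reducing both to $\tfrac{(n-2)!}{m!\,(m-1)!\,(n-2m)!}$. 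The two derivations involve essentially the same combinatorial content (the paper's Motzkin identity hides exactly the Catalan number $\tfrac{(2m)!}{m!(m+1)!}$), but yours avoids citing the Motzkin reference and is thus more elementary. Your singularity analysis for the convergence radius is also the same in spirit as the paper's (both locate the branch point of the square root at $x = \tfrac{1}{3}$); your formulation via the disk $\{|2x/(1-x)|<1\}$ is a clean way to see that $B_{1/3}(0)$ lies inside the domain of holomorphy. One small caution: the sum interchange is justified for $|x|<\tfrac{1}{3}$ because all the series involved have nonnegative coefficients, so Tonelli applies after bounding by the real positive case — this is worth stating explicitly if you were to write it out fully. (As an aside, the paper contains a typo at the very end of its proof, writing $x^2 m(x)^2$ where $x^2 m(x)$ is meant, but the conclusion is correctly stated.)
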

\begin{proof}
Let $(m_n)_{n \in \F{Z}_+}$ be the Motzkin numbers defined by the quadratic equation (c.f \cite[Eq.(1)]{motzkin})
\begin{equation}
\label{eq: appendix_motzkin_quadratic_eq}
    m(x) = 1 + xm(x) + (xm(x))^2 = \sum_{n=0}^\infty m_n x^n,
\end{equation}
more precisely defined by the analytic branch of the two branches of solution of the above equation. An elementary calculation shows that
\begin{equation}
   m(x) = \frac{1-x - \sqrt{(1-x)^2 - 4 x^2}}{2x^2},
\end{equation}
for $x \in [-1,\frac{1}{3}]$ (the range where the quadratic equation \eqref{eq: appendix_motzkin_quadratic_eq} has real solutions). Note that the function $m(x)$ is analytic at $0$ ; we have 
\begin{equation}
\label{eq: app_motzkin_series}
    m(x) = \sum_{n = 0}^\infty m_n x^n,
\end{equation}
where the convergence radius of the above series is $\frac{1}{3}$. This can be seen as follows. It is well known, that the root function $\sqrt{y}$ has a holomorphic extension onto the sliced plane $\F{C}\backslash \F{R}_{\leq 0}$. Furthermore, an elementary calculation shows that the complex polynomial $p(z) = (1- z)^2 - 4 z^2$, satisfies $\text{Re}(p(z)) > 0$ for $\abs{z} < \frac{1}{3}$, thus the numerator $F(x) = 1- x - \sqrt{p(x)}$ has a holomorphic extension onto the open ball $B_{\frac{1}{3}}(0) \subset \F{C}$  with radius $\frac{1}{3}$ (using the holomorphic extension of the root). Thus, as all holomorphic functions are analytic we can write $F(x) = \sum_{n = 0}^\infty F_n x^n$ for $\abs{x} < \frac{1}{3}$. But , $F(0) = F'(0) = 0$, so $F_0 = F_1 = 0$ and consequently  we can write $m(x) = \frac{F(x)}{2x^2}$ as in \eqref{eq: app_motzkin_series} for $\abs{x} < \frac{1}{3}$. The convergence radius of the series \eqref{eq: app_motzkin_series} cannot be bigger than $\frac{1}{3}$, because otherwise the function $x^2 m(x) - 1 + x = \sqrt{p(x)}$ would have an holomorphic extension onto some ball $B_{\frac{1}{3}+\varepsilon}(0)$, which is not possible. That this is not possible follows for example from the fact that $\lim_{x \uparrow \frac{1}{3}} \frac{d}{dx} \sqrt{p(x)} = \infty$ (which would have to exists if $\sqrt{p(x)}$ had an holomorphic extension). We show now that 
\begin{equation}
    \beta_{n+2} = m_n
\end{equation}
for all $n \in \F{Z}_+$. For that we use the identity (see \cite[Eq.\,(3)]{motzkin}):
\begin{equation}
    m_n =  \sum_{m = 0}^\infty \binom{n}{2m} \frac{(2m)!}{m! (m+1)!}.
\end{equation}
As $\binom{n}{k} = 0$ if $k> n$ it follows that
\begin{equation}
\label{eq: appendix_m_n}
    m_n = \sum_{m = 0}^{\lfloor \frac{n}{2} \rfloor } \binom{n}{2m} \frac{(2m)!}{m! (m+1)!}
\end{equation}
By definition
\begin{equation}
\label{eq: appendix_beta_n+2}
    \beta_{n+2} = \sum_{m = 1}^{\lfloor \frac{n+2}{2} \rfloor} \beta(n+2, m) = \sum_{m = 1}^{\lfloor \frac{n}{2} \rfloor + 1} \beta(n+2, m)
    = \sum_{m = 0}^{\lfloor \frac{n}{2} \rfloor} \beta(n+2, m+1)
\end{equation}
and (c.f \eqref{eq: appendix_lemma_beta(n,m)})
\begin{align}
    \begin{split}
      &\beta(n+2,m+1) = \frac{1}{n+1} \binom{n+1}{m+1} \binom{n -m}{n - 2m } \\ 
      &= \frac{n!}{(m+1)!m! (n-2m)!} 
      = \binom{n}{2m} \frac{(2m)!}{m! (m+1)!},
    \end{split}
\end{align}
where in the last two equalities $\binom{l}{k} =  \frac{l!}{k! (l-k)!}$ was used. Combining the above equality  with \eqref{eq: appendix_m_n}  and \eqref{eq: appendix_beta_n+2} yields $\beta_{n+2} = m_n$, which implies
\begin{equation}
    \sum_{n = 2}^\infty \beta_n x^n = \sum_{n = 0}^\infty m_nx^{n+2} = x^2 m(x)^2.
\end{equation}
Thus, these series have the same convergence radius  $\frac{1}{3}$. Finally, an elementary calculation shows that $\Phi(x) = x^2m(x)$ for $0 \leq x \leq \frac{1}{3}$.
\end{proof}

\newpage
\nomenclature{$E$}{finite state space; i.e. some finite set }
\nomenclature{$\F{X} = (X_t)_{t \geq 0}$}{Markov jump process \space \pageref{def: Markov_jump_process} }
\nomenclature{$(\F{P}_x)_{x \in E}$}{family of probability measures of an MJP  \space \pageref{def: Markov_jump_process}}
\nomenclature{$\nu, \mu, \beta $ }{probability measures on $E$}
\nomenclature{$f, g, h$}{real valued functions on $E$}
\nomenclature{$\F{P}_x( \text{\,} \cdot \text{\,} \vert \text{\,}  \cdot \text{\,} ) , \F{E}_x( \text{\,} \cdot \text{\,}   \vert \text{\,} \cdot \text{\,} ) $}{conditional probability, conditional expectation }
\nomenclature{$\pi$}{invariant probability measure of an irreducible MJP \space \pageref{th: uniqueness_existence_invariant_measure}}
\nomenclature{$(P_t)_{t \geq 0}$}{semigroup of operators; corresponding to an MJP  \pageref{eq: contraction_semigroup_Markov_process}}
\nomenclature{$L, L^*$}{infinitesimal generator, adjoint of infinitesimal generator; of an (irreducible) MJP \space \pageref{lem: contraction_semigroup_MJP}}
\nomenclature{$\mathbf{1}$}{constant $1$-function on $E$ }
\nomenclature{$\norm{\cdot}_2, \langle \cdot , \cdot \rangle $}{$L^2(\pi)$ norm  of functions and operators, scalar product on $L^2(\pi)$}
\nomenclature{$S$}{reduced resolvent of $\frac{L+L^*}{2}$ with respect to $0$ \space \pageref{def: reduced_resolvent} }
\nomenclature{$\F{R}^E,\,\mathcal{B}(E),\,L^2(\pi)$}{real function spaces: all functions $f : E \rightarrow \F{R}$, bounded and measurable, $\pi$-square-integrable}
\nomenclature{$\pi(f), \mu(f)$}{short notation for $\int_E f d\pi, \int_E f d\mu $}
\nomenclature{$F,G,H$}{real valued functions defined on some subset $D \subset \F{R}$} 
\nomenclature{$(P_t^h)_{t \geq 0}$}{Feynman-Kac semigroup \pageref{eq: def_P_t^h}}
\nomenclature{$\mathcal{M}_1(E)$}{set of probability measures on $E$}
\nomenclature{$A_t$}{integral $\int_0^t f(X_s) ds$}
\nomenclature{$M_f$}{multiplication with $f$}
\nomenclature{$\lambda_0(r)$}{largest eigenvalue of $\frac{L+L^*}{2} + rM_f$}
\nomenclature{$F^*, G^*, \lambda_0^*$}{ Fenchel conjugates \space \pageref{def: fenchel_conjugate}}
\nomenclature{$\Psi_Z(r), \Psi_{A_t}(r)$}{cumulant generating function of $Z$ resp. $A_t$; i.e. $\log \F{E} (e^{rZ}) $ resp. $\log \F{E}_\nu(e^{rA_t})$}
\nomenclature{$\Psi_Z^*$}{Cramér transform of a random variable $Z$ \space \pageref{th: cramer_chernoff}}
\nomenclature{$\F{P}_\nu, \F{E}_\nu$}{probability measure $\sum_{x \in E} \nu_x \F{P}_x$, expectation with respect to $\F{P}_\nu$ \space \pageref{eq: def_P_nu}}
\printnomenclature



\newpage

\printbibliography
\end{document}